\crefname{enumi}{}{}
\newif\ifpdf
\newtheorem{thm}{Theorem}[section]
\newcommand{\myendsymbol}{\ensuremath{\diamondsuit}}
\declaretheorem[
  style=definition,
  title=Example,
  qed={$\myendsymbol$},
  refname={example,examples},
  Refname={Example,Examples},
  sharenumber=thm,
]{exa}
\declaretheorem[
  style=definition,
  title=Definition,
  qed={$\myendsymbol$},
  sharenumber=thm,
]{dfn}
\declaretheorem[
  style=definition,
  title=Lemma,
  qed={},
  sharenumber=thm,
]{lem}
\declaretheorem[
  style=definition,
  title=Proposition,
  qed={},
  sharenumber=thm,
]{prop}
\declaretheorem[
  style=definition,
  title=Corollary,
  qed={},
  sharenumber=thm,
]{cor}
\declaretheorem[
  style=definition,
  title=Remark,
  qed={$\myendsymbol$},
  sharenumber=thm,
]{rmk}
\declaretheorem[
  style=definition,
  title=Notation,
  qed={$\myendsymbol$},
  sharenumber=thm,
]{notation}
\newcommand{\nd}{\noindent}
\newcommand{\dV}{{\mathds V}}
\newcommand{\dR}{{\mathds R}}
\newcommand{\dC}{{\mathds C}}
\newcommand{\dQ}{{\mathds Q}}
\newcommand{\dN}{{\mathds N}}
\newcommand{\dZ}{{\mathds Z}}
\newcommand{\dP}{{\mathds P}}
\newcommand{\dL}{{\mathbb L}}
\newcommand{\bD}{{\mathbb D}}
\newcommand{\cA}{\mathscr{A}}
\newcommand{\cB}{\mathcal{B}}
\newcommand{\cC}{\mathcal{C}}
\newcommand{\cD}{\mathscr{D}}
\newcommand{\cF}{\mathcal{F}}
\newcommand{\cG}{\mathcal{G}}
\newcommand{\cH}{\mathcal{H}}
\newcommand{\cI}{\mathcal{I}}
\newcommand{\cK}{\mathcal{K}}
\newcommand{\cL}{\mathcal{L}}
\newcommand{\cM}{\mathcal{M}}
\newcommand{\cN}{\mathcal{N}}
\newcommand{\cO}{\mathcal{O}}
\newcommand{\cS}{\mathcal{S}}
\newcommand{\cU}{\mathcal{U}}
\newcommand{\cY}{\mathcal{Y}}
\newcommand{\fg}{\mathfrak{g}}
\newcommand{\D}{\displaystyle}
\newcommand{\SC}{\scriptstyle}
\DeclareMathOperator{\Spec}{\textup{Spec}\,}
\DeclareMathOperator{\Proj}{\textup{Proj}\,}
\DeclareMathOperator{\Tr}{\textup{Tr}}
\DeclareMathOperator{\Der}{\textup{Der}}
\DeclareMathOperator{\End}{\textup{End}}
\DeclareMathOperator{\Sym}{\textup{Sym}}
\DeclareMathOperator{\GL}{\textup{GL}}
\DeclareMathOperator{\diag}{\textup{diag}}
\DeclareMathOperator{\vol}{\textup{vol}}
\DeclareMathOperator{\Lie}{\textup{Lie}}
\DeclareMathOperator{\DR}{\mathit{DR}}
\DeclareMathOperator{\im}{\textup{im}}
\DeclareMathOperator{\FL}{\textup{FL}}
\DeclareMathOperator{\id}{\textup{id}}
\DeclareMathOperator{\gr}{\textup{Gr}}
\DeclareMathOperator{\SL}{\textup{SL}}
\newcommand{\Supp}{\textup{Supp}}
\newcommand{\MHM}{\textup{MHM}}
\newcommand{\HM}{\textup{HM}}
\newcommand{\Mh}{{^{H,*}\!\!}\cM_L^\beta}
\newcommand{\MhD}{{^{H,!}\!\!}\cM_L^{-\beta}}
\newcommand{\Nh}{{^{H,*}\!\!}\cN_L^\beta}
\newcommand{\mbd}{\mathbb{D}}
\newcommand{\mbz}{\mathds{Z}}
\newcommand{\mcu}{\mathcal{U}}
\newcommand{\mcy}{\mathcal{Y}}
\newcommand{\ra}{\rightarrow}
\newcommand{\lra}{\longrightarrow}
\newsavebox\foobox
\newcommand{\suchthat}{\;\ifnum\currentgrouptype=16 \middle\fi|\;}
\DeclareMathOperator{\Id}{Id}
\DeclareMathOperator{\trace}{trace}
\DeclareMathOperator{\Tot}{Tot}
\DeclareMathOperator{\rk}{rk}
\DeclareMathOperator{\Pic}{Pic}
\DeclareMathOperator{\sheafHom}{\mathscr{H}\kern -3pt\textit{om}\kern 1pt}
\DeclareMathOperator{\sheafDer}{\mathscr{D}\kern -1pt\textit{er}\kern 1pt}
\DeclareMathOperator{\differential}{d\!}
\DeclareMathOperator{\ad}{ad}
\newcommand{\N}{\mathds{N}}
\renewcommand{\P}{\mathds{P}}
\newcommand{\R}{\mathds{R}}
\newcommand{\C}{\mathds{C}}
\newcommand{\Q}{\mathds{Q}}
\newcommand{\Z}{\mathds{Z}}
\renewcommand{\O}{\mathcal{O}}
\newcommand{\Ell}{\mathscr{L}}
\newcommand{\fsl}{\mathfrak{sl}}
\newcommand{\fgl}{\mathfrak{gl}}
\let\originalleft\left
\let\originalright\right
\renewcommand{\left}{\mathopen{}\mathclose\bgroup\originalleft}
\renewcommand{\right}{\aftergroup\egroup\originalright}
\newcommand\restrK[2]{{(#1)_{| {#2}}}}
\newcommand\restr[2]{{#1_{| {#2}}}}
\newcommand\defstyle[1]{\textbf{#1}}
\begin{document}
\title{\Large Tautological systems, homogeneous spaces and the holonomic rank problem}
\author{\small Paul Görlach, Thomas Reichelt, Christian Sevenheck, Avi Steiner and Uli Walther}

\renewcommand{\thefootnote}{}
\footnotetext{
\noindent
PG and CS were partially supported by DFG grant SE 1114/5-2.
 TR and AS acknowledge support by DFG grant RE 3567/1-1.
 UW was supported by NSF grant DMS-2100288 and by
 Simons Foundation Collaboration Grant for Mathematicians \#580839 and SFI-MPS-TSM-00012928.

\noindent 2020 \emph{Mathematics Subject Classification.} 32C38, 14F10, 32S40\\ Keywords: Tautological system, Fourier--Laplace transformation, mixed Hodge module, Lie group, homogeneous space}
\renewcommand{\thefootnote}{\arabic{footnote}}

\maketitle

\begin{abstract}
  \noindent Many hypergeometric differential systems that arise from a geometric setting can be endowed with the structure of mixed Hodge modules. We generalize this fundamental result to the tautological systems associated to homogeneous spaces by giving a functorial construction for them. As an application, we solve the holonomic rank problem for such tautological systems in full generality.
\end{abstract}

\tableofcontents

\section{Introduction} \label{sec:intro}

The purpose of this paper is to investigate differential systems that
one can naturally associate to group actions on smooth algebraic
varieties, and more specifically to representations of algebraic
groups. More precisely,  consider
the following data: a complex algebraic group $G$
acting linearly on a finite-dimensional vector space, an invariant
subvariety of this space, and a homomorphism from the Lie algebra of
$G$ into the complex numbers. To this situation is naturally
attached a \emph{tautological system}, which is an equivariant
$\mathscr{D}$-module on the dual vector space. In the case where the group $G$
  is an algebraic torus, this construct
was considered  first by Gel'fand and his collaborators
  Gindikin, Graev, Kapranov and Zelevinsky in a seminal investigation
  that originated in the study of Aomoto integrals on hyperplane
  arrangements; cf.\ \cite{GGG80,GG86,GZ86,GGZ87,GKZ89,GKZ90}.  Today,
  these \emph{GKZ-} or \emph{$A$-hypergeometric systems} are of
  fundamental importance in several areas of mathematics; see, e.g.,
  \cite{GKZReview} for an overview on the algebraic aspects of this
  theory.
  For more general Lie groups this construction seems to
go back to \cite[Section II.4]{Hot98}, and has more recently been the
main focus in a series of papers by Bloch, Huang, Lian, Song, Yau and
Zhu (\cite{TautPeriod1, Bloch_HolRankProb, TautPeriod2,
  HuangLianZhu}).

One of the main motivations for studying
  tautological systems arises in mirror symmetry, understood in the
classical sense of recovering enumerative geometry information (i.e.,
quantum cohomology) of certain symplectic varieties by period integral
computations of their mirror families (or oscillating integrals in the
non-Calabi--Yau case). Aided by the high level of
  sophistication reached in the theory of GKZ-systems, the
investigation of mirror symmetry for
  complete intersections inside toric varieties has been very
  successful and can be considered as
  settled (at least under sufficient positivity assumptions, cf.\
  \cite{Giv7, Iritani, RS12} and the respective bibliography trees).
It is, however, a longstanding and
challenging problem to establish mirror symmetry, expressed as an
equivalence of $\mathscr{D}$-modules (possibly with additional
structures, such as Hodge modules or irregular variants of them) for
non-toric varieties. Our present work is a contribution to the
fundamental properties of tautological systems in the more general
setup, with a view towards applications to mirror symmetry.

An important class of examples arise from homogeneous spaces; for a
partial list of known results on mirror symmetry in that context see
\cite{Rietsch, Marsh-Rietsch, Lam-Templier}. A common feature of these
papers is that the mirror of a Fano manifold that is
  also a homogeneous space for some group $G^\vee$ consists of a
Landau--Ginzburg potential, constructed via Lie theoretic methods from
the Langlands dual group $G$ of $G^\vee$. When restricted to a torus
inside $G$, such a potential function can be expressed as a Laurent
polynomial.  Describing, and then studying, an appropriate partial
compactification of this mirror Laurent polynomial is a major and
central problem in the area; the toric situation is considered for
example in \cite{RS12} and, from a very different point of view, in
\cite{Siebert-TropLG}.
It is hence a problem of fundamental importance to
    decribe, for a given homogeneous space $X=G/P$ embedded into a
    projective space, the differential system satisfied by periods of
  families of its hyperplane sections. Such analogue
    of a GKZ-system should yield (by dimensional reduction) the
  mirror $\mathscr{D}$-module considered in the papers by Rietsch,
  Marsh, Lam and Templier above.
Our main findings here, as they relate to homogeneous
  spaces and
paraphrased in \cref{theo:Main} below, give criteria to determine when
tautological systems arise as such $\cD$-modules in a setting where we
allow $G$ to be any linear algebraic group, and where the
representation will be in the space of sections of some equivariant
line bundle $L$ on $X$.

Our investigations show that, in order to obtain a
  nonzero tautological system for general Lie group actions,  one
needs to impose rather delicate conditions on the bundle $L$ and the
 corresponding parameter Lie algebra homomorphism. If
these conditions---which we make explicit---hold true, we show that
the corresponding tautological system has, in fact, a
functorial description  similar to the one for
  GKZ-systems given in \cite{GKZ90,SW09}, and thus naturally
underlies a mixed Hodge module. We determine its possible weights, and
we show how to compute its solution rank at any point. In particular,
we determine its holonomic rank in terms of the dimension of the
cohomology of a natural family of (complements of) hyperplane sections
of $X$. In order to put this in context, recall that
  the variation of certain relative cohomology
  groups to families of hypersurfaces in a torus is captured
  by the restriction of the GKZ-system induced by the hypersurface to
  its regular locus (see \cite[Thm.~2.13]{Reich2},
  \cite{Stienstra} and \cite{ReichWalth-Duco}) while information on the
  intersection cohomology of the associated toric projective variety
  can be estimated as well (compare, for example
  \cite[Prop.~2.14]{RS12}).

Our general rank result gives a completely general solution for
  arbitrary line bundles to the holonomic rank problem raised in
  \cite{Bloch_HolRankProb} (and adressed in \cite{HuangLianZhu} in the
  special case $L=-K_X$). Considering rather finer structure aspects,
we further show that in many cases (classified by the value of the
parameter Lie algebra homomorphism), the monodromy representation
defined by the smooth part of the tautological system is irreducible.
This, again, has well-established counterparts in the
  case of a torus action: compare
  \cite{GKZ90,W-taka,Saito11,SW12,Beukers}. It is interesting that the resulting mixed Hodge modules have only two nonzero weights, in contrast to the GKZ case of \cite{ReichWalth-Weight} where the torus dimension predicts the number of weights.

    The potential applications in mirror symmetry can be understood along the lines of what is already known in the case of toric varieties (see \cite{RS10, RS12, RS20}). Namely, the Hodge module structure that we prove to exist on the tautological system (the object called $\tau(\rho, \hat{X}, \beta)$ in \cref{theo:Main} below) can be used to construct a non-commutative Hodge structure on the partial Fourier transform of its dimensional reduction to the K\"ahler moduli space  mentioned above. Conjecturally, this one is identified with the quantum $\cD$-module of the A-model $P^\vee\backslash G^\vee$ of the given homogeneous space $G/P$ (the variation of the cohomology of hyperplane sections of which is controlled by our tautological system). This would constitute a full mirror statement, currently only known in a few special cases due to the work of Rietsch, Marsh, etc. It should be noted though that in order to carry out this program, considerable extra work is needed, starting with identifying the Hodge filtration on the tautological system (the corresponding statement in the GKZ-case is the main result in \cite{RS20}).

  Besides applications to
mirror symmetry, our results should also lay the foundations for
further studies of Hodge theory of various differential modules
constructed from representations of algebraic groups, such as
Frenkel--Gross connections (see \cite{FrenkelGross}) or generalized
Kloosterman $\cD$-modules (\cite{HeinlothNgoYun}).

In the remainder of this introduction, we will describe our main results in more detail, and we give an overview on the content of this paper. The main character, the tautological system, is defined below. In terms of notation,   for a vector space $V$ and its dual space $W:=V^\vee$, we denote the Fourier--Laplace transformation functor
$\FL^V\colon \textup{Mod}(\mathscr{D}_V)\rightarrow\textup{Mod}(\mathscr{D}_W)$  (see \cref{ssec:FLBasics} below for more details about Fourier--Laplace transformations on arbitrary vector bundles). For now, $G'$ can be any linear algebraic group, but in the
later parts we will consider a group $G$ acting transitively on a variety $X$, and $G'$ will denote the
product $\dC^*\times G$, acting on equivariant line bundles $L\rightarrow X$.
\begin{dfn} \label{def:tautSys}
  Let $\rho \colon G' \to \GL(V)$ be a finite-dimensional rational representation of an algebraic group and denote the induced Lie algebra representation by $\differential\rho \colon \mathfrak g' \to \fgl(V)$. Let $\overline{Y}$ be a $G'$-invariant closed subvariety of $V$. For a Lie algebra homomorphism $\beta \colon \mathfrak g' \to \C$, define the left $\mathscr{D}_V$-module
  \begin{equation}\label{eq:tau_hat}
  \hat{\tau}(\rho, \overline{Y}, \beta) := \mathscr{D}_{V}/\left(\mathscr{D}_{V} \cI + \mathscr{D}_{V}\left(Z_V(\xi)-\beta'(\xi) \mid \xi \in \mathfrak g'\right)\right),
  \end{equation}
    where $\cI \subseteq \cO_V$ is the vanishing ideal of $\overline{Y}$, where
    $Z_V(\xi)$ denotes the vector field on $V$ given by the infinitesimal action of $\fg'$ (see \cref{lem:equivVFOnVectSp} for a detailed discussion), and where   $\beta'(\xi) := \trace(\differential\rho(\xi))-\beta(\xi)$.

  Its Fourier--Laplace transform
  \begin{equation}\label{eq:tau}
  \tau(\rho, \overline{Y}, \beta) := \FL^V(\hat\tau(\rho, \overline{Y}, \beta))
  \end{equation}
  is a left $\mathscr{D}_{V^\vee}$-module called the \defstyle{tautological system} associated to $\rho$, $\overline{Y}$ and $\beta$.
\end{dfn}

The next statement summarizes our main results.
To state them, assume that $X$ is a smooth projective variety, and that $G$ is a reductive and connected linear algebraic group that acts transitively on $X$. Suppose that $L\rightarrow X$ is a $G$-equivariant line bundle on $X$, with sheaf of sections $\Ell$,
which we assume to be very ample. We put $G':=\dC^*\times G$, and we define an action of $G'$ on $L$ by letting the $\dC^*$-factor act via inverse scaling in the fibres of $L$ (see \cref{def:EquLineChar} for a more precise and more general description). Setting $V:=H^0(X, \mathscr{L})^\vee$, we obtain a representation $G'\rightarrow \GL(V)$. Moreover, since $L$ is very ample, the linear system $|\mathscr{L}|$ yields an embedding $g\colon X\hookrightarrow \P V$. Let $\hat{X}\subseteq V$ be the affine cone; this is a $G'$-invariant subvariety.
Notice that there is an isomorphism $L^* \cong \hat{X}\setminus \{0\}$, where $L^*$ is the complement of the zero section of $L\rightarrow X$
and we write
$\iota \colon L^* \hookrightarrow V$ for the corresponding locally closed embedding obtained by composing this isomorphism with the embedding $\hat{X}\setminus\{0\}\hookrightarrow V$.
Choose any Lie algebra homomorphism $\beta\colon \fg'=\dC\mathbf{e}\oplus \fg\rightarrow \dC$ with $\beta_{|\fg}=0$ (this is forced on $\beta$  if $G$ is semisimple, since then $[\fg,\fg]=\fg$), i.e., choose a number $\beta(\mathbf{e})\in \dC$. \begin{thm}[\cref{thm:restrictedTauHatDescription}, \cref{theo:tau_is_MHM} and \cref{cor:holRankProb}]\label{theo:Main}
In the above situation, the following statements hold true.
\begin{enumerate}
    \item
        Let $\beta(\mathbf{e}) \notin  \dZ$. We have
        \begin{enumerate}
        \item
        $$
        \tau(\rho, \hat{X}, \beta)
        =
        \left\{
        \begin{array}{lcl}
         \FL^V(\iota_+ \cO_{L^*}^{\ell/k}) &\quad & \textup{if } \mathscr L^{\otimes \ell} \cong \omega^{\otimes (-k)}_X \textup{ and }\beta(\mathbf{e}) = \ell/k,\\ \\
        0 & \quad &  \textup{else},
        \end{array}
        \right.
        $$
        where $\cO_{L^*}^{\ell/k}$ is a smooth $\mathscr{D}_{L^*}$-module of rank $1$ on $L^*$ (and we denote by $\underline{\dC}_{L^*}^{\ell/k}$
        its  associated local system) which underlies a pure complex Hodge module of weight $\dim(X)+1$.
        \item
        If $\tau(\rho, \hat{X}, \beta)\neq 0$, then it underlies a simple pure complex Hodge module of weight $\dim(X)+\dim(V^\vee)$. In particular, the  local system corresponding to the restriction of $\tau(\rho, \hat{X}, \beta)$ to the complement of its singular locus (or, phrased differently, its monodromy representation) is irreducible.
        \item
        The holonomic rank of $\tau(\rho, \hat{X}, \beta)$ equals         $$
        \dim_\dC H^{\dim(X)}_c\left(X \setminus Z(\lambda), \underline{\dC}_\lambda^{\ell/k}\right)
        $$
        for a generic $\lambda\in V^\vee=H^0(X,\mathscr{L})$, where
        $Z(\lambda)$ is the vanishing locus in $X$ of the section $\lambda$, and where
        $\underline{\dC}_\lambda^{\ell/k}$ is the local system
        $\lambda_{|X\setminus Z(\lambda)}^* \underline{\dC}_{L^*}^{\ell/k}$.
        \end{enumerate}
    \item
        Let $\beta(\mathbf{e}) \in \dZ_{>0}$. We then have
        $$
            \tau(\rho, \hat{X}, \beta) = \FL^V\left(H^0 \iota_\dag \cO_{L^*}\right),
        $$
        which underlies a (rational) mixed Hodge module (i.e. an element in $\MHM(V^\vee)$), with weights in $\left\{\dim(X)+\dim(V^\vee),\, \dim(X)+\dim(V^\vee)+1\right\}$. Its holonomic rank is given by
        $$
            \dim_\dC H^{\dim(X)}(X\setminus Z(\lambda), \dC).
        $$
  \end{enumerate}
\end{thm}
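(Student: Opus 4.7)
The plan is to first identify the $\cD$-module $\hat\tau(\rho,\hat X,\beta)$ on $V$ explicitly, then transfer through the Fourier--Laplace transformation via its compatibility with mixed Hodge modules, and finally deduce the rank formula from an exponential-twist cohomology computation.

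\textbf{Step 1 (describing $\hat\tau$).} The defining relations force $\hat\tau$ to be supported on $\hat X$ and to carry a $(G',\beta')$-equivariant structure. Since $G$ acts transitively on $X$ and $\dC^*$ scales fibres, $\hat X\setminus\{0\}\cong L^*$ is a single $G'$-orbit on which $(G',\beta')$-equivariant $\cD_{L^*}$-modules have rank at most one. Such a non-zero object exists precisely when $\beta'$ matches the character of the equivariant dualizing sheaf $\omega_{L^*}$; using the relative decomposition $\omega_{L^*}\cong \pi^*\omega_X\otimes \omega_{L^*/X}$ for $\pi\colon L^*\to X$, this translates into the numerical condition $\mathscr L^{\otimes\ell}\cong\omega_X^{\otimes(-k)}$ with $\beta(\mathbf e)=\ell/k$. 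A presentation argument then shows that the natural map $\hat\tau\to\iota_+\cO_{L^*}^{\ell/k}$, respectively $\hat\tau\to H^0\iota_\dag\cO_{L^*}$ in the positive-integer case, is an isomorphism, by ruling out extra quotients supported on $\{0\}$ or on $V\setminus\hat X$.

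\textbf{Step 2 (Hodge structure).} The rank-one object $\cO_{L^*}^{\ell/k}$ underlies a complex variation of Hodge structure of weight zero. For $\beta(\mathbf e)\notin\dZ$, the associated local system has non-trivial monodromy along the $\dC^*$-fibres of $\pi$, so $\iota_+=\iota_{!*}$ and the output on $V$ is a simple pure complex Hodge module of weight $\dim L^*=\dim X+1$. Applying $\FL^V$, which preserves (complex) mixed Hodge modules up to an appropriate weight shift involving $\dim V^\vee$, yields the weight and simplicity assertions of part (1). For $\beta(\mathbf e)\in\dZ_{>0}$ the local system is trivial, and $H^0\iota_\dag\cO_{L^*}$ carries a two-step weight filtration with graded pieces in two consecutive weights near $\dim X+1$; the same transport gives the weight bounds of part (2).

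\textbf{Step 3 (holonomic rank).} For $N=\iota_+\mathcal E$ with $\mathcal E$ smooth, the generic rank of $\FL^V(N)$ at $\lambda\in V^\vee$ equals the dimension of the exponential-twisted de Rham cohomology $H^{\dim V}(L^*, \mathcal E\otimes e^{-\lambda|_{L^*}})$. Using the $\dC^*$-bundle $\pi\colon L^*\to X$, the function $\lambda|_{L^*}$ is linear and non-vanishing in the fibres over $X\setminus Z(\lambda)$, so fibrewise integration against $e^{-\lambda}$ produces a local system on $X\setminus Z(\lambda)$ extended by zero across $Z(\lambda)$. Its compactly-supported cohomology in the top degree $\dim X$ yields exactly $\dim H^{\dim X}_c(X\setminus Z(\lambda),\underline{\dC}_\lambda^{\ell/k})$ in the non-integer case; in the integer case the local system is trivial, producing the rank formula in part (2).

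\textbf{Main obstacle.} The principal difficulty is Step 1: pinning down the sharp vanishing obstruction, and especially distinguishing $\iota_+$ from $H^0\iota_\dag$ according to whether $\beta(\mathbf e)$ is integral or not. The delicate behaviour at the vertex $\{0\}\in \hat X$, where $\iota$ fails to be a closed embedding, drives this distinction and forces careful control of extensions across the origin. Once Step 1 is established, the Hodge assertions follow from standard preservation properties of $\FL^V$ on (complex) mixed Hodge modules, and the rank assertions from the cohomological formula together with the fibrewise integration argument.
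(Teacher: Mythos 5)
Your Step 2 rests on the assertion that $\FL^V$ ``preserves (complex) mixed Hodge modules up to an appropriate weight shift involving $\dim V^\vee$''. This is false as a general principle: the Fourier--Laplace transform of a regular holonomic module is typically irregular, so it cannot underlie a mixed Hodge module at all, and there is no functor $\FL^V$ on $\MHM(V)$. Making sense of the Hodge structure on $\tau(\rho,\hat X,\beta)$ is precisely the central difficulty here, and it is resolved not by transporting a Hodge structure through $\FL^V$ but by rewriting $\FL^V(\hat\iota_+\cO_{L^{\vee,*}}^{\beta})$ without any Fourier transform, as $a_{W,+}\,ev^\dag\, j_{L,\dag}\,\cO_{L^*}^{-\beta}$ (using the compatibility of $\FL$ with bundle morphisms and the computation $\FL_X^{L}(j_{L,+}\cO_{L^*}^{\beta})\cong j_{L^\vee,\dag}\cO_{L^{\vee,*}}^{-\beta}$ on the rank-one bundle). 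Every functor in that expression lifts to $\MHM$, and the weight $\dim X+\dim W$ then comes from purity of $j_{L,!}\,{}^{H}\underline{\dC}^{-\beta}_{L^*}$ together with Saito's decomposition/weight theorem for the projective map $a_W$ and the smooth map $ev$. If you instead want to invoke the Fourier transform of \emph{monodromic} mixed Hodge modules, you must say so and justify why your modules are monodromic; as written, your argument is circular on the key point.

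Two further soft spots. In Step 1 you dismiss the extension across the origin as ``a presentation argument ruling out extra quotients supported on $\{0\}$''. For $\beta(\mathbf e)\notin\dZ$ this can indeed be done cheaply (an Euler-operator eigenvalue argument shows $R\Gamma_{\{0\}}\hat\tau=0$, so $\hat\tau\cong j_+j^+\hat\tau$), but for $\beta(\mathbf e)\in\dZ_{>0}$ the module $\hat\tau$ genuinely acquires a nonzero submodule supported at the origin (this already happens for $\dP^1\times\dP^1$ in $\dP^3$), so it is neither $\iota_+$ nor the intermediate extension; proving $\hat\tau\cong H^0 j_\dag j^+\hat\tau$ requires showing the vanishing of $H^0$ and $H^{-1}$ of a Lie-algebroid de Rham (Chevalley--Eilenberg/Spencer-type) complex on $\omega_V\otimes\cO_{\hat X}$, which is where the real work lies and which your sketch does not supply. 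Finally, your Step 3 via exponential-twisted de Rham cohomology and fibrewise integration is a legitimately different route from the base-change computation $i_\lambda^+\tau\cong a^{U_\lambda}_{!}\,\underline{\dC}^{-\ell/k}_\lambda[\dim W]$ that drops out of the functorial description above, but as stated it is a heuristic; once Step 2 is done correctly, the rank formula follows for free from proper base change, so I would advise taking that path instead.
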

Since the above theorem is meant only as an overview of our results, we ignore the case where $\beta(\mathbf{e})\in \dZ_{\leq 0}$ here, as it is essentially uninteresting (see \cref{cor:LocColocHatTau} for more details).
In a similar spirit, we only mention the holonomic rank here, whereas
\cref{cor:holRankProb} contains finer results concerning the fibre rank (resp.\ the solution rank) of the system $\tau(\rho, \hat{X}, \beta)$ at any point. Notice further that the points 1.(a) and 1.(b) in the above theorem imply in particular that for any given equivariant line bundle $L$ that gives a non-zero tautological system, some sufficiently high power of it yields a system with irreducible monodromy representation.

There are essentially three main ingredients in the proof of the above results. First, one needs to rewrite the
Fourier--Laplace transformation entering in the definition of the tautological system in \eqref{eq:tau} as an operation that involves only functors defined in the category of mixed Hodge modules. This is done using a strategy that already appeared in \cite{Reich2}, namely, via the Radon transformation for algebraic $\mathscr{D}_{\dP^n}$-modules.
Due to the possible non-integrality of $\beta$ however, we need here a variant of this transformation.
This twisted  Radon transformation was used in \cite[Section 5.2]{RS20}, and the relevant adaptations are discussed in \cref{sec:FLOnVecBundles}.

The second main point of our investigations is to study non-vanishing criteria for tautological systems. As already  mentioned, a tautological system, as defined by \eqref{eq:tau} resp.\ \eqref{eq:tau_hat} is often the zero module, especially when the dimension of the group $G$ is larger than the dimension of the $G$-variety $Y$. This is exactly the situation that we are facing when studying tautological systems defined by homogeneous spaces. This aspect seems to have been overlooked in the previous studies of tautological systems (e.g.\ in \cite{Bloch_HolRankProb,HuangLianZhu}).
We therefore need to develop both necessary and sufficient criteria for such systems to be non-zero. As stated in our main result above, they involve both constraints on the equivariant line bundle $L$ and on the parameter homomorphism $\beta$. We develop these criteria in \cref{sec:nonVanishingCriteria,sec:FormuleBeta}, using some facts on modules over rings of twisted differential operators as well as some standard techniques from representation theory.

The third important ingredient of our construction is a localization result for the Fourier--Laplace transform $\hat{\tau}(\rho,\hat{X},\beta)$ of the tautological system, treated in \cref{subsec:LocProp} and \cref{subsec:CoLocProp}. Although the problem is similar to the  corresponding result in the GKZ-case in \cite{SW09}, the techniques are very much different. It is here where the two cases
$\beta(\mathbf{e})\in\dZ$ resp.\ $\beta(\mathbf{e})\notin\dZ$
need to be treated separately. While the latter is a relatively simple argument concerning eigenvalue decomposition for an operator derived from the Euler vector field on the space $V$, the former is more delicate. Contrary to the strategy in \cite{SW09} (using so-called Euler-Koszul homology) we study here various Lie algebroid cohomologies and prove some vanishing theorems about them.

\bigskip

\emph{Outline:}
Let us give a more specific overview over the various parts of the paper. Notice that the level of generality is decreasing, in the sense that the results in the earlier sections apply to more general situations than the main result as stated above. In particular, Section \ref{sec:nonVanishingCriteria}
contains results of general interest about
$\cD$-modules related to group actions on algebraic varieties.

We start by defining in \cref{sec:OBeta} certain Hodge modules on
line bundles $L\rightarrow X$ over smooth varieties
(or rather on the complement of the zero section $L^*$). Their underlying $\mathscr{D}$-modules (denoted by $\cO_{L^*}^\beta$) generalize the twisted structure sheaf $\mathscr{D}_{\dC^*}/\mathscr{D}_{\dC^*}(\partial_t\cdot t + \beta)$
(which would correspond to the case where the variety is a point).
Then we study their Fourier--Laplace transforms in \cref{sec:FLOnVecBundles}, show that they still underlie a mixed Hodge module on the dual bundle and discuss a (complex of) $\cD$-module(s) on the space of global sections of this dual bundle as well as a geometric interpretation of it as twisted cohomology of hyperplane sections.

In \cref{sec:nonVanishingCriteria} we address the question under which hypotheses the tautological system
$\tau(\rho,\overline{Y},\beta)$ and its Fourier transform $\hat{\tau}(\rho,\overline{Y},\beta)$ are a non-zero $\mathscr{D}_{V^\vee}$- resp.\ $\mathscr{D}_V$-module. We first consider a quite general situation of a smooth algebraic variety $Y$ endowed with the action of an algebraic group $G'$. From the vector fields induced by this group action, together with a Lie algebra homomorphism $\beta$, we construct a $\mathscr{D}_Y$-module $\mathscr{N}_Y^\beta$. Especially important is the case where $Y$ occurs as an orbit in a vector space $V$ underlying a rational representation $\rho\colon G'\rightarrow \GL(V)$. According to principles outlined above, there is a tautological system $\tau(\rho,\overline{Y},\beta)$ and its Fourier transform $\hat{\tau}(\rho,\overline{Y},\beta)$. We relate in \cref{cor:FLTautSysAsDirectImage} the restriction of $\hat{\tau}(\rho,\overline{Y},\beta)$ to $Y$ with the intrinsically defined module $\mathscr{N}_Y^\beta$. We then develop therefore a framework, based on the formalism of Lie algebroids and their universal enveloping algebras, to study the vanishing resp.\ non-vanishing of the module $\mathscr{N}_Y^\beta$. The first main result in this section is \cref{thm:nonZeroGeneralSetup} which gives a sufficient criterion for $\hat{\tau}(\rho, \overline{Y},\beta)$ to be non-zero. Afterwards, this is applied to the more specific case where the variety $Y$ is the complement of the zero section of the total space of a line bundle over a variety $X$ equipped with an action by a group $G$. Then $L$ can, with the choice of a character, be made into a $G'$-space, where $G':=\dC^*\times G$. The second main result is \cref{thm:restrictedTauHatDescription}, which not only gives sufficient and necessary non-vanishing criteria for the
system $\hat{\tau}(\rho,\overline{Y},\beta)$, but also describes this system, if it is non-zero, as a direct image of one of the modules $\cO_{L^*}^\beta$ introduced earlier in \cref{sec:OBeta}.

In \cref{sec:FormuleBeta}, we derive (via representation theoretic methods) a formula for the complex parameter value $\beta(\mathbf{e})\in \dC$
for which the tautological system $\tau(\rho, \hat{X}, \beta)$ is non-zero, at least in the case of a semisimple group $G$.
We also give a geometric interpretation of this formula and show that it is compatible with the general criterion of \cref{thm:restrictedTauHatDescription}.

In \cref{sec:tautologicalSystems}, we apply all the previous results in the case where the variety $X$ is a homogeneous space, and where the representation $\rho$ is in the dual of the space of sections of an equivariant line bundle on $X$. The affine cone of $X$ then takes the role of the $G'$-invariant space used in the definition of the tautological system $\tau(\rho,\hat{X},\beta)$. The main result is then \cref{theo:tau_is_MHM}, showing that if $\beta$ is such that $\tau(\rho,\hat{X},\beta)\neq 0$, then it underlies a pure complex Hodge module for $\beta(\mathbf{e})\notin \dZ$ and a rational mixed Hodge module for $\beta(\mathbf{e})\in \dZ_{>0}$. Moreover, we exhibit in \cref{cor:holRankProb} a functorial description of $\tau(\rho,\hat{X},\beta)$ as a
direct resp.\ as a proper direct image of a family of (complements of) hyperplane sections of $X$,  and in consequence solve the holonomic rank problem as stated in \cite{Bloch_HolRankProb} and \cite{HuangLianZhu} in this generalized situation. A major ingredient necessary for the formulation of this functorial description is to determine how the Fourier--Laplace transformation of $\tau(\rho,\hat{X},\beta)$ is related to its restriction to the complement of the origin. The answer to this question depends crucially on whether  $\beta(\mathbf{e})$  is integral or not; we treat the two cases separately in \cref{subsec:LocProp} and \cref{subsec:CoLocProp}.

\medskip

\emph{Notations:} Throughout, we work over $\C$. By \emph{variety}, we mean an integral scheme of finite type over $\C$. When we talk about \emph{points} on a variety, we mean closed points unless mentioned otherwise. Our convention for the projective space of a finite-dimensional vector space $V$ is $\P V := \Proj \Sym V^\vee$, i.e., $\P V$ parameterizes one-dimensional \emph{subspaces} of $V$.
For a smooth variety $X$, we let $\mathscr{D}_X$ be the sheaf of algebraic differential operators on $X$. If not mentioned otherwise, a $\mathscr{D}_X$-module is a
quasi-coherent $\cO_X$-module equipped with a left action
by $\mathscr{D}_X$. The category of such modules is denoted by $\textup{Mod}_{qc}(\mathscr{D}_X)$ and the corresponding bounded derived category by $D_{qc}^b(\mathscr{D}_X)$.
Similarly, let $\textup{Mod}_{h}(\mathscr{D}_X)$ and $D_{h}^b(\mathscr{D}_X)$ be the category of holonomic $\mathscr D_X$-modules and its corresponding bounded derived category, respectively.
Throughout, for a morphism $f \colon X \to Y$ between smooth varieties over $\C$, we denote by $f_+ \colon D_{qc}^b(\mathscr D_X) \to D_{qc}^b(\mathscr D_Y)$ and $f^+ \colon D_{qc}^b(\mathscr D_Y) \to D_{qc}^b(\mathscr D_X)$ the functors defined by
\[f_+ \mathscr M := Rf_*(\mathscr D_{Y \leftarrow X} \otimes^{\mathbb L}_{\mathscr D_X} \mathscr M)
\quad \text{and} \quad
f^+\mathscr N := \mathscr D_{X \to Y} \otimes^{\mathbb L}_{f^{-1} \mathscr D_Y} f^{-1} \mathscr N.\]
Moreover, we denote by
$$
    \bD \mathscr{M} := \omega^\vee_X\otimes_{\cO_X} R{\cH\!}om_{\cD_X}(\mathscr{M},\cD_X)[\dim(X)]
$$
the duality functor from $D^b_h(\cD_X)$ to itself; it respects
$\textup{Mod}_h(\cD_X)$. We then define the functors
$$
f_\dag:=\bD\circ f_+ \circ \bD
\quad\quad
\textup{and}
\quad\quad
f^\dag:=\bD\circ f^+ \circ \bD.
$$
For any variety $X$, let $\MHM(X)$ be the Abelian category of algebraic
($\dQ$-)mixed Hodge modules on $X$ (as defined in \cite{Saito1, SaitoMHM}) and $D^b \MHM(X)$ its bounded derived category. For any morphism $f\colon X\rightarrow Y$ the functors $f_+, f_\dag$ resp.\ $f^\dag[\dim(Y)-\dim(X)], f^+[\dim(X)-\dim(Y)]$ on $D^b_{h}(\mathscr{D}_X)$ resp.\
$D^b_{h}(\mathscr{D}_Y)$ lift to functors
$$
f_*, f_!\colon  D^b\MHM(X)\rightarrow D^b\MHM(Y)
\quad\quad
\textup{resp.}
\quad\quad
f^*, f^!\colon  D^b\MHM(Y)\rightarrow D^b\MHM(X).
$$
We also denote by $\bD$ the functor on $D^b \MHM(X)$ which lifts the above defined holonomic duality functor on $D^b_h(\cD_X)$.
Any object $\cM\in \MHM(X)$ is a tuple
$\cM=(\mathscr{M},F_\bullet, W_\bullet, K)$  where
$\mathscr{M}\in\textup{Mod}_h(\mathscr{D}_X)$ and $W_\bullet\mathscr{M}$ is its weight filtration. We denote by
$\HM(X,w)$ (or simply $\HM(X)$ if $w$ is clear from the context) the full subcategory of objects such that
$\gr_l(\mathscr{M})=0$ for all $l\neq w$; these are the pure Hodge modules of weight $w$.

We will need an extension of the notion of ($\dQ$-)mixed Hodge modules to the category of complex mixed Hodge modules. It can be constructed by first defining $\dR$-mixed Hodge modules,
see \cite[Section 13.5]{Mo12}. Then a filtered $\mathscr{D}$-module $(\mathscr{M},F_\bullet)$ is said to underlie a complex mixed Hodge module if it is a direct summand of an $\dR$-mixed Hodge module (\cite[Definition 3.2.1.]{DS13}). We denote the corresponding Abelian category by $\MHM(X,\dC)$, by
$D^b\MHM(X,\dC)$ its bounded derived category and by
$\HM(X,\dC,w)$ (or $\HM(X,\dC)$ for short) the category of pure complex Hodge modules of weight $w$. Many of the known constructions for $\dR$-mixed Hodge modules carry over to the categories $\MHM(X,\dC)$ and $\HM(X,\dC)$ since they are stable under taking direct summands.
The article \cite[Section 7.1 and Appendix A]{VilonenDavis} contains a more detailed discussion of complex Hodge modules.

For any variety $X$, write $a_X \colon X\rightarrow \{pt\}$ for the map to the point. We denote by
${^H\!}\dC_{pt}$ the trivial complex Hodge structure of dimension $1$. Then
$$
^{H\!}\underline{\dC}_{X} := a_X^* {^H\!}\dC_{pt}[\dim(X)],
$$
is a smooth (constant) Hodge module and indeed an object in $\MHM(X,\dC)$. Notice that our notation differs from the convention in \cite{Saito1, SaitoMHM}, where the ($\dQ$-)constant Hodge module of rank $1$ is denoted by ${^p}\dQ_X^H$.

We will further need a particular smooth (but non-constant) complex Hodge module on the one-dimensional torus $\dC^*$. Namely, for any $\beta\in \dR$,
we denote by $\cO^{\beta}_{\dC^*}$ the $\mathscr{D}_{\dC^*}$-module
$$
\cO^\beta_{\dC^*} = \mathscr{D}_{\dC^*} / \mathscr{D}_{\dC^*}(\partial_t t + \beta).
$$
We write ${^H\!}\underline{\dC}^\beta_{\dC^*}$ for the complex Hodge module with underlying $\mathscr{D}$-module equal to $\cO^\beta_{\dC^*}$ (placed in cohomological degree zero), and where
$\gr^F_p \cO^\beta_{\dC^*} = 0$ for $p \neq 0$ and $\gr^W_i \cO^\beta_{\dC^*}= 0$ for $i \neq 1$. Its corresponding perverse sheaf
is $\dV[1]$, where $\dV$ is the local system of rank $1$ on $\dC^*$ given by the monodromy with eigenvalue $e^{2\pi \sqrt{-1} \beta}$. Again, in the conventions of \cite{Saito1, SaitoMHM} this object would have been denoted by
${^p}\dC_X^{H,\beta}$.

\emph{Acknowledgements:} We thank Michel Brion for his assistance finding references for the parabolic version of Borel--Weil and for his explanations about line bundles on homogeneous varieties.
We thank Luis Narv\'aez Macarro for sharing some insights on Lie algebroids with us, especially for his help with the proof of \cref{lem:kernelGeneratedByVFs}.

We thank the anonymous referees for their careful reading of an earlier version of this paper and for their many helpful remarks.
In particular, one referee suggested that some results of this paper can alternatively be shown by using equivariant properties of the objects involved in a more systematic way. We comment on this in \cref{rmk:RemarkToReferee} and \cref{rmk:RemarkRefColoc}.

\section{Mixed Hodge modules on line bundles} \label{sec:OBeta}

As a preliminary result, we state and prove for the reader's convenience the following well known fact about the fundamental group of the complement of the zero section of a line bundle.
\begin{prop}\label{prop:PiEinsLStern}

Let $M$ be a simply connected complex manifold, i.e. $\pi_1(M) = \{e\}$. Let $ \pi: L \ra M$ be a holomorphic line bundle on $M$, and write
$$
c_1(L)=\sum_{i=1}^r \lambda_i e_i \in H^2(M,\Z)
$$
for some basis $e_1,\ldots,e_r$ of $H^2(M,\Z)$.
Denote by $L^*$ the complement of the zero section of $L$. Then $\pi_1(L^*)=\Z/k \Z$, where $k=\textup{gcd}(\lambda_1,\ldots,\lambda_r)$.
\end{prop}
\begin{proof}
We first notice that the assumption $\pi_1(M) = \{e\}$ and the universal coefficient theorem for cohomology implies that $H^2(M,\mathbb{Z})$ is free, so the statement of the proposition makes sense. Furthermore, $L^* \rightarrow M$ is a principal $\mathbb{C}^*$-bundle, hence by Milnor's construction \cite{Milnor} for the case $G= \mathbb{C}^*$ there is a classifying space $B:= BG$, a universal principal $G$-bundle $p: E:= EG \ra B$ and a map $\varphi: M \ra B$ so that $L^*$ is the pullback of $E$ along  $\varphi$.

Set $I :=[0,1]$, denote by $\ast$ the base point of $B$ and by $PB := \{ \gamma \in B^I : \gamma(0) = \ast\}$ the Moore path space over $B$. Since $B$ is path connected
we have the Moore path space fibration for $(B;*)$
\[
\Omega B \longrightarrow PB \overset{\rho}\longrightarrow B \qquad \rho(\gamma) = \gamma(1).
\]
and  analogously for $\pi: PE \ra E$. It can be shown \cite[Proposition 2.10]{FHT} that there is an action of $G \times_E PE$ on $PE$, making $ \pi:PE \rightarrow B$ a $G \times_E PE$-fibration. One gets a diagram of fibrations
$$
\begin{tikzcd}
E \arrow[ddrr, "p"']
  && PE \arrow[dd, "\pi"] \arrow[rr] \arrow[ll]
  && PB \arrow[ddll, "\rho"] \\ \\
  && B &&
\end{tikzcd}
$$
with fibers
$$
\begin{tikzcd}
G
  && G \times_E PE \arrow[ll, "\gamma"'] \arrow[rr, "\gamma'"]
  && \Omega B
\end{tikzcd}
$$
where $\gamma$ and $\gamma'$ are weak homotopy equivalences (see loc. cit.).

Pulling back this diagram along $\varphi$ one gets
$$
\begin{tikzcd}
L^* \arrow[ddrr] && M \times_B PE \arrow[ll] \arrow[rr] \arrow[dd] && H(\varphi) \arrow[ddll] \\ \\
&& M &
\end{tikzcd}
$$
where $H(\varphi)$ is by definition the homotopy fiber of $\varphi$ giving a fibration
\[
H(\varphi) \longrightarrow M \overset{\varphi}\longrightarrow B.
\]
Notice that $L^*$ and $H(\varphi)$ are weakly homotopy equivalent, in particular $\pi_1(L^*) \simeq \pi_1(H(\varphi))$.

Consider the long exact homotopy sequence  of the fibration above
\[
\pi_2(M) \lra \pi_2(B) \lra \pi_1(H(\varphi)) \lra \pi_1(M)= \{e\}.
\]

Since $\pi_1(M)$ and $\pi_1(B)$ are trivial we have $\pi_2(M) \simeq H_2(M,\mbz)$ and $\pi_2(B) \simeq H_2(B,\mbz) \simeq \mbz$. In particular, we get the exact sequence
\[
H_2(M,\mathbb{Z})_{\text{free}}\ \longrightarrow H_2(B,\mathbb{Z}) \lra \pi_1(H(\varphi)) \longrightarrow \pi_1(M) = \{e\}.
\]
By duality we get the map $\mathbb{Z} \cdot \theta \simeq H^2(B,\mathbb{Z}) \ra H^2(M,\mathbb{Z})$, where $\theta$ is a generator of $H^2(B,\mathbb{Z})$. Notice that this map is simply the pullback of cohomology classes along the classifying map $M \ra B$. In order to identify the image of $\theta$, we notice that there is a commutative diagram
$$
\begin{tikzcd}
&& B = B\mathbb{C}^* \\ \\
M \arrow[uurr] \arrow[ddrr] && \\ \\
&& BU(1) \arrow[uuuu]
\end{tikzcd}
$$
The map $BU(1) \rightarrow B$ is induced by the inclusion $U(1) \rightarrow \mathbb{C}^*$ and is a homotopy equivalence, in particular $H^*(B, \mathbb{Z}) \simeq H^*(BU(1),\mathbb{Z}) \simeq \mathbb{Z}[\theta]$ with $\deg(\theta) = 2$. The map $M \rightarrow BU(1)$ is the classifying space of the sphere bundle of $L$. By definiton of the Chern classes the pullback of $\theta$ along $M \rightarrow BU(1)$ is the first Chern class of $L$. We conclude that, with respect to the dual basis of $e_1,\ldots, e_r$, the map $H_2(M,\mathbb{Z})_{\text{free}} \rightarrow H_2(B,\mathbb{Z})$ is given by $\mathbb{Z}^r \overset{(\lambda_1, \ldots, \lambda_r)}\longrightarrow \mathbb{Z}$.

\end{proof}

From now on, let $X$ be a smooth complex quasi-projective variety and let $L \to X$ an algebraic line bundle. Unless noted otherwise, we work in the algebraic setting and denote the associated complex manifolds by $X^\mathrm{an}, L^\mathrm{an},\dots$. In particular, when assuming that $\pi_1(X^\mathrm{an})=\{e\}$, we can apply the above proposition to the case where $M:=X^\mathrm{an}$ and to the holomorphic line bundle $L^\mathrm{an}\rightarrow X^\mathrm{an}$. We therefore conclude that $\pi_1(L^{*,\mathrm{an}})=\Z/k\Z$. For the remainder of this paper, we will moreover assume that $L$ is a non-trivial line bundle on $X$ (in particular, we assume that $\textup{Pic}(X)\neq 0$). This implies in particular that the number $k$ obtained in the previous proposition is different from zero.

\begin{dfn}\label{def:Obeta-new}
Let $L\rightarrow X$ be as above, and let $k$ be as in \cref{prop:PiEinsLStern}.
Choose a rational number $\beta$ in $\frac{1}{k}\Z$. Consider the representation $\pi_1(L^{*,\mathrm{an}}) \rightarrow \dC^*$ given by sending $[1]\in\Z/k\Z \cong\pi_1(L^{*,\mathrm{an}})$ to the $k$-th root of unity
$e^{-2\pi i \beta}$. This defines a local system on $L^{*,\mathrm{an}}$ which we denote by $\underline{\dC}^\beta_{L^*}$. The corresponding $\cO_{L^*}$-module with integrable connection
(i.e., the corresponding smooth $\cD_{L^*}$-module) is denoted
by $\cO_{L^*}^\beta$. It underlies a complex smooth pure Hodge module on $L^*$ denoted by   $^{H\!}\underline{\dC}^\beta_{L^*}$.
\end{dfn}
Notice that it follows from this definition that $\cO_{L^*}^\beta\cong \cO_{L^*}^{\beta'}$ for $\beta-\beta'\in \Z$.

\begin{prop}\label{prop:ObetaExtProd}
  Locally, over an open subset $U \subseteq X$ trivializing the line bundle, $\O_{L^*}^\beta$ is isomorphic to the $\mathscr D_{\C^* \times U}$-module
  \[\mathscr D_{\C^*}/(\partial_t t + \beta) \boxtimes \O_U.\]
\end{prop}
\begin{proof}
    Write $L_U^*$ for the restriction of $L^*$ over the trivializing set $U\subseteq X$. Clearly, $U$ is also a trivializing open set for $L^*$, i.e., we have $L^*_{U}\cong \dC^*\times U$. First we note that
    $$
    \pi_1(L_U^{*,\mathrm{an}})\cong \pi_1(\dC^{*,\mathrm{an}}\times U^{\mathrm{an}})=\Z\times \pi_1(U^\mathrm{an}).
    $$
    Notice that although one can find trivializing sets of the analytic bundle $L^\mathrm{an}\rightarrow X^\mathrm{an}$ which are simply connected, this is not necessarily true for the set $U^\mathrm{an}$, which is the analytification of a (Zariski open) trivializing set of the algebraic bundle $L\rightarrow X$.

    The group homomorphism $\pi_1(L_U^{*,\mathrm{an}}) \to \pi_1(L^{*,\mathrm{an}})$ induced by the inclusion is given by
    $$
    \begin{array}{rcl}
         \pi_1(L_U^{*,\mathrm{an}}) = \Z\times \pi_1(U^\mathrm{an}) & \longrightarrow & \Z/k\Z = \pi_1(L^{*,\mathrm{an}}) \\
         (l,\gamma) & \longmapsto & \overline{l}
    \end{array}
    $$
    since $\pi_1(X^\mathrm{an})=\{e\}$.
    Now, the restriction by $L_U^{*,\mathrm{an}} \hookrightarrow L^{*,\mathrm{an}}$ of $\underline{\dC}^\beta_{L^*}$ is given by the representation $\pi_1(L_U^{*,\mathrm{an}})\rightarrow \dC^*$ obtained by composing the map $\pi_1(L_U^{*,\mathrm{an}})  \rightarrow  \pi_1(L^{*,\mathrm{an}})$ with the given representation $\pi_1(L^{*,\mathrm{an}})\rightarrow \dC^*$ defining $\underline{\dC}^\beta_{L^*}$. Hence, it sends $(1,\gamma)$ to $e^{-2\pi i \beta}$. Therefore, the restriction of $\underline{\dC}^\beta_{L^*}$ to $L^{*,\mathrm{an}}_U$ is isomorphic to $\underline{\dC}^\beta_{\C^*}\boxtimes \underline{\dC}_{U^\mathrm{an}}$, and consequently the restriction of the algebraic $\cD_{L^*}$-module $\cO_{L^*}^\beta$ to $L_U^*$ is given as stated above.
\end{proof}
The following fact about the holonomic dual of $\cO_{L^*}^\beta$ is obvious from the definition, since for smooth objects, the holonomic dual coincides with the dual bundle (with dual connection).
\begin{lem}\label{lem:OLbetaDual}
We have $\bD \cO_{L^*}^\beta\cong \cO_{L^*}^{-\beta}$
for all $\beta\in \frac{1}{k}\Z$.
\end{lem}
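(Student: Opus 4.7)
The key observation is that the $\mathscr{D}_{L^*}$-module $\cO_{L^*}^\beta$ is a flat line bundle on $L^*$, i.e.\ $\cO_{L^*}$-coherent with integrable connection. Indeed, over each trivialization $\widetilde\psi_i\colon \C^*\times U_i\xrightarrow{\cong}\pi^{-1}(U_i)^*$, the cyclic presentation in \cref{lem:existenceGluing} shows that $\widetilde\psi_i^+(\cO_{L^*}^\beta|_{\pi^{-1}(U_i)^*})=\cO_{\C^*\times U_i}^\beta$ is free of rank one over $\cO_{\C^*\times U_i}$, and the gluing cocycles $\mu_{ij}$ are right-multiplications by units $\alpha_{ij}^m$. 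Consequently, $\bD\cO_{L^*}^\beta$ is concentrated in cohomological degree zero and is again a flat line bundle.

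The plan is to verify that $\bD\cO_{L^*}^\beta$ satisfies the same local description on trivializations as $\cO_{L^*}^{-\beta}$, and then invoke the uniqueness part of \cref{prop:constructionOBeta}. Since $\widetilde\psi_i$ is an isomorphism, pullback commutes with $\bD$, so
\[
\widetilde\psi_i^+\bigl(\bD\cO_{L^*}^\beta\bigr|_{\pi^{-1}(U_i)^*}\bigr)
\;\cong\;\bD\bigl(\cO_{\C^*}^\beta\boxtimes\cO_{U_i}\bigr)
\;\cong\;\bD\cO_{\C^*}^\beta\boxtimes\cO_{U_i},
\]
using the standard compatibility of $\bD$ with external products together with $\bD\cO_{U_i}=\cO_{U_i}$. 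Thus the whole assertion reduces to the one-dimensional computation $\bD\cO_{\C^*}^\beta\cong\cO_{\C^*}^{-\beta}$.

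For the one-dimensional statement, either invoke Riemann--Hilbert (the local system of $\cO_{\C^*}^\beta$ has monodromy $e^{2\pi\sqrt{-1}\beta}$, whose dual has monodromy $e^{-2\pi\sqrt{-1}\beta}$, corresponding to $\cO_{\C^*}^{-\beta}$), or compute the $\cO$-dual connection directly: the generator $e$ of $\cO_{\C^*}^\beta$ satisfies $\partial_t e=-(1+\beta)/t\cdot e$, so its $\cO$-dual generator $e^\vee$ satisfies $\partial_t e^\vee=(1+\beta)/t\cdot e^\vee$, which identifies $(\cO_{\C^*}^\beta)^\vee$ with $\cO_{\C^*}^{-2-\beta}$. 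By \cref{prop:OBetaIntegerShifts}, this is isomorphic to $\cO_{\C^*}^{-\beta}$. Finally, the uniqueness statement of \cref{prop:constructionOBeta} applied to $L=F^{\otimes k}$ with parameter $-\beta$ (note $k(-\beta)\in\Z$) concludes the proof.

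The only mildly delicate step is the reduction to the one-dimensional computation: one must check that the isomorphism $\bD\cO_{L^*}^\beta\cong\cO_{L^*}^{-\beta}$ obtained locally on each $\pi^{-1}(U_i)^*$ is compatible with the gluing, but this is automatic from the uniqueness assertion in \cref{prop:constructionOBeta}, which precisely captures that any two $\mathscr{D}_{L^*}$-modules with the prescribed local form must be isomorphic.
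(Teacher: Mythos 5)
Your proof is correct, but it takes a genuinely different route from the paper's. The paper first invokes \cref{prop:OBetaAsMinimalExtension} to write $\cO_{L^*}^\beta \cong j_{\dag,+}\widetilde\psi_{s^k,+}(\cO_{\C^*}^\beta\boxtimes\cO_U)$ globally as the intermediate extension from a single dense trivializing open subset, and then pushes $\bD$ through $j_{\dag,+}$, through the isomorphism $\widetilde\psi_{s^k,+}$, and through the external product, reducing to $\bD\,\cO_{\C^*}^\beta\cong\cO_{\C^*}^{-\beta}$. You instead avoid the minimal-extension description entirely: you observe that $\cO_{L^*}^\beta$ is a flat line bundle, so that $\bD$ produces another flat line bundle concentrated in degree zero, compute its restriction on each trivializing chart, and conclude via the uniqueness clause of \cref{prop:constructionOBeta}. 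Both arguments are sound; the paper's buys a one-line functorial chain at the cost of using \cref{prop:OBetaAsMinimalExtension} and the self-duality of $j_{\dag,+}$, while yours stays elementary and local but leans on the uniqueness statement and on the (standard, and correctly used) identification of $\bD$ of a flat bundle with the dual connection --- your computation $(\cO_{\C^*}^\beta)^\vee\cong\cO_{\C^*}^{-2-\beta}\cong\cO_{\C^*}^{-\beta}$ via \cref{prop:OBetaIntegerShifts} is right. One small point worth making explicit in your write-up: the uniqueness in \cref{prop:constructionOBeta} is stated for the local models built from sections of the fixed $k$-th root $F$, and your charts $\widetilde\psi_{s_i^k}$ are exactly those, so the appeal is legitimate; as you note, this is precisely what makes the compatibility of the local dualities with the gluing automatic.
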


The next step is to consider extensions of the $\cD_{L^*}$-module $\cO^\beta_{L^*}$ (resp.\ of the corresponding Hodge module $^{H\!}\underline{\dC}^\beta_{L^*}$) over the zero section of $L\rightarrow X$. For this, we choose a covering
$X=\bigcup_{i\in I} U_i$ by Zariski open subsets over each of which the bundle $L$ (as well as its restriction $L^*$) trivializes. We write $L_{U_i}$ and (as above) $L^*_{U_i}$ for the restriction of the bundle $L$ and that of $L^*$ over the open set $U_i$. We denote by $j_i:L_{U_i}\hookrightarrow L$ resp.\ by
$\tilde{j}_i:L^*_{U_i}\hookrightarrow L^*$ be the canonical open embeddings.
By shrinking $U_i$ if necessary (so that it becomes the complement of a divisor in $X$, and so will be $L_{U_i}$ in $L$, and $L^*_{U_i}$ in $L^*$), it will be convenient to assume that both $j_i$ and $\tilde{j}_i$ are affine maps, in particular, the functors $j_{i,\star}$ and $\tilde{j}_{i,\star}$ are exact for $\star\in\{+,\dag\}$.

Moreover, we write $j_L:L^*\hookrightarrow L$ for the open embedding of the complement of the zero section into $L$. We then have the following cartesian diagram of canonical open embeddings:
\begin{equation}\label{diag:ExtObetaOpenSubs}
    \begin{tikzcd}
        L^*_{U_i} \ar[hook]{rr}{\tilde{j}_i} \ar[hook]{dd}{j_{i,L}} && L^* \ar[hook]{dd}{j_L} \\ \\
        L_{U_i} \ar[hook]{rr}{j_i} && L.
    \end{tikzcd}
\end{equation}

By construction of the module $\cO^\beta_{L^*}$, for any $i\in I$, we have an isomorphism
(depending on the choice of a trivialization of $L$ resp.\  of  $L^*$ over $U_i$)
$$
\psi_i: \cO^\beta_{\dC^*}\boxtimes\cO_{U_i} \stackrel{\cong}{\longrightarrow} \tilde{j}_i^+ \cO_{L^*}^\beta.
$$
Then we have the following statement
\begin{prop}\label{prop:CBetaHodgeExt}
\begin{enumerate}
    \item
        The $\mathscr{D}_{L}$-modules $j_{L,+} \cO^\beta_{L^{*}}$ resp.\ $j_{L,\dag} \cO^\beta_{L^{*}}$ underlie the mixed Hodge modules $j_{L,*} {^{H\!}}\underline{\dC}^\beta_{L^*}$
        resp.\ $j_{L,!} {^{H\!}}\underline{\dC}^\beta_{L^*}$ on $L$.

    \item
        If
        $\beta\notin\dZ$, then
        $$
        j_{L,*} {^{H\!}}\underline{\dC}^\beta_{L^*}  \cong j_{L,!} {^{H\!}}\underline{\dC}^\beta_{L^*}\cong j_{L,!*} {^{H\!}}\underline{\dC}^\beta_{L^*},
        $$
        which is pure of weight $\dim(X)+1$.

    \item
                For any $\beta\in\frac{1}{k}\dZ$ the following isomorphisms hold in $\MHM(L)$
        $$
        j_{L,*} {^{H\!}}\underline{\dC}^\beta_{L^*}  \cong
        j_{i,!*} j_i^{*} j_{L,*} {^{H\!}}\underline{\dC}^\beta_{L^*},
        \quad\quad\textup{and}\quad\quad
        j_{L,!} {^{H\!}}\underline{\dC}^\beta_{L^*}\cong
        j_{i,!*} j_i^{*} j_{L,!} {^{H\!}}\underline{\dC}^\beta_{L^*},
        $$
        i.e., these mixed Hodge modules are the minimal extensions of their restrictions to open sets $L_{U_i}\subset L$.
\end{enumerate}
\end{prop}
\begin{proof}
\begin{enumerate}
    \item
        This is obvious.

    \item

        Notice that we have a well defined morphism
        $$
        j_{L,\dag} \cO_{L^*}^\beta \longrightarrow j_{L,+} \cO_{L^*}^\beta.
        $$
        It suffices to show that if $\beta\notin\Z$, then this is an isomorphism in $\textup{Mod}(\cD_L)$. This is a local statement, therefore, we can reduce the proof to show that for any $i\in I$, the morphism
        $$
        j_i^+j_{L,\dag} \cO_{L^*}^\beta \longrightarrow j_i^+j_{L,+} \cO_{L^*}^\beta.
        $$
        is an isomorphism. Since $j_i^+\cong j_i^\dag$, this is equivalent by base change (see diagram \eqref{diag:ExtObetaOpenSubs}) to show that
        $$
        (j_{i,L})_\dag\tilde{j}_i^+ \cO_{L^*}^\beta \longrightarrow
        (j_{i,L})_+\tilde{j}_i^+ \cO_{L^*}^\beta
        $$
        is an isomorphism. Since $\tilde{j}_i^+ \cO_{L^*}^\beta$ is isomorphic to
        $\cO^\beta_{\dC^*}\boxtimes \cO_{U_i}$ via $\psi_i$, and since the functors $(j_{i,L})_+$ resp.\ $(j_{i,L})_\dag$ correspond to $(j_\dC\times\id_{U_i})_+$ resp.\
        $(j_\dC\times\id_{U_i})_\dag$ under this isomorphism, the statement reduces to the well-known fact that
        $j_{\dC,+}\cO_{\dC^*}^\beta\cong j_{\dC,\dag}\cO_{\dC^*}^\beta\cong j_{\dC,\dag+}\cO^\beta_{\dC^*}$ for $\beta\notin\dZ$.
    \item

        Again it suffices to show the statement on the level of $\cD_L$-modules, i.e., we need to show that for all $i\in I$ we have
        \begin{equation}\label{eq:MinExt}
        j_{L,+} \cO^\beta_{L^{*}} \cong j_{i,\dag+} j_i^+ j_{L,+} \cO^\beta_{L^{*}}
        \quad\quad\textup{and}\quad\quad
        j_{L,\dag} \cO^\beta_{L^{*}} \cong
        j_{i,\dag+} j_i^+ j_{L,\dag} \cO^\beta_{L^{*}}.
        \end{equation}

        Let us prove the first statement concerning the extension $j_{L,+}\cO_{L^*}^\beta$, the proof of the second one is similar. Fix $i\in I$. Then for any $r\in I\backslash\{i\}$, we obtain an isomorphism
        \begin{equation}\label{eq:IsoDoubleOpenSets}
            j_r^+ j_{L,+} \cO^\beta_{L^{*}} \cong j_r^+ j_{i,\dag+} j_i^+ j_{L,+} \cO^\beta_{L^{*}}
        \end{equation}
        by an argument similar to point 2. above. Namely, in order to show \eqref{eq:IsoDoubleOpenSets}, it suffices by base change (notice that all functors involved are exact, so the base change property also holds for the intermediate extension) to prove
        $$
            j_{r,L,+} \cO_{L^*_{U_r}}^\beta \cong
            j_{ir,\dag+}
            j_{ir}^+
            j_{r,L,+} \cO^\beta_{L_{U_r}^{*}}
        $$
        where now $j_{r,L}:L^*_{U_r}\hookrightarrow L_{U_r}$ and where
        $j_{ir}:L_{U_i\cap U_r}\hookrightarrow L_{U_r}$. However, since both $L^*$ and $L$ trivialize over $U_r$ and since the module $\cO_{L_{U_r}^*}^\beta$ resp.\ the extension $(j_{r,L})_+\cO_{L_{U_r}^*}^\beta$ is isomorphic  to the exterior product  $\cO_{\dC^*}^\beta\boxtimes \cO_{U_r}$ resp.\ to $j_{\dC,+}\cO_{\dC^*}^\beta\boxtimes \cO_{U_r}$ (and since obviously
        $\cO_{U_r}$ is the minimal extension of $\cO_{U_i\cap U_r}$), we obtain the existence of the isomorphism \eqref{eq:IsoDoubleOpenSets}. Now it is a tedious but straightforward check that these isomorphisms are compatible on intersections of trivializing open sets, hence they yield the desired isomorphism
        $$
            j_{L,+} \cO^\beta_{L^{*}} \cong j_{i,\dag+} j_i^+ j_{L,+} \cO^\beta_{L^{*}}.
        $$
\end{enumerate}
\end{proof}

\section{Fourier--Laplace transformation} \label{sec:FLOnVecBundles}

The purpose of this section is twofold: First we recall a few basic properties of general Fourier-Laplace transformations on (not necessarily trivial) vector bundles. We then apply these constructions to study a (complex of) $\mathscr{D}$-module(s) that generically computes cohomology groups of hyperplane sections of projective varieties. These results are used later in \cref{sec:tautologicalSystems} for the special case of
homogeneous spaces and their corresponding tautological systems.

\subsection{Fourier--Laplace transformation on vector bundles} \label{ssec:FLBasics}

\begin{dfn}
  Given a vector bundle $E \to X$ on a smooth variety $X$, we consider the canonical projections $p_1 \colon E \times_X E^\vee \to E$ and $p_2 \colon E \times_X E^\vee \to E^\vee$. Let $\alpha \colon E \times_X E^\vee \to \C \times X \to \C$ be the natural pairing and denote $\mathscr K := \alpha^+(\mathscr D_\C/\mathscr D_\C(\partial_t+1))$.
  The \defstyle{Fourier--Laplace transformation} is the functor $\FL_X^E \colon D_{qc}^b(\mathscr D_E) \to D_{qc}^b(\mathscr D_{E^\vee})$ given as
  \[\FL_X^E(\mathscr M) := p_{2,+} (p_1^+ \mathscr M \otimes^{\mathbb L}_{\O_{E\!\times\!E^\vee}} \mathscr K).\]
\end{dfn}

We first note a well-known fact concerning the behavior of the Fourier-Laplace transformation with respect to the holonomic duality functor.
\begin{lem}\label{lem:DualFourier}
We have
$$
c^+ \circ \FL_X^E \circ \bD \cong \bD \circ \FL_X^E
$$
as functors from $D^b_c(\cD_E) \to D^b_c(\cD_{E^\vee})$,
where $c:E^\vee \to E^\vee$ is the
automorphism given by fiberwise negation.
\end{lem}
\begin{proof}
This can be shown exactly as in \cite[Corollaire~2.2.2.1., 4)]{Daia} (see especially loc.cit., Proposition~2.2.3.2)
\end{proof}

We proceed with the following two basic properties that follow rather directly from the projection formula and base change.

\begin{lem} \label{lem:FLBundleMorphism}
  Let $\varphi \colon E \to F$ be a morphism of vector bundles over $X$ and denote by $\varphi^\vee \colon F^\vee \to E^\vee$ the induced morphism of dual vector bundles. Then for $\star\in\{+,\dag\}$ we have
  \[\FL_X^F \circ \varphi_\star \cong \varphi^{\vee,\star} \circ \FL_X^E\]
  as functors $D_{c}^b(\mathscr D_E) \to D_{c}^b(\mathscr D_{F^\vee})$.
\end{lem}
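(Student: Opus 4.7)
The plan is to carry out the standard correspondence argument: introduce a common fiber product carrying both pullback data, apply base change on both sides, and use that the natural pairings are compatible under $\varphi$ and $\varphi^\vee$.

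First I would set up the intermediate space $Z := E \times_X F^\vee$ with its projections $q_1 \colon Z \to E$ and $q_2 \colon Z \to F^\vee$, and the two natural morphisms
\[\varphi \times \id \colon Z \to F \times_X F^\vee, \qquad \id \times \varphi^\vee \colon Z \to E \times_X E^\vee.\]
The crucial observation is that both composites with the pairing agree: if $\alpha_E \colon E \times_X E^\vee \to \C$ and $\alpha_F \colon F \times_X F^\vee \to \C$ are the evaluation maps, then $\alpha_F \circ (\varphi \times \id) = \alpha_E \circ (\id \times \varphi^\vee)$, because $\langle \varphi(e),\eta\rangle = \langle e,\varphi^\vee(\eta)\rangle$. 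Consequently
\[(\varphi \times \id)^+ \mathscr K_F \cong (\id \times \varphi^\vee)^+ \mathscr K_E,\]
which will be the bridge between the two sides.

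Next, starting from $\FL_X^F(\varphi_+ \mathscr M) = p_{2,+}^F(p_1^{F,+}\varphi_+ \mathscr M \otimes^{\mathbb L} \mathscr K_F)$, I would apply base change to the Cartesian square
\[\begin{tikzcd}
Z \ar[r, "\varphi \times \id"] \ar[d, "q_1"'] & F \times_X F^\vee \ar[d, "p_1^F"] \\
E \ar[r, "\varphi"] & F
\end{tikzcd}\]
to rewrite $p_1^{F,+}\varphi_+ \mathscr M \cong (\varphi \times \id)_+ q_1^+ \mathscr M$, then use the projection formula to pull $(\varphi\times\id)_+$ out, and note $p_2^F \circ (\varphi \times \id) = q_2$ to obtain
\[\FL_X^F(\varphi_+ \mathscr M) \cong q_{2,+}\bigl(q_1^+ \mathscr M \otimes^{\mathbb L} (\varphi \times \id)^+ \mathscr K_F\bigr).\]
On the other hand, for the right-hand side, I apply base change to
\[\begin{tikzcd}
Z \ar[r, "\id \times \varphi^\vee"] \ar[d, "q_2"'] & E \times_X E^\vee \ar[d, "p_2^E"] \\
F^\vee \ar[r, "\varphi^\vee"] & E^\vee
\end{tikzcd}\]
to get $\varphi^{\vee,+} p_{2,+}^E \cong q_{2,+}(\id \times \varphi^\vee)^+$, and combined with $p_1^E \circ (\id \times \varphi^\vee) = q_1$ this yields
\[\varphi^{\vee,+}\FL_X^E(\mathscr M) \cong q_{2,+}\bigl(q_1^+ \mathscr M \otimes^{\mathbb L} (\id \times \varphi^\vee)^+ \mathscr K_E\bigr).\]
Comparing the two expressions and using the kernel identity above concludes the proof.

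The argument is essentially bookkeeping of diagrams, so I do not anticipate a genuine obstacle; the only point needing some care is verifying that the two squares are actually Cartesian (which follows from $Z = E \times_F (F \times_X F^\vee) = (E \times_X E^\vee) \times_{E^\vee} F^\vee$) and that the base change and projection formulae are available in $D^b_{qc}(\mathscr D)$, which is standard. The compatibility of pairings $\alpha_F \circ (\varphi \times \id) = \alpha_E \circ (\id \times \varphi^\vee)$ is the one ingredient that is not formal and encodes the duality $\varphi \leftrightarrow \varphi^\vee$.
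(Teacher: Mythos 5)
Your argument is correct and coincides with the paper's own proof: the same intermediate space $E \times_X F^\vee$, the same two Cartesian squares for base change, the projection formula, and the key compatibility $\alpha^F \circ (\varphi \times \id) = \alpha^E \circ (\id \times \varphi^\vee)$ identifying the pulled-back kernels. Nothing further is needed.
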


\begin{proof}
  We only show the case $\star=+$ from which the case $\star=\dag$ follows directly using \cref{lem:DualFourier}.
  We denote $\mathscr K^E := (\alpha^E)^+(\mathscr D_\C/\mathscr D_\C(\partial_t+1))$ and $\mathscr K^F := (\alpha^F)^+(\mathscr D_\C/\mathscr D_\C(\partial_t+1))$, where $\alpha^E \colon E \times_X E^\vee \to \C$ and $\alpha^F \colon F \times_X F^\vee \to \C$ are the natural pairings. Moreover, denote by $q_1 \colon E \times_X F^\vee \to E$ and $q_2 \colon E \times_X F^\vee \to F^\vee$ the projections onto the first and second factor. Consider the commutative diagram
  \[\begin{tikzcd}[column sep=huge]
    &
    E \ar{r}{\varphi}
    & F
    \\
    E \times_X E^\vee \ar{ur}{p^E_1} \ar{d}{p^E_2}
    & E \times_X F^\vee \ar{u}[swap]{q_1} \ar{d}[swap]{q_2} \ar{l}{\id_E \times \varphi^\vee} \ar{r}{\varphi \times \id_{F^\vee}}
    & F \times_X F^\vee \ar{dl}{p_2^F} \ar{u}{p_1^F}
    \\
    E^\vee
    & F^\vee \ar{l}{\varphi^\vee}
    & \phantom{F \times_X F^\vee},
  \end{tikzcd}\]
  whose squares are cartesian. For every $\mathscr M \in D^b_{c}(\mathscr D_E)$, we have:
  \begin{align*}
    &\FL_X^F(\varphi_+ \mathscr M) & \\
    &= p_{2,+}^F (p_1^{F,+} \varphi_+ \mathscr M \otimes^{\mathbb L}_{\O_{F\!\times\!F^\vee}} \mathscr K^F) & \\
    &\cong p_{2,+}^F \big((\varphi \times \id_{F^\vee})_+ q_1^+ \mathscr M \otimes^{\mathbb L}_{\O_{F\!\times\!F^\vee}} \mathscr K^F\big) & \text{\small (base change)}\\
    &\cong p_{2,+}^F (\varphi \times \id_{F^\vee})_+ \big(q_1^+ \mathscr M \otimes^{\mathbb L}_{\O_{E\!\times\!F^\vee}} (\varphi \times \id_{F^\vee})^+ \mathscr K^F\big) & \text{\small (projection formula)}\\
    &\cong q_{2,+} \big(q_1^+ \mathscr M \otimes^{\mathbb L}_{\O_{E\!\times\!F^\vee}} (\varphi \times \id_{F^\vee})^+ \mathscr K^F\big) & \text{\small ($q_2 = p_2^F \circ (\varphi \times \id_{F^\vee})$)}\\
    &\cong q_{2,+} \big(q_1^+ \mathscr M \otimes^{\mathbb L}_{\O_{E\!\times\!F^\vee}} (\id_E \times \varphi^\vee)^+ \mathscr K^E\big) & \text{\small ($\alpha^E \circ (\id_E \times \varphi^\vee) = \alpha^F \circ (\varphi \times \id_{F^\vee})$)}\\
    &\cong q_{2,+} \big((\id_E \times \varphi^\vee)^+ p_1^{E,+} \mathscr M \otimes^{\mathbb L}_{\O_{E\!\times\!F^\vee}} (\id_E \times \varphi^\vee)^+ \mathscr K^E\big) & \text{\small ($q_1 = p_1^E \circ (\id_E \times \varphi^\vee)$)}\\
    &\cong \varphi^{\vee,+} p^E_{2,+} (p_1^{E,+} \mathscr M \otimes^{\mathbb L}_{\O_{E\!\times\!E^\vee}} \mathscr K^E) & \text{\small (base change)}\\
    &= \varphi^{\vee,+} \FL_X^E(\mathscr M). & \qedhere
  \end{align*}
\end{proof}

\begin{lem} \label{lem:FLCartesian}

  Consider a cartesian square
  \[\begin{tikzcd}
    E \ar{r}{g} \ar{d} \ar[phantom]{dr}{\times} & F \ar{d} \\
    X \ar{r} & Y,
  \end{tikzcd}\]
  where the vertical arrows are vector bundles over smooth varieties. Denote by $g^\vee \colon E^\vee \to F^\vee$ the corresponding morphism of dual vector bundles. Then for
  $\star=\{+,\dag\}$ we have
  \[\FL_Y^F \circ g_\star \cong g^\vee_\star \circ \FL_X^E \qquad \text{and} \qquad
  \FL_X^E \circ g^\star \cong g^{\vee,\star} \circ \FL_Y^F\]
  as functors $D_{c}^b(\mathscr D_E) \to D_{c}^b(\mathscr D_{F^\vee})$ and $D_{c}^b(\mathscr D_F) \to D_{c}^b(\mathscr D_{E^\vee})$, respectively.
\end{lem}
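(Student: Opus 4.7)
The plan is to mimic the strategy of \cref{lem:FLBundleMorphism}: introduce the induced map on the total spaces of the pairing bundles, verify that the relevant squares are cartesian and that the two Fourier kernels match under pullback, and then manipulate the defining formula of the Fourier--Laplace transform via base change and the projection formula.

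Concretely, I would set $h := g \times g^\vee \colon E \times_X E^\vee \to F \times_Y F^\vee$. The cartesian-ness of the bottom rectangle gives $E \cong X \times_Y F$ and, dually, $E^\vee \cong X \times_Y F^\vee$, from which I obtain the identification $E \times_X E^\vee \cong X \times_Y F \times_Y F^\vee \cong F \times_Y E^\vee$. With this identification in hand, both squares
\[p_1^F \circ h = g \circ p_1^E, \qquad p_2^F \circ h = g^\vee \circ p_2^E\]
become cartesian. Moreover, since the natural pairing is compatible with pullback of vector bundles, one has $\alpha^F \circ h = \alpha^E$, and hence $h^+ \mathscr K^F \cong \mathscr K^E$.

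Given these ingredients, both claimed isomorphisms follow by a short formal chain, exactly parallel to the computation carried out in \cref{lem:FLBundleMorphism}. For the first one, I would start with $\FL_Y^F(g_+ \mathscr M)$, apply base change to the top cartesian square to rewrite $p_1^{F,+} g_+$ as $h_+ p_1^{E,+}$, then pull $h_+$ outside the tensor product via the projection formula, identify $h^+ \mathscr K^F$ with $\mathscr K^E$, and finally use $p_2^F \circ h = g^\vee \circ p_2^E$ to arrive at $g^\vee_+ \FL_X^E(\mathscr M)$. For the second isomorphism, I would go in the opposite direction: pull $p_1^{F,+} \mathscr N$ back along $h$ using the top cartesian square, use $h^+ \mathscr K^F \cong \mathscr K^E$ to bring the kernel inside a single $h^+$, and then apply base change on the bottom cartesian square to exchange $p_{2,+}^E \circ h^+$ for $g^{\vee,+} \circ p_{2,+}^F$.

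The only genuinely geometric input is the verification that the two squares built from $h$ are cartesian, which, as noted above, is immediate from the identification $E \times_X E^\vee \cong X \times_Y F \times_Y F^\vee$. Beyond this, everything is a formal shuffling of base change and projection formula in $D^b_{qc}(\mathscr D)$, so I do not anticipate a substantive obstacle; the only mild bookkeeping issue is keeping the two pairing bundles (and their respective kernels $\mathscr K^E$, $\mathscr K^F$) straight throughout.
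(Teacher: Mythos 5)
Your proposal is correct and follows essentially the same route as the paper: the paper's proof likewise introduces $g \times g^\vee$, records that the two squares flanking it in the diagram relating $E \times_X E^\vee$ to $F \times_Y F^\vee$ are cartesian, uses $\alpha^E = \alpha^F \circ (g \times g^\vee)$ to match the kernels, and then runs exactly the base-change/projection-formula chain you describe for each of the two isomorphisms. Your explicit verification of cartesian-ness via $E \times_X E^\vee \cong X \times_Y F \times_Y F^\vee$ is a detail the paper leaves implicit, but otherwise the arguments coincide.
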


\begin{proof}
  Again we restrict to the case
  $\star=+$, and invoke duality to deduce the corresponding statements for $\star=\dag$.
  We use notations as in the proof of \cref{lem:FLBundleMorphism}. Note that we have the following commutative diagram with cartesian squares:
  \[\begin{tikzcd}
    E \ar{d}{g} & E \times_X E^\vee \ar{l}[swap]{p_1^E} \ar{r}{p_2^E} \ar{d}{g \times g^\vee} & E^\vee \ar{d}{g^\vee} \\
    F & F \times_Y F^\vee \ar{l}[swap]{p_1^F} \ar{r}{p_2^F} & F^\vee. \\
  \end{tikzcd}\]
  For every $\mathscr M \in D_{qc}^b(\mathscr D_E)$, we have:
  \begin{align*}
    &\FL_Y^F(g_+ \mathscr M) & \\
    &= p_{2,+}^F (p_1^{F,+} g_+ \mathscr M \otimes^{\mathbb L}_{\O_{F\!\times_{\!Y}\!F^\vee}} \mathscr K^F) & \\
    &\cong p_{2,+}^F \big((g \times g^\vee)_+ p_1^{E,+} \mathscr M \otimes^{\mathbb L}_{\O_{F\!\times_{\!Y}\!F^\vee}} \mathscr K^F\big) & \text{\small (base change)} \\
    &\cong p_{2,+}^F (g \times g^\vee)_+ \big(p_1^{E,+} \mathscr M \otimes^{\mathbb L}_{\O_{E\!\times_{\!X}\!E^\vee}} (g \times g^\vee)^+\mathscr K^F\big) & \text{\small (projection formula)} \\
    &\cong g^\vee_+ p_{2,+}^E \big(p_1^{E,+} \mathscr M \otimes^{\mathbb L}_{\O_{E\!\times_{\!X}\!E^\vee}} (g \times g^\vee)^+\mathscr K^F\big) & \text{\small ($p_2^F \circ (g \times g^\vee) = g^\vee \circ p_2^E$)} \\
    &\cong g^\vee_+ p_{2,+}^E (p_1^{E,+} \mathscr M \otimes^{\mathbb L}_{\O_{E\!\times_{\!X}\!E^\vee}} \mathscr K^E) & \text{\small ($\alpha^E = \alpha^F \circ (g \times g^\vee)$)} \\
    &= g_+^\vee \FL_X^E(\mathscr M). &
  \end{align*}
  Similarly, for $\mathscr N \in D_{qc}^b(\mathscr D_F)$, we get:
  \begin{align*}
    &\FL_X^E(g^+ \mathscr N) & \\
    &= p_{2,+}^E (p_1^{E,+} g^+ \mathscr N \otimes^{\mathbb L}_{\O_{E\!\times_{\!X}\!E^\vee}} \mathscr K^E) & \\
    &\cong p_{2,+}^E \big((g \times g^\vee)^+ p_1^{F,+} \mathscr N \otimes^{\mathbb L}_{\O_{E\!\times_{\!X}\!E^\vee}} \mathscr K^E\big) &\text{\small ($g \circ p_1^E = p_1^F \circ (g \times g^\vee)$)} \\
    &\cong p_{2,+}^E \big((g \times g^\vee)^+ p_1^{F,+} \mathscr N \otimes^{\mathbb L}_{\O_{E\!\times_{\!X}\!E^\vee}} (g \times g^\vee)^+\mathscr K^F\big) &\text{\small ($\alpha^E = \alpha^F \circ (g \times g^\vee)$)} \\
    &\cong g^{\vee,+} p_{2,+}^F \big(p_1^{F,+} \mathscr N \otimes^{\mathbb L}_{\O_{F\!\times_{\!Y}\!F^\vee}} \mathscr K^F\big) &\text{\small (base change)} \\
    &= g^{\vee,+} \FL_Y^F(\mathscr N). & \qedhere
  \end{align*}
\end{proof}

In the following, we wish to relate Fourier--Laplace transforms on
vector bund\-les with classical Fourier--Laplace transforms on a finite-dimensional vector space  (which is the special case of a vector bundle over a point). For this, we consider the following situation: Let $\pi \colon E \to X$ be a vector bundle on a smooth variety and denote by $\mathcal E$ its sheaf of sections, i.e., $E = \Tot(\mathcal E) := \Spec_{\O_X} \Sym^\bullet \mathcal E^\vee$. Let $W \subseteq \Gamma(X,\mathcal E)$ be a non-zero finite-dimensional vector space of global sections of $E$ and let $V := W^\vee$ be its dual vector space. There are natural bundle morphisms $ev \colon W \times X \to E$ and $ev^\vee \colon E^\vee \to V \times X$, where $E^\vee$ denotes the dual vector bundle to $E$.

\begin{prop} \label{prop:FLTransfer}
  Let $W$ be a finite-dimensional space of global sections of a vector bundle $E \to X$ on a smooth variety. Let $V$ denote its dual vector space. If $a_V \colon V \times X \to V$ and $a_W \colon W \times X \to W$ denote the projections onto the first factors, we have
  \[\FL^V(a_{V,+}ev^\vee_+ \mathscr M) \cong a_{W,+} ev^+ \FL_X^{E^\vee}(\mathscr M)\]
  for all $\mathscr M \in D_{qc}^b(\mathscr D_{E^\vee})$.
\end{prop}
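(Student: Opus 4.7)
The plan is to prove the isomorphism by combining the two transfer results already established: the compatibility of Fourier--Laplace with bundle morphisms (\cref{lem:FLBundleMorphism}) and with cartesian squares (\cref{lem:FLCartesian}). Concretely, $ev^\vee \colon E^\vee \to V \times X$ is a morphism of vector bundles over $X$, whose dual is $ev \colon W \times X \to E$ (using the canonical identification $E^{\vee\vee} = E$ and $(V \times X)^\vee = W \times X$). Applying \cref{lem:FLBundleMorphism} to $ev^\vee$ yields a natural isomorphism
\[\FL_X^{V \times X}(ev^\vee_+ \mathscr{M}) \cong ev^+ \FL_X^{E^\vee}(\mathscr{M})\]
in $D_{qc}^b(\mathscr{D}_{W \times X})$.

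Next I would exhibit $V \times X \to X$ as the pullback of the vector bundle $V \to \{pt\}$ along $a_X \colon X \to \{pt\}$; this gives a cartesian diagram
\[\begin{tikzcd}
  V \times X \ar{r}{a_V} \ar{d} \ar[phantom]{dr}{\times} & V \ar{d} \\
  X \ar{r}{a_X} & \{pt\}
\end{tikzcd}\]
whose induced morphism of dual bundles is precisely $a_W \colon W \times X \to W$. Then \cref{lem:FLCartesian} (with $g = a_V$, $g^\vee = a_W$, and noting that $\FL_{\{pt\}}^V$ is just the classical Fourier--Laplace $\FL^V$ on the vector space $V$) gives a natural isomorphism
\[\FL^V \circ a_{V,+} \cong a_{W,+} \circ \FL_X^{V \times X}\]
of functors $D_{qc}^b(\mathscr{D}_{V \times X}) \to D_{qc}^b(\mathscr{D}_W)$.

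Chaining the two isomorphisms yields
\[\FL^V(a_{V,+} ev^\vee_+ \mathscr M) \cong a_{W,+} \FL_X^{V \times X}(ev^\vee_+ \mathscr M) \cong a_{W,+} ev^+ \FL_X^{E^\vee}(\mathscr M),\]
which is the claim. The only real subtlety -- and the step most likely to need care in writing -- is checking that the dualities line up correctly: that $(ev^\vee)^\vee$ is genuinely $ev$ (this amounts to the tautological identification of $ev$ with the bundle map sending $(w,x)$ to the value $w(x) \in E_x$ and $ev^\vee$ to the dualization fibrewise) and that the dual of the bundle morphism $a_V$ over the cartesian square is $a_W$. Once those identifications are verified, no further computation is required beyond citing the two previous lemmas.
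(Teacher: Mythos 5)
Your proof is correct and is essentially identical to the paper's own argument, which likewise deduces the claim from \cref{lem:FLBundleMorphism} applied to the bundle morphism $ev^\vee$ over $X$ and \cref{lem:FLCartesian} applied to the cartesian square exhibiting $V \times X \to X$ as the pullback of $V \to \Spec\C$. The identifications $(ev^\vee)^\vee = ev$ and $a_V^\vee = a_W$ that you flag as the only subtlety are indeed the only things to check, and they hold tautologically.
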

\begin{proof}
  The claim follows from \cref{lem:FLBundleMorphism} and \cref{lem:FLCartesian} considering the diagram
  \[\begin{tikzcd}
    E^\vee \ar{r}{ev^\vee} \ar{dr} & V \times X \ar{d} \ar{r}{a_V} & V \ar{d} \\
    & X \ar[phantom]{ur}{\times} \ar{r} & \Spec \C.
  \end{tikzcd}\]
\end{proof}

\subsection{Fourier--Laplace transform of extensions of \texorpdfstring{$\O_{L^{*}}^\beta$}{O\_\{L\textasciicircum*\}\textasciicircum\textbackslash beta}}

We now determine the Fourier--Laplace transform of the $\mathscr{D}_{L^*}$-modules $\O_{L^*}^\beta$ defined in \cref{sec:OBeta}, where $L^*$ is the complement of the zero section of a line bundle $\pi \colon L \to X$.
We denote by $j_L \colon L^* \hookrightarrow L$ and $j_{L^\vee} \colon L^{\vee,*} \hookrightarrow L^\vee$  the open embeddings from the complements of the zero section into $L$ and $L^\vee$, respectively.

\begin{prop} \label{prop:FLofObeta}
  Let $\beta \in \C$ with $k\beta \in \Z$. Then
  \[\FL_X^{L}(j_{L,+} \O_{L^{*}}^{\beta}) \cong j_{L^\vee,\dag} \O_{L^{\vee,*}}^{-\beta}.\]
\end{prop}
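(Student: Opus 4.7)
The plan is to reduce the statement to the classical Fourier--Laplace transform on $\C$ by working locally on a trivializing cover of $L$, and then to check that the resulting local isomorphisms glue correctly. The first step is the baseline identity
$\FL^\C(j_{\C,+}\O_{\C^*}^\beta) \cong j_{\C,\dag}\O_{\C^*}^{-\beta}$
with $j_\C \colon \C^* \hookrightarrow \C$. For $\beta \notin \Z$ all three functors $j_{\C,+}$, $j_{\C,\dag}$, $j_{\C,!*}$ agree on $\O^\beta_{\C^*}$, and a direct cyclic computation sends the relation $\partial_t t + \beta$ to $\partial_\tau\tau + (-1-\beta)$; since $-1-\beta \equiv -\beta \pmod{\Z}$, \cref{prop:OBetaIntegerShifts} identifies the result with $\O^{-\beta}_{\C^*}$. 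For $\beta \in \Z$ one uses that $\O^\beta_{\C^*} \cong \O_{\C^*}$ together with the short exact sequences
$0 \to \O_\C \to j_{\C,+}\O_{\C^*} \to \delta_0 \to 0$ and $0 \to \delta_0 \to j_{\C,\dag}\O_{\C^*} \to \O_\C \to 0$,
the identities $\FL^\C(\O_\C) \cong \delta_0$ and $\FL^\C(\delta_0) \cong \O_\C$, and indecomposability (equivalently, non-splitting) of both extensions, preserved by the equivalence $\FL^\C$.

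Next, fix a trivializing cover $X = \bigcup_i U_i$ of $F$ with sections $s_i \in \Gamma(U_i,\mathscr F)$, inducing trivializations $\psi_{s_i^k}\colon \C \times U_i \xrightarrow{\sim} L|_{U_i}$ and dual trivializations $\psi_{s_i^{-k}}\colon \C \times U_i \xrightarrow{\sim} L^\vee|_{U_i}$ (the two being mutually inverse under bundle duality). By \cref{prop:constructionOBeta},
$$\restrK{j_{L,+}\O^\beta_{L^*}}{L|_{U_i}} \cong \psi_{s_i^k,+}\bigl(j_{\C,+}\O^\beta_{\C^*} \boxtimes \O_{U_i}\bigr).$$
Restriction to $L^\vee|_{U_i}$ commutes with $\FL^L_X$ by \cref{lem:FLCartesian}. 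Combining \cref{lem:FLBundleMorphism} applied to $\psi_{s_i^k}$, the Künneth-type identity
$\FL^{\C \times U_i}_{U_i}(N \boxtimes \O_{U_i}) \cong \FL^\C(N) \boxtimes \O_{U_i}$
(itself a consequence of \cref{lem:FLCartesian} for the cartesian square associated to $U_i \to \mathrm{pt}$), and the baseline identity, one obtains
$$\restrK{\FL^L_X(j_{L,+}\O^\beta_{L^*})}{L^\vee|_{U_i}} \cong \psi_{s_i^{-k},+}\bigl(j_{\C,\dag}\O^{-\beta}_{\C^*} \boxtimes \O_{U_i}\bigr).$$
This matches the local description of $j_{L^\vee,\dag}\O^{-\beta}_{L^{\vee,*}}$ coming from \cref{prop:constructionOBeta} applied to $L^\vee = F^{\otimes(-k)}$ with parameter $-\beta$ and denominator $-k$; the relevant exponent from \cref{def:OBeta} is $m^\vee := (-k)(-\beta) + 1 = k\beta + 1 = m$, so the two local descriptions coincide on the nose.

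The third step is to verify that these local isomorphisms glue. On an overlap $U_{ij}$, the gluing datum for $j_{L,+}\O^\beta_{L^*}$ is (by \cref{def:OBeta}) right-multiplication by $\alpha_{ij}^m = (s_i/s_j)^m$ combined with the $L$-transition $\widetilde\psi^L_{ij}\colon (\lambda,p) \mapsto (\alpha_{ij}^k \lambda, p)$. Since $\alpha_{ij}^m \in \O_{U_{ij}}^\times$ is independent of the fiber coordinate, right-multiplication by it commutes with the relative Fourier--Laplace transform; meanwhile \cref{lem:FLBundleMorphism} transforms $\widetilde\psi^L_{ij}$ into its bundle dual, which is precisely the $L^\vee$-transition $\widetilde\psi^{L^\vee}_{ij}\colon (\mu,p) \mapsto (\alpha_{ij}^{-k}\mu, p)$. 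Hence the induced gluing on $\FL^L_X(j_{L,+}\O^\beta_{L^*})$ is right-multiplication by $\alpha_{ij}^m$ combined with $\widetilde\psi^{L^\vee}_{ij}$, which (because $m^\vee = m$) is exactly the gluing prescribed by \cref{def:OBeta} for $j_{L^\vee,\dag}\O^{-\beta}_{L^{\vee,*}}$. The local isomorphisms therefore patch to the desired global isomorphism.

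The main subtlety lies in the integer case of the baseline identity: cyclic-presentation manipulations alone cannot distinguish $j_{\C,+}$ from $j_{\C,\dag}$, and one must pin down the correct target using the non-split extensions of $\O_\C$ by $\delta_0$ (or vice versa) preserved by the equivalence $\FL^\C$. The gluing compatibility in the last step is otherwise a routine verification, provided one is consistent about dual-bundle sign conventions (the crucial coincidence $m^\vee = m$ reflects precisely the fact that passing from $L$ to $L^\vee$ flips both $k$ and $\beta$).
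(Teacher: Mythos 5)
Your overall strategy---establish the one-dimensional identity $\FL^\C(j_{\C,+}\O_{\C^*}^\beta)\cong j_{\C,\dag}\O_{\C^*}^{-\beta}$, transport it through the local trivializations via \cref{lem:FLCartesian}, \cref{lem:FLBundleMorphism} and \cref{lem:FLTensorProduct}, and then check that the local isomorphisms glue---is exactly the paper's. The baseline identity and the local descriptions are fine; your non-split-extension argument for the case $\beta\in\Z$ is a valid alternative to the paper's direct computation $\FL^\C(\mathscr D_\C/\mathscr D_\C\,\partial_t t)\cong\mathscr D_\C/\mathscr D_\C(\partial_t t-1)$ (which, contrary to your remark, does distinguish $j_{\C,+}$ from $j_{\C,\dag}$ at the level of cyclic presentations).

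The gap is in the gluing verification, which is the actual content of the proof. First, the gluing exponent in the construction of $\O_{L^*}^\beta$ is $m=k(\beta+1)$, not $k\beta+1$: this is what \eqref{eq:existenceGluing} forces for the transition function $\alpha=\alpha_{ij}^k$, and it is what the proofs of \cref{prop:constructionOBeta} and \cref{prop:FLofObeta} actually use (the formula $m:=k\beta+1$ in \cref{def:OBeta} is a slip). With the correct exponent one gets $m^\vee=(-k)\bigl((-\beta)+1\bigr)=k\beta-k\neq m$, so the ``crucial coincidence $m^\vee=m$'' on which your patching rests is false. Second, and independently of the exponent convention, the assertion that the Fourier--Laplace transform of the gluing datum is simply ``right-multiplication by $\alpha_{ij}^m$ combined with the dual transition'' is unjustified: in the cyclic presentations of \cref{lem:existenceGluing}, the identification $\FL\circ\psi_{ij,+}\cong\psi_{ij}^{\vee,+}\circ\FL$ contributes an extra unit $\alpha_{ij}^{-k}$, and the baseline isomorphism $\eta$ is built from right-multiplication by the fiber coordinate $t$, which does not commute with the transition $t\mapsto\alpha_{ij}^{\pm k}t$ and contributes a further unit. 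The paper's proof tracks all of these factors in an explicit commutative square (total exponent $k(\beta+1)-k=k\beta$ on one side against $k(\beta-1)$ composed with the conjugated $\eta$ on the other) and shows they cancel; your argument closes up only because the erroneous exponent formula happens to be formally self-dual under $(k,\beta)\mapsto(-k,-\beta)$. The statement is true, but the compatibility check needs this explicit bookkeeping rather than the exponent-matching shortcut.
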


\begin{proof}
We have trivially
$$
    \cO^\beta_{L^{*}} \cong \tilde{j}_{i,\dag+} \tilde{j}_i^+ \cO^\beta_{L^{*}},
$$
(recall diagram \eqref{diag:ExtObetaOpenSubs} for the maps involved in this isomorphism) since $\cO_{L^*}^\beta$ is a smooth $\cD_{L^*}$-module. The commutativity of
diagram \eqref{diag:ExtObetaOpenSubs} yields
$$
j_{L,+}\tilde{j}_{i,\dag+} \tilde{j}_i^+\cO_{L^*}^\beta \cong j_{i,\dag+} j_{i,L,+} \tilde{j}_i^+\cO_{L^*}^\beta
\quad\quad
\textup{and}
\quad\quad
j_{L,\dag}\tilde{j}_{i,\dag+} \tilde{j}_i^+\cO_{L^*}^\beta \cong j_{i,\dag+} j_{i,L,\dag} \tilde{j}_i^+\cO_{L^*}^\beta,
$$
therefore, we obtain
$$
j_{L,\ast} \cO_{L^*}^\beta \cong j_{i,\dag+} j_{i,L,\ast} \tilde{j}_i^+\cO_{L^*}^\beta
$$
for $\ast\in\{+,\dag\}$.
Similar statements hold for $\cO_{L^{\vee,*}}^{-\beta}$ and its extensions to $\cD_{L^{\vee,*}}$-modules (they involve the canonical open embeddings $j_i^\vee:L^\vee_{U_i}\hookrightarrow L^\vee$ on the dual bundle).

By \cref{lem:FLCartesian}, we have $\FL^L_X \circ j_{i,\ast}=j^\vee_{i,\ast}\circ\FL^{L_{U_i}}_{U_i}$ for $\ast\in\{+,\dag\}$. Moreover, since
all of the four functors $\FL^L_X, j_{i,\ast}, j^\vee_{i,\ast}, \FL^{L_{U_i}}_{U_i}$
are exact, we also obtain $\FL^L_X \circ j_{i,\dag+}=j^\vee_{i,\dag+}\circ\FL^{L_{U_i}}_{U_i}$.
It follows that
$$
\begin{array}{rcl}
\D \FL^L_X(j_{L,+} \O_{L^{*}}^{\beta})
&\cong &
\D  j^\vee_{i,\dag+} \FL^{L_{U_i}}_{U_i} \left( \tilde{j}_{L,+} \tilde{j}_{i}^+ \cO^\beta_{L^{*}} \right)
\\ \\
&\cong &
\D  j^\vee_{i,\dag+} \FL^{L_{U_i}}_{U_i} \left( \tilde{j}_{L,+} \left(\cO_{\dC^*}^\beta\boxtimes\cO_{U_i}\right) \right)
\\ \\
&\cong&\D
j^\vee_{i,\dag+} \FL^{L_{U_i}}_{U_i} \left(j_{\dC,+}\cO_{\dC^*}^\beta\boxtimes\cO_{U_i}\right)
\\ \\
&\cong&\D
j^\vee_{i,\dag+} \left( \FL^{\dC} (j_{\dC,+}\cO_{\dC^*}^\beta)\boxtimes\cO_{U_i}\right)
\\ \\
&\cong&\D
j^\vee_{i,\dag+} \left( j_{\dC,\dag}\cO_{\dC^*}^{-\beta}\boxtimes\cO_{U_i}\right)
\\ \\
&\cong&\D
j^\vee_{i,\dag+} \left( \tilde{j}_{L^\vee,\dag}\left(\cO_{\dC^*}^{-\beta}\otimes \cO_{U_i}\right)\right)
\\ \\
&\cong&\D
j^\vee_{i,\dag+} \left( \tilde{j}_{L^\vee,\dag} \tilde{j}_i^{\vee,+}\cO^{-\beta}_{L^{\vee,*}}\right)
\\ \\
&\cong&\D
j_{L^\vee,\dag}\cO^{-\beta}_{L^{\vee,*}}.
\end{array}
$$
\end{proof}

\begin{cor} \label{cor:FLofOBetaIsMHM}
  Let $k \in \Z$ and let $\beta \in \R$ with $k\beta \in \Z$. Then the Fourier--Laplace transform on $L$ of the $\mathscr{D}_{L}$-module $j_{L,+}\O_{L^*}^\beta$ can be equipped with the structure of a complex mixed Hodge module which is pure of weight $\dim(X)+1$ if $\beta\notin \dZ$.
\end{cor}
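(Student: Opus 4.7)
The plan is to combine Proposition \ref{prop:FLofObeta} with Corollary \ref{cor:CBetaHodgeExt} in a direct way, since both of the required ingredients are essentially already in place.

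First, I would invoke Proposition \ref{prop:FLofObeta} to rewrite the Fourier--Laplace transform as
\[
\FL_X^{L}(j_{L,+} \O_{L^{*}}^{\beta}) \cong j_{L^\vee,\dag} \O_{L^{\vee,*}}^{-\beta}.
\]
The point is that the right-hand side is no longer expressed via the Fourier--Laplace transformation, which is not a priori a functor on mixed Hodge modules, but rather only through the standard functors $j_{L^\vee,\dag}$ and via the $\mathscr{D}$-module $\O_{L^{\vee,*}}^{-\beta}$, both of which have been shown in \cref{sec:OBeta} to lift to the mixed Hodge module setting. Here we recall that the definition of $\O_{L^{\vee,*}}^{-\beta}$ is based on $F^\vee$ as a $k$-th root of $L^\vee$, and the assumption $k\beta \in \Z$ guarantees the existence of this $\mathscr{D}$-module.

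Next, since $-\beta \in \R$ and $k(-\beta) \in \Z$, Corollary \ref{cor:CbetaHodge} supplies the complex smooth pure Hodge module ${^{H\!}}\underline{\dC}^{-\beta}_{L^{\vee,*}}$ of weight $\dim(L^{\vee,*}) = \dim(X)+1$ with underlying $\mathscr{D}$-module $\O_{L^{\vee,*}}^{-\beta}$. Applying Corollary \ref{cor:CBetaHodgeExt}, the $\mathscr{D}_{L^\vee}$-module $j_{L^\vee,\dag}\O_{L^{\vee,*}}^{-\beta}$ underlies the mixed Hodge module $j_{L^\vee,!}\,{^{H\!}}\underline{\dC}^{-\beta}_{L^{\vee,*}} \in \MHM(L^\vee,\dC)$. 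This proves the first claim.

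Finally, for the purity statement, if $\beta \notin \Z$ then also $-\beta \notin \Z$, so the second part of Corollary \ref{cor:CBetaHodgeExt} gives
\[
j_{L^\vee,!}\,{^{H\!}}\underline{\dC}^{-\beta}_{L^{\vee,*}} \cong j_{L^\vee,!*}\,{^{H\!}}\underline{\dC}^{-\beta}_{L^{\vee,*}},
\]
which is pure of weight $\dim(X)+1$. I do not foresee any real obstacle, since the only non-trivial identification, namely Proposition \ref{prop:FLofObeta}, has already been established and the rest is bookkeeping via the functoriality of $j_{L^\vee,!}$ and $j_{L^\vee,!*}$ on mixed Hodge modules.
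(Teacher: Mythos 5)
Your proposal matches the paper's own proof: both apply \cref{prop:FLofObeta} to identify $\FL_X^{L}(j_{L,+}\O_{L^*}^\beta)$ with $j_{L^\vee,\dag}\O_{L^{\vee,*}}^{-\beta}$ and then invoke \cref{cor:CBetaHodgeExt} to see that this underlies $j_{L^\vee,!}\,{^{H\!}}\underline{\dC}^{-\beta}_{L^{\vee,*}}\in\MHM(L^\vee,\dC)$, which is pure of weight $\dim(X)+1$ when $\beta\notin\dZ$. The argument is correct and essentially identical to the one in the paper.
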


\begin{proof}
We have just seen in the previous \cref{prop:FLofObeta} that
$$
\FL_X^{L}(j_{L,+} \O_{L^*}^{\beta}) \cong j_{L^\vee,\dag} \O_{L^{\vee,*}}^{-\beta}.
$$
On the other hand, we know by \cref{prop:CBetaHodgeExt} that $j_{L^\vee,\dag} \O_{L^{\vee,*}}^{-\beta}$ underlies the  the object
$$
j_{L^\vee,!} \, ^{H\!}\underline{\dC}^{-\beta}_{L^{\vee,*}} \in \MHM(L^\vee,\dC),
$$
and that it is pure if $\beta\notin\dZ$.
\end{proof}

\subsection{Twisted cohomology of hyperplane sections}\label{sec:TwistCOhomHypSec}

In this subsection, we describe a complex of $\cD$-modules that generically computes certain twisted cohomologies of hyperplane sections of our variety $X$ (resp.\ the complement of those). We show that it underlies an object in the derived category of mixed Hodge modules. In the more specific situation studied later in \cref{sec:tautologicalSystems}, when $X$ arises as a homogeneous space, these $\cD$-modules will appear as tautological systems.

With the notations from before, we fix a non-zero finite-dimensional subspace $W$ of $\Gamma(X,\Ell)$.
Let $V := W^\vee$ denote its dual vector space. The linear system $W$ on $X$ defines a rational map $g \colon X \dashrightarrow \P V$. The natural evaluation morphism
\begin{equation}\label{eq:evMorph}
ev \colon W \times X \to L,
\end{equation}
is a morphism of vector bundles over $X$ and it induces a dual bundle morphism
\[ev^\vee \colon L^\vee \to V \times X.\]
The following diagram commutes:
\[\begin{tikzcd}
  L^\vee \ar{r}{ev^\vee} \ar{d}[swap]{\pi^\vee} & V \times X \ar[dashed]{d} \\
  X \ar[dashed]{r}{g \times \id_X} &  \P V \times X.
\end{tikzcd}\]

If the linear system $W$ is base-point-free, then $g \colon X \to \P V$ is a morphism and $ev^\vee$ restricts to a morphism
\[\widetilde{ev}^{\vee} \colon L^{\vee,*} \to (V \setminus \{0\}) \times X\]
of complements of zero sections. In this case, we have the following commutative diagram:
\[\begin{tikzcd}
  L^\vee \ar{r}{ev^\vee} & V \times X \ar{r}{a_V} \ar[phantom]{dr}{\times} & V\\
  L^{\vee,*}   \ar{r}{\widetilde{ev}^{\vee}} \ar{d} \ar[hook]{u}{j_{L^\vee}} & (V \setminus \{0\}) \times X \ar{d} \ar[hook]{u}{j \times \id_X} \ar{r}
  \ar[phantom]{dr}{\times}
  & V \setminus \{0\} \ar{d} \ar[hook]{u}{j} \\
  X \ar{r}{g \times \id_X} & \P V \times X \ar{r} &  \P V.&
  \end{tikzcd}\]
If, moreover, the linear system $W$ separates points and tangent directions (in particular, $\Ell$ is very ample in this case), then $g \colon X \to \P V$ is a locally closed embedding. In this case, $L^{\vee,*}$ is isomorphic to $\hat{X} \setminus \{0\}$, where $\hat{X} \subseteq V$ is the affine cone over $g(X) \subseteq \P V$, and $L^\vee$ is the blow-up of $\hat X$ in the origin: $L^\vee \cong \operatorname{Bl}_{\{0\}}\hat{X}$.
We denote further by $\cY:=ev^{-1}(0)$ the inverse image of the zero section of $L$, by $\cU:=(W \times X)\backslash \cY$ its complement, and we write $a_\cY:\cY\rightarrow W$ resp.\ $a_\cU:\cU\rightarrow W$ for the restrictions of the
projection $a_W \colon W\times X \to W$ to $\cY$ resp.\ to $\cU$.
\begin{prop} \label{prop:directImageOfOBetaMHM}
  Assume $L$ to be very ample and let $W \subseteq H^0(X,\Ell)$ be a finite-dimensional linear system defining a locally closed embedding $g \colon X \hookrightarrow \P V$, where $V := W^\vee$.  Let $\hat\iota \colon L^{\vee,*} \cong \hat{X} \setminus \{0\} \hookrightarrow V$ denote the locally closed embedding of the punctured affine cone over $X$ into $V$. Then we have the following.
  \begin{enumerate}
      \item
          For all $\beta \in \C$ with $k\beta \in \Z$, the complexes of $\mathscr D_W$-modules
          \[
            \FL^V(\hat\iota_+ \O_{L^{\vee,*}}^\beta)
            \quad\quad\textup{and}\quad\quad
            \FL^V(\hat\iota_\dag \O_{L^{\vee,*}}^{-\beta})
          \]
          underlie elements of $D^b\MHM(W,\dC)$
                    that we denote by $\Mh$ and by $\MhD$, respectively. We have
          $$
            \Mh \cong {^{H,*}\!\!}\cM_L^{\beta+\ell}
            \quad\quad\textup{and}\quad\quad
            \MhD \cong {^{H,!}\!\!}\cM_L^{-\beta+\ell}
          $$
          for any $\ell\in \dZ$.
    \item
        For $\beta\in \dZ$, the complexes $\FL^V(\hat\iota_+ \O_{L^{\vee,*}}^\beta)$ and $\FL^V(\hat\iota_\dag \O_{L^{\vee,*}}^{-\beta})$ underlie elements in $D^b\MHM(W)$
        that we denote unambiguously by ${^{H,*}\!\!}\cM_L$ resp.\ by ${^{H,!}\!\!}\cM_L$.

    \item
      For $\beta \notin \dZ$, we have an isomorphism
      $\Mh \cong {^{H,!}\!\!}\cM_L^{\beta}$. If $X$ is projective, then the cohomology modules
      $H^i(\Mh)$ are pure Hodge modules of weight $\dim(X)+\dim(W)+i$.

    \item     Let $\beta\in\dZ$ and assume again that $X$ is projective. Then for any $k\in \dZ$,
    there exist morphisms in the abelian category of mixed Hodge modules
    \[
        H^k(a_{\mcy,*} \, ^{H\!}\underline{\dC}_{\mcy}) \lra H^{k} \left({^{H,*}\!\!}\cM_L\right) \quad \text{resp.} \quad H^{k} \left({^{H,!}\!\!}\cM_L\right) \lra H^k(a_{\mcy,*} \, ^{H\!}\underline{\dC}_{\mcy})(-1)
    \]
    with constant kernel of weight $k+ \dim X + \dim W -1$ resp.\ $k+ \dim X + \dim W$  and constant cokernel of weight $k+ \dim X + \dim W $ resp.\ $k+ \dim X + \dim W+1$.
    In particular there are the following weight estimates for ${^{H,*}\!\!}\cM_L$ and ${^{H,!}\!\!}\cM_L$:
    \begin{align*}
        \gr^W_\ell (H^k( {^{H,*}\!\!}\cM_L)) = 0 &\quad \text{for} \quad \ell \neq k+ \dim W +\dim X -1 ,k+ \dim W +\dim X,\\
        \gr^W_\ell (H^k( {^{H,!}\!\!}\cM_L)) = 0 &\quad \text{for} \quad \ell \neq k+ \dim W +\dim X ,k+ \dim W +\dim X+1.
    \end{align*}
\end{enumerate}
\end{prop}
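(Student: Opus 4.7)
The overall plan is to use the factorization $\hat\iota = a_V\circ ev^\vee\circ j_{L^\vee}$ together with \cref{prop:FLTransfer} and \cref{prop:FLofObeta} to rewrite
\[\FL^V(\hat\iota_+ \cO_{L^{\vee,*}}^\beta) \;\cong\; a_{W,+}\, ev^+\, j_{L,\dag}\cO_{L^*}^{-\beta},\]
and analogously for $\FL^V(\hat\iota_\dag \cO_{L^{\vee,*}}^{-\beta})$; combining the latter with \cref{lem:OLbetaDual} and the compatibility of $\FL^V$ with holonomic duality yields $\MhD\cong \bD\Mh$. Since $j_{L,\dag}\cO_{L^*}^{-\beta}$ underlies a mixed Hodge module by \cref{cor:CBetaHodgeExt}, and since $a_{W,+}$ and $ev^+$ lift to $D^b\MHM$ (as $a_{W,*}$ and $ev^*$ up to cohomological shifts), this furnishes the MHM lift claimed in part (1); the shift independence $\Mh\cong{^{H,*}\!\!}\cM_L^{\beta+\ell}$ follows from \cref{prop:OBetaIntegerShifts}, whose gluing construction preserves the Hodge filtration. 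Part (2) is then immediate: for $\beta\in\dZ$ one may take $\beta=0$ by shift independence, so that ${^{H\!}}\underline{\dC}_{L^*}^0$ is the rational constant Hodge module and every subsequent functor preserves the rational structure.

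For the first claim in part (3), the key observation is that $L^\vee$ is the blow-up of the affine cone $\hat X$ at the origin, so $\hat\iota$ factors as $L^{\vee,*} \hookrightarrow L^\vee \xrightarrow{\tilde\iota} V$ where $\tilde\iota\colon L^\vee \to \hat X \hookrightarrow V$ is the composition of a proper blow-down with a closed embedding, hence is proper. Thus $\tilde\iota_+ = \tilde\iota_\dag$, and \cref{cor:CBetaHodgeExt} gives $j_{L^\vee,+}\cO^\beta\cong j_{L^\vee,\dag}\cO^\beta$ for $\beta\notin\dZ$, so $\hat\iota_+\cO^\beta\cong \hat\iota_\dag\cO^\beta$ and applying $\FL^V$ yields $\Mh\cong{^{H,!}\!\!}\cM_L^\beta$. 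For the purity, projectivity of $X$ makes $a_W$ proper, so $a_{W,*}$ preserves purity on its cohomologies by the decomposition theorem, while the smooth pullback $ev^*$ preserves pure weight; since $j_{L,!}{^{H\!}}\underline{\dC}_{L^*}^{-\beta}$ is pure of weight $\dim X+1$ by \cref{cor:CBetaHodgeExt}, tracking shifts shows $\Mh$ is pure of weight $\dim X+\dim W$ as a complex, whence $H^i(\Mh)$ is pure of weight $\dim X+\dim W+i$.

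The main obstacle is part (4). Setting $\beta=0$ by shift independence, I would apply the functor $F(-):=a_{W,*}\bigl(ev^*(-)[\dim W-1]\bigr)$ computing ${^{H,*}\!\!}\cM_L$ to the distinguished triangle
\[j_{L,!}{^{H\!}}\underline{\dC}_{L^*}\lra{^{H\!}}\underline{\dC}_L\lra i_{X,*}\,i_X^*{^{H\!}}\underline{\dC}_L \xrightarrow{\ +1\ }\]
arising from the zero section $i_X\colon X\hookrightarrow L$. Base change $ev^*\circ i_{X,*}\cong i_{\cY,*}\circ(ev|_\cY)^*$, together with the smoothness of $ev|_\cY\colon \cY\to X$ and careful bookkeeping of the shifts, transforms this into a distinguished triangle
\[a_{\cY,*}{^{H\!}}\underline{\dC}_\cY\lra{^{H,*}\!\!}\cM_L\lra a_{W,*}{^{H\!}}\underline{\dC}_{W\times X}\xrightarrow{\ +1\ }\]
in $D^b\MHM(W)$. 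Projectivity of $X$ makes $a_W$ proper, so by the decomposition theorem $a_{W,*}{^{H\!}}\underline{\dC}_{W\times X}$ is pure of weight $\dim W+\dim X$ as a complex (and by Künneth equals the constant MHM ${^{H\!}}\underline{\dC}_W\otimes H^*(X,\dC)^H$), giving $H^k$ of this term pure of weight $\dim W+\dim X+k$. The long exact sequence in MHM-cohomology then yields the asserted morphism $H^k(a_{\cY,*}{^{H\!}}\underline{\dC}_\cY)\to H^k({^{H,*}\!\!}\cM_L)$ whose kernel and cokernel are subquotients of $H^{k-1}$ resp.\ $H^k$ of the constant middle term, hence are themselves constant Hodge modules of the claimed weights $k+\dim X+\dim W-1$ and $k+\dim X+\dim W$. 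The analogous statement for ${^{H,!}\!\!}\cM_L$ follows by applying Verdier duality to this construction.
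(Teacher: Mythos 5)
Your proposal is correct and follows the paper's own strategy almost step for step: the same reduction of $\FL^V(\hat\iota_+\O_{L^{\vee,*}}^\beta)$ to $a_{W,+}\,ev^{\dag}\,j_{L,\dag}\O_{L^*}^{-\beta}$ via \cref{prop:FLTransfer} and \cref{prop:FLofObeta}, the same definition of $\MhD$ as a dual, the same purity argument via Saito's theorem for the projective map $a_W$, and in part (4) exactly the same adjunction triangle for the zero section of $L$, base-changed along $ev$ and pushed down by $a_{W,*}$. The one place you genuinely deviate is the first claim of part (3): you obtain $\hat\iota_+\O^\beta_{L^{\vee,*}}\cong\hat\iota_\dag\O^\beta_{L^{\vee,*}}$ directly by factoring $\hat\iota$ through the proper blow-down $L^\vee\to\hat X\hookrightarrow V$ and invoking $j_{L^\vee,!}\cong j_{L^\vee,*}$ on ${}^{H}\underline{\dC}^\beta_{L^{\vee,*}}$ for $\beta\notin\dZ$ (\cref{cor:CBetaHodgeExt}), whereas the paper derives the same identification from the duality computation $\bD\,\FL^V(\hat\iota_+\O^{-\beta})\cong\FL^V(\hat\iota_+\O^{\beta})$ carried out on the $L$-side; your route is more geometric and arguably shorter, and it is essentially the observation the paper itself reuses later in \cref{cor:LocColocHatTau}. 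Two small points to fix in a full write-up: the paper normalizes $\MhD:=(\bD\,\Mh)(\dim(W\times X))$ rather than $\bD\,\Mh$ — the Tate twist is irrelevant for the underlying $\cD$-module but is needed to land on the weight window $\{k+\dim X+\dim W,\,k+\dim X+\dim W+1\}$ when you dualize in part (4); and the weight bounds on $H^k({}^{H,*}\!\cM_L)$ itself require noting that $\cY\to X$ is a sub-vector-bundle (so $\cY$ is smooth and $a_\cY$ proper, making $H^k(a_{\cY,*}{}^{H}\underline{\dC}_{\cY})$ pure of weight $k+\dim X+\dim W-1$), which you use implicitly.
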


\begin{rmk}
 Notice that by a result of Chen--Dirks \cite[Theorem 1.4]{CD23} one can easily show point 1.\ of the above proposition, i.e., the fact that $\FL^V(\hat\iota_+ \O_{L^{\vee,*}}^\beta)$ and $\FL^V(\hat\iota_\dag \O_{L^{\vee,*}}^{-\beta})$
 underlie mixed Hodge modules. Namely, it is shown in loc.\ cit.\ that the monodromic Fourier-Laplace transform of a monodromic mixed Hodge module carries again the structure of a monodromic mixed Hodge module, and since the $\cD_V$-modules $\hat\iota_+ \O_{L^{\vee,*}}^\beta$ and $\hat\iota_\dag \O_{L^{\vee,*}}^\beta$ do underlie monodromic mixed Hodge modules on $V$ under the assumptions of \cref{prop:directImageOfOBetaMHM}, the result of Chen--Dirks can be applied. However, the proof of point 1.\ of \cref{prop:directImageOfOBetaMHM} below will yield a particular presentation of $\FL^V(\hat\iota_+ \O_{L^{\vee,*}}^\beta)$ and $\FL^V(\hat\iota_\dag \O_{L^{\vee,*}}^{-\beta})$ by standard functors (see Formula \eqref{eq:ProofOf38}), which
  is needed later  in Proposition \ref{prop:TautIsDirectImage}, which in turn is a cornerstone in the solution of the holonomic rank problem (see \cref{cor:holRankProb} in \cref{subsec:TautMHM} as well as \cref{theo:Main} in the introduction).
\end{rmk}

\begin{proof}[Proof of \cref{prop:directImageOfOBetaMHM}]
\begin{enumerate}
    \item
          We start by showing the statement for ${^{H,*}\!\!}\cM_L^\beta$. For this purpose, we combine \cref{prop:FLTransfer} and \cref{prop:FLofObeta} to get a purely functorial description of this complex of $\mathscr{D}_W$-modules not involving Fourier--Laplace transforms, namely
          \begin{equation}\label{eq:ProofOf38}
          \begin{array}{rcll}
            &&\FL^V(\hat\iota_+ \O_{L^{\vee,*}}^{\beta}) \\
            &\cong& \FL^V(a_{V,+}ev^\vee_+ j_{{L^{\vee}},+} \O_{L^{\vee,*}}^\beta) \\
            &\cong& a_{W,+} ev^+ \FL_X^{L^\vee}(j_{{L^\vee},+} \O_{L^{\vee,*}}^\beta) & {\textup{\small\cref{prop:FLTransfer}}} \\
            &\cong& a_{W,+} ev^+ j_{L,\dag} \O_{L^{*}}^{-\beta} & {\textup{\small\cref{prop:FLofObeta}}}\\
            &\cong& a_{W,+} ev^\dag j_{L,\dag} \O_{L^{*}}^{-\beta} & {ev \textup{ is smooth}}\\
            &= &a_{W,+} \, ev^\dag[\dim L - \dim (W\times X)] \, j_{L,\dag} \O_{L^{*}}^{-\beta}[\dim W-1],
          \end{array}
        \end{equation}
                where the last equality is due to the obvious dimension count $\dim(L)=\dim(X)+1$.

          Since $\O_{L^*}^{-\beta}$ underlies the complex pure Hodge module $^{H\!}\underline{\dC}^{-\beta}_{L^*}$ (see \cref{def:Obeta-new}), we obtain that
          \begin{equation}\label{eq:defMbeta}
              \Mh := a_{W,*} ev^*  j_{L,!}\, ^{H\!}\underline{\dC}^{-\beta}_{L^*}[\dim W-1]
              \in D^b\MHM(W,\dC).
          \end{equation}

        Define
        $$
        \MhD:=\left(\bD\, \Mh \right)(\dim(W \times X))
                $$
        where $\bD$ is the duality functor in $\MHM(W,\dC)$ as recalled in the introduction. Clearly, the complex of $\cD_W$-modules that underlies $\MhD$ is then $\bD \FL^V(\hat{\iota}_+\cO^\beta_{L^{\vee,*}})$, where this time $\bD$ is the holonomic duality functor on $\cD_W$-modules.

        On the spaces $V$ and $L^{\vee,*}$, we consider the isomorphisms $c_V$ and $c_{L^{\vee,*}}$ given by multiplication by $-1$ (in all variables for $c_V$ and fibrewise for $c_{L^{\vee,*}}$). Then since the Fourier transformation $\FL^V$ and the holonomic duality commute up to the action of $c_V$ (i.e., since
        $\bD\, \circ \FL^V\cong \FL^V\circ\bD\circ c_V^+$), we obtain the following isomorphisms in $D^b(\cD_W)$ for the complex of $\cD_W$-modules underlying $\MhD$:
        \begin{align*}
            \bD \FL^V(\hat{\iota}_+ \cO^\beta_{L^{\vee,*}}) &\simeq   \FL^V \bD \, c^+_V (\hat{\iota}_+ \cO^\beta_{L^{\vee,*}}) \\
            &\simeq   \FL^V \bD  (\hat{\iota}_+ c^+_{L^{\vee,*}} \cO^\beta_{L^{\vee,*}}) & {\textup{since } c_V\circ\hat{\iota}=\hat{\iota}\circ c_{L^{\vee,*}}\textup{ by definition of } \hat{\iota}}\\
            &\simeq   \FL^V \bD  (\hat{\iota}_+  \cO^\beta_{L^{\vee,*}})
            &{\exists\textup{ isomorphism } c^+_{L^{\vee,*}}\cO^\beta_{L^{\vee,*}}\cong \cO^\beta_{L^{\vee,*}}} \\
            & \simeq  \FL^V  (\hat{\iota}_\dag  \bD\, \cO^\beta_{L^{\vee,*}}) &
            {\bD\, \hat{\iota}_+ \cong \hat{\iota}_\dag\,\bD}\\
                    & \simeq  \FL^V  (\hat{\iota}_\dag   \cO^{-\beta}_{L^{\vee,*}})
            & {\bD\, \cO^\beta_{L^{\vee,*}}\cong \cO^{-\beta}_{L^{\vee,*}} \textup{ by \cref{lem:OLbetaDual}}}.
        \end{align*}
        This shows that the underlying complex of $\cD_W$-modules of         $\MhD$ is $\FL^V  (\hat{\iota}_\dag   \cO^{-\beta}_{L^{\vee,*}})$, as claimed.

        The second statement follows directly from the fact that $\cO_{L^*}^\beta\cong\cO_{L^*}^{\beta'}$ for $\beta-\beta'\in \Z$.
    \item
        For $\beta\in \dZ$, we have $\cO_{L^*}^{-\beta}=\cO_{L^*}$, which underlies an element in $\MHM(L^*)$, and by the above argument we get that
        ${^{H,*}\!\!}\cM_L, {^{H,!}\!\!}\cM_L \in D^b\MHM(W)$.
   \item
      Recall from \eqref{eq:ProofOf38} above that
      $$
        \FL^V(\hat{\iota}_+\cO_{L^{\vee,*}}^{-\beta}) \cong a_{W,+} ev^\dag j_{L,\dag} \cO_{L^*}^\beta.
      $$
      Applying the holonomic duality functor yields
      $$
        \bD \FL^V(\hat{\iota}_+\cO_{L^{\vee,*}}^{-\beta})
        \cong
        a_{W,+} ev^\dag j_{L,+} \bD \, \cO_{L^*}^\beta,
        \cong
        a_{W,+} ev^\dag j_{L,+} \cO_{L^*}^{-\beta},
      $$
      since $a_{W,\dag}\cong a_{W,+}$ ($a_W$ is proper) and since  $ev^+\cong ev^\dag$ ($ev$ is smooth). Now if $\beta\notin\dZ$, by using \cref{prop:CBetaHodgeExt}, we have $j_{L,+}\cO_{L^*}^{-\beta} \cong j_{L,\dag}\cO_{L^*}^{-\beta}$, and thus we obtain
      $$
        \bD \FL^V(\hat{\iota}_+\cO_{L^{\vee,*}}^{-\beta})
        \cong
        a_{W,+} ev^\dag j_{L,\dag} \cO_{L^*}^{-\beta}
        \cong
        \FL^V(\hat{\iota}_+\cO_{L^{\vee,*}}^{\beta}),
      $$
      from which we deduce an isomorphism
      $$
        {^{H,!}\!\!}\cM_L^{\beta} \cong \Mh
      $$
      in $D^b\MHM(W,\dC)$.

      Moreover, under the assumption that $\beta\notin \dZ$, we have seen in \cref{cor:FLofOBetaIsMHM} that $j_{L,!}\, ^{H\!}\underline{\dC}^{-\beta}_{L^*}$ is pure (of weight $\dim(X)+1$). Since the morphism $ev$ is smooth, and since $a_W$ is projective here, the second assertion thus follows from \cite[Th\'eor\`eme~1]{Saito1}.

    \item Recall that we denoted by  $j_L \colon L^* \ra L$ the inclusion of the complement of the zero section and denote by $i_L \colon X \ra L$ the inclusion of the zero section of $L$. There is the following adjunction triangle
    \[
        j_{L,!} \,j_L^{-1} \, ^{H\!}\underline{\dC}_{L}\lra \, ^{H\!}\underline{\dC}_{L} \lra i_{L,!} i^{-1}_L \, ^{H\!}\underline{\dC}_{L} \overset{+1}\lra .
    \]
    Since $i^{-1}_L \, ^{H\!}\underline{\dC}_{L} = ^{H\!}\underline{\dC}_{X}[1]$ we get the triangle
    $$
        i_{L,!} \, ^{H\!}\underline{\dC}_{X} \lra j_{L,!} \,j_L^{-1} \, ^{H\!}\underline{\dC}_{L}\lra \, ^{H\!}\underline{\dC}_{L}  \overset{+1}\lra .
    $$

     Since the map $j_L$ is affine, the functor $j_{L,!}$ from $\MHM(L^*)$ to $\MHM(L)$ is exact and $H^0(j_{L_!} {^{H\!}}\underline{\dC}_{L^*})$ is the only non-zero cohomology. Therefore we get the short exact sequence
     \begin{equation}
     \label{eq:adjtr}
     0 \lra i_{L,!} \, ^{H\!}\underline{\dC}_{X} \lra H^0(j_{L,!}  \, ^{H\!}\underline{\dC}_{L^*})\lra \, ^{H\!}\underline{\dC}_{L}  \lra 0.
     \end{equation}
     We have the following diagram with cartesian squares

     $$
        \begin{tikzcd}
        \mcy \arrow[rr, "i_{\mcy}", hook] \arrow[dd, "ev_{|\cY}", swap] && X \times W \arrow[dd, "ev"] && \mcu \arrow[ll, hook'] \arrow[dd, "ev_{|\cU}"] \\ \\
        X \arrow[rr, "i_L", hook] && L && L^* \arrow[ll, "j_L", hook', swap]
        \end{tikzcd}
     $$
     Applying the exact functor $ev^*[\dim W -1]$ to the short exact sequence \eqref{eq:adjtr} we get the short exact sequence
     \begin{equation}\label{eq:adjtri2}
     0 \lra i_{\mcy,!} \, ^{H\!}\underline{\dC}_{\mcy} \lra H^0(ev^* j_{L,!}  \, ^{H\!}\underline{\dC}_{L^*}[\dim W -1])\lra \, ^{H\!}\underline{\dC}_{X \times W}  \lra 0.
     \end{equation}
     Notice that $i_{\mcy,!} \, ^{H\!}\underline{\dC}_{\mcy}$ is pure of weight $\dim X + \dim W -1$ and that $^{H\!}\underline{\dC}_{X \times W}$ is pure of weight $\dim X + \dim W$. We apply the functor $a_{W,*}$ to \eqref{eq:adjtri2} and get
     \begin{equation}\label{eq:4termseq}
     H^{k-1}(a_{W,*}\, ^{H\!}\underline{\dC}_{X \times W} ) \ra H^k(a_{\mcy,*} \, ^{H\!}\underline{\dC}_{\mcy}) \ra H^{k} \left({^{H,*}\!\!}\cM_L\right) \ra H^k(a_{W,*}\, ^{H\!}\underline{\dC}_{X \times W} ).
     \end{equation}

     Since $H^k(a_{\mcy,*}\, ^{H\!}\underline{\dC}_{\mcy})$ is pure of weight $k+ \dim X + \dim W -1$ and the constant mixed Hodge module $H^k(a_{W,*} ^{H\!}\underline{\dC}_{X \times W} )$ is pure of weight $k +\dim X + \dim W$ we conclude that
     \[
     \gr^W_\ell (H^k( {^{H,*}\!\!}\cM_L)) = 0 \quad \text{for} \quad \ell \neq k+ \dim W +\dim X -1 ,k+ \dim W +\dim X.
     \]
     and there exists a map $H^k(a_{\mcy,*} \, ^{H\!}\underline{\dC}_{\mcy}) \ra H^{k} \left({^{H,*}\!\!}\cM_L\right)$ with constant kernel and cokernel.
     Applying  $\mbd$ to the sequence \eqref{eq:4termseq}  and doing a Tate-twist by $-(\dim X \times W)$ we get for $m = -k$
     \[
     H^m(a_{W,*}\, ^{H\!}\underline{\dC}_{X \times W}) \ra H^m\left({^{H,!}\!\!}\cM_L\right) \ra H^m(a_{\mcy,*} \, ^{H\!}\underline{\dC}_{\mcy})(-1) \ra H^{m+1}(a_{W,*}\, ^{H\!}\underline{\dC}_{X \times W} ).
     \]
     Since $H^k(a_{\mcy,*}\, ^{H\!}\underline{\dC}_{\mcy})$ is pure of weight $k+ \dim X + \dim W +1$ we conclude that
     \[
     \gr^W_\ell (H^m( {^{H,!}\!\!}\cM_L)) = 0 \quad \text{for} \quad \ell \neq m+ \dim W +\dim X ,m+ \dim W +\dim X+1.
     \]
     and there exists a map $ H^{k} \left({^{H,!}\!\!}\cM_L\right) \ra H^k(a_{\mcy,*} \, ^{H\!}\underline{\dC}_{\mcy})(-1)$ with constant kernel and cokernel.

                                   \end{enumerate}
\end{proof}

We will discuss next a natural geometric interpretation of the complex of mixed Hodge modules $\Mh$ resp.\ $\MhD$.

For this purpose, fix some value
$\lambda\in W$. Then, by definition, we have $\lambda\in \Gamma(X, \Ell)$, and interpreting this global section as a morphism $\lambda \colon X \hookrightarrow L$, we can consider the image $L_\lambda:=\im(\lambda)\subseteq L$. We identify the zero section of the projection $\pi\colon  L\twoheadrightarrow X$ inside $L$ with $X$ and recall that $L^{*}:=L\setminus X$ denotes the complement of the zero section. We denote by $H_\lambda:=
L_\lambda \cap X \subseteq X$ the zero locus of the section $\lambda$ (which was called $Z(\lambda)$ in \cref{theo:Main}) and by $U_\lambda := X\setminus H_\lambda$
its complement in $X$.

Notice that the full family of zero loci of sections of $L$ is given by $\cY:=ev^{-1}(0) \to  W$, $(s,\lambda)\mapsto \lambda$, i.e.  the fibre of this map over a point $\lambda\in W$ is exactly the hypersurface $H_\lambda$.
Similarly, we have
$\cU    =(W\times X)\setminus \cY = \bigcup_{\lambda\in W} U_\lambda$, the evaluation morphism $ev$ from Formula
\eqref{eq:evMorph} then restricts to a morphism
$$
ev_{|\cU} \colon  \cU \longrightarrow L^*.
$$
Recall that we defined the constant (complex) pure Hodge module $^{H\!}\underline{\dC}^\beta_{L^*}$  in  \cref{def:Obeta-new}. We then put
$$
^{H\!}\underline{\dC}^\beta_{\cU} := {ev_{|\cU}^*}
\,{^{H\!}}\underline{\dC}^\beta_{L^*}[\dim W -1]
\in\HM(\cU,\dC).
$$
Moreover, for $\lambda\in W$ as above, consider
the restriction $\lambda_{|U_\lambda}\colon U_\lambda
\hookrightarrow L^*$. We put, for $\beta\in \dQ$,
$$
{}^{H\!}\underline{\dC}^\beta_\lambda:=\lambda^*_{|U_\lambda}\, ^{H\!}\underline{\dC}^\beta_{L^*}[-1]\in \HM(U_\lambda,\dC).
$$

\begin{prop}\label{prop:TautIsDirectImage} We continue with the setup of \cref{prop:directImageOfOBetaMHM} and additionally assume that $X$ is projective. Let $\beta \in \C$ with $k\beta \in \Z$. Then the following statements hold true:
\begin{enumerate}
    \item
      Let $a_{\cU}\colon \cU\rightarrow W$ be the restriction of the projection $a_W\colon W\times X\rightarrow W$. Then we have an isomorphism
      $$
        a_{\cU,!} \, ^{H\!}\underline{\dC}^{-\beta}_{\cU}
        \cong         \Mh
        \quad\quad
        \textup{and}
        \quad\quad
        a_{\cU,*} \, ^{H\!}\underline{\dC}^{\beta}_{\cU}( 2 \dim W + 2\dim X)
        \cong \MhD
      $$
     in $ D^b\MHM(W,\dC)$.
    \item
      For any $m \in \dN$, and any $\lambda\in W$       we have isomorphisms of (complex) mixed Hodge structures
      $$
      \begin{array}{rcl}
          H^m(i_\lambda^* \, \Mh[-\dim W]) & \cong & H_c^{\dim(X) + m}(U_\lambda,\,^{H\!}\underline{\dC}^{-\beta}_\lambda), \\ \\
          H^m(i_\lambda^! \, \MhD[\dim W]) & \cong & H^{\dim(X)+m}(U_\lambda,\,^{H\!}\underline{\dC}^{\beta}_\lambda) (\dim W + 2 \dim X). \\ \\
      \end{array}
            $$
            \end{enumerate}
\end{prop}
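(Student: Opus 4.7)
The plan is to unfold the functorial description of $\Mh$ and $\MhD$ given in the proof of \cref{prop:directImageOfOBetaMHM} and then reduce both parts of the proposition to standard base change and duality computations.

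For the first isomorphism in part (1), recall from the proof of \cref{prop:directImageOfOBetaMHM} that
\[
\Mh \cong a_{W,*}\, ev^*\, j_{L,!}\, {}^{H\!}\underline{\dC}^{-\beta}_{L^*}[\dim W-1].
\]
The preimage $ev^{-1}(L^*)$ is exactly $\cU$, so the open embeddings $j_L\colon L^*\hookrightarrow L$ and $j_\cU\colon \cU\hookrightarrow W\times X$ form a cartesian square with $ev$ and $ev|_\cU$. Base change gives $ev^* j_{L,!}\cong j_{\cU,!}(ev|_\cU)^*$, and inserting the shift $[\dim W-1]$ produces ${}^{H\!}\underline{\dC}^{-\beta}_\cU$ by its very definition. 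Since $X$ is projective the projection $a_W$ is proper, hence $a_{W,*}\cong a_{W,!}$; combined with the factorization $a_\cU = a_W\circ j_\cU$, this yields $\Mh \cong a_{\cU,!}\, {}^{H\!}\underline{\dC}^{-\beta}_\cU$.

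For the dual formula, I would start from the relation $\MhD \cong \bD(\Mh)(\dim(W\times X))$ established in the proof of \cref{prop:directImageOfOBetaMHM}, and use the identity $\bD\circ a_{\cU,!} \cong a_{\cU,*}\circ \bD$. The remaining task is to identify $\bD\,{}^{H\!}\underline{\dC}^{-\beta}_\cU$. As a smooth rank-one (complex) Hodge module on the smooth variety $\cU$, its Verdier dual is the dual local system Tate-twisted by $\dim \cU = \dim W+\dim X$; combining \cref{lem:OLbetaDual} applied to $\cO^{-\beta}_{L^*}$ with the fact that pullback along the smooth morphism $ev|_\cU$ commutes with duality up to the appropriate twist, we obtain $\bD\,{}^{H\!}\underline{\dC}^{-\beta}_\cU \cong {}^{H\!}\underline{\dC}^{\beta}_\cU(\dim W+\dim X)$. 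Adding the Tate twist $(\dim(W\times X))$ produces the claimed $(2\dim W+2\dim X)$.

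Part (2) comes from a second base change, now on the cartesian fiber diagram whose top arrow is the closed embedding $i_{U_\lambda}\colon U_\lambda\hookrightarrow \cU$ realizing $U_\lambda$ as the fiber of $a_\cU$ over $\lambda$, and whose bottom arrow is $i_\lambda\colon\{\mathrm{pt}\}\hookrightarrow W$. Base change yields $i_\lambda^*\Mh \cong a_{\lambda,!}\, i_{U_\lambda}^*\,{}^{H\!}\underline{\dC}^{-\beta}_\cU$. Since the composition $ev\circ j_\cU\circ i_{U_\lambda}$ coincides with $\lambda|_{U_\lambda}\colon U_\lambda\hookrightarrow L^*$, the definition of ${}^{H\!}\underline{\dC}^{-\beta}_\cU$ gives $i_{U_\lambda}^*\,{}^{H\!}\underline{\dC}^{-\beta}_\cU \cong {}^{H\!}\underline{\dC}^{-\beta}_\lambda[\dim W]$. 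Shifting by $[-\dim W]$ and taking $H^m$ then produces $H^{\dim X+m}_c(U_\lambda,{}^{H\!}\underline{\dC}^{-\beta}_\lambda)$, as claimed. The dual assertion follows by the same method using $i_\lambda^! a_{\cU,*}\cong a_{\lambda,*}\,i_{U_\lambda}^!$ together with the closed-embedding pullback identity $i_{U_\lambda}^!\cong i_{U_\lambda}^*[-2\dim W](-\dim W)$ for the smooth fiber $U_\lambda\subset\cU$; the Tate twists combine precisely with the $(2\dim W+2\dim X)$ twist of $\MhD$ to give the $(\dim W+2\dim X)$ twist of the final answer.

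The argument is essentially formal; the main bookkeeping to take care of concerns tracking Tate twists and cohomological shifts, and in particular verifying that the fiber $U_\lambda$ is a smooth closed subvariety of $\cU$ of codimension exactly $\dim W$ so that the closed-embedding pullback identity $i_{U_\lambda}^!\cong i_{U_\lambda}^*[-2\dim W](-\dim W)$ applies.
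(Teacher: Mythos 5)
Your proposal is correct and follows essentially the same route as the paper: part (1) is the identical base-change-plus-properness argument followed by an application of $\bD$, and part (2) is the same fibre base change, merely performed in one step over $\cU$ rather than the paper's two steps via $X\times\{\lambda\}$. The only (harmless) variation is that for the second isomorphism in (2) you compute $i_\lambda^!\,\MhD$ directly via $!$-base change and the purity identity $i_{U_\lambda}^!\cong i_{U_\lambda}^*[-2\dim W](-\dim W)$, whereas the paper simply dualizes the already-established $*$-statement; both give the same twists and shifts.
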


\begin{proof}

  In the course of the proof, we will make repeatedly use of the base change property for algebraic mixed Hodge modules, as stated in \cite[Section 4.4.3]{SaitoMHM}.

  \begin{enumerate}
      \item
          This is almost immediate by considering the following cartesian diagram
          $$
          \begin{tikzcd}
            && \cU \ar{rr}{ev_{|\cU}} \ar[hook]{dd}{j_{\cU}} \ar[swap]{lldd}{a_\cU} && L^* \ar[hook]{dd}{j_{L}}\\ \\
            W && X\times W \ar{rr}{ev} \ar{ll}{a_W} && L
          \end{tikzcd}
          $$
          which yields (using \eqref{eq:defMbeta})
                                                                                                                              $$
            \begin{array}{rcl}
            \Mh
            &\stackrel{(*)}{\cong}&
            a_{W,!}\, ev^*\,{j_{L,!}}\, ^{H\!}\underline{\dC}^{-\beta}_{L^{*}}[\dim W-1]\\
            &\stackrel{(**)}{\cong}&
            a_{W,!}\, j_{\cU,!}\, {ev_{|\cU}^*} ^{H\!}\underline{\dC}^{-\beta}_{L^{*}}[\dim W -1] \\[0.5em]
            &=&
            a_{\cU,!}\, {ev_{|\cU}^*} ^{H\!}\underline{\dC}^{-\beta}_{L^{*}}[\dim W -1]\\
            &=&
            {a_{\cU,!}}\, ^{H\!}\underline{\dC}^{-\beta}_{\cU},
            \end{array}
            $$
          where the isomorphism $(*)$ holds because $a_W$ is proper (since $X$ is projective)
          and $ev$ is smooth, and where $(**)$ follows by base change. We then apply the duality functor $\bD$ on $D^b\MHM(W,\dC)$ on both sides of
          $a_{\cU,!} \, ^{H\!}\underline{\dC}^{-\beta}_{\cU}\cong \Mh$ to obtain that $a_{\cU,*} \, ^{H\!}\underline{\dC}^{\beta}_{\cU}(2\dim X + 2\dim W)
          \cong \MhD$.
      \item
          Write
          $$
          \Nh:=ev^* j_{L,!}\, ^{H\!}\underline{\dC}^{-\beta}_{L^{*}}[\dim W-1] \in \MHM(X\times W,\dC),
          $$
          then by the proof of the previous
          \cref{prop:directImageOfOBetaMHM} we have that
          $$
          \Mh \cong a_{W,!}\, \Nh.
          $$

          Now consider the cartesian diagram
          $$
          \begin{tikzcd}
            X\times \left\{\lambda\right\} \ar[hook]{rr}{i_\lambda^X} \ar[twoheadrightarrow]{dd}{a^X} && X\times W \ar[twoheadrightarrow]{dd}{a_W}\\ \\
            \left\{\lambda\right\} \ar[hook]{rr}{i_\lambda }&& W.
          \end{tikzcd}
          $$
          Then
          $$
          \begin{array}{rclcl}
            i_\lambda^*\,  \Mh            & \cong & i_\lambda^* {a_{W,!}}\, \Nh            \\ \\
            & \cong & a^X_! i^{X,*}_\lambda \, \Nh=
            a^X_!\,i^{X,*}_\lambda ev^* j_{L,!}\, ^{H\!}\underline{\dC}^{-\beta}_{L^{*}}[\dim W-1]
            && (\textup{base change}) \\ \\
            &\cong & a_!^X\, \lambda^* j_{L,!}\, ^{H\!}\underline{\dC}^{-\beta}_{L^{*}}[\dim W-1]  && (ev \circ i^X_\lambda =\lambda) \\ \\
            &\cong & a_*^X\, \lambda^* j_{L,!}\, ^{H\!}\underline{\dC}^{-\beta}_{L^{*}}[\dim W-1]   && (a^X \textup{ proper}).
          \end{array}
          $$
          Now we consider the diagram
          $$
          \begin{tikzcd}
            X \ar{rr}{\lambda} && L \\ \\
            U_\lambda \ar[hook]{uu}{j}
            \ar{rr}{\lambda_{|U_\lambda}}&& L^{*}
            \ar[hook]{uu}{j_{L}},
          \end{tikzcd}
          $$
          then base change yields $\lambda^*\, j_{L,!} \cong j_!\, \lambda_{|U_\lambda}^*$, so we get an isomorphism of objects in $D^b\MHM(\{\lambda\},\dC)$ (which we identify with the derived category of complex mixed Hodge structures).
          $$
          \begin{array}{rl}
          i_\lambda^*\,  \Mh                     &\cong
          a_*^X\, j_!\, \lambda_{|U_\lambda}^*\, ^{H\!}\underline{\dC}^{-\beta}_{L^{*}}[\dim W-1]
          \cong
          a_*^X\, j_!\, ^{H\!}\underline{\dC}^{-\beta}_\lambda[\dim W]\\
          &\cong
          a_!^X\, j_!\, ^{H\!}\underline{\dC}^{-\beta}_\lambda[\dim W]
          =a^{U_\lambda}_!\, ^{H\!}\underline{\dC}^{-\beta}_\lambda[\dim W],
          \end{array}
          $$
          where $a^{U_\lambda}\colon U_\lambda\twoheadrightarrow \{\lambda\}$ and where we have used $a^X_*=a^X_!$
          since $X$ is projective. We apply $H^m(-)$ to both sides to obtain
          an isomorphism of complex mixed Hodge structures
          $$
          H^m(i_\lambda^* \, \Mh          [- \dim W]) \cong
          H^m(a^{U_\lambda}_!\, ^{H\!}\underline{\dC}^{-\beta}_\lambda)
          =H^{m+ \dim X}_c(U_\lambda, {^{H\!}}\underline{\dC}^{-\beta}_\lambda).
        $$
                                recall that we use the convention $^{H\!}\underline{\dC}_{X} := a_X^* {^H\!}\dC_{pt}[\dim(X)]$.

For the second statement, we apply the duality functor $\bD$ in $D^b\MHM(W,\dC)$ to the isomorphism
$i_\lambda^*\,  \Mh  \cong a^{U_\lambda}_!\, ^{H\!}\underline{\dC}^{-\beta}_\lambda[\dim W]$ just proved, which gives
$$
i_\lambda^!\,  \MhD  \cong a^{U_\lambda}_*\, ^{H\!}\underline{\dC}^{\beta}_\lambda(\dim W + 2 \dim X)[-\dim W],
$$
and then by taking cohomology again we find that
$$
          H^m(i_\lambda^! \, \MhD[\dim W])  \cong H^{\dim(X)+m}(U_\lambda,\,^{H\!}\underline{\dC}^{\beta}_\lambda)(\dim W + 2 \dim X),
$$
as required. \qedhere
\end{enumerate}
\end{proof}

In the subsequent sections of this article, we will investigate to which extent tautological systems for homogeneous spaces $X$ are examples of the $\mathscr D$-modules underlying ${}^{H\!}\mathcal M_L^\beta$ for particular line bundles $L$ and values $\beta$.

\section{Non-vanishing criteria for tautological systems} \label{sec:nonVanishingCriteria}

The definition of a tautological system does not always describe a non-zero $\mathscr D$-module. In fact, for tautological systems arising from projective homogeneous spaces, this fails in a striking way, as we will see below in \cref{ssec:canSheafOnLineBdl}.
In that setup, tautological systems $\tau(\rho,\overline{Y},\beta)$ will only be non-zero for very particular representations $\rho$ and specific choices of $\beta$. In those cases however, tautological systems are particularly interesting.
The aim of this section is therefore to develop general criteria for vanishing resp.\ non-vanishing of tautological systems.

\subsection{\texorpdfstring{$\mathscr D$}{D}-modules from group actions} \label{ssec:VFfromGroupActions}

Here we consider the action of a linear algebraic group $G'$ on a variety $Y$.
The main case of interest (which will be discussed from \cref{ssec:canSheafOnLineBdl} on)
arises when we are given an action of a reductive linear algebraic group $G$ on a smooth variety $X$, and an equivariant line bundle $\mathscr{L}$ on $X$.  Denoting by $G'$ the group $G \times \C^*$, we let $G'$ act on  $Y$, which we take to be the total space $L$  (or the complement $L^*$ of the zero section) of $\mathscr{L}$. For the purpose of clarity, it is however useful to first treat the case of an arbitrary variety $Y$ admitting a $G'$-action, where $G'$ is any linear algebraic group. This is the point of view that we are going to adapt in Sections~\ref{ssec:VFfromGroupActions} to~\ref{sec:Twists}.

We begin by recalling some facts concerning group actions on smooth varieties. They are mainly included for the reader's convenience and in order to fix notations. The proofs are rather elementary and will therefore be omitted.

\begin{lem} \label{lem:defEquivVF}
  Let $G'$ be an algebraic group acting on a smooth variety $Y$. Then there is a unique Lie algebra homomorphism
  \[Z_Y \colon \mathfrak g' \to \Gamma(Y,\Theta_Y)\]
  associating to every element $\xi$ of the Lie algebra $\mathfrak g'$ of $G'$ a vector field $Z_Y(\xi)$ on $Y$ with the following point-wise description: At a point $y \in Y$, the tangent vector of the vector field $Z_Y(\xi)$ is given by $\differential \varphi^y(\xi)$, where $\varphi^y \colon G' \to Y$, $g \mapsto g^{-1} \cdot y$, and $\xi$ is understood as a tangent vector to $G'$ at the point $1 \in G'$.
\end{lem}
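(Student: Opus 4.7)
The plan is to construct $Z_Y(\xi)$ as a global vector field on $Y$ using the differential of the action morphism $\sigma \colon G' \times Y \to Y$, then verify the pointwise formula, uniqueness, and the Lie bracket identity in that order.

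For each $\xi \in \mathfrak g' = T_1 G'$, I would let $\tilde\xi$ denote the associated left-invariant vector field on $G'$. Viewing $\tilde\xi$ as a $\dC$-linear derivation of $\O_{G'}$ and using the identification $\O_{G' \times Y} = \O_{G'} \boxtimes \O_Y$, define $Z_Y(\xi)$ as the composition
\[ \O_Y \xrightarrow{\sigma^{\#}} \sigma_*(\O_{G'} \boxtimes \O_Y) \xrightarrow{-\tilde\xi \otimes \id} \sigma_*(\O_{G'} \boxtimes \O_Y) \xrightarrow{\textup{ev}_1} \O_Y, \]
where $\textup{ev}_1$ evaluates the $G'$-variables at $1 \in G'$. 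The Leibniz rule for $\tilde\xi \otimes \id$, together with $\textup{ev}_1 \circ \sigma^{\#} = \id_{\O_Y}$ (which reflects $\sigma(1,-) = \id_Y$), ensures that the composition is a $\dC$-linear derivation $\O_Y \to \O_Y$ and hence a global vector field $Z_Y(\xi) \in \Gamma(Y,\Theta_Y)$, manifestly $\dC$-linear in $\xi$. Equivalently, one may apply $\differential\sigma \colon \Theta_{G' \times Y} \to \sigma^*\Theta_Y$ to the vector field on $G' \times Y$ that pulls back $\tilde\xi$ along the first projection and is zero along the second, and then restrict along $\iota_1 \colon \{1\} \times Y \hookrightarrow G' \times Y$, using $\sigma \circ \iota_1 = \id_Y$ to identify $\iota_1^*\sigma^*\Theta_Y$ with $\Theta_Y$.

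The pointwise formula follows at once by noting that the restriction of $\sigma$ to $G' \times \{y\}$ equals $\varphi^y$, so $\differential\sigma_{(1,y)}(\xi,0) = \differential\varphi^y(\xi)$ and hence $Z_Y(\xi)|_y = -\differential\varphi^y(\xi)$. Uniqueness is then immediate: on the smooth (and in particular reduced) variety $Y$, any global section of $\Theta_Y$ is determined by its values at all closed points, which are now prescribed.

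The main obstacle is the Lie bracket identity $[Z_Y(\xi), Z_Y(\eta)] = Z_Y([\xi,\eta])$. I would verify this at the level of derivations of $\O_Y$ by exploiting the associativity axiom $\sigma \circ (m \times \id_Y) = \sigma \circ (\id_{G'} \times \sigma)$ (with $m$ the multiplication on $G'$), which allows one to rewrite the commutator $Z_Y(\xi) Z_Y(\eta) - Z_Y(\eta) Z_Y(\xi)$ as the action on $\O_Y$ of the commutator of the left-invariant extensions $\tilde\xi, \tilde\eta$ evaluated at $1 \in G'$. Since $\xi \mapsto \tilde\xi$ is the standard Lie algebra isomorphism from $\mathfrak g'$ onto the left-invariant vector fields on $G'$, one has $[\tilde\xi,\tilde\eta] = \widetilde{[\xi,\eta]}$ at $1$. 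The minus sign in the definition of $Z_Y$ is precisely what converts what would otherwise be an anti-homomorphism (because $\sigma$ is a left action) into an honest Lie algebra homomorphism.
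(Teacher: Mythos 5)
The paper does not actually contain a proof of this lemma: it is introduced with the remark that ``the proofs are rather elementary and therefore omitted,'' so there is nothing to compare your argument against line by line. On its own merits, your construction is correct and is the standard one: defining $Z_Y(\xi)$ as $\mathrm{ev}_1\circ(-\tilde\xi\otimes\id)\circ\sigma^{\#}$ gives a derivation by the Leibniz rule together with $\mathrm{ev}_1\circ\sigma^{\#}=\id$, the pointwise formula $Z_Y(\xi)|_y=-\differential\varphi^y(\xi)$ follows since $\sigma|_{G'\times\{y\}}=\varphi^y$, and uniqueness holds because a section of the locally free sheaf $\Theta_Y$ on the reduced variety $Y$ is determined by its values at closed points. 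Your treatment is also consistent with the paper's surrounding statements (the analytic formula via $\exp(t\xi)^{-1}$ and the later lemma identifying $-Z_{G'}(\xi)$ with the right-invariant field). The one step worth writing out in full is the bracket: associativity gives the intertwining $(\tilde\eta\otimes\id)\circ\sigma^{\#}=(\id\otimes X_\eta)\circ\sigma^{\#}$, where $X_\eta|_y=\differential\varphi^y(\eta)$, and since operators acting in the two separate factors commute, one finds $\mathrm{ev}_1(\tilde\xi\otimes\id)(\tilde\eta\otimes\id)\sigma^{\#}f=X_\eta X_\xi f$ --- note the reversal of order. This yields $[X_\xi,X_\eta]=-X_{[\xi,\eta]}$, i.e.\ an anti-homomorphism, and your closing observation that negating an anti-homomorphism of Lie algebras produces a homomorphism is exactly what finishes the proof; the signs all check out.
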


In the complex analytic category, the vector field $Z_Y(\xi)$ may be defined as the derivation
  \[Z_Y(\xi)(f) = \frac{d}{dt} \restr{f\left(\exp(t\xi)^{-1} \cdot {(-)}\right)}{t=0}.\]
If the $G'$-variety $Y$ considered is clear from the context, we will drop the index and just write $Z(\xi)$. In the literature, the vector field $Z(\xi)$ is sometimes denoted by $L_\xi$, see e.g.\ \cite[II.2]{Hot98}.

\begin{exa}
  Consider the action of $G'$ on itself by left-multiplication (i.e., $Y = G'$). Then $-Z_{G'}(\xi)$ is the right-invariant vector field associated to $\xi \in \mathfrak g'$. If, for example, $G' = (\C^*)^d$ and $\xi \in \C^d = \mathfrak g'$, then
  \[Z_{(\C^*)^d}(\xi) = -\sum_{i=1}^d \xi_i \, t_i \partial_{t_i},\]
  where $(t_1,\dots,t_d)$ are the standard coordinates on $(\C^*)^d$.
\end{exa}

For group actions on finite-dimensional vector spaces, we also have the following description:

\begin{lem} \label{lem:equivVFOnVectSp}
  Let $\rho \colon G' \to \GL(V)$ be a finite-dimensional rational representation of an algebraic group $G'$. The induced left action of $G'$ on $\C[V] = \bigoplus_{d\geq 0} \Sym^d V^\vee$ describes a morphism of algebraic groups $G' \to \GL_\C(\C[V])$ whose induced Lie algebra homomorphism $\mathfrak g' \to \End_\C(\C[V])$ makes the following diagram commute:
  \[\begin{tikzcd}
    \mathfrak g' \ar{rr} \ar{dr}[swap]{Z_V} & & \End_\C(\C[V]) \\
    & \Der(\C[V]) \ar[hookrightarrow]{ur} & .
  \end{tikzcd}\]
  Explicitly, if we fix coordinates $x_1, \ldots, x_n$ on $V$ and consider the associated Lie algebra representation $\differential\rho \colon \mathfrak g' \to \mathfrak{gl}(V) = \mathfrak{gl}(n,\C) = \C^{n\times n}$, then
  \[Z_V(\xi) = -\sum_{i,j=1}^n \differential\rho(\xi)_{ji} \, x_i \partial_{x_j}\]
  for all $\xi \in \Lie(G')$.
\end{lem}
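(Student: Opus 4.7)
The plan is to check the commutativity of the diagram on generators of $\C[V]$ and then extract the coordinate formula, having first argued that the induced Lie algebra representation lands in $\Der(\C[V])$.

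First I would verify that the homomorphism $\mathfrak g' \to \End_\C(\C[V])$ factors through $\Der(\C[V])$. The $G'$-action on $\C[V] = \Sym V^\vee$ given by $(g \cdot f)(v) := f(g^{-1}v)$ is by $\C$-algebra automorphisms, since it extends multiplicatively from the contragredient linear action on $V^\vee$. Differentiating the identity $g \cdot (fh) = (g\cdot f)(g \cdot h)$ along a one-parameter family through the identity of $G'$ yields the Leibniz rule for the associated Lie algebra operator, so the image of any $\xi \in \mathfrak g'$ is a derivation. Because $\Der(\C[V]) \hookrightarrow \End_\C(\C[V])$ is injective, it suffices from now on to show that $Z_V(\xi)$ agrees with this derivation.

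Since two derivations of $\C[V]$ agree if and only if they agree on the generating subspace $V^\vee \subset \C[V]$, I would now compute both sides on a linear form $x \in V^\vee$. Using the definition $Z_V(\xi)_v = -\differential\varphi^v(\xi)$ from \cref{lem:defEquivVF} and the identification $T_v V \cong V$ under which $\differential\varphi^v(\xi)$ corresponds to $\differential\rho(\xi)(v)$, one finds $\bigl(Z_V(\xi)(x)\bigr)(v) = -x\bigl(\differential\rho(\xi)(v)\bigr)$. On the other hand, the Lie algebra operator coming from the $G'$-action on $\C[V]$ sends $x$ to the polynomial $v \mapsto \frac{d}{dt}\restr{x(\exp(-t\xi)v)}{t=0} = -x(\differential\rho(\xi)(v))$, so the two agree. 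This proves commutativity of the diagram.

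The explicit coordinate formula then drops out of the previous computation. Taking $x = x_j$ and using $\differential\rho(\xi)_{ji}$ for the matrix entries of $\differential\rho(\xi)$, one obtains $Z_V(\xi)(x_j) = -\sum_{i} \differential\rho(\xi)_{ji}\, x_i$, and since a derivation on $\C[V]$ is determined by its effect on the coordinate functions via $D = \sum_j D(x_j)\, \partial_{x_j}$, we conclude $Z_V(\xi) = -\sum_{i,j} \differential\rho(\xi)_{ji}\, x_i \partial_{x_j}$. The only delicate point is bookkeeping of conventions: namely that the action on $\C[V]$ uses $g^{-1}$ (so the representation on polynomials is the contragredient of the one on $V$) and that $Z_V(\xi)$ carries an explicit minus sign in \cref{lem:defEquivVF}; these two sign choices combine consistently to yield the formula as stated.
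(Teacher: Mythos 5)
Your proof is correct. Note that the paper itself gives no proof of this lemma: it is one of the statements in \cref{ssec:VFfromGroupActions} that the authors declare ``rather elementary and therefore omitted,'' so there is no argument to compare against. Your write-up supplies exactly the intended verification — reduce to derivations, compare on the generating subspace $V^\vee$ using the pointwise description of $Z_V(\xi)$ from \cref{lem:defEquivVF}, and track the two sign conventions (the contragredient action on $\C[V]$ and the minus sign in the definition of $Z_Y(\xi)$), which is consistent with the paper's own remark in \cref{ex:P1VF} about where the minus signs come from.
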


\begin{exa}
  Let $G' = (\C^*)^d$ be a $d$-dimensional torus acting linearly on an $n$-di\-men\-sion\-al vector space $V$. We identify $V$ with $\C^n$ by picking a basis that diagonalizes the action, i.e., $t = (t_1,\dots,t_d) \in (\C^*)^d$ acts on $x = (x_1,\dots,x_n) \in \C^n$ by
  \[
  t \cdot x = (t^{\alpha_1} x_1, \dots, t^{\alpha_n} x_n) \qquad \text{with }\alpha_1, \ldots, \alpha_n \in \Z^d.
  \]
  If $\xi \in \Z^d = \Lie\left((\C^*)^d\right)$ is the $i$-th standard basis vector $e_i$, we get the vector field
  \[
  Z_V(e_i) = -\sum_{j=1}^n (\alpha_j)_i x_j \partial_{x_j}
  \]
  on $V$. These are the vector fields showing up in GKZ-systems associated to the given torus action.
\end{exa}

\begin{exa} \label{ex:P1VF}
  Let $X \subseteq \P^k$ be the rational normal curve of degree $k$, i.e., the image of
  \[\P^1 \xrightarrow{|\O(k)|} \P^k, \qquad [x_0:x_1] \mapsto \big[{\textstyle \binom{k}{i} \, x_0^{k-i} x_1^i \mid i = 0,\dots,k}\big],\]
  and let $Y := \hat X \setminus \{0\}$ be the punctured affine cone over $X$ in $V := \C^{k+1}$. The group $\SL(2)$ acts on $V = H^0(\P^1,\O(k))^\vee = \Sym^k (\C^2)$, the $k$-th symmetric power of the standard $\SL(2)$-representation, and we extend this to an action of $G' := \SL(2) \times \C^*$ by letting the $\C^*$-factor act by scaling on $V$.
    The Lie algebra $\mathfrak g'$ is generated by $E_{12}, E_{21}, E_{11}-E_{22} \in \fsl(2)$ and the generator $\mathbf e$ of $\Lie(\C^*) \cong \C$.
    The induced vector fields on $V$ are
  \[
  \begin{array}{c@{\hspace{3em}}c}
    \begin{aligned}
      Z_V(E_{12}) &= -\sum_{i=1}^k i \, z_i \partial_{z_{i-1}}, \\
      Z_V(E_{11}-E_{22}) &= -\sum_{i=0}^k  (k-2i) \, z_i \partial_{z_i},
    \end{aligned}
    &
    \begin{aligned}
      Z_V(E_{21}) &= -\sum_{i=1}^k (k-i+1) \, z_{i-1} \partial_{z_i}, \\
      Z_V(\mathbf e) &= -\sum_{i=0}^k z_i \partial_{z_i},
    \end{aligned}
  \end{array}
  \]
  where $z_0,\dots,z_k$ denote the coordinates on $V = \C^{k+1}$. Note that the minus signs appear because we differentiate the \emph{contragredient} action on the coordinate ring of $V$.

  On the $G'$-invariant subset $Y$, these vector fields restrict to the vector fields $Z_Y(\xi)$. In local charts, these can be expressed as follows: We may cover $Y$ by the two open subsets $U_0$ and $U_1$ given by the non-vanishing of $x_0^k \in V^\vee$ and $x_1^k \in V^\vee$, respectively. Identifying
  \begin{align*}
    U_0 &\cong \C^* \times \C, \qquad
    \lambda \cdot {\textstyle(1,ks, \binom{k}{2}s^2,\dots,ks^{k-1},s^k)} \quad \mapsfrom \ (\lambda,s), \\
    U_1 &\cong \C^* \times \C, \qquad
    \mu \cdot  {\textstyle(t^k,kt^{k-1},\binom{k}{2}t^{k-2},\dots,kt,1)} \ \, \mapsfrom \ (\mu,t),
  \end{align*}
  the vector fields induced from the $G'$-action on $Y$ are:
  \[
\begin{array}{c@{\hspace{3em}}c}
  \begin{aligned}
  \restr{Z_Y(E_{12})}{U_0} &= -ks \lambda \partial_\lambda + s^2 \partial_s, \\
  \restr{Z_Y(E_{21})}{U_0} &= -\partial_s, \\
  \restr{Z_Y(E_{11}-E_{22})}{U_0} &= -k \lambda \partial_\lambda + 2 s \partial_s, \\
  \restr{Z_Y(\mathbf e)}{U_0} &= -\lambda \partial_\lambda,
  \end{aligned} &
  \begin{aligned}
    \restr{Z_Y(E_{12})}{U_1} &= -\partial_t,\\
    \restr{Z_Y(E_{21})}{U_1} &= -kt \mu \partial_\mu + t^2 \partial_t,\\
    \restr{Z_Y(E_{11}-E_{22})}{U_1} &= k \mu \partial_\mu - 2 t \partial_t, \\
    \restr{Z_Y(\mathbf e)}{U_1} &= -\mu \partial_\mu.
  \end{aligned}
\end{array}\]
Note that these local expressions coincide on the intersection $U_0 \cap U_1$ under the gluing
$\C^* \times \C^* \xrightarrow{\cong} \C^* \times \C^*$, $(\lambda,s) \mapsto (\lambda s^k, s^{-1}) = (\mu,t)$.
\end{exa}

Using the vector fields defined in \cref{lem:defEquivVF}, we introduce the following $\mathscr{D}$-modules:

\begin{dfn} \label{def:NBeta}
  Let $G'$ be an algebraic group acting on a smooth variety $Y$ and let $\beta \colon \mathfrak g' \to \C$ be a Lie algebra homomorphism. Then we define the left $\mathscr{D}_Y$-module
  \[\mathscr N_Y^\beta := \omega_Y^\vee \otimes_{\O_Y} \mathscr{D}_Y/(Z_Y(\xi)-\beta(\xi) \mid \xi \in \mathfrak g')\mathscr{D}_Y. \qedhere\]
\end{dfn}

Recall that on a local coordinate system, the right-left transformation $\omega_Y^\vee \otimes (\cdot)$ is given by transposition of operators, see e.g.~\cite[§1.2]{Hotta}, but this  does not globalize in general.

\begin{rmk}\label{rem:LocSysRkOne}
In case that the action of $G'$ on $Y$ is transitive, it is easy to see that if $\mathscr N_Y^\beta\neq 0$, then it is a smooth $\cD_Y$-module of rank $1$ (namely, the vector fields $Z_Y(\xi)$, when $\xi$ runs through $\fg'$, generate the tangent bundle of $Y$). As already mentioned, our main case of interest is when $Y=L^*$ for some $\dC^*\times G$-equivariant line bundle $L$ on a homogeneous $G$-space $X$. Then $G':=\C^*\times G$ clearly acts transitively on $L^*$, and therefore $\mathscr N_Y^\beta$ corresponds to a rank $1$ local system on $L^*$. By the discussion in \cref{sec:OBeta}, we then know that $\mathscr N_Y^\beta$ must be one of the $\cD_{L^*}$-modules introduced in \cref{def:Obeta-new}. Under the hypothesis that $G$ is semisimple, one can also show  that a non-zero $\mathscr{N}_Y^\beta$ is isomorphic to
$\cO_{L^*}^{\beta(\mathbf e)}$, where $\mathbf e$ is the generator of the Lie algebra of $\C^*$, as we will discuss later in \cref{prop:NBetaEqualsOBeta}. However, one of the main points in this section is that very often, the module $\mathscr N_Y^\beta$ (and, if $Y$ is an orbit in a representation space $V$ of $G'$, the tautological system $\tau(\rho,\overline{Y},\beta)$) is zero, and then it is certainly not isomorphic to $\cO_{L^*}^{\beta}$. We will develop below criteria that guarantee the non-vanishing of the modules $\mathscr N_Y^\beta$ resp.\ of tautological systems (see \cref{prop:NBetaEqualsOBeta} and \cref{thm:restrictedTauHatDescription} below).
\end{rmk}

\begin{exa}
  Let $G' = T = (\C^*)^d$ be a $d$-dimensional torus acting on itself. We identify Lie algebra homomorphisms $\beta \colon \C^d = \mathfrak g' \to \C$ with vectors $\beta \in \C^d$. Then
  \[\mathscr N_T^\beta = \omega_T^\vee \otimes_{\O_T} \mathscr{D}_T/(-t_i \partial_{t_i}-\beta_i \mid i = 1,\dots,d)\mathscr{D}_T \cong \mathscr{D}_T/\mathscr{D}_T(\partial_{t_i} t_i-\beta_i \mid i = 1,\dots,d).\]
  This $\mathscr D_T$-module was called $\O_T^{-\beta}$ in \cite{RS20}. \end{exa}

\begin{exa} \label{ex:P1NBeta}
  We reconsider the action of $G' = \SL(2) \times \C^*$ on the punctured affine cone $Y$ over the rational normal curve of degree $k$ from \cref{ex:P1VF} and use the notations from before. Every Lie algebra homomorphism $\beta \colon \mathfrak g' \to \C$ is given by $\restr{\beta}{\fsl(2)} \equiv 0$ and $\beta(\mathbf{e}) = \beta_0 \in \C$. By the computations in \cref{ex:P1VF}, in the local chart $U_0 \cong \C^* \times \C \subseteq Y$, the $\mathscr D_Y$-module $\mathscr{N}_Y^\beta$ can be expressed as
  \begin{align*}
    \restrK{\mathscr N_Y^\beta}{U_0} &\cong
    \omega_{U_0}^\vee \otimes_{\O_{U_0}} \mathscr{D}_{U_0}/(
    -ks \lambda \partial_\lambda + s^2 \partial_s, \;
    -\partial_s, \;
    -k \lambda \partial_\lambda + 2 s \partial_s, \;
    -\lambda \partial_\lambda - \beta_0)\mathscr{D}_{U_0} \\
    &\cong \mathscr{D}_{U_0}/\mathscr{D}_{U_0}(
    ks \partial_\lambda \lambda - \partial_s s^2, \;
    \partial_s, \;
    k \partial_\lambda \lambda - 2 \partial_s s, \;
    \partial_\lambda \lambda - \beta_0) \\
    &\cong \mathscr{D}_{U_0}/\mathscr{D}_{U_0}(
    ks \lambda \partial_\lambda - s^2 \partial_s + (k-2)s, \;
    \partial_s, \;
    k \lambda \partial_\lambda - 2 s \partial_s + (k-2), \;
    \lambda \partial_\lambda + 1 - \beta_0) \\
    &\cong \mathscr{D}_{U_0}/\mathscr{D}_{U_0}(
    \partial_s, \;
    \lambda \partial_\lambda + 1 - \beta_0, \; k(-1+\beta_0)+(k-2)) \\
    &\cong
    \begin{cases}
      \mathscr{D}_{\C^*}/\mathscr{D}_{\C^*}(\partial_\lambda \lambda - \beta_0) \boxtimes \mathscr{D}_{\C}/\mathscr{D}_{\C} \cdot \partial_s
            \quad &\text{if } \beta_0 = 2/k, \\
      0 &\text{otherwise}
    \end{cases}
  \end{align*}
  and similarly for the other local chart $U_1$ of $Y$. In particular, for one specific value for $\beta(\mathbf e)$, we obtain a non-zero $\mathscr D_Y$-module that will be of interest to us.

  Note that in contrast, if we define the cyclic \emph{left} module
  \[\tilde{\mathscr N}_Y^\beta := \mathscr{D}_Y/\mathscr{D}_Y(Z_Y(\xi)-\beta(\xi) \mid \xi \in \mathfrak g'),\]
  then, in this example, we get
  \begin{align*}
    \restrK{\tilde{\mathscr N}_Y^\beta}{U_0} &\cong
    \mathscr{D}_{U_0}/\mathscr{D}_{U_0}(
    -ks \lambda \partial_\lambda + s^2 \partial_s, \;
    -\partial_s, \;
    -k \lambda \partial_\lambda + 2 s \partial_s, \;
    -\lambda \partial_\lambda - \beta_0) \\
    &\cong \mathscr{D}_{U_0}/\mathscr{D}_{U_0}(
    \partial_s, \;
    \lambda \partial_\lambda + \beta_0, \;
    k \beta_0)
    \cong
    \begin{cases}
      \O_{U_0} &\text{if } \beta_0 = 0, \\
      0 &\text{otherwise}.
    \end{cases}
  \end{align*}
  For $k = 2$, we have $\mathscr N_Y^{(\beta_0=1)} = \tilde{\mathscr N}_Y^{(\beta_0=0)}$, but in general they do not agree with each other. In fact, one can show that although $\mathscr{N}_Y^\beta$ is \emph{locally} a cyclic left $\mathscr D_Y$-module, it does not admit a \emph{global} description as a cyclic left $\mathscr D_Y$-module for $k \geq 3$.
\end{exa}

The main reason we wish to consider the $\mathscr D_Y$-module $\mathscr N_Y^\beta$ defined via the right-left-transformation of a cyclic right-module is the following behavior under equivariant closed embeddings:

\begin{prop} \label{prop:directImageOfNbeta}
  Let $G'$ be an algebraic group and let $i \colon Y_1 \hookrightarrow Y_2$ be a $G'$-equivariant closed embedding between smooth $G'$-varieties $Y_1,Y_2$.
    Then, for all Lie algebra homomorphisms $\beta \colon \mathfrak g' \to \C$, we have
  \[i_+ \mathscr N_{Y_1}^\beta \cong   \omega_{Y_2}^\vee \otimes_{\O_{Y_2}} \mathscr{D}_{Y_2}/\big(\mathcal{I} + (Z_{Y_2}(\xi)-\beta(\xi) \mid \xi \in \mathfrak g')\big)\mathscr{D}_{Y_2}\]   where $\mathcal {I} \subseteq \O_{Y_2}$ is the ideal sheaf of $Y_1$ in $Y_2$.
\end{prop}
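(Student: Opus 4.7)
The plan is to transfer the problem to cyclic right $\mathscr{D}$-modules, where the direct image under a closed embedding admits a transparent presentation, and then switch back via side-switching. Set
\[
\mathscr{M}_Y^\beta := \mathscr{D}_Y\big/(Z_Y(\xi)-\beta(\xi) \mid \xi \in \mathfrak{g}')\mathscr{D}_Y,
\]
a cyclic right $\mathscr{D}_Y$-module, so that by definition $\mathscr{N}_Y^\beta = \omega_Y^\vee \otimes_{\O_Y} \mathscr{M}_Y^\beta$. Since the side-switching functor intertwines the left- and right-module direct images, $i_+(\omega_{Y_1}^\vee \otimes_{\O_{Y_1}} \mathscr{M}) \cong \omega_{Y_2}^\vee \otimes_{\O_{Y_2}} i^{\mathrm{r}}_+ \mathscr{M}$, it suffices to prove the right-module statement
\[
i^{\mathrm r}_+ \mathscr{M}_{Y_1}^\beta \;\cong\; \mathscr{D}_{Y_2}\big/\big(\mathcal{I} + (Z_{Y_2}(\xi)-\beta(\xi)\mid\xi\in\mathfrak{g}')\big)\mathscr{D}_{Y_2}.
\]

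Next I would compute the right-hand side using the standard description of $i^{\mathrm r}_+$ for a closed embedding. Because $i$ is closed, $i^{\mathrm r}_+$ is exact (Kashiwara) and $i^{\mathrm r}_+\mathscr{D}_{Y_1} = i_*\mathscr{D}_{Y_1\to Y_2} \cong \mathscr{D}_{Y_2}/\mathcal{I}\mathscr{D}_{Y_2}$ as a right $\mathscr{D}_{Y_2}$-module. Applying $i^{\mathrm r}_+$ to the presentation of $\mathscr{M}_{Y_1}^\beta$ realizes $i^{\mathrm r}_+ \mathscr{M}_{Y_1}^\beta$ as the quotient of $\mathscr{D}_{Y_2}/\mathcal{I}\mathscr{D}_{Y_2}$ by the right $\mathscr{D}_{Y_2}$-submodule generated by the image of the relations $Z_{Y_1}(\xi)-\beta(\xi)$ under the residual left $\mathscr{D}_{Y_1}$-action on the transfer module. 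The crux is then to identify this image with the class of $Z_{Y_2}(\xi)-\beta(\xi)$ acting by left multiplication on $\mathscr{D}_{Y_2}/\mathcal{I}\mathscr{D}_{Y_2}$.

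This identification is where the $G'$-equivariance of $i$ enters, and I expect it to be the part requiring the most care. Equivariance forces $Y_1$ to be $G'$-stable; hence each $Z_{Y_2}(\xi)$ is tangent to $Y_1$, i.e.\ $Z_{Y_2}(\xi)(\mathcal{I}) \subseteq \mathcal{I}$, so left multiplication by $Z_{Y_2}(\xi)$ descends to a well-defined operator on $\mathscr{D}_{Y_2}/\mathcal{I}\mathscr{D}_{Y_2}$. Applying the naturality of the infinitesimal action map from \cref{lem:defEquivVF} to the equivariant closed embedding $i$ further yields the restriction formula $Z_{Y_2}(\xi)|_{Y_1} = Z_{Y_1}(\xi)$. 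The remaining technical subtlety---matching the abstract left $\mathscr{D}_{Y_1}$-action on $i_*\mathscr{D}_{Y_1\to Y_2}$ with left multiplication by any tangent lift---is a local check: in coordinates $(x_1,\dots,x_r,y_1,\dots,y_s)$ on $Y_2$ for which $\mathcal{I}=(x_1,\dots,x_r)$, any vector field $\theta = \sum_j f_j(y)\partial_{y_j}$ tangent to $Y_1$ acts on $\mathscr{D}_{Y_2}/\mathcal{I}\mathscr{D}_{Y_2}$ by left multiplication, and this recovers the intrinsic $\mathscr{D}_{Y_1}$-action through the transfer module. Combining this compatibility with $Z_{Y_2}(\xi)|_{Y_1} = Z_{Y_1}(\xi)$ yields exactly the presentation claimed in the proposition.
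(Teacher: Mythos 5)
Your proposal is correct and follows essentially the same route as the paper's proof: reduce to the cyclic right module via side-switching, identify $\mathscr{D}_{Y_1\to Y_2}$ with $i^{-1}(\mathscr{D}_{Y_2}/\mathcal{I}\mathscr{D}_{Y_2})$ using exactness of $i_*$ and right-exactness of the tensor product, and then use equivariance to show $\differential i(Z_{Y_1}(\xi)) = Z_{Y_2}(\xi)|_{Y_1}$ so that the relations push forward to left multiplication by $Z_{Y_2}(\xi)-\beta(\xi)$. The only difference is cosmetic: you make the local-coordinate verification of the transfer-module action explicit where the paper asserts it.
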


\begin{proof}
  Since $i \colon Y_1 \hookrightarrow Y_2$ is a closed embedding, the functor $i_*$ is exact and the transfer module $\mathscr{D}_{Y_1 \to Y_2}$ is a flat $\mathscr{D}_{Y_1}$-module. Therefore, the direct image of $\mathscr N_{Y_1}^\beta$ under $i$ is given by
  \[i_+  \mathscr N_{Y_1}^\beta \cong \omega_{Y_2}^\vee \otimes_{\O_{Y_2}} i_*\big(\mathscr{D}_{Y_1}/(Z_{Y_1}(\xi)-\beta(\xi) \mid \xi \in \mathfrak g') \mathscr D_{Y_1} \otimes_{\mathscr{D}_{Y_1}} \mathscr{D}_{Y_1\to Y_2}\big).\]
  Hence, the claim is that
  \begin{align*}
    &\mathscr{D}_{Y_1}/(Z_{Y_1}(\xi)-\beta(\xi) \mid \xi \in \mathfrak g') \mathscr D_{Y_1} \otimes_{\mathscr{D}_{Y_1}} \mathscr{D}_{Y_1\to Y_2} \\
    &\qquad \cong i^{-1} \big(\mathscr{D}_{Y_2}/((Z_{Y_2}(\xi)-\beta(\xi)\mid \xi \in \mathfrak g') \mathscr{D}_{Y_2} + \mathcal I \mathscr{D}_{Y_2}) \big).
  \end{align*}
  as right $i^{-1}\mathscr{D}_{Y_2}$-modules.
  Note that $\mathscr{D}_{Y_1\to Y_2} \cong i^{-1}(\mathscr{D}_{Y_2}/\mathcal I \mathscr{D}_{Y_2})$ as right $i^{-1} \mathscr{D}_{Y_2}$-modules, since $i$ is a closed embedding. Under the left $\mathscr{D}_{Y_1}$-module structure on $\mathscr{D}_{Y_1\to Y_2}$, vector fields on $Y_1$ act via the push-forward homomorphism
  \[\differential i \colon \Theta_{Y_1} \to i^* \Theta_{Y_2} = \O_{Y_1} \otimes_{i^{-1} \O_{Y_2}} i^{-1} \Theta_{Y_2} \cong i^{-1}(\O_{Y_2}/\mathcal{I} \otimes_{\O_{Y_2}} \Theta_{Y_2}).\]

  We note that the push-forward of the vector field $Z_{Y_1}(\xi)$ on $Y_1$ agrees with the restriction of the vector field $Z_{Y_2}(\xi)$ on $Y_2$ to $Y_1$, i.e., $\differential i(Z_{Y_1}(\xi)) = 1 \otimes Z_{Y_2}(\xi)$. Indeed, this follows from the construction of $Z_{Y_1}(\xi)$ and $Z_{Y_2}(\xi)$, using the commutativity of
  \[\begin{tikzcd}
    G' \times Y_1 \ar{r}{\varphi_1} \ar{d}{\id_{G'} \times i} & Y_1 \ar{d}{i} \\
    G' \times Y_2 \ar{r}{\varphi_2} & Y_2,
  \end{tikzcd}\]
  where $\varphi_1, \varphi_2$ are the morphisms given by the $G'$-actions.

  This shows that $Z_{Y_1}(\xi) \in \Der(\O_{Y_1})$ acts on the right $i^{-1} \mathscr D_{Y_2}$-module $\mathscr D_{Y_1 \to Y_2} \cong i^{-1}(\mathscr D_{Y_2}/\mathcal I \mathscr{D}_{Y_2})$ by left-multiplication with $Z_{Y_2}(\xi)$. This implies the claimed description as a cyclic right $i^{-1} \mathscr{D}_{Y_2}$-module of $\mathscr{D}_{Y_1}/(Z_{Y_1}(\xi)-\beta(\xi) \mid \xi) \mathscr D_{Y_1} \otimes_{\mathscr{D}_{Y_1}} \mathscr{D}_{Y_1\to Y_2}$, concluding the proof.
\end{proof}

The $\mathscr{D}$-modules in \cref{prop:directImageOfNbeta} look similar to the $\beta$-twistedly equivariant $\mathscr{D}$-modules considered in \cite[II.2]{Hot98}, yet they are different: Instead of considering a cyclic \emph{left} module obtained by quotienting out a $G'$-stable ideal and the vector fields induced by the group action (twisted with $\beta$), we instead consider the \emph{right} module constructed in the same way and apply a right-left transformation to obtain a left $\mathscr{D}$-module. The behavior under direct images of closed embeddings in \cref{prop:directImageOfNbeta} is the reason why for our purposes we work with the definition via right modules in \cref{def:NBeta}.

We next consider the situation where $Y$ is an orbit
of a rational representation $\rho$ of our group $G'$ in a given vector space $V$. Recall from our basic  \cref{def:tautSys} that under this hypothesis, we can define, for any Lie algebra homomorphism $\beta\colon \fg'\rightarrow \dC$, the $\mathscr{D}_V$-module $\hat{\tau}(\rho, \overline{Y},\beta)$ (as well as its Fourier-Laplace transform $\tau(\rho, \overline{Y},\beta)$ which was called tautological system in \cref{def:tautSys}). The next result tells us about a technically easy but imporant relation of this $\hat{\tau}(\rho, \overline{Y},\beta)$ to the $\mathscr{D}_Y$-module $\mathscr{N}_Y^\beta$ considered above.
\begin{cor} \label{cor:FLTautSysAsDirectImage}
  Let $\rho \colon G' \to \GL(V)$ be a finite-dimensional rational representation of an algebraic group and let $\beta \colon \mathfrak g' \to \C$ be a Lie algebra homomorphism. Let $Y \subseteq V$ be a $G'$-orbit, let $\overline{Y}$ be its closure and let $\partial Y := \overline Y \setminus Y$.
  Then
  \[j^+ \hat{\tau}(\rho, \overline{Y}, \beta) \cong i_+ \mathscr{N}_{Y}^{\beta},\]
  where $Y = \overline Y \setminus \partial Y \xhookrightarrow{i} U:=V \setminus \partial Y \xhookrightarrow{j} V$.

  In particular, if  $\hat{\tau}(\rho, \overline{Y}, \beta)$ is localized at $\partial Y$ (meaning $j_+ j^+ \hat{\tau}(\rho,\overline{Y},\beta) \cong \hat{\tau}(\rho,\overline{Y},\beta)$), then it is the direct image of $\mathscr{N}_{Y}^{\beta}$ under the locally closed embedding $Y \hookrightarrow V$.
\end{cor}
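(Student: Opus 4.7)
The plan is to verify the isomorphism on $U = V \setminus \partial Y$ by comparing cyclic presentations. Since $\overline Y$ and $Y$ are $G'$-stable, so is $\partial Y$; hence $U$ is $G'$-stable, and the vector fields $Z_V(\xi)$ restrict to the intrinsic $Z_U(\xi)$ on $U$. Because restriction to an open subset preserves cyclic presentations,
\[j^+\hat\tau(\rho,\overline Y,\beta) \cong \mathscr{D}_U/\bigl(\mathscr{D}_U \cI|_U + \mathscr{D}_U(Z_U(\xi) - \beta'(\xi) \mid \xi \in \mathfrak g')\bigr),\]
where $\cI|_U \subseteq \O_U$ is the ideal sheaf of the $G'$-stable closed subvariety $i \colon Y = \overline Y \cap U \hookrightarrow U$.

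Next, applying \cref{prop:directImageOfNbeta} to this equivariant closed embedding yields
\[i_+ \mathscr N_Y^\beta \cong \omega_U^\vee \otimes_{\O_U} \mathscr{D}_U/\bigl((\cI|_U + (Z_U(\xi) - \beta(\xi) \mid \xi \in \mathfrak g'))\mathscr{D}_U\bigr).\]
The task is to identify this left $\mathscr{D}_U$-module, obtained by side-change from a cyclic \emph{right} module, with the cyclic \emph{left} $\mathscr{D}_U$-module $j^+\hat\tau$ displayed above.

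The heart of the proof is the following calculation. Since $U$ is open in the vector space $V$, the top form $\sigma := \differential x_1 \wedge \cdots \wedge \differential x_n$ trivializes $\omega_U$, and the side-change can be implemented through the formal adjoint anti-involution $P \mapsto P^*$ on $\mathscr{D}_V$ (characterized by $f^* = f$ for $f \in \O_V$, $\partial_{x_i}^* = -\partial_{x_i}$, and $(PQ)^* = Q^* P^*$): for any right ideal $J \subseteq \mathscr{D}_V$, the left module $\omega_V^\vee \otimes_{\O_V} \mathscr{D}_V/J$ is canonically isomorphic to $\mathscr{D}_V/J^*$, where $J^*$ is the left ideal generated by the adjoints of generators of $J$. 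Using the explicit formula $Z_V(\xi) = -\sum_{i,j} \differential\rho(\xi)_{ji} \, x_i \partial_{x_j}$ from \cref{lem:equivVFOnVectSp}, a short direct computation gives
\[Z_V(\xi)^* \;=\; -Z_V(\xi) + \trace(\differential\rho(\xi)),\]
so that $(Z_V(\xi) - \beta(\xi))^* = -(Z_V(\xi) - \beta'(\xi))$, which generates the same left ideal as $Z_V(\xi) - \beta'(\xi)$. Combined with $f^* = f$ for $f \in \cI|_U$, this identifies $J^*$ with $\mathscr{D}_U \cI|_U + \mathscr{D}_U(Z_V(\xi) - \beta'(\xi))$, establishing $i_+ \mathscr N_Y^\beta \cong j^+\hat\tau(\rho,\overline Y,\beta)$.

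For the ``in particular'' claim, applying $j_+$ to the just-established isomorphism and invoking the localization hypothesis gives $\hat\tau \cong j_+ j^+ \hat\tau \cong j_+ i_+ \mathscr N_Y^\beta = (j \circ i)_+ \mathscr N_Y^\beta$, exhibiting $\hat\tau$ as the direct image of $\mathscr N_Y^\beta$ under the locally closed embedding $Y \hookrightarrow V$. The main technical delicacy lies in the sign bookkeeping for the formal adjoint of $Z_V(\xi)$, which cleanly yields the $\trace(\differential\rho(\xi))$ correction that converts $\beta$ into $\beta'$; everything else is formal manipulation of cyclic presentations and a standard application of the equivariant closed embedding formula.
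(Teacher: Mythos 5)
Your proposal is correct and follows essentially the same route as the paper's own proof: apply \cref{prop:directImageOfNbeta} to the equivariant closed embedding $i \colon Y \hookrightarrow U$, then implement the side-change via the transpose/formal adjoint and compute $Z_V(\xi)^T = -Z_V(\xi) + \trace(\differential\rho(\xi))$ using \cref{lem:equivVFOnVectSp}, which is exactly the calculation in the paper. The only cosmetic difference is that you spell out the restriction-to-$U$ step and the final application of $j_+$ slightly more explicitly.
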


\begin{proof}
  We apply \cref{prop:directImageOfNbeta} to the $G'$-spaces $Y_1 := Y$, $Y_2 := V \setminus \partial Y$ and the closed embedding $i \colon Y_1 \hookrightarrow Y_2$ to see that
  \[
  i_+ \mathscr{N}_{Y}^{\beta} \cong \omega_{U}^\vee \otimes_{\O_U} \mathscr{D}_U/\big(\cI + \{Z_U(\xi)-\beta(\xi)\}\big)\mathscr{D}_U.
  \]
  Choosing coordinates $x_1,\dots,x_n$ on $V$, we may by \cref{lem:equivVFOnVectSp} express the vector field $Z_U(\xi)$ as the derivation $-\sum_{i,j=1}^n \differential\rho(\xi)_{ji} \, x_i \partial_{x_j}$. The right-left transformation $\omega_U^\vee \otimes_{\O_U} (\cdot)$ is then explicitly given by transposing operators:
  \[
  i_+ \mathscr{N}_{Y}^\beta \cong \mathscr{D}_U/\mathscr{D}_U \big(\cI + \{Z_U(\xi)^T-\beta(\xi)\}\big).
  \]
  An explicit computation of the transposed vector fields yields:
  \[
  Z_V(\xi)^T = \sum_{i,j=1}^n \differential\rho(\xi)_{ji} \, \partial_{x_j} x_i
  = \sum_{i,j=1}^n \differential \rho(\xi)_{ji} \, x_i \partial_{x_j} + \sum_{i=1}^n \differential\rho(\xi)_{ii} = -Z_V(\xi)+\trace(\differential \rho(\xi)),
  \]
  hence (using $Z_V(\xi)_{|U}=Z_U(\xi)$) we have that $i_+ \mathscr{N}_{Y}^\beta \cong j^+ \hat{\tau}(\rho, \overline{Y}, \beta)$.
\end{proof}

\begin{exa}[GKZ-systems]

  Consider a torus representation $\rho \colon (\C^*)^n \to \GL(n,\C)$ that is given by $\rho(t_1,\dots,t_d) = \diag(t^{\alpha_1}, \dots, t^{\alpha_n})$ with $\alpha_i \in \Z^d$. Let $\overline{Y} \subseteq \C^n$ be the orbit closure of the point $(1,\dots,1) \in \C^n$; this is a (not necessarily normal) affine toric variety. The $\mathscr{D}_{\C^n}$-module $\hat{\tau}(\rho, \overline{Y}, \beta)$ is the Fourier--Laplace transform $\widehat{\mathscr{M}}_{A}(-\beta)$ of the GKZ-system $\mathscr{M}_{A}(-\beta)$ (see, e.g.  \cite{GKZReview} for an overview and for the notation used here), where $A$ is the $d \times n$-matrix whose $i$-th column is $\alpha_i$ and $\beta \colon \Lie\left((\C^*)^d\right) = \Z^d \to \C$ is identified with the vector $\left(\beta(e_i)\right)_{i=1,\dots,d} \in \C^d$.

  In this case, \cref{cor:FLTautSysAsDirectImage} applied to $Y = \overline{Y} \cap (\C^*)^n$ says that $\widehat{\mathscr{M}}_{A}(-\beta)$ is the direct image of $\mathscr{N}_{(\C^*)^d}^\beta = \O_{(\C^*)^d}^{-\beta}$ under the locally closed embedding $(\C^*)^d \cong Y \hookrightarrow \C^n$, whenever $\mathscr{M}_{A}(-\beta)$ is localized at the intersection of $Y$ with the union of coordinate hyperplanes of $\C^n$. This was observed in \cite{SW09}, where an explicit combinatorial characterization of the localization property in terms of $A$ and $\beta$ was proved using Euler--Koszul complexes.
\end{exa}

\begin{exa}
  Reconsider from \cref{ex:P1VF} the punctured affine cone $Y$ over the rational normal curve of degree $k$. This may be identified with the complement of the zero section in the line bundle $L = \Tot(\O_{\P^1}(k)) \to \P^1$. The calculation in \cref{ex:P1NBeta} shows that $\mathscr N_Y^\beta = \O_{L^*}^{-\beta(\mathbf e)}$ if $\beta(\mathbf e) = 2/k$ and $\mathscr N_Y^\beta = 0$ otherwise. \cref{cor:FLTautSysAsDirectImage} shows that the restriction of the FL-transformed tautological system
  \begin{align*}
    \hat \tau(\rho,\overline Y, \beta) = \mathscr D_V / \mathscr D_V \cdot &\left\{\binom{k}{i_2} \binom{k}{j_2} z_{i_1} z_{j_1}- \binom{k}{i_1} \binom{k}{j_1} z_{i_2} z_{j_2} \mid i_1+j_1=i_2+j_2\right\} \\
    &\cup \left\{-\sum_{i=1}^k i \, z_i \partial_{z_{i-1}}, \; -\sum_{i=1}^k (k-i+1) \, z_{i-1} \partial_{z_i}, \; \right. \\
    &\phantom{{}\cup{} \bigg\{}\left. -\sum_{i=0}^k  (k-2i) \, z_i \partial_{z_i}, \; -\sum_{i=0}^k z_i \partial_{z_i}-(k+1)+\beta(\mathbf e)\right\}
  \end{align*}
  to the complement of the origin in $V$ is
  \[
  \restr{\hat \tau(\rho,\overline Y,\beta)}{V\setminus \{0\}} =
  \begin{cases}
   i_+ \O_{L^*}^{-\beta(\mathbf{e})} &\text{if } \beta(\mathbf e) = 2/k, \\
   0 &\text{otherwise}.
  \end{cases}
  \]
\end{exa}

\subsection{\texorpdfstring{$\cD$}{D}-modules from equivariant line bundles} \label{ssec:DFromAMod}

The non-vanishing of tautological systems is by \cref{cor:FLTautSysAsDirectImage} closely tied to the non-vanishing of the $\mathscr{D}$-modules $\mathscr{N}_Y^\beta$. The aim of this section is to study criteria for $\mathscr N_Y^\beta$ to be (non-)zero. In order to do so, we introduce another construction of $\cD$-modules on a $G'$-variety $Y$, which also depends on the choice of an equivariant line bundle on $Y$. It will turn out (this is the main result of \cref{sec:Twists} below) that the modules $\mathscr N_Y^\beta$ can be expressed in exactly this way.
    For such $\cD$-modules defined by equivariant line bundles, it is possible to develop non-vanishing criteria using an interpretation via modules over rings of twisted differential operators (called $\cA$-modules below).

We continue with the setup of the previous section. Let $G'$ be a connected linear algebraic group acting on a smooth connected algebraic variety $Y$. Denote by $\mathfrak g'$ the Lie algebra of $G'$ and by $\mathcal U(\mathfrak g')$ its universal enveloping algebra. Every element $\xi$ of $\mathfrak g'$ induces a vector field $Z_Y(\xi) \in \Gamma(Y,\Theta_Y)$ by \cref{lem:defEquivVF}, and this map extends to a homomorphism of $\O_Y$-modules
\[Z_Y \colon \O_Y \otimes_\C \mathfrak g' \to \Theta_Y\]
via $Z_Y(f \otimes \xi) = f Z_Y(\xi)$ for $f \in \O_Y$, $\xi \in \mathfrak g'$.

\begin{dfn} \label{def:A}
  Given the $G'$-variety $Y$, we define
    \[\cA_Y := \O_Y \otimes_\C \mathcal U(\mathfrak g'),\]
  which has the structure of an associative $\C$-algebra with multiplication given by
  \[(f_1 \otimes \xi_1) \cdot (f_2 \otimes \xi_2) = f_1 f_2 \otimes \xi_1 \xi_2 + f_1 Z_Y(\xi_1)(f_2) \otimes \xi_2.\]
\end{dfn}

The $\O_Y$-module homomorphism $Z_Y$ extends to a homomorphism of associative $\C$-algebras
\[\tilde{Z}_Y \colon \cA_Y \to \mathscr D_Y.\]
For any left $\cA_Y$-module $\mathcal M$, we may consider the left $\mathscr{D}_Y$-module obtained by scalar extension
  \[\mathscr{D}_Y \otimes_{\cA_Y} \mathcal{M}.\]
On the other hand, note that the homomorphism $\tilde{Z}_Y$ induces a forgetful functor from the category of left $\mathscr{D}_Y$-modules to the category of left $\cA_Y$-modules.

The associative algebra $\cA_Y$ is the universal enveloping algebra of the Lie algebroid $(\O_Y \otimes_\C \mathfrak g',\, Z_Y)$ on $Y$, see \cite[1.8.4.Example]{BeilinsonBernstein_Jantzen}. This is the reason why, in many ways, modules over $\cA_Y$ behave similarly to modules over the algebra $\mathscr{D}_Y$ (which can be viewed as the universal enveloping algebra of the Lie algebroid $\Theta_Y$). For example, the tensor product of two left $\cA_Y$-modules over $\O_Y$ is again naturally a left $\cA_Y$-module, while the tensor product of a left and a right $\cA_Y$-module over $\O_Y$ naturally becomes a right $\cA_Y$-module. Applying basic results on modules over universal enveloping algebras of Lie algebroids \cite[Appendice]{CN05} to $\tilde{Z}_Y \colon \cA_Y \to \mathscr{D}_Y$, we obtain the following elementary properties:

\begin{lem}[{\cite[Théorème A.6 and Corollaire A.2]{CN05}}] \label{lem:tensorProductsOnAandD}
  Let $\mathcal M$ be a left $\cA_Y$-module. Let $\mathcal N$ (resp.\ $\mathcal N'$) be a left (resp.\ right) $\mathscr{D}_Y$-module. Then there are natural isomorphisms
  \begin{enumerate}
    \item $\mathscr D_Y \otimes_{\cA_Y} (\mathcal M \otimes_{\O_Y} \mathcal N) \cong (\mathscr D_Y \otimes_{\cA_Y} \mathcal M) \otimes_{\O_Y} \mathcal N$ as left $\mathscr{D}_Y$-modules, \label{item:leftCNMN}
    \item $(\mathcal M \otimes_{\O_Y} \mathcal N') \otimes_{\cA_Y} \mathscr{D}_Y \cong (\mathscr{D}_Y \otimes_{\cA_Y} \mathcal M) \otimes_{\O_Y} \mathcal N'$ as right $\mathscr D_Y$-modules. \label{item:rightCNMN}
  \end{enumerate}
  Here, on the left hand sides, $\mathcal N$ and $\mathcal N'$ are considered as $\cA_Y$-modules via $\tilde Z_Y\colon \cA_Y \to \mathscr D_Y$.
\end{lem}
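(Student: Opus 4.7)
The plan is to construct the isomorphisms in (1) and (2) explicitly and verify them by a local filtered argument; the general framework is exactly that developed in \cite[Appendice]{CN05}, which axiomatizes these manipulations for modules over universal enveloping algebras of Lie algebroids.

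For part (1), I would first observe that since $Z_Y(\xi) \in \Theta_Y$ is a derivation for every $\xi \in \mathfrak{g}'$, both the $\mathcal A_Y$-module structure on $\mathcal M \otimes_{\O_Y} \mathcal N$ (given by Leibniz: $\xi \cdot (m \otimes n) = (\xi m) \otimes n + m \otimes Z_Y(\xi) n$) and the $\mathscr D_Y$-module structure on $(\mathscr D_Y \otimes_{\mathcal A_Y} \mathcal M) \otimes_{\O_Y} \mathcal N$ (again given by Leibniz for $\Theta_Y$) are compatible with $\tilde Z_Y \colon \mathcal A_Y \to \mathscr D_Y$. Define $\alpha \colon \mathcal M \otimes_{\O_Y} \mathcal N \to (\mathscr D_Y \otimes_{\mathcal A_Y} \mathcal M) \otimes_{\O_Y} \mathcal N$ by $m \otimes n \mapsto (1 \otimes m) \otimes n$. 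The only non-trivial $\mathcal A_Y$-linearity check is for $\xi \in \mathfrak{g}'$, where using $Z_Y(\xi) \otimes m = 1 \otimes \xi m$ in $\mathscr D_Y \otimes_{\mathcal A_Y} \mathcal M$ one sees that both sides produce the same two Leibniz terms. By the adjunction between extension of scalars $\mathscr D_Y \otimes_{\mathcal A_Y}(-)$ and the forgetful functor from $\mathscr D_Y$-modules to $\mathcal A_Y$-modules, the map $\alpha$ extends uniquely to a left $\mathscr D_Y$-linear homomorphism $\Phi$ of the form claimed in the lemma.

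To show $\Phi$ is an isomorphism, I would argue locally on a sufficiently small affine open $U \subseteq Y$ where $\Theta_U$ and $\O_U \otimes_\C \mathfrak{g}'$ are $\O_U$-free. The PBW theorem for Lie algebroids provides associated-graded descriptions $\gr \mathcal A_U \cong \O_U \otimes_\C \Sym(\mathfrak g')$ and $\gr \mathscr D_U \cong \Sym_{\O_U}(\Theta_U)$, with $\gr \tilde Z_Y$ induced by $Z_Y$. Filtering both sides of $\Phi$ by the order of the $\mathscr D_Y$-factor and passing to associated graded reduces the claim to the obvious $\O_U$-linear reassociation isomorphism for tensor products over the commutative ring $\O_U$, which holds tautologically; the filtered-to-graded principle then yields that $\Phi$ itself is an isomorphism. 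Part (2) is entirely parallel: the right $\mathcal A_Y$-module structure on $\mathcal M \otimes_{\O_Y} \mathcal N'$ is dictated by the opposite-algebra convention (with the same derivation-induced Leibniz rule, up to sign), and the analogous map $\Psi$ is built from a right-module analogue of $\alpha$ and proved to be an isomorphism by the same local filtered analysis. The main technical obstacle in both parts is the passage to associated graded inside the tensor product, which requires enough flatness to commute $\gr$ with $\otimes_{\mathcal A_Y}$; this is the content of the cited appendix to \cite{CN05}, and is where one genuinely uses that $\mathcal A_Y$ and $\mathscr D_Y$ are universal enveloping algebras of Lie algebroids rather than arbitrary associative algebras.
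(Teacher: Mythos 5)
The paper does not prove \cref{lem:tensorProductsOnAandD} at all: it is imported verbatim from \cite[Théorème A.6 and Corollaire A.2]{CN05}, so there is no internal argument to compare yours against. Judged on its own terms, your sketch follows the standard route for such compatibilities between induction and internal tensor products over enveloping algebras of Lie algebroids. The construction of the forward map is complete and correct: $\alpha(m\otimes n)=(1\otimes m)\otimes n$ is $\mathcal A_Y$-linear because $\tilde Z_Y(\xi)\otimes m=1\otimes \xi m$ holds in $\mathscr D_Y\otimes_{\mathcal A_Y}\mathcal M$ by the balance relation, and extension of scalars is left adjoint to restriction along $\tilde Z_Y$, so $\Phi$ exists and is $\mathscr D_Y$-linear. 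The right-module case is indeed parallel.

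The entire substance of the lemma is in the bijectivity of $\Phi$, and there your sketch is thinner than it reads. Two points are worth making explicit. First, it would strengthen the write-up to record \emph{why} no direct inverse is available: the obvious candidate $(P\otimes m)\otimes n\mapsto P\cdot\bigl(1\otimes(m\otimes n)\bigr)$ is not balanced over $\O_Y$ in the outer tensor product --- the two representatives $\bigl(f\cdot(P\otimes m)\bigr)\otimes n$ and $(P\otimes m)\otimes (fn)$ are sent to $(fP)\cdot\bigl(1\otimes(m\otimes n)\bigr)$ and $(Pf)\cdot\bigl(1\otimes(m\otimes n)\bigr)$ respectively, which differ by the commutator $[P,f]$ --- so a filtered (or coalgebra-theoretic) device is genuinely forced. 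Second, after passing to associated graded the comparison is not quite "the obvious reassociation over $\O_U$'': with the trivial filtrations on $\mathcal M$ and $\mathcal N$, the algebra $\Sym(\mathfrak g')$ acts through its augmentation, so both graded objects are the same quotient of $\Sym_{\O_U}(\Theta_U)\otimes_{\O_U}\mathcal M\otimes_{\O_U}\mathcal N$ by the ideal generated by the image of $Z_Y$ --- \emph{provided} the associated graded of the quotient filtration agrees with the naive tensor product of graded objects. That is exactly the commutation of $\gr$ with $\otimes_{\mathcal A_Y}$ which you defer to \cite{CN05}. Since the paper itself defers the whole lemma to that reference, this does not make your proposal wrong, but you should be aware that its load-bearing step is still the citation rather than an independent argument.
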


From now on, we will only be interested in the case that $G'$ acts transitively on $Y$. In this case, the $\O_Y$-module homomorphism $Z_Y \colon \O_Y \otimes_\C \mathfrak g' \to \Theta_Y$ is surjective, hence the same is true for $\tilde{Z}_Y \colon \cA_Y \to \mathscr D_Y$, so
\[\mathscr{D}_Y \cong \cA_Y/\ker \tilde Z_Y.\]

We observe that the kernel of $\tilde Z_Y$ (which is a two-sided ideal in $\cA_Y$) is generated as a left ideal in $\cA_Y$ by the kernel of $Z_Y$:

\begin{lem}
\label{lem:kernelGeneratedByVFs}
  If $G'$ acts transitively on $Y$, then
  \[\ker\big(\tilde{Z}_Y \colon \cA_Y \to \mathscr{D}_Y\big) = \cA_Y \cdot \ker\big(Z_Y \colon \O_Y \otimes \mathfrak g' \to \Theta_Y\big).\]
\end{lem}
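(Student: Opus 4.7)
The inclusion $\mathcal{A}_Y \cdot \ker(Z_Y) \subseteq \ker(\tilde{Z}_Y)$ is immediate, since $\tilde{Z}_Y$ is an algebra homomorphism restricting to $Z_Y$ on $\mathcal{O}_Y \otimes \mathfrak{g}'$. The content of the lemma is the reverse inclusion, which I would prove by exploiting the two natural filtrations and passing to the associated graded. This is ultimately a sheaf statement, so I would work locally on $Y$.

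The plan is to put on $\mathcal{A}_Y$ the PBW filtration $F_n \mathcal{A}_Y := \mathcal{O}_Y \otimes_{\C} \mathcal{U}_n(\mathfrak{g}')$ and on $\mathscr{D}_Y$ the order filtration. The commutator calculation $[1 \otimes \xi, f \otimes 1] = Z_Y(\xi)(f) \otimes 1$ shows $[F_m,F_n] \subseteq F_{m+n-1}$, hence the associated graded is commutative and naturally identifies with $\operatorname{Sym}_{\mathcal{O}_Y}(\mathcal{O}_Y \otimes_{\C} \mathfrak{g}')$. Similarly $\operatorname{gr} \mathscr{D}_Y \cong \operatorname{Sym}_{\mathcal{O}_Y}(\Theta_Y)$. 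The map $\tilde{Z}_Y$ is filtered and induces on the associated graded exactly the symmetric algebra map $\operatorname{Sym}_{\mathcal{O}_Y}(Z_Y)$.

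The crucial input from transitivity is that $Z_Y \colon \mathcal{O}_Y \otimes_\C \mathfrak{g}' \twoheadrightarrow \Theta_Y$ is surjective, so its kernel $\mathcal{K}$ fits into a short exact sequence of locally free $\mathcal{O}_Y$-modules. I would then use the standard commutative-algebra fact that for such a surjection $\varphi \colon F \twoheadrightarrow G$ of locally free sheaves, $\ker \operatorname{Sym}(\varphi) = \mathcal{K} \cdot \operatorname{Sym}(F)$: locally a splitting gives $\operatorname{Sym}(F) \cong \operatorname{Sym}(\mathcal{K}) \otimes \operatorname{Sym}(G)$, and the induced map just augments the first factor. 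Applied here, this yields
\[
  \ker\bigl(\operatorname{gr}\tilde{Z}_Y\bigr) \;=\; \mathcal{K} \cdot \operatorname{gr}\mathcal{A}_Y.
\]

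The result then follows by induction on the filtration degree. For $P \in F_n \ker \tilde{Z}_Y$, its symbol $\bar{P} \in \operatorname{gr}_n \mathcal{A}_Y$ lies in the kernel above, so locally we may write $\bar{P} = \sum_i \bar{q}_i \cdot k_i$ with $k_i \in \mathcal{K}$ and $\bar{q}_i \in \operatorname{gr}_{n-1}\mathcal{A}_Y$. Lifting the $\bar{q}_i$ to $\tilde{q}_i \in F_{n-1}\mathcal{A}_Y$, the element $\tilde{P} := \sum_i \tilde{q}_i \cdot k_i$ lies in $\mathcal{A}_Y \cdot \ker(Z_Y)$, is annihilated by $\tilde{Z}_Y$, and has the same image as $P$ in $\operatorname{gr}_n$. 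Hence $P - \tilde{P} \in F_{n-1} \ker \tilde{Z}_Y$, and the induction closes (the base case $n=0$ being trivial since $F_0 \mathcal{A}_Y \cong \mathcal{O}_Y$ injects into $\mathscr{D}_Y$). The main delicate point is keeping track of the symmetric algebra computation sheaf-theoretically, which is why it is important to reduce to a local statement where $\mathcal{K}$ admits a direct complement in $\mathcal{O}_Y \otimes \mathfrak{g}'$; without local freeness of $\mathcal{K}$ (i.e.\ without transitivity ensuring that $Z_Y$ is a surjection of locally free sheaves), the identification of $\ker \operatorname{Sym}(Z_Y)$ would fail.
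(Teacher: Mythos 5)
Your argument is correct, and it takes a genuinely different route from the paper's. The paper works with an explicitly adapted local frame: after shrinking to a coordinate neighbourhood it column-reduces the matrix of $Z_Y$ to choose an $\O_U$-basis $\theta_1,\dots,\theta_m$ of $\O_U\otimes\mathfrak g'$ with $Z_Y(\theta_i)=\partial_{x_i}$ for $i\le n$ and $Z_Y(\theta_i)=0$ for $i>n$, and then reads off the kernel directly from the PBW monomial expansion $f\theta_1^{a_1}\cdots\theta_m^{a_m}\mapsto f\partial_{x_1}^{a_1}\cdots\partial_{x_n}^{a_n}$ (or $0$). You instead pass to the associated graded for the PBW and order filtrations, identify $\operatorname{gr}\tilde Z_Y$ with $\operatorname{Sym}_{\O_Y}(Z_Y)$, compute $\ker\operatorname{Sym}(Z_Y)$ via a local splitting of the surjection of locally free sheaves, and descend by induction on filtration degree. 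The two proofs rest on the same geometric input (transitivity makes $Z_Y$ a surjection of locally free sheaves, hence locally split, so $\ker Z_Y$ is a local direct summand), and in effect the paper's choice of adapted basis \emph{is} a choice of splitting; but your symbol argument avoids any explicit monomial bookkeeping and, as you note, proves verbatim the more general statement recorded in the remark following the lemma, namely that for any surjection $\varphi\colon\mathscr F_1\twoheadrightarrow\mathscr F_2$ of locally free Lie algebroids the kernel of $\mathcal U(\mathscr F_1)\twoheadrightarrow\mathcal U(\mathscr F_2)$ is the left ideal generated by $\ker\varphi$. The only points worth spelling out in a final write-up are (i) that the identification $\operatorname{gr}\mathcal A_Y\cong\operatorname{Sym}_{\O_Y}(\O_Y\otimes_\C\mathfrak g')$ is the Lie algebroid PBW theorem, which here follows at once from the ordinary PBW theorem for $\mathfrak g'$ because $\mathcal A_Y=\O_Y\otimes_\C\mathcal U(\mathfrak g')$ as a filtered $\O_Y$-module, and (ii) the base case $F_0\cap\ker\tilde Z_Y=0$, which you correctly dispose of since $\tilde Z_Y$ restricts to the identity on $\O_Y$.
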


\begin{proof}
  We check the claim locally. For this, let $p \in Y$ be an arbitrary point and let $U \subseteq Y$ be an open neighborhood of $p$ admitting a local coordinate system $(x_1,\dots,x_n)$, so that $\Theta_U = \bigoplus_{i=1}^n \O_U \partial_{x_i}$. We claim that by further shrinking the open set $U$, we may choose
  an appropriate $\O_U$-basis $\theta_1,\dots,\theta_m$ of the free $\O_U$-module $\O_U \otimes \mathfrak g'$ such that the surjective homomorphism of $\O_U$-modules
  \[\restrK{Z_Y}{U} \colon \O_U \otimes \mathfrak g' \to \Theta_U\]
  is given by
  \[\theta_i \mapsto \begin{cases} \partial_{x_i} &\text{if } i \leq n \\ 0 &\text{if } i > n. \end{cases}\]

  Indeed, $\restrK{Z_Y}{U}$ is a surjective homomorphism of free $\O_U$-modules of finite rank and we may represent it by an $n \times m$-matrix $A$ (with $m \geq n$) by choosing \emph{any} $\O_U$-basis of $\O_U \otimes \mathfrak g'$. By surjectivity of $Z_Y$, some $n \times n$-minor of $A$ does not vanish at the point $p$. After permuting the chosen $\O_U$-basis of $\O_U \otimes \mathfrak g'$, we may assume that the non-vanishing set $V \subseteq U$ of the minor given by the first $n$ columns is an open neighborhood of $p$. Writing
  \[A = (A_1 \, | \, A_2) \qquad \text{with } A_1 \in \operatorname{Mat}(n\times n,\O_U), A_2 \in \operatorname{Mat}(n\times (m-n),\O_U),\]
  we have $A_1 \in \GL(n,\O_V)$. Changing the $\O_U$-basis on $\restr{(\O_U \otimes \mathfrak g')}{V} = \O_V \otimes \mathfrak g'$ corresponds to right-multiplying $A$ with an element of $\GL(m,\O_V)$. Then
  \[\begin{pmatrix}
    A_1 & A_2
  \end{pmatrix} \cdot
  \begin{pmatrix}
    A_1^{-1} & -A_1^{-1}A_2 \\
    0 & \Id_{m-n}
  \end{pmatrix} =
  \begin{pmatrix}
    \Id_n & 0
  \end{pmatrix}\]
  shows that a choice of $\theta_1,\dots, \theta_m$ as desired exists.

  Now, every section of $\cA_U$ can be expressed as a sum of elements of the form $f \theta_1^{a_1} \theta_2^{a_2} \dots \theta_m^{a_m}$ with $f \in \O_U$, $a_1,\dots,a_m \in \N$, each of which gets mapped under $\restrK{\tilde{Z}_Y}{U}$ to
  \[f \theta_1^{a_1} \theta_2^{a_2} \dots \theta_m^{a_m} \mapsto \begin{cases}
    f \partial_{x_1}^{a_1} \partial_{x_2}^{a_2} \dots \partial_{x_n}^{a_n} &\text{if } a_{n+1} = \dots = a_m = 0, \\
    0 &\text{otherwise.}
  \end{cases}\]
  From this, we can see that every section of $\cA_U$ getting mapped to zero under $\restrK{\tilde{Z}_Y}{U}$ is an element of \[\cA_U \cdot \{\theta_{n+1},\dots,\theta_m\} = \cA_U \cdot \ker(\restrK{Z_Y}{U}). \qedhere\]
\end{proof}

\cref{lem:kernelGeneratedByVFs} is in fact a special case of a more general fact about Lie algebroids: If $\varphi \colon \mathscr F_1 \twoheadrightarrow \mathscr F_2$ is a surjective homomorphism of two locally free Lie algebroids of finite rank on the same variety $Y$, then the kernel of the induced homomorphism of universal enveloping algebras $\tilde{\varphi} \colon \mathcal U(\mathscr F_1) \twoheadrightarrow \mathcal U(\mathscr F_2)$ is generated by $\ker \varphi$ as a left $\mathcal U(\mathscr F_1)$-ideal. A similar proof to the above carries over.

\paragraph{Equivariant line bundles as \texorpdfstring{$\cA_Y$}{A\_Y}-modules:} If $E \to Y$ is a $G'$-equivariant line bundle and we denote by $\mathscr{E}$ its sheaf of sections, then for every open subset $U \subseteq Y$, the Lie algebra $\mathfrak g'$ acts on $\Gamma(U,\mathscr{E})$. This makes $\mathscr{E}$ a left $\cA_Y$-module. We will be particularly interested in the left $\mathscr D_Y$-module
$\mathscr{D}_Y \otimes_{\cA_Y} \mathscr{E}$ arising from this.

\begin{rmk}
  If $U \subseteq Y$ is an open subset not invariant under $G'$, then $G'$ does not act on $U$. Yet, we still get $Z_U \colon \O_U \otimes_\C \mathfrak g' \to \Theta_U$, allowing us to define $\cA_U$.
    While $\restr{\mathscr{E}}{U}$ is not $G'$-equivariant, it still is a left $\cA_U$-module, and we may consider $\mathscr{D}_U \otimes_{\cA_U} \restr{\mathscr{E}}{U}$. This suggests a generalized viewpoint, where we replace the $G'$-action on $Y$ by a $\mathfrak g'$-action on $\O_Y$, and replace $G'$-equivariant line bundles with line bundles carrying a left $\cA_Y$-module structure.
\end{rmk}

Next, we examine when equivariant line bundles give rise to non-zero $\mathscr{D}$-modules.

\begin{prop} \label{prop:nonZeroIndependentOfPower}
  Assume $G'$ acts transitively on $Y$. Let $\mathscr{E}$ be a $G'$-equivariant line bundle on $Y$. Then the following are equivalent:
    \begin{enumerate}
      \item $\mathscr D_Y \otimes_{\cA_Y} \mathscr E \neq 0$, \label{item:nonZero}
      \item $\mathscr E \to \mathscr D_Y \otimes_{\cA_Y} \mathscr E$ is an isomorphism of left $\cA_Y$-modules, \label{item:natIsom}
      \item $\mathscr E^{\otimes k} \to \mathscr D_Y \otimes_{\cA_Y} \mathscr E^{\otimes k}$ is an isomorphism of left $\cA_Y$-modules for some $k \in \Z_{>0}$, \label{item:natIsomSomePower}
      \item $\mathscr E^{\otimes k} \to \mathscr D_Y \otimes_{\cA_Y} \mathscr E^{\otimes k}$ is an isomorphism of left $\cA_Y$-modules for all $k \in \Z_{>0}$. \label{item:natIsomAllPowers}
    \end{enumerate}
\end{prop}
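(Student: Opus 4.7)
The implications $(4) \Rightarrow (3)$, $(4) \Rightarrow (2)$, and $(2) \Rightarrow (1)$ are immediate from the definitions. My plan is to reduce the rest to two statements: that $(1) \Leftrightarrow (2)$ holds for any fixed equivariant line bundle, and that the ideal $\mathcal J_{\mathscr{E}^{\otimes k}}$ introduced below is independent of $k \in \Z_{>0}$. Together with the trivial implications, these close the circle.

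The starting point is \cref{lem:kernelGeneratedByVFs}: under transitivity of $G'$ it yields $\mathscr D_Y \cong \mathcal A_Y/\mathcal A_Y \cdot \ker Z_Y$, and consequently for every left $\mathcal A_Y$-module $\mathscr{M}$ one obtains
\[ \mathscr D_Y \otimes_{\mathcal A_Y} \mathscr{M} \cong \mathscr{M}/(\ker Z_Y) \cdot \mathscr{M}, \]
so that the canonical morphism $\mathscr{M} \to \mathscr D_Y \otimes_{\mathcal A_Y} \mathscr{M}$ is surjective with kernel $(\ker Z_Y) \cdot \mathscr{M}$. The identity $Z_Y([\xi,\theta]) = [Z_Y(\xi), Z_Y(\theta)]$ shows that $\ker Z_Y$ is stable under the adjoint $\mathfrak g'$-action on $\O_Y \otimes \mathfrak g'$, and therefore that $(\ker Z_Y) \cdot \mathscr{M}$ is automatically an $\mathcal A_Y$-submodule of $\mathscr{M}$.

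Specializing to $\mathscr{M} = \mathscr{F}$ a line bundle, the $\O_Y$-submodule $(\ker Z_Y) \cdot \mathscr{F} \subseteq \mathscr{F}$ can be uniquely written as $\mathcal J_\mathscr{F} \cdot \mathscr{F}$ for an ideal sheaf $\mathcal J_\mathscr{F} \subseteq \O_Y$. The Leibniz rule $\xi \cdot (f e) = Z_Y(\xi)(f)\, e + f(\xi \cdot e)$, combined with the $\mathcal A_Y$-stability of $\mathcal J_\mathscr{F} \cdot \mathscr{F}$, forces $\mathcal J_\mathscr{F}$ to be closed under every derivation $Z_Y(\xi)$, hence to be $\mathfrak g'$-invariant. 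Since $G'$ is connected, $\mathcal J_\mathscr{F}$ is $G'$-invariant, so its set-theoretic vanishing locus $V(\mathcal J_\mathscr{F}) \subseteq Y$ is a $G'$-stable closed subset; by transitivity it is either empty or all of $Y$, and since $Y$ is reduced this gives the dichotomy $\mathcal J_\mathscr{F} \in \{0, \O_Y\}$. Applied to $\mathscr{F} = \mathscr{E}^{\otimes k}$, this immediately yields $(1) \Leftrightarrow (2)$ for each such power.

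To compare $\mathcal J_{\mathscr{E}^{\otimes k}}$ with $\mathcal J_\mathscr{E}$, I would pass to a local trivialization. Over an open $U \subseteq Y$ and a nowhere-vanishing section $e$ of $\mathscr{E}$, define $\lambda_U \colon \mathfrak g' \to \O_U$ by $\xi \cdot e = \lambda_U(\xi)\, e$; the Leibniz rule yields $\xi \cdot e^{\otimes k} = k\, \lambda_U(\xi)\, e^{\otimes k}$. Extending $\lambda_U$ to $\O_U \otimes \mathfrak g'$ by $\O_U$-linearity, a short computation using $Z_Y(\theta) = 0$ gives $\theta \cdot (g\, e^{\otimes k}) = g \cdot k\, \lambda_U(\theta)\, e^{\otimes k}$ for all $\theta \in \ker Z_Y|_U$, so $\mathcal J_{\mathscr{E}^{\otimes k}}|_U$ is the $\O_U$-ideal generated by $\{k\, \lambda_U(\theta) : \theta \in \ker Z_Y|_U\}$. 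For $k \in \Z_{>0}$ this ideal equals the one generated by $\{\lambda_U(\theta)\}$, i.e.\ $\mathcal J_\mathscr{E}|_U$. Hence $\mathcal J_{\mathscr{E}^{\otimes k}} = \mathcal J_\mathscr{E}$ for all $k > 0$, and together with the dichotomy above this closes the equivalence of $(1)$--$(4)$. The main technical point to be careful about is the invariance step, where $\mathcal J_\mathscr{F}$ need not a priori be radical, yet connectedness and transitivity of $G'$ together with reducedness of $Y$ still force the rigid dichotomy $\mathcal J_\mathscr{F} \in \{0, \O_Y\}$.
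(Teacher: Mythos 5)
Your argument is correct, and it reorganizes the proof around a single object rather than following the paper's three separate implications. The paper proves (1)$\Leftrightarrow$(2) by a support argument (the support of $\mathscr D_Y \otimes_{\mathcal A_Y} \mathscr E$ is $G'$-stable, hence empty or all of $Y$, and the only quotient of a line bundle with full support is the line bundle itself), proves (2)$\Rightarrow$(4) by induction using the tensor-product compatibility of \cref{lem:tensorProductsOnAandD}, and proves (3)$\Rightarrow$(2) via the Leibniz identity $P \cdot s^k = k s^{k-1}(P \cdot s)$ for $P \in \ker Z_Y$ together with irreducibility of $Y$. You instead use \cref{lem:kernelGeneratedByVFs} to identify the kernel of $\mathscr F \to \mathscr D_Y \otimes_{\mathcal A_Y} \mathscr F$ as $\mathcal J_{\mathscr F} \cdot \mathscr F$ for an ideal sheaf $\mathcal J_{\mathscr F}$, establish the dichotomy $\mathcal J_{\mathscr F} \in \{0, \O_Y\}$, and show $\mathcal J_{\mathscr E^{\otimes k}} = \mathcal J_{\mathscr E}$ for $k>0$ by a local computation; this yields all four equivalences simultaneously and dispenses with both the support argument and \cref{lem:tensorProductsOnAandD}. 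The shared computational core is the same Leibniz rule applied to powers of a local trivializing section; what your packaging buys is a transparent description of the obstruction (an ideal visibly insensitive to positive tensor powers), at the modest cost of verifying that $(\ker Z_Y)\cdot \mathscr F$ is an $\mathcal A_Y$-submodule, which you do correctly. One small point: the passage from $\mathfrak g'$-invariance of $\mathcal J_{\mathscr F}$ to $G'$-invariance is an integration step you leave implicit; it can be bypassed entirely, since by transitivity the vector fields $Z_Y(\xi)$ generate $\Theta_Y$ over $\O_Y$, so your stability computation already exhibits $\mathcal J_{\mathscr F}$ as a $\mathscr D_Y$-submodule of $\O_Y$, and $\O_Y$ is a simple $\mathscr D_Y$-module on a smooth connected variety, which gives the dichotomy directly.
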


\begin{proof}
  First, we show that the first two items are equivalent:
  By transitivity of the group action, $\tilde Z_Y \colon \cA_Y \to \mathscr{D}_Y$ is surjective, hence the natural homomorphism of $\cA_Y$-modules $\mathscr E \to \mathscr D_Y \otimes_{\cA_Y} \mathscr E$ is also surjective. Since the support of $\mathscr D_Y \otimes_{\cA_Y} \mathscr E$ is a $G'$-invariant subset of $Y$, by transitivity we must either have $\mathscr D_Y \otimes_{\cA_Y} \mathscr E = 0$ or $\Supp(\mathscr D_Y \otimes_{\cA_Y} \mathscr E) = Y$. Since $\mathscr E$ is a line bundle on $Y$, the only quotient of the $\O_Y$-module $\mathscr{E}$ with support equal to $Y$ is $\mathscr{E}$ itself. This shows \cref{item:nonZero}~$\Leftrightarrow$~\cref{item:natIsom}.

  The implication \cref{item:natIsomAllPowers}~$\Rightarrow$~\cref{item:natIsomSomePower} is trivial. To show the implication \cref{item:natIsom}~$\Rightarrow$~\cref{item:natIsomAllPowers}, we assume for contradiction that there is some $k \geq 2$ for which the claim does not hold and assume $k$ to be minimal. Applying \cref{lem:tensorProductsOnAandD}.\cref{item:leftCNMN} to $\mathcal M := \mathscr E^{\otimes (k-1)}$ and $\mathcal N := \mathscr D_Y \otimes_{\cA_Y} \mathscr E$ gives:
    \[\mathscr D_Y \otimes_{\cA_Y} \mathscr E^{\otimes k} \cong (\mathscr D_Y \otimes_{\cA_Y} \mathscr E^{\otimes (k-1)}) \otimes_{\O_Y} (\mathscr D_Y \otimes_{\cA_Y} \mathscr E) \cong \mathscr{E}^{\otimes (k-1)} \otimes_{\O_Y} \mathscr{E} = \mathscr{E}^{\otimes k}\]
  as left $\cA_Y$-modules (by minimality of $k$). This is a contradiction to the choice of $k$.

  It remains to show the implication \cref{item:natIsomSomePower}~$\Rightarrow$~\cref{item:natIsom}. Consider the two-sided ideal
  \[\mathcal I := \ker\left(\tilde Z_Y \colon \cA_Y \twoheadrightarrow \mathscr{D}_Y\right)\]
  of $\cA_Y$.
  Note that the natural homomorphism $\mathscr{E} \to \mathscr{D}_Y \otimes_{\cA_Y} \mathscr{E}$ of left $\cA_Y$-modules is an isomorphism if and only if $\mathcal{I}$ annihilates $\mathscr{E}$. Using \cref{lem:kernelGeneratedByVFs}, it suffices to prove that $\mathscr{E}$ is annihilated by $\ker(Z_Y)$. Let $s \in \Gamma(U,\mathscr{E})$ be a non-zero local section of $\mathscr{E}$ and let $P \in \Gamma(U,\ker(Z_Y)) \subseteq \O_U \otimes \mathfrak g'$.
      By assumption \cref{item:natIsomSomePower}, we have $\mathscr E^{\otimes k} \cong \mathscr D_Y \otimes_{\cA_Y} \mathscr E^{\otimes k}$ as left $\cA_Y$-modules for some $k \geq 1$, meaning that $\mathscr{E}^{\otimes k}$ is annihilated by $\mathcal I$. In particular, the local section $s^k \in \Gamma(U,\mathscr{E}^{\otimes k})$ is annihilated by $P$, so $P \cdot s^k = 0$. On the other hand, we have
  \[P \cdot s^k = k s^{k-1} (P \cdot s).\]
  Since $Y$ is an irreducible variety, we deduce that $P \cdot s = 0$. This concludes the proof.
\end{proof}

\begin{cor} \label{prop:torsionGivesNonZero}
  Assume $G'$ acts transitively on $Y$. Let $\mathscr{E}$ be a torsion element of the equivariant Picard group $\Pic^{G'}(Y)$, i.e., $\mathscr{E}^{\otimes k} \cong \O_Y$ as equivariant line bundles for some $k \in \Z_{>0}$. Then
  the natural homomorphism
  \[\mathscr{E} \to \mathscr{D}_Y \otimes_{\cA_Y} \mathscr{E}\]
  of left $\cA_Y$-modules is an isomorphism.
    \end{cor}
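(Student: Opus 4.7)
The plan is to deduce this directly from \cref{prop:nonZeroIndependentOfPower} by verifying that its third equivalent condition holds for the given $k$. By hypothesis, the isomorphism $\mathscr{E}^{\otimes k} \cong \O_Y$ is one of $G'$-equivariant line bundles, and since the left $\mathcal{A}_Y$-module structure on a $G'$-equivariant line bundle is determined by the differentiated $\mathfrak g'$-action on sections, this isomorphism is in fact an isomorphism of left $\mathcal A_Y$-modules. Consequently, under this identification, the natural map
\[\mathscr{E}^{\otimes k} \lra \mathscr D_Y \otimes_{\mathcal A_Y} \mathscr{E}^{\otimes k}\]
is transported to the natural map $\O_Y \to \mathscr D_Y \otimes_{\mathcal A_Y} \O_Y$, so it is enough to prove that this latter map is an isomorphism.

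To establish this, I would construct an explicit inverse $\mathscr D_Y \otimes_{\mathcal A_Y} \O_Y \to \O_Y$ sending $P \otimes h$ to $P(h)$. This map is well-defined because the $\mathcal{A}_Y$-action on $\O_Y$ (which sends $f \otimes \xi$ to $f \cdot Z_Y(\xi)$) is by construction the restriction of the $\mathscr D_Y$-action via $\tilde Z_Y$, so $P \tilde Z_Y(a) \otimes h$ and $P \otimes (a \cdot h)$ both map to $(P \tilde Z_Y(a))(h)$. It is clearly a left inverse to $h \mapsto 1 \otimes h$, giving injectivity of the natural map. For surjectivity, I would use that $\tilde Z_Y \colon \mathcal A_Y \twoheadrightarrow \mathscr D_Y$ is surjective by transitivity of the action (as in the discussion preceding \cref{lem:kernelGeneratedByVFs}); hence for any $P \in \mathscr D_Y$ one can write $P = \tilde Z_Y(a)$ locally, and then
\[P \otimes h \;=\; \tilde Z_Y(a) \otimes h \;=\; 1 \otimes (a \cdot h) \;=\; 1 \otimes P(h),\]
showing that every element lies in the image of $\O_Y$.

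Once this is established, \cref{prop:nonZeroIndependentOfPower} applied via the implication (3) $\Rightarrow$ (2) immediately yields that $\mathscr{E} \to \mathscr D_Y \otimes_{\mathcal A_Y} \mathscr{E}$ is an isomorphism. There is essentially no obstacle here, as the hard work has been done in \cref{prop:nonZeroIndependentOfPower}; the only point requiring a little care is verifying that the equivariance isomorphism $\mathscr E^{\otimes k} \cong \O_Y$ is genuinely $\mathcal A_Y$-linear, which is a direct consequence of functoriality of differentiating $G'$-actions.
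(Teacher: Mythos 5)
Your proof is correct and follows essentially the same route as the paper: both reduce to the case $\mathscr{E}\cong\O_Y$ via the implication \cref{item:natIsomSomePower}~$\Rightarrow$~\cref{item:natIsom} of \cref{prop:nonZeroIndependentOfPower} (after noting that an isomorphism of equivariant line bundles is automatically $\mathcal A_Y$-linear), and both then settle the trivial-bundle case using transitivity. The only cosmetic difference is in the last step, where the paper computes $\mathscr{D}_Y\otimes_{\mathcal A_Y}\big(\mathcal A_Y/\mathcal A_Y(\xi\mid\xi\in\mathfrak g')\big)\cong\mathscr{D}_Y/\mathscr{D}_Y\Theta_Y\cong\O_Y$ from the cyclic presentation, whereas you exhibit the explicit retraction $P\otimes h\mapsto P(h)$ and use surjectivity of $\tilde Z_Y$; these are equivalent.
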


\begin{proof}
  By \cref{prop:nonZeroIndependentOfPower}, it suffices to consider the case that $\mathscr{E} = \O_Y$ as equivariant line bundles. The Lie algebra $\mathfrak g'$ acts trivially on the $1$-section of $\O_Y$, hence
  \[\mathscr E \cong \cA_Y/\cA_Y(\xi \mid \xi \in \mathfrak g')\]
  as left $\cA_Y$-modules. Tensoring with $\mathscr{D}_Y$ over $\cA_Y$ gives
  \[\mathscr{D}_Y \otimes_{\cA_Y} \mathscr E \cong \mathscr{D}_Y/\mathscr{D}_Y(Z_Y(\xi) \mid \xi \in \mathfrak g') = \mathscr{D}_Y/\mathscr{D}_Y \Theta_Y \cong \O_Y \cong \mathscr E.\]
  Here, we use that the vector fields $Z_Y(\xi)$ for $\xi \in \mathfrak g'$ generate the tangent bundle $\Theta_Y$, as the action of $G'$ on $Y$ is transitive.
\end{proof}

\begin{rmk} \label{rem:torsionGivesNonZero}
  Note from the proof above that the equivalences of 2.,\ 3.\ and 4.\ in \cref{prop:nonZeroIndependentOfPower} hold more generally for any line bundle $\mathscr{E}$ with a left $\cA_Y$-module structure, not necessarily arising from $G'$-equivariant structure on $\mathscr{E}$. The equivalence with 1.\ moreover holds whenever $\mathscr D_Y \otimes_{A_Y} \mathscr E$ is known to have $G'$-invariant support (as will be the case for example if we know that some positive power of $\mathscr E$ underlies a $G'$-equivariant line bundle, or if $\mathscr E$ is a twist of a $G'$-equivariant line bundle by a Lie algebra homomorphism as we will consider in \cref{sec:Twists}).
\end{rmk}

\cref{prop:torsionGivesNonZero} shows in particular that $\mathscr{D}_Y \otimes_{\cA_Y} \mathscr{E} \neq 0$ for $G'$-equivariant torsion line bundles. Under certain assumptions on $Y$, the converse is also true:

\begin{prop} \label{prop:nonTorsionGivesZero}
  Let $G'$ act transitively on $Y$ and assume that there is an open cover $Y = \bigcup_{i \in I} U_i$ such that for each $i \in I$, there is a subgroup $N_i$ of $G$ acting freely and transitively on $U_i$. Then
    \[\mathscr{D}_Y \otimes_{\cA_Y} \mathscr{E} \neq 0
      \quad \Leftrightarrow \quad
      \mathscr{E} \cong \O_Y \text{ as $G'$-equivariant line bundles}.\]
\end{prop}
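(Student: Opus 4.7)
The $(\Leftarrow)$ direction is the special case $k=1$ of \cref{prop:torsionGivesNonZero}, so I focus on $(\Rightarrow)$. The plan is to manufacture a nowhere-vanishing $G'$-invariant global section of $\mathscr E$, which is equivalent to the desired $G'$-equivariant isomorphism $\mathscr E \cong \O_Y$. The starting observation is that combining \cref{prop:nonZeroIndependentOfPower} with \cref{lem:kernelGeneratedByVFs} turns the hypothesis $\mathscr D_Y \otimes_{\mathcal A_Y} \mathscr E \neq 0$ into the concrete statement that the kernel of $Z_Y \colon \O_Y \otimes \mathfrak g' \to \Theta_Y$ annihilates the $\mathcal A_Y$-module $\mathscr E$.

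Fix $i \in I$. The free transitive action of $N_i$ on $U_i$ has two consequences I plan to exploit. First, $\dim N_i = \dim Y$, so the $\O_{U_i}$-linear surjection $Z_Y \colon \O_{U_i} \otimes \mathfrak n_i \to \Theta_{U_i}$ between free modules of the same rank is in fact an isomorphism. Second, $U_i$ is a trivial $N_i$-torsor, and equivariant descent along $U_i \to \{\mathrm{pt}\}$ identifies $N_i$-equivariant line bundles on $U_i$ with $\C$-vector spaces, so $\mathscr E|_{U_i}$ admits an $N_i$-invariant trivializing section $s_i \in \Gamma(U_i, \mathscr E)$, unique up to a scalar in $\C^\times$.

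The conceptual heart of the argument, which I expect to be the main technical step, is the upgrade from $\mathfrak n_i$-invariance to full $\mathfrak g'$-invariance of $s_i$. Given $\xi \in \mathfrak g'$, the local isomorphism above produces $\eta \in \Gamma(U_i, \O_{U_i} \otimes \mathfrak n_i)$ with $Z_Y(\eta) = Z_Y(\xi)|_{U_i}$, so $1 \otimes \xi - \eta \in \ker Z_Y$ acts as zero on $s_i$ by the hypothesis, while $\eta \cdot s_i = 0$ by $\mathfrak n_i$-invariance; subtracting gives $\xi \cdot s_i = 0$.

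For the gluing, the transition ratios $c_{ij} := s_i/s_j \in \mathcal O^\times(U_i \cap U_j)$ satisfy $Z_Y(\xi)(c_{ij}) = 0$ for every $\xi \in \mathfrak g'$ by the Leibniz rule, and transitivity of the $G'$-action makes these vector fields span $\Theta_Y$, so the $c_{ij}$ are locally constant. Since $Y$ is irreducible, every non-empty Zariski open is connected, forcing $c_{ij} \in \C^\times$; the resulting Čech cocycle with values in the constant sheaf $\underline{\C}^\times$ is automatically a coboundary because $\underline{\C}^\times$ is flasque on an irreducible variety. Rescaling each $s_i$ by an appropriate constant therefore yields a nowhere-vanishing global section $s \in \Gamma(Y, \mathscr E)$ which, being $\mathfrak g'$-annihilated, is $G'$-invariant by connectedness of $G'$. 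This produces the desired trivialization $\mathscr E \cong \O_Y$ as $G'$-equivariant line bundles.
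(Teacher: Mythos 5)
Your proof is correct, and its skeleton coincides with the paper's: construct $N_i$-invariant non-vanishing local sections $s_i$, upgrade them to $\mathfrak g'$-invariance, observe that the transition functions are constants because they are killed by all the vector fields $Z_Y(\xi)$, and rescale by constants to glue into a $\mathfrak g'$-annihilated global trivializing section. The one step where your mechanism genuinely differs is the upgrade to $\mathfrak g'$-invariance. The paper writes $\xi \cdot s_i = f\, s_i$, notes that $f$ then annihilates the cyclic module $\mathscr D_{U_i} \cdot (1 \otimes s_i)$, and kills $f$ using $\Supp(\mathscr D_Y \otimes_{\mathcal A_Y} \mathscr E) = Y$. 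You instead extract from \cref{prop:nonZeroIndependentOfPower} that $\ker \tilde Z_Y$ (hence $\ker Z_Y$) annihilates $\mathscr E$, and decompose $1 \otimes \xi$ over $U_i$ as an element of $\O_{U_i} \otimes \mathfrak n_i$ (which kills $s_i$ by $N_i$-invariance) plus an element of $\ker Z_Y$ (which kills $s_i$ by hypothesis), using surjectivity of $Z_Y \colon \O_{U_i} \otimes \mathfrak n_i \to \Theta_{U_i}$. Both are valid and closely related, since the support argument is exactly what drives the cited proposition; note that you only use the trivial inclusion $\ker Z_Y \subseteq \ker \tilde Z_Y$, so \cref{lem:kernelGeneratedByVFs} is not actually needed. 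Your \v{C}ech-theoretic phrasing of the gluing (a constant cocycle on an irreducible variety is a coboundary, since all non-empty opens meet) is equivalent to the paper's explicit rescaling $\tilde s_i := \alpha_{ki}^{-1} s_i$, and in fact sidesteps the index bookkeeping there.
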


We remark that under the assumptions on $Y$ in \cref{prop:nonTorsionGivesZero}, there are no non-trivial equivariant torsion line bundles on $Y$.

\begin{proof}
  One implication is given by \cref{prop:torsionGivesNonZero}. For the converse, we assume that $\mathscr{D}_Y \otimes_{\cA_Y} \mathscr{E} \neq 0$. Since $\mathscr{E}$ is $G'$-equivariant, the support of this $\mathscr{D}_Y$-module is a non-empty $G'$-invariant subset of $Y$, hence (by transitivity of the group action)
  \begin{equation} \label{eq:invariantSupport}
    \Supp( \mathscr{D}_Y \otimes_{\cA_Y} \mathscr{E}) = Y.
  \end{equation}
  In particular, the restriction to $U_i$ is a non-zero $\mathscr{D}_{U_i}$-module for each $i \in I$.

  Denote by $E^*$ the complement of the zero section of $E = \Tot(\mathscr E) \xrightarrow{\pi} Y$. For $i \in I$, the choice of a point $w_i \in E^*$ such that $p_i := \pi(w_i) \in U_i$ determines a local section $s_i \in \Gamma(U_i, \mathscr E)$ geometrically given by
  \begin{align*}
    s_i \colon \quad U_i \ \xrightarrow{\cong} \ &N_i \ \to \ \pi^{-1}(U_i) \\
    g \cdot p_i \ \mapsfrom \ &g \quad \mapsto \ g \cdot w_i.
  \end{align*}
  Here, we use that $N_i \to U_i$, $g \mapsto g \cdot p_i$ is an isomorphism by Zariski's Main Theorem, since it is bijective (as $N_i$ is assumed to act freely and transitively on $U_i$) and $U_i \subseteq Y$ is normal. Since $E^*$ is invariant under the action of $G'$ on $E$, the local section $s_i$ does not vanish on $U_i$, hence $\restr{\mathscr{E}}{U_i} = \O_{U_i} s_i$.

  By definition, $s_i$ is an $N_i$-invariant section of $\restr{\mathscr{E}}{U_i}$, hence $\xi \cdot s_i = 0$ holds for all $\xi \in   \Lie(N_i) =: \mathfrak n_i$. Since $N_i$ acts transitively on $U_i$, the $\O_{U_i}$-module homomorphism $\O_{U_i} \otimes \mathfrak n_i \to \Theta_{U_i}$ is surjective, so from the above we may deduce that $\Theta_{U_i}$ annihilates the cyclic $\mathscr{D}_{U_i}$-module $\restr{(\mathscr{D}_Y \otimes_{\cA_Y} \mathscr{E})}{U_i}$ generated by $1 \otimes s_i$.

  Take any $\xi \in \mathfrak g'$. Then $\xi \cdot s_i = f \cdot s_i$ for some $f \in \Gamma(U_i,\O_{U_i})$. But then $f$ annihilates $\restr{(\mathscr{D}_Y \otimes_{\cA_Y} \mathscr{E})}{U_i}$, as $f \cdot (1 \otimes s_i) = 1 \otimes (\xi \cdot s_i) = \restr{Z_Y(\xi)}{U_i} \cdot (1 \otimes s_i) = 0$. Because of \eqref{eq:invariantSupport}, this forces
    \[\xi \cdot s_i = 0 \qquad \text{for all }\xi \in \mathfrak g'.\]

  On $U_{ij} := U_i \cap U_j$ for $i,j \in I$, the non-vanishing local sections $s_i$ and $s_j$ only differ by an invertible function:
    \[\restrK{s_i}{U_{ij}} = \alpha_{ij} \restrK{s_j}{U_{ij}}, \qquad \alpha_{ij} \in \Gamma(U_{ij},\O_{U_{ij}}^\times).\]
  Since
  \[0 = \xi \cdot \restrK{s_i}{U_{ij}} = \xi \cdot (\alpha_{ij} \restrK{s_j}{U_{ij}}) = \restr{Z_Y(\xi)}{U_{ij}}(\alpha_{ij}) \restrK{s_j}{U_{ij}} + \alpha_{ij} (\xi \cdot \restrK{s_j}{U_{ij}}) = \restr{Z_Y(\xi)}{U_{ij}}(\alpha_{ij}) \restrK{s_j}{U_{ij}},\]
  we see that $\alpha_{ij} \neq 0$ is annihilated by all vector fields on $U_{ij}$ (since $\Theta_Y$ is globally generated by the image of $Z_Y \colon \O_Y \otimes \mathfrak g' \to \Theta_Y$). Therefore, $\alpha_{ij} \in \C^*$.

  We may now fix some $k \in I$ and define non-vanishing sections
  \[\tilde s_i := \alpha_{ki}^{-1} s_i \in \Gamma(U_i,\mathscr{E}) \qquad \text{for all } i \in I\]
  which are still annihilated by the action of $\mathfrak g'$.
  Then $\tilde s_i$ and $\tilde s_j$ agree on $U_{ij}$ for all $i,j \in I$, so they glue to a global non-vanishing section $\tilde s \in \Gamma(Y,\mathscr{E})$ annihilated by $\mathfrak g'$. This section defines an isomorphism $\mathscr{E} \cong \O_Y$ of left $\cA_Y$-modules and hence of $G'$-equivariant line bundles.
\end{proof}

\subsection{Twist by characters and non-vanishing of tautological systems} \label{sec:Twists}

Next we relate the construction from the previous section  to the $\mathscr D$-modules $\mathscr N_Y^\beta$ from \cref{def:NBeta}. Recall from \cref{cor:FLTautSysAsDirectImage} that these $\mathscr D$-modules describe restrictions of Fourier-transformed tautological systems and hence we obtain in \cref{thm:nonZeroGeneralSetup} below a non-vanishing result for tautological systems $\tau(\rho, \overline Y, \beta)$ based on the non-vanishing of $\mathscr N_Y^\beta$.

To start with, we need to consider twists of equivariant line bundles by characters:

\begin{dfn}\label{def:EquLineChar}
  Let $\chi \colon G' \to \C^*$ be a character. We define a $G'$-equivariant line bundle $\O_Y\{\chi\}$ on $Y$ by equipping the trivial line bundle $\O_Y$ with a $G'$-equivariant structure such that the action of $G'$ on $\Tot(\O_Y\{\chi\}) = \C \times Y$ is given by $g \cdot (\lambda,y) = (\chi(g)\lambda, \, g \cdot y)$.

  For any $G'$-equivariant line bundle $\mathscr{E}$, consider the $G'$-equivariant line bundle
  \[\mathscr{E}\{\chi\} := \mathscr{E} \otimes_{\O_Y} \O_Y\{\chi\},\]
  which has the same underlying $\O_Y$-module, but a different equivariant structure.
\end{dfn}

One easily checks that
$\operatorname{Hom}(G', \C^*) \to \Pic^{G'}(Y)$, $\chi \mapsto \O_Y\{\chi\}$ is a group homomorphism.

\begin{rmk}
  For a given equivariant line bundle $\mathscr{E}$ whose $G'$-action is given on $E := \Tot(\mathscr{E})$ as $\varphi \colon G' \times E \to E$, the $G'$-action on $\Tot(\mathscr{E}\{\chi\}) = E$ is given by
  \[G' \times E \to E, \qquad (g,e) \mapsto \mu\big(\chi(g),\varphi(g,e)\big),\]
  where $\mu \colon \C^* \times E \to E$ denotes the natural $\C^*$-action on $E$ by scaling fibers.
\end{rmk}

We have seen before that every $G'$-equivariant line bundle on $Y$ is a left $\cA_Y$-module, so for every character $\chi \colon G' \to \C^*$, we get the left $\cA_Y$-module
\[\O_Y\{\chi\} \cong \cA_Y/\cA_Y (\xi - \differential\chi(\xi) \mid \xi \in \mathfrak g'),\]
where $\differential\chi \colon \mathfrak g' \to \C$ is the Lie algebra homomorphism induced by $\chi$.

Note that the left $\cA_Y$-module structure on a $G'$-equivariant line bundle $\mathscr{E}$ results just from the infinitesimal action of $\mathfrak g'$ on local sections of $\mathscr{E}$. Therefore, it is natural to make the following more general definition:
\begin{dfn} For any Lie algebra homomorphism $\beta \colon \mathfrak g' \to \C$, we define the left $\mathscr A_Y$-module
\[\O_Y\{\beta\} := \cA_Y/\cA_Y (\xi - \beta(\xi) \mid \xi \in \mathfrak g').\]
If $\mathscr E$ is a left $\cA_Y$-module, then we denote by $\mathscr E \{\beta\}$ the left $\cA_Y$-module $\mathscr{E} \otimes_{\O_Y} \O_Y\{\beta\}$.
\end{dfn}
This may in general not be a $G'$-equivariant line bundle. Note that $\O_Y\{\chi\} \cong \O_Y\{\differential \chi\}$ as $\cA_Y$-modules for $\chi \colon G' \to \C^*$ inducing $\differential\chi \colon \mathfrak g' \to \C$. Similarly to before, given a left $\cA_Y$-module $\mathscr E$, we denote by $\mathscr E \{\beta\}$ the left $\cA_Y$-module $\mathscr{E} \otimes_{\O_Y} \O_Y\{\beta\}$. If $\mathscr E$ is a line bundle with a left $\cA_Y$-module structure, we denote $\mathscr (\mathscr E\{\beta\})^\vee := \mathscr E^\vee\{-\beta\}$.

Recall from \cref{def:NBeta} that for any Lie algebra homomorphism $\beta \colon \mathfrak g' \to \C$ on a smooth connected $G'$-variety $Y$, we defined the left $\mathscr{D}_Y$-module
  \[\mathscr{N}_{Y}^{\beta} := \omega_Y^\vee \otimes_{\O_Y} \mathscr{D}_Y / (Z_Y(\xi)-\beta(\xi) \mid \xi \in \mathfrak g') \mathscr{D}_Y.\]
Our next aim is to show the following result describing this $\mathscr{D}_Y$-module as arising from a $\mathfrak g'$-module structure on the anticanonical bundle on $Y$:

\begin{prop} \label{prop:interpretationOfNBeta}
  Let $\beta \colon \mathfrak g' \to \C$ be a Lie algebra homomorphism. Considering $\omega_Y$ with its natural $G'$-equivariant structure, there is an isomorphism of left $\mathscr{D}_Y$-modules
  \[\mathscr{D}_Y \otimes_{\cA_Y} (\omega_Y\{\beta\})^\vee \cong \mathscr{N}_{Y}^{\beta}.\]
\end{prop}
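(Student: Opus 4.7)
My plan is to construct a natural left $\mathscr D_Y$-linear homomorphism
\[
\Phi \colon \mathscr D_Y \otimes_{\mathcal A_Y} (\omega_Y\{\beta\})^\vee \longrightarrow \mathscr N_Y^\beta
\]
arising by extension of scalars along $\tilde Z_Y \colon \mathcal A_Y \to \mathscr D_Y$ from an $\mathcal A_Y$-linear map $\phi\colon (\omega_Y\{\beta\})^\vee \to \mathscr N_Y^\beta$ (where the target is viewed as a left $\mathcal A_Y$-module via $\tilde Z_Y$), and then to check locally that $\Phi$ is an isomorphism of cyclic left $\mathscr D_Y$-modules.

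For the definition of $\phi$: as an $\O_Y$-module, $(\omega_Y\{\beta\})^\vee$ is just $\omega_Y^\vee$, so the assignment $t \mapsto t \otimes \overline{1} \in \omega_Y^\vee \otimes_{\O_Y} (\mathscr D_Y/(Z_Y(\xi) - \beta(\xi)\mid\xi)\mathscr D_Y) = \mathscr N_Y^\beta$ on local sections $t$ of $\omega_Y^\vee$ is patently $\O_Y$-linear. To verify $\mathfrak g'$-equivariance I would use the standard fact that on any natural tensor bundle attached to the $G'$-variety $Y$ -- in particular on $\omega_Y^\vee$ -- the infinitesimal $\mathfrak g'$-action induced from the canonical $G'$-equivariant structure equals the Lie derivative along the fundamental vector field $Z_Y(\xi)$. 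Writing $L_{Z_Y(\xi)}(t) = \mu_\xi\, t$ in a local trivialisation (with $\mu_\xi \in \O_Y$), the tensor-twist formula in $(\omega_Y\{\beta\})^\vee \cong \omega_Y^\vee \otimes_{\O_Y} \O_Y\{-\beta\}$ gives
\[
\xi \cdot (t \otimes 1) = (\mu_\xi - \beta(\xi))(t \otimes 1),
\]
while the standard formula $\theta \cdot (t \otimes \overline D) = L_\theta(t) \otimes \overline D - t \otimes \overline{D\theta}$ for the right-to-left $\mathscr D_Y$-transform applied with $\theta = Z_Y(\xi)$, together with the defining relation $\overline{Z_Y(\xi)} = \beta(\xi)\overline 1$ of $\mathscr N_Y^\beta$, yields
\[
Z_Y(\xi) \cdot (t \otimes \overline 1) = (\mu_\xi - \beta(\xi))(t \otimes \overline 1).
\]
The two expressions agree, so $\phi$ is $\mathcal A_Y$-linear and extension of scalars produces the desired $\Phi$.

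To see that $\Phi$ is an isomorphism I would work locally on an open subset $U \subseteq Y$ on which $\omega_Y$ trivialises. Fixing a frame $t$ of $\omega_Y^\vee$ on $U$ and a $\C$-basis $\xi_1,\dots,\xi_m$ of $\mathfrak g'$, the preceding computation presents $\restrK{(\omega_Y\{\beta\})^\vee}{U}$ as the cyclic left $\mathcal A_U$-module with generator $t$ and relations $\xi_i - (\mu_{\xi_i} - \beta(\xi_i))$; tensoring with $\mathscr D_U$ over $\mathcal A_U$ yields
\[
\restrK{\bigl(\mathscr D_Y \otimes_{\mathcal A_Y} (\omega_Y\{\beta\})^\vee\bigr)}{U} \cong \mathscr D_U \big/ \mathscr D_U\!\bigl( Z_Y(\xi_i) - \mu_{\xi_i} + \beta(\xi_i) \bigm| i = 1,\dots,m \bigr).
\]
On the other hand, $\restr{\mathscr N_Y^\beta}{U}$ is the cyclic left $\mathscr D_U$-module on $t\otimes\overline 1$ annihilated by precisely the same operators, and $\restr{\Phi}{U}$ sends generator to generator, hence is an isomorphism.

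The main conceptual step is the identification of the $\mathfrak g'$-action on $\omega_Y^\vee$ coming from the $G'$-equivariant structure with the Lie derivative $L_{Z_Y(\xi)}$ appearing in the right-to-left $\mathscr D_Y$-transform; once this is in hand, everything else is routine sign-bookkeeping arising from the twist by $\O_Y\{-\beta\}$ and the presentation of the transform.
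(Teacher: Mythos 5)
Your proof is correct, but it takes a genuinely different route from the paper's. The paper never writes down an explicit morphism: it passes to the associated right $\mathscr D_Y$-modules, invokes the Lie-algebroid tensor identities of \cref{lem:tensorProductsOnAandD} (citing Calder\'on Moreno--Narv\'aez Macarro), and then compares the two right $\mathcal A_Y$-module structures on $\omega_Y$ globally by means of the auxiliary bimodule $\alpha_Y=\bigwedge^{\dim\mathfrak g'}(\O_Y\otimes\mathfrak g')^\vee$ and the character $\delta=\trace\circ\ad$ (\cref{lem:rightAModuleStructuresOnOmega}), arriving at the presentation $(\alpha_Y\otimes_{\O_Y}\O_Y\{\delta-\beta\})\otimes_{\mathcal A_Y}\mathscr D_Y\cong\mathscr D_Y/(Z_Y(\xi)-\beta(\xi))\mathscr D_Y$ without ever trivialising $\omega_Y$. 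You instead exhibit the map $t\mapsto t\otimes\overline 1$ directly, check $\mathcal A_Y$-linearity by the Lie-derivative computation, and verify the isomorphism on a trivialising cover by matching cyclic presentations. Your key identification --- that the infinitesimal action on $\omega_Y^\vee$ coming from the equivariant structure is $L_{Z_Y(\xi)}$ --- is exactly the content the paper establishes inside the proof of \cref{lem:rightAModuleStructuresOnOmega} (via $\xi\cdot\theta=[Z_Y(\xi),\theta]$ on $\Theta_Y$), and your signs check out: both presentations come down to $\mathscr D_U/\mathscr D_U(Z_Y(\xi)-\mu_\xi+\beta(\xi))$. The one place you are slightly glib is the assertion that $\restr{\mathscr N_Y^\beta}{U}$ is annihilated by \emph{precisely} those operators (i.e.\ that the annihilator of $t\otimes\overline 1$ is no larger); this is the standard fact that $\omega_U^\vee\otimes_{\O_U}(\mathscr D_U/J)\cong\mathscr D_U/J^{T}$ for a right ideal $J$ under the transposition anti-automorphism attached to the frame, and deserves a sentence. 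What your approach buys is independence from the $\alpha_Y$-machinery and the external reference; what the paper's buys is a coordinate-free argument whose ingredients (\cref{lem:tensorProductsOnAandD} in particular) are reused elsewhere, e.g.\ in \cref{prop:nonZeroIndependentOfPower}.
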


For the proof of \cref{prop:interpretationOfNBeta}, we need some technical remarks on left-right transforms of $\cA_Y$-modules that we carry out first:

The line bundle $\alpha_Y := \bigwedge^{\dim G'} (\O_Y \otimes_\C \mathfrak g')^\vee$ on $Y$ has the structure of a right $\cA_Y$-module which is given by the negated Lie derivative: A Lie algebra element $\xi \in \mathfrak g'$ acts on an alternating form $\omega$ by mapping it to the alternating form $\omega \cdot \xi$ given by
\[(\omega \cdot \xi)(\theta_1,\dots,\theta_m) = -Z_Y(\xi)(\omega(\theta_1,\dots,\theta_m)) + \sum_{i=1}^m \omega(\theta_1,\dots,[\xi,\theta_i],\dots,\theta_m)\]
for any $\theta_1,\dots,\theta_m \in \O_Y \otimes_\C \mathfrak g'$. This defines transformations between left and right $\cA_Y$-modules giving rise to an equivalence of categories
\begin{align*}
  \operatorname{Mod}(\cA_Y) &\xrightarrow{\qquad \cong \qquad} \operatorname{Mod}(\cA_Y^{\text{op}}), \\
  \mathcal M &\qquad\longmapsto \qquad \alpha_Y \otimes_{\O_Y} \mathcal M, \\
  \alpha_Y^\vee \otimes_{\O_Y} \mathcal M' &\qquad\longmapsfrom \qquad \mathcal M'.
\end{align*}

\begin{rmk} \label{rem:explicitLeftRightTransform}
  If $\xi_1,\dots,\xi_m$ form a $\C$-basis of $\mathfrak g'$, then
  \[\alpha_Y = \O_Y \, \xi_1^* \wedge \dots \wedge \xi_m^*\]
  The right action on $\alpha_Y$ is given by
  \[(f\, \xi_1^* \wedge \dots \wedge \xi_m^*) \cdot \xi = \big(\!\trace(\ad(\xi))f-Z_Y(\xi)(f)\big) \, \xi_1^* \wedge \dots \wedge \xi_m^* \qquad \text{for } \xi \in \mathfrak g'.\]
  In general, if $\mathcal M$ is a left $\cA_Y$-module, then the right $\cA_Y$-module structure on $\alpha_Y \otimes_{\O_Y} \mathcal M$ is given by
  \[(\xi_1^* \wedge \dots \wedge \xi_m^* \otimes s) \cdot \xi = \xi_1^* \wedge \dots \wedge \xi_m^* \otimes \big(\!\trace(\ad(\xi))-\xi\big) \cdot s \quad \text{for } \xi \in \mathfrak g', s \in \mathcal M.\]
\end{rmk}

The canonical bundle $\omega_Y$ on $Y$ is a right $\mathscr{D}_Y$-module and hence, via $\tilde Z_Y \colon \cA_Y \to \mathscr D_Y$, it also has the structure of a right $\cA_Y$-module. On the other hand, the action of $G'$ on $Y$ extends naturally to an action on the tangent bundle on $Y$, so $\omega_Y = \bigwedge^{\dim Y} \Theta_Y^\vee$ is naturally a $G'$-equivariant line bundle, which induces a left $\cA_Y$-module structure. The next lemma states that these left and right module structures on $\omega_Y$ relate to each other via the transformation above:

\begin{lem} \label{lem:rightAModuleStructuresOnOmega}
  Let $\delta := \trace \circ \ad \colon \mathfrak g' \to \C$ and let $\xi_1,\dots,\xi_m$ form a $\C$-basis of $\mathfrak g'$. There is an isomorphism of right $\cA_Y$-modules
  \begin{align*}
    \omega_Y &\xrightarrow{\cong} \alpha_Y \otimes_{\O_Y} \omega_Y\{\delta\} \\
    s &\mapsto \xi_1^* \wedge \dots \wedge \xi_m^* \otimes s,
  \end{align*}
  where on the left hand side, $\omega_Y$ is endowed with its right $\cA_Y$-module structure induced from the homomorphism $\tilde Z_Y \colon \cA_Y \to \mathscr D_Y$, and on the right hand side, we consider $\omega_Y$ with its left $\cA_Y$-module structure by viewing it as a $G'$-equivariant line bundle.
\end{lem}

\begin{proof}
  Denote $a := \xi_1^* \wedge \dots \wedge \xi_m^* \in \Gamma(Y,\alpha_Y)$ and recall that $\xi_1,\dots,\xi_m$ are a $\C$-basis of $\mathfrak g'$. Since $a$ is a non-vanishing global section of the line bundle $\alpha_Y$, the homomorphism $\omega_Y \to \alpha_Y \otimes_{\O_Y} \O_Y\{\delta\} \otimes_{\O_Y} \omega_Y$, $s \mapsto a \otimes 1 \otimes s$ is an isomorphism of $\O_Y$-modules, hence it suffices to show:
  \[(a \otimes 1 \otimes s) \cdot \xi \stackrel{!}{=} a \otimes 1 \otimes (s \cdot \xi)\]
  for $s \in \omega_Y$, $\xi \in \mathfrak g'$.

  The right $\cA_Y$-module structure on $\omega_Y$ (inherited from the right $\mathscr{D}_Y$-module structure) is given by
  \[(s \cdot \xi)(\theta_1,\dots,\theta_n) = -Z_Y(\xi)(s(\theta_1,\dots,\theta_n))+\sum_{i=1}^n s(\theta_1,\dots,[Z_Y(\xi),\theta_i],\dots,\theta_n)\]
  for $\xi \in \mathfrak g'$, $s \in \omega_Y$, $\theta_1,\dots, \theta_n \in \Theta_Y$. On the other hand, the right $\cA_Y$-module $\alpha_Y$ satisfies $a \cdot \xi = \delta(\xi) a$, so the right $\cA_Y$-module structure on $\alpha_Y \otimes_{\O_Y} \O_Y\{\delta\}$ satisfies
  \[(a \otimes 1) \cdot \xi = 0.\]

  The left $\cA_Y$-module structure on $\omega_Y$ results from the left $\cA_Y$-module structure on the $G'$-equivariant vector bundle $\Theta_Y$ given by
  \[\xi \cdot \theta = [Z_Y(\xi),\theta] \qquad \text{for all } \xi \in \mathfrak g'\]
  The induced left $\cA_Y$-module structure on $\bigwedge^n \Theta_Y$ is given by
  \[\xi \cdot (\theta_1 \wedge \dots \wedge \theta_n) = \sum_{i=1}^n \theta_1 \wedge \dots \wedge [Z_Y(\xi),\theta_i] \wedge \dots \wedge \theta_n\]
  for $\xi \in \mathfrak g'$. Passing to the dual line bundle $\omega_Y$, we get
  \begin{align*}(\xi \cdot s)(\theta_1,\dots,\theta_n) &= Z_Y(\xi)(s(\theta_1,\dots,\theta_n))-\sum_{i=1}^n s(\theta_1,\dots,[Z_Y(\xi),\theta_i],\dots,\theta_n) \\
    &= -(s \cdot \xi)(\theta_1,\dots,\theta_n) \qquad \text{for all }\xi \in \mathfrak g'.
  \end{align*}
  The right $\cA_Y$-module structure on $\alpha_Y \otimes_{\O_Y} \O_Y\{\delta\} \otimes_{\O_Y} \omega_Y$ resulting from this satisfies
  \[(a \otimes 1 \otimes s) \cdot \xi
  = ((a \otimes 1) \cdot \xi) \otimes s - (a \otimes 1) \otimes (\xi \cdot s) = a \otimes 1 \otimes (s \cdot \xi)\]
  for $\xi \in \mathfrak g'$.
\end{proof}

We can now turn to the proof of the description of $\mathscr N_Y^\beta$ via the (twisted) equivariant
anti-canonical bundle:

\begin{proof}[Proof of \cref{prop:interpretationOfNBeta}]
  Equivalently to the claim, we may show that there is an isomorphism between the corresponding right $\mathscr{D}_Y$-modules
  \[\omega_Y \otimes_{\O_Y} \big(\mathscr{D}_Y \otimes_{\cA_Y} (\omega_Y\{\beta\})^\vee\big) \stackrel{!}{\cong} \mathscr{D}_Y / (Z_Y(\xi)-\beta(\xi) \mid \xi \in \mathfrak g') \mathscr{D}_Y.\]
  By \cref{lem:tensorProductsOnAandD}.\cref{item:rightCNMN}, we have an isomorphism of right $\mathscr{D}_Y$-modules
  \[\omega_Y \otimes_{\O_Y} \big(\mathscr{D}_Y \otimes_{\cA_Y} (\omega_Y\{\beta\})^\vee\big) \cong (\omega_Y \otimes_{\O_Y} (\omega_Y\{\beta\})^\vee) \otimes_{\cA_Y} \mathscr{D}_Y,\]
  where, on the right hand side, the first occurrence of $\omega_Y$ is equipped with the right $\cA_Y$-module structure inherited from its right $\mathscr{D}_Y$-module structure. Combining this with \cref{lem:rightAModuleStructuresOnOmega}, we obtain:
  \begin{align*}
    \omega_Y \otimes_{\O_Y} \big(\mathscr{D}_Y \otimes_{\cA_Y} (\omega_Y\{\beta\})^\vee\big)
    &\cong (\alpha_Y \otimes_{\O_Y} \omega_Y\{\delta\} \otimes_{\O_Y} (\omega_Y\{\beta\})^\vee) \otimes_{\cA_Y} \mathscr{D}_Y \\
    &\cong (\alpha_Y \otimes_{\O_Y} \omega_Y \otimes_{\O_Y} \omega_Y^\vee \otimes_{\O_Y} \O_Y\{\delta-\beta\}) \otimes_{\cA_Y} \mathscr{D}_Y
  \end{align*}
  where $\delta := \trace \circ \ad \colon \mathfrak g' \to \C$ and $\omega_Y$ is now     considered as a left $\cA_Y$-module via its natural structure as a $G'$-equivariant line bundle. Since $\omega_Y \otimes_{\O_Y} \omega_Y^\vee \cong \O_Y$ as $G'$-equivariant line bundles (and therefore also as left $\cA_Y$-modules), we conclude:
  \[\omega_Y \otimes_{\O_Y} \big(\mathscr{D}_Y \otimes_{\cA_Y} \omega_Y^\vee\{\beta\}\big) \cong (\alpha_Y \otimes_{\O_Y} \O_Y\{\delta-\beta\}) \otimes_{\cA_Y} \mathscr{D}_Y.\]
  Recall that $\O_Y\{\delta-\beta\} \cong \cA_Y/\cA_Y(\xi-(\delta-\beta)(\xi) \mid \xi \in \mathfrak g')$, so by \cref{rem:explicitLeftRightTransform}, we have
  \[\alpha_Y \otimes_{\O_Y} \O_Y\{\delta-\beta\} \cong \cA_Y/(\xi-\beta(\xi) \mid \xi \in \mathfrak g') \cA_Y\]
  as right $\cA_Y$-modules. Tensoring with $\mathscr{D}_Y$ over $\cA_Y$ by means of the homomorphism $\tilde{Z}_Y \colon \cA_Y \to \mathscr D_Y$ yields the claimed result.
\end{proof}

By using all the constructions and results of this section,  we get the following non-vanishing theorem for tautological systems:

\begin{thm} \label{thm:nonZeroGeneralSetup}
  Let $\rho \colon G' \to \GL(V)$ be a finite-dimensional rational representation. Let $Y \subseteq V$ be a $G'$-orbit and let $\overline{Y}$ be its closure. Let $\beta \colon \mathfrak g' \to \C$ be a Lie algebra homomorphism. If $(\omega_Y\{\beta\})^{\otimes k} \cong \O_Y$ for some $k\in \Z$ as left $\cA_Y$-modules,
    then $\hat{\tau}(\rho, \overline{Y}, \beta) \neq 0$, and hence also $\tau(\rho, \overline{Y}, \beta) \neq 0$.
\end{thm}

\begin{proof}
  By \cref{cor:FLTautSysAsDirectImage}, we have $i_+ \mathscr N_Y^{\beta} \cong \restr{\hat\tau(\rho,\overline Y, \beta)}{U}$, where $i$ denotes the closed embedding of $Y$ into $U := V \setminus \partial Y$ for $\partial Y := \overline Y \setminus Y$. With \cref{prop:interpretationOfNBeta}, we conclude that
  \[\restr{\hat{\tau}(\rho,\overline Y,\beta)}{U} \cong i_+\big(\mathscr{D}_{Y} \otimes_{\cA_{Y}} (\omega_{Y}\{\beta\})^\vee\big)\]
  as left $\mathscr{D}_{L^*}$-modules. To show that the right hand side is non-zero, it suffices to see that we have $\mathscr{D}_{Y} \otimes_{\cA_{Y}} (\omega_{Y}\{\beta\})^\vee \neq 0$.
  But this follows from \cref{prop:torsionGivesNonZero} respectively \cref{rem:torsionGivesNonZero}, because we assumed that $(\omega_{Y}\{\beta\})^{\otimes k} \cong \O_Y$ as left $\cA_Y$-modules.
\end{proof}

\subsection{Application to projective homogeneous spaces} \label{ssec:canSheafOnLineBdl}

We now apply the previous results to the following setup: Consider a smooth projective variety $X$ with a transitive action of a reductive connected linear algebraic group $G$ (i.e., $X$ is a \emph{homogeneous space}). Let $L \to X$ be a $G$-equivariant line bundle on $X$ with sheaf of sections $\Ell$. We consider $G' := G \times \C^*$ and denote the Lie algebras involved by $\mathfrak g' := \Lie(G')$, $\mathfrak g := \Lie(G)$ and $\Lie(\C^*) = \C \mathbf{e}$, so
\[\mathfrak g' = \mathfrak g \oplus \C \mathbf{e}.\]
We view $\Ell$ as a $G'$-equivariant line bundle on $X$ by letting the $\C^*$-factor of $G'$ act trivially on $X$ and by inverse scaling on the fibers of $L \to X$. Note that then $G'$ acts transitively on $L^*$.
Denote by $L^* \subseteq L$ the complement of its zero section. The morphisms to $X$ are denoted $\pi^L \colon L \to X$ and $\pi^{L^*} \colon L^* \to X$.

\begin{lem}\label{lem:transitCond}
  Every point of $X$ admits an open neighborhood on which a subgroup of $G$ acts freely and transitively. The same holds for the action of $G'$ on $L^*$.
\end{lem}

\begin{proof}
  For any point $p \in X$, the stabilizer $P := \{g \in G \mid g \cdot p = p\}$ describes the variety as a quotient:
   \[G/P \xrightarrow{\cong} X, \qquad gP \mapsto g \cdot p.\]
  Since $X$ is projective, the subgroup $P \subseteq G$ is parabolic (this can be taken as the definition of parabolic subgroups, see, e.g., \cite[\S21.3]{Hum75}). Let $N^{-} \subseteq G$ be the unipotent radical of the opposite parabolic subgroup to $P$ in $G$. Then $N^{-} \cap P = 1$, which shows that $N^{-}$ acts freely and transitively on the $N^{-}$-orbit $N^{-} \cdot p$. On the other hand, we have $\Lie(N^{-}) \oplus \Lie(P) = \mathfrak g$ as $\C$-vector spaces, so $N^{-} \cdot p$ is of dimension $\dim G - \dim P = \dim X$, hence it is an open neighborhood of $p$ in $X$.

  For the $G'$-action on $L^*$, take any point $w \in L^*$ and consider $p = \pi^{L^*}(w)$ in the argument above. Then $\pi^{L^*,-1}(U)$ is an open neighborhood of $w$ in $L^*$ on which $\C^* \times N^{-} \subseteq G'$ acts transitively and freely.
\end{proof}

In the following, we remark that the assumptions on $X$ guarantee that we are in the setup of \cref{sec:OBeta}.

\begin{rmk}
Projective homogeneous spaces $X \cong G/P$ are smooth Fano varieties (we recall a short representation-theoretic argument in \cref{lem:Fano} below). As such, it has the property that the underlying complex manifold $X^{\mathrm{an}}$ is simply-connected (see e.g.~\cite[Corollary~4.29]{Debarre-2001}). In particular, we may apply \cref{prop:PiEinsLStern} to $X^{\mathrm{an}}$ to get $k \in \Z_{>0}$ with $\pi_1(L^{*,\mathrm{an}}) \cong \Z/k\Z$ and we may consider the $\mathscr D_{L^*}$-modules $\O_{L^*}^{\ell/k}$ for $\ell \in \Z$ as in \cref{def:Obeta-new}.

Additionally, the Fano property of $X$ implies $H^i(X,\O_X) = 0$ for all $i > 0$ by the Kodaira vanishing theorem, hence in particular $\Pic(X) = H^1(X,\O_X^\times) \cong H^2(X^{\mathrm{an}},\Z)$. In algebraic terms, the integer $k$ in the statement of \cref{prop:PiEinsLStern} is therefore the largest positive integer such that $\Ell$ admits a $k$-th root in the Picard group. Moreover, $\Pic(X) \cong H^2(X^{\mathrm{an}},\Z)$ has no torsion, as observed in the proof of \cref{prop:PiEinsLStern}.
\end{rmk}

\begin{lem} \label{lem:invariantSection}
  Let $\mathcal M$ be a $G'$-equivariant line bundle on $X$. Then
  \[\pi^{L^*,*} \mathcal M \cong \O_{L^*} \quad \Leftrightarrow \quad \mathcal M \cong \Ell^{\otimes r} \text{ for some $r \in \Z$},\]
  where both sides are isomorphisms of $G'$-equivariant line bundles.
\end{lem}

\begin{proof}
  For the implication ``$\Leftarrow$", it suffices to consider the case $r = 1$. Note that the $G'$-equivariant structure on $\pi^{L^*,*} \Ell$ corresponds to the diagonal $G'$-action on $\Tot(\pi^{L^*,*} \Ell) = L^* \times_X L \xrightarrow{pr_1} L^*$. Note further that the map
  \[s \colon L^* \xrightarrow{\Delta} L^* \times_X L^* \hookrightarrow L^* \times_X L\]
  is a $G'$-invariant global section of the line bundle $\pi^{L^*,*} \Ell$ that vanishes nowhere on $L^*$. Then
  \[\O_{L^*} \to \pi^{L^*,*} \Ell, \qquad 1 \mapsto s\]
  is an isomorphism of $G'$-equivariant line bundles.

  To show the implication ``$\Rightarrow$", let $\mathcal M$ be a $G'$-equivariant line bundle on $X$ with $\pi^{L^*,*} \mathcal M \cong \O_{L^*}$. This means that there is a $G'$-invariant global section of $\pi^{L^*,*} \mathcal M$ which we may view as a morphism
  \[s \colon L^* \to \Tot(\pi^{L^*,*} \mathcal M) = L^* \times_X M,\]
  where $M = \Tot(\mathcal M)$ and where $s$ is the identity on the first component. Since the section is non-vanishing, it is therefore given by $s = \id_{L^*} \times \varphi$ for some morphism \[\varphi \colon L^* \to M^*\]
  over $X$. The $G'$-invariance of the section $s$ translates to $G'$-equivariance of the morphism $\varphi$. Recall that the $\C^*$-factor of $G'=G \times \C^*$ acts trivially on $X$ and by inverse scaling on the fibers of $\pi^{L} \colon L \to X$. Notice that on $M$, the $\C^*$-action must also be given fiberwise, so there exists some $r \in \Z$ such that $\C^* \subseteq G'$ acts on fibers of $\pi^{M}\colon M \to X$ by scaling with $(-r)$-th powers. In particular, the $\C^*$-equivariance of $\varphi \colon L^* \to M^*$ implies the following fiberwise description: Over $p \in X$, we have
  \begin{align*}
    \varphi_p \colon L_p^* \xrightarrow{\cong} \C^* &\xrightarrow{\lambda \mapsto \lambda^r} \C^* \xrightarrow{\cong} M_p^* \\[-0.25em]
    \lambda w \ \mapsfrom \ \lambda \ &\phantom{\xrightarrow{\lambda \mapsto \lambda^r}{}} \ \ \mu \ \mapsto \ \mu \varphi(w)
  \end{align*}
  for any choice of $w \in L_p^*$.
    From this, we can conclude $\mathcal M \cong \Ell^{\otimes r}$, for instance as follows: Take an open cover $X = \bigcup_{i \in I} U_i$ trivializing $\Ell$ as $\restr{\Ell}{U_i} = \O_{U_i} s_i$ for some choice of non-vanishing local sections $s_i \in \Gamma(U_i,\Ell)$. On $U_{ij} := U_i \cap U_j$, we have $s_i = \alpha_{ij} s_j$ for some $\alpha_{ij} \in \Gamma(U_{ij}, \O_{U_{ij}}^\times)$ and the collection $(\alpha_{ij})_{i,j \in I}$ forms a {\v{C}}ech cocycle whose class in $H^1(X,\O_X^\times) \cong \Pic(X)$ defines the isomorphism class of $\Ell$. Viewing the sections $s_i$ geometrically as morphisms $U_i \to L^*$, we may compose them with $\varphi$ to get non-vanishing local sections $\varphi \circ s_i \in \Gamma(U_i, \mathcal M)$. On $U_{ij}$, we then have $\varphi \circ s_i = (\alpha_{ij}^r) (\varphi \circ s_j)$ since $\varphi$ is given on fibers of $X$ by taking $r$-th powers. This shows that the class of $\mathcal M$ in the Picard group of $X$ is the class of the {\v{C}}ech cocycle $(\alpha_{ij}^r)_{i,j \in I}$ in $H^1(X,\O_X^\times) \cong \Pic(X)$. On the other hand, the same is true for the line bundle $\Ell^{\otimes r}$ (as can e.g.\ be seen in the same way, using the morphism $L^* \to L^{\otimes r,*}$ given by fiberwise $r$-th powers). Hence, $\mathcal M \cong \Ell^{\otimes r}$   and the $G'$-equivariance of this isomorphism follows from the $G'$-equivariance of $\varphi$.
  \end{proof}

\begin{lem} \label{lem:canonicalBundleOnL}
  There is an isomorphism
  \[\omega_L \cong \pi^{L,*} \omega_X \otimes_{\O_L} \pi^{L,*} \Ell^\vee\]
  of $G'$-equivariant line bundles on $L$. In particular,
  $\omega_{L^*} \cong \pi^{L^*,*} \omega_X$
  as $G'$-equivariant line bundles on~$L^*$.
\end{lem}

\begin{proof}
The second claim follows directly from the first claim by pulling back the line bundles to $L^*$ and using \cref{lem:invariantSection}. Hence, it suffices to prove the formula for $\omega_L$.

Let $\mathscr{F}$ be a $G'$-equivariant vector bundle on $X$ and put  $F:=\textup{Tot}(\mathscr{F})$ with projection $\pi^F\colon F\rightarrow X$.
The variety $F$ is then equipped with a $G'$-action. We first claim that there is an isomorphism
$$
\Theta_{F/X} \cong \pi^{F,*}\mathscr{F}.
$$
of $G'$-equivariant vector bundles on $F$,
and a corresponding isomorphism $\Omega^1_{F/X}\cong \pi^{F,*}\mathscr{F}^\vee$ of dual vector bundles. Namely, any section $s\in\Gamma(U,\mathscr{F})$ can be
considered as an element $s\in \Gamma(U,{\cH\!}om_{\cO_X}(\mathscr{F}^\vee,\cO_X))$,
and it extends via the Leibniz rule as a section of
$\Gamma(U,{\cD\!}er_{\cO_X}({\cS\!}ym_{\cO_X}(\mathscr{F}^\vee))$. This yields a $G'$-equivariant morphism of $\cO_X$-modules
$$
\mathscr{F} \longrightarrow
{\cD\!}er_{\cO_X}({\cS\!}ym_{\cO_X}(\mathscr{F}^\vee))=
{\cD\!}er_{\cO_X}(\pi^F_*\cO_F).
$$
It is also injective,
since for any $s\neq 0$, there is some section of $\mathscr{F}^\vee$ that
is not killed by $s$, so that $s$ is not the zero derivation in ${\cD\!}er_{\cO_X}(\pi^F_*\cO_F)$. Since both $\mathscr{F}$ and ${\cD\!}er_{\cO_X}(\pi^F_*\cO_F)$
are locally free of the same rank, it follows that the cokernel of the inclusion
$\mathscr{F} \hookrightarrow {\cD\!}er_{\cO_X}(\pi^F_*\cO_F)$, if not zero, must be a torsion sheaf on $X$, but this is impossible since this map is equivariant, and so is its cokernel.
We conclude that there is an isomorphism
$\mathscr{F} \cong {\cD\!}er_{\cO_X}(\pi^F_*\cO_F)$ of $G'$-equivariant vector bundles on $X$. Applying the functor $\pi^{F,*}$ then
yields an isomorphism
$$
\pi^{F,*}\mathscr{F} \stackrel{\cong}{\longrightarrow} \pi^{F,*}
{\cD\!}er_{\cO_X}(\pi^F_*\cO_F) \stackrel{(\star)}{\cong}
{\cD\!}er_{\pi^{F,-1}\cO_X}(\cO_F) \cong \Theta_{F/X},
$$
of $G'$-equivariant bundles on $F$, as required. Notice that the isomorphism $(\star)$ in the above displayed formula holds since the map $\pi^F$ is affine.

We apply this to the special case $F=L$, i.e., $\rk(\mathscr{F})=1$, to  obtain the $\cO_L$-isomorphism
\begin{equation}\label{eq:relativeTang}
\omega_{L/X}\cong \pi^{L,*}\Ell^\vee,
\end{equation}which again is $G'$-equivariant.

Consider the cotangent sequence
$$
0 \longrightarrow  \pi^{L,*} \Omega^1_X \longrightarrow \Omega^1_L
\longrightarrow \omega_{L/X}\longrightarrow 0,
$$
which, since $\pi^L  \colon L\rightarrow X$ is $G'$-equivariant, is an exact sequence of $G'$-equivariant vector bundles on $L$. Applying $\bigwedge^{\dim(X)+1}_{\cO_L}(-)$ to this sequence, we get the following isomorphism of $G'$-equivariant line bundles on $L$:
$$
\omega_L \cong \pi^{L,*}\omega_X \otimes_{\cO_L} \omega_{L/X}.
$$
Plugging in the isomorphism from Equation \eqref{eq:relativeTang} yields
$$
\omega_L \cong \pi^{L,*} \omega_X \otimes_{\O_L} \pi^{L,*} \Ell^\vee,
$$
as required.

\end{proof}

\begin{prop} \label{prop:NBetaEqualsOBeta}
  Let
  $\beta \colon \mathfrak g' \to \C$ be a Lie algebra homomorphism with $\restr{\beta}{\mathfrak g} \equiv 0$. Let $k \in \Z_{>0}$ be such that $\pi_1(L^{*,\mathrm{an}}) \cong \Z/k\Z$ (see \cref{sec:OBeta}). Then
  \[
  \mathscr N_{L^*}^\beta \cong
  \begin{cases}
    \O_{L^*}^{\ell/k} & \text{if $\exists \ell \in \Z: \beta(\mathbf e) = \ell/k$ and $\Ell^{\otimes \ell} \cong \omega_X^{\otimes (-k)}$ as $G$-equivariant line bundles,} \\
    0 & \text{otherwise.}
  \end{cases}\]
\end{prop}

\begin{proof}
  Recall from \cref{def:NBeta} that
  $$
      \mathscr N_{L^*}^\beta := \omega_{L^*}^\vee \otimes_{\O_{L^*}} \mathscr{D}_{L^*}/(Z_{L^*}(\xi)-\beta(\xi) \mid \xi \in \mathfrak g')\mathscr{D}_{L^*}.
  $$
  We first assume that $\mathscr N_{L^*}^\beta \neq 0$.  Since $G'$ acts transitively on $L^*$, the vector fields $Z_{L^*}(\xi)$, when $\xi$ runs through $\fg'$, generate the tangent bundle of $L^*$. This implies that $\mathscr N_{L^*}^\beta$ is a smooth $\mathscr{D}_{L^*}$-module of rank one, i.e., corresponds to a local system on $L^{*,\mathrm{an}}$.
  By the discussion in \cref{sec:OBeta}, we therefore have an isomorphism of $\cD_{L^*}$-modules
  $\mathscr N_{L^*}^\beta\cong \cO_{L^*}^{\ell/k}$ for some $\ell\in\Z$.     Let $U\subseteq X$ be a Zariski open affine coordinate set (in the algebraic sense, see, e.g. \cite[Definition A.5.2]{Hotta}) such that $L$ trivializes over $U$. Then
  $$
  \mathscr N_{L^*|\dC^*\times U}^\beta \cong
  \cD_{\dC^*\times U}/\cD_{\dC^*\times U}(Z_{L^*}(\xi)_{|\dC^*\times U}^T\mid \xi\in\fg)+
  \cD_{\dC^*\times U}(-\partial_t t -\beta(\mathbf{e})),
  $$
  where we denote by $(-)^T$ the transpose of a differential operator written in the chosen local coordinates.

  Now using \cref{prop:ObetaExtProd} the isomorphism $\cO_{L^*|\dC^*\times U}^{\ell/k}\cong \mathscr N_{L^*|\dC^*\times U}^\beta$ can be made explicit, and then it follows easily that $\beta(\mathbf{e})-\ell/k$ must be an integer, but this yields an isomorphism $ \mathscr N_{L^*}^\beta \cong \cO_{L^*}^{\beta(\mathbf{e})}$ by the remark after \cref{def:Obeta-new}. In particular, this shows that when $\mathscr N_{L^*}^\beta\neq 0$, we must have $\beta(\mathbf{e})\in\frac{1}{k}\Z$.

  In order to show the remaining statements, we therefore assume that $\beta(\mathbf e) = \ell/k \in \Q$ for some $\ell\in\Z$. By \cref{prop:interpretationOfNBeta}, we know
  \[\mathscr{N}_{L^*}^{\beta} \cong \mathscr{D}_{L^*} \otimes_{\cA_{L^*}} (\omega_{L^*}\{\beta\})^\vee.\]
  Because of \cref{lem:transitCond}, we may apply \cref{prop:nonTorsionGivesZero} to decide when a $\mathscr D_{L^*}$-module of the form ${\mathscr D_{L^*} \otimes_{\cA_{L^*}} \mathscr E}$ is non-zero. However, notice that this criterion only applies to equivariant line bundles $\mathscr E$, while $(\omega_{L^*}\{\beta\})^\vee$ is for non-integral $\beta(\mathbf e)$ only a line bundle with an $\cA_{L^*}$-module structure. However, the $k$-th tensor power
  \[(\omega_{L^*}\{\beta\})^{\otimes (-k)} \cong \omega_{L^*}^{\otimes (-k)}\{-k\beta\},\]
  underlies an equivariant line bundle, since $k\beta =\differential\chi_{\ell}$, where $\chi_{\ell} \colon G'=G \times \C^* \to \C^*$ is the character given by $(g,t) \mapsto t^{\ell}$. With \cref{prop:nonZeroIndependentOfPower}/\cref{rem:torsionGivesNonZero} and \cref{prop:nonTorsionGivesZero}, we see that
  \[\mathscr N_{L^*}^\beta \neq 0
  \ \Leftrightarrow \ \mathscr{D}_{L^*} \otimes_{\cA_{L^*}} (\omega_{L^*}\{\beta\})^\vee \neq 0
  \ \Leftrightarrow \ \mathscr{D}_{L^*} \otimes_{\cA_{L^*}} \omega_{L^*}^{\otimes (-k)}\{-\differential\chi_\ell\} \neq 0
  \ \Leftrightarrow \ \omega_{L^*}^{\otimes (-k)}\{-\differential\chi_\ell\} \cong \O_{L^*}.\]

  Now $\omega_{L^*} \cong \pi^{L^*,*} \omega_X$ as $G'$-equivariant line bundles by \cref{lem:canonicalBundleOnL} and \cref{lem:invariantSection}. Hence,
  \[\omega_{L^*}^{\otimes (-k)}\{-\differential\chi_\ell\} \cong \pi^{L^*,*} \big(\omega_X^{\otimes (-k)}\{-\differential\chi_\ell\}\big).\]
  By using this, and invoking \cref{lem:invariantSection} again, we see that
  \[\mathscr N_{L^*}^\beta \neq 0 \quad \Leftrightarrow \quad \omega_X^{\otimes (-k)}\{-\differential\chi_\ell\} \cong \Ell^{\otimes r} \text{ as $G'$-equivariant line bundles for some $r\in \Z$}.\]
  Since the $\C^*$-factor of $G'$ acts trivially on $X$, note that the natural $\C^*$-equivariant structure on $\omega_X^{\otimes (-k)}$ is also trivial. On the other hand, $\C^*$ acts by inverse scaling on the fibers of $L = \Tot(\Ell)$. Hence, if $\omega_X^{\otimes (-k)}\{-\differential\chi_\ell\} \cong \Ell^{\otimes r}$ holds for some $r \in \Z$, we must have $r = \ell$. Therefore:
  \begin{align*}
  \mathscr N_{L^*}^\beta \neq 0 \quad &\Leftrightarrow \quad \omega_X^{\otimes (-k)}\{-\differential\chi_\ell\} \cong \Ell^{\otimes \ell} \text{ as $G'$-equivariant line bundles}. \\
  &\Leftrightarrow \quad \omega_X^{\otimes (-k)} \cong \Ell^{\otimes \ell} \text{ as $G$-equivariant line bundles}. \qedhere
  \end{align*}
\end{proof}

We now conclude with the final result of this section classifying when Fourier-transformed tautological systems are non-zero away from the origin. We work in the setup stated at the beginning of this section, i.e.,
$X$ is projective and admits a transitive action by a reductive algebraic group $G$, $L$ is $G$-equivariant,
$G':=\dC^*\times G$, and $G'$ acts on $L$ by letting the $\dC^*$-factor act by inverse scaling in the fibres.
Moreover, we now assume that $\Ell$ on $X$ is very ample. Consider the $G'$-representation $\rho \colon G' \to \operatorname{GL}(V)$ with $V := H^0(X,\Ell)^\vee$ and the equivariant closed embedding $X \hookrightarrow \P V$ defined by $|\Ell|$. Let $\hat X \subseteq V$ be the affine cone of $X$ in $V$. Notice that we have an isomorphism $\hat{X} \setminus \{0\} \cong L^{\vee,*}$, given by identifying $L^{\vee}$ with the blow-up of $\hat X$ at the origin. We write $i$ for the closed embedding of $\hat{X}\setminus \{0\}$ into $V\setminus \{0\}$. Together with the isomorphism $\textup{inv} \colon L^* \to L^{\vee,*}$ given by inverting fibers, we obtain a closed embedding $i' \colon L^* \hookrightarrow V \setminus \{0\}$ defined by $i':=i \circ \textup{inv}$, as shown in the following diagram.
\begin{equation}\label{diag:Maps}
\begin{tikzcd}
L & L^*\ar[swap,hook']{l}{j_L} \ar[swap]{dd}{\textup{inv}}[swap]{\cong} \ar[hook]{rrd}{i'} & & &  \\
&&\hat{X} \setminus \{0\} \ar[hook]{r}{i} & V \setminus \{0\} \\
\textup{Bl}_{\{0\}}(\hat{X})\cong L^\vee&  L^{\vee,*}\ar[hook']{l}{j_{L^\vee}} \ar{ur}{\cong}
&&&
\end{tikzcd}
\end{equation}

\begin{thm} \label{thm:restrictedTauHatDescription}
  Let
  $\beta \colon \mathfrak g' \to \C$ be a Lie algebra homomorphism with $\restr{\beta}{\mathfrak g} \equiv 0$. Let $k \in \Z_{>0}$ be such that $\pi_1(L^{*,\mathrm{an}}) \cong \Z/k\Z$ (see \cref{sec:OBeta}). Then
  \[
  \restr{\hat{\tau}(\rho, \hat{X}, \beta)}{V \setminus \{0\}} \cong
    \begin{cases}
    i'_+ \O_{L^*}^{\ell/k} & \quad\text{if $\exists \ell \in \Z: \beta(\mathbf e) = \ell/k$ and $\Ell^{\otimes \ell} \cong \omega_X^{\otimes (-k)}$ } \\
    & \quad\text{as $G$-equivariant line bundles,} \\ \\
    0 & \quad\text{otherwise.}
  \end{cases}\]
\end{thm}

\begin{proof}
  This follows directly from the work above by combining \cref{cor:FLTautSysAsDirectImage} (applied to $Y = \hat X \setminus \{0\}$) and \cref{prop:NBetaEqualsOBeta}.

  For convenience, we roughly summarize the main steps that led to the proof of \cref{thm:restrictedTauHatDescription}:
  \begin{itemize}
    \item The restriction of $\hat{\tau}(\rho, \hat{X}, \beta)$ to $V \setminus \{0\}$ is supported on $\hat X \setminus \{0\} \cong L^*$ and can be described as $i_+ \mathscr N_{\hat X \setminus \{0\}}^\beta$ (\cref{cor:FLTautSysAsDirectImage}). Here, $\mathscr N_{\hat X \setminus \{0\}}^\beta$ arises from a cyclic right $\mathscr D$-module constructed from the vector fields induced by the group action (\cref{def:NBeta}).
    \item The $\mathscr D$-module $\mathscr N_{\hat X \setminus \{0\}}^\beta$ is alternatively described as $\mathscr{D}_{\hat X \setminus \{0\}} \otimes_{\cA_{\hat X \setminus \{0\}}} (\omega_{\hat X \setminus \{0\}}\{\beta\})^\vee$ (\cref{prop:interpretationOfNBeta}), where $\cA_{\hat X \setminus \{0\}} = \O_{\hat X \setminus \{0\}} \otimes \mathcal U(\mathfrak g')$ and $(\omega_{\hat X \setminus \{0\}}\{\beta\})^\vee$ is the anticanonical bundle with a $\mathfrak g'$-module structure determined by $\beta$.
    \item Identifying $\hat X \setminus \{0\}$ with $L^*$, we can argue that $\mathscr{D}_{\hat X \setminus \{0\}} \otimes_{\cA_{\hat X \setminus \{0\}}} (\omega_{\hat X \setminus \{0\}}\{\beta\})^\vee$ is non-zero if and only if $\Ell$ is a $\ell/k$-th rational power of $\omega_X^\vee$ and is equipped with a suitable equivariant structure (\cref{prop:NBetaEqualsOBeta}). The geometric reason is that in this case $(\omega_{\hat X \setminus \{0\}}\{\beta\})^{\otimes k} \cong \O_{\hat X \setminus \{0\}}$. This statement is based on vanishing results for $\mathscr D$-modules constructed from equivariant line bundles in this way (\cref{prop:nonZeroIndependentOfPower}, \cref{prop:torsionGivesNonZero} and \cref{prop:nonTorsionGivesZero}).
    \item In the cases that $\mathscr N_{\hat X \setminus 0}^\beta$ is non-zero, it is a smooth $\mathscr D$-module of rank~$1$. As such, it is isomorphic to $\O_{L^*}^{\ell/k}$ (see \cref{sec:OBeta}) and one confirms that $\beta(\mathbf e) = \ell/k$ (see \cref{prop:NBetaEqualsOBeta}). \qedhere
          \end{itemize}
\end{proof}

\begin{rmk}
  If $G$ is a semisimple linear algebraic group, then we have $[\mathfrak g, \mathfrak g] = \mathfrak g$. This shows that $\restr{\beta}{\mathfrak g} \equiv 0$ holds for every Lie algebra homomorphism $\beta \colon \mathfrak g' \to \C$, so this condition in \cref{thm:restrictedTauHatDescription} is always fulfilled in this case.
\end{rmk}

\begin{rmk}
  For $G$ not semisimple, let us drop the assumption $\beta \equiv 0$. If $\restr{\hat{\tau}(\rho,\hat X, \beta)}{V \setminus \{0\}} \neq 0$, then   it is necessarily of the form $i'_+\O_{L^*}^{\ell/k}$ for some $\ell \in \Z$ as before, since $\mathscr N_{L^*}^\beta$ is a smooth $\mathscr D_{L^*}$-module of rank one.
    One can also show that $\mathscr N_{L^*}^\beta \cong \O_{L^*}^{\ell/k}$ forces $k\beta = \differential \chi$ for some group character $\chi \colon G' \to \C^*$.
  With the same arguments as in the proofs above, we then get:
  \[
  \restr{\hat{\tau}(\rho, \hat{X}, \beta)}{V \setminus \{0\}} \cong
    \begin{cases}
    i'_+ \O_{L^*}^{\ell/k} & \quad\text{if $\exists \ell \in \Z: \beta(\mathbf e) = \ell/k$ and $\Ell^{\otimes \ell}\{\chi\} \cong \omega_X^{\otimes (-k)}$ } \\
    & \quad\text{as $G$-equivariant line bundles,} \\ \\
    0 & \quad\text{otherwise.}
  \end{cases}\]
  Hence, in the general situation we still only get a non-zero $\restr{\hat\tau(\rho, \hat X, \beta)}{V \setminus \{0\}}$ for $\Ell$ being a rational power of the anticanonical bundle and for exactly one suitable $\beta$ uniquely determined by $\Ell$.
\end{rmk}

\begin{rmk}\label{rmk:RemarkToReferee}
  As has been pointed out to us by one of the referees, many results of this section can also be viewed through the lens of equivariant $\mathscr D$-modules as in \cite{BeilinsonBernstein_Jantzen}, realizing that $\hat \tau(\rho, \hat X, \beta)$ is a $\beta'$-twisted strongly $G'$-equivariant $\mathscr D_V$-module supported on $\hat X$, \cite[Definition, page~16]{Hot98}. As above, outside $\{0\}$ we may understand it as a direct image of a $\mathscr D_{L^*}$-module, twisted equivariant under a transitive $G'$-action. The fiber $F \cong \C^*$ of $L^*$ over a point of $X$ is a slice for the $G'$-action, which implies that the category of (twisted) $G'$-equivariant $\mathscr D$-modules is equivalent to that of (twisted) $P'$-equivariant $\mathscr D$-modules, where $P'$ is the stabilizer of $F$, see e.g. \cite[Proposition~4.5]{Lorincz-Walther}. Here, the action of $P'$ on the fiber $\C^*$ captures the global geometry of the $G'$-equivariant line bundle $L$ in a character $P' \to \C^*$. Through an explicit calculation on the level of twisted equivariant $\mathscr D_{\C^*}$-modules, one sees that there only exist $\beta'$-twisted equivariant $\mathscr D_{L^*}$-modules for $\beta(\mathbf e)\in \frac{1}{k} \Z$, each a direct sum of a unique simple $\mathscr D_{L^*}$-module. The question whether or not the restriction of $\hat \tau(\rho, \overline X, \beta)$ to $V \setminus \{0\}$ then translates to whether this simple object admits a $G'$-invariant section, which in turn reduces to a calculation when a certain equivariant line bundle is trivial, similar in spirit to \cref{prop:nonTorsionGivesZero} and the calculation in the proof of \cref{prop:NBetaEqualsOBeta} above.
\end{rmk}

\section{Representation theoretic criterion}\label{sec:FormuleBeta}

The purpose of this section is to give a necessary criterion for the non-vanishing of tautological systems in terms of the representation of $G$ on the space $V=H^0(X,\cL)^\vee$, at least in the case where $G$ is semisimple. On the one hand, this is a slight sharpening of one direction of \cref{thm:restrictedTauHatDescription}, which only concerns vanishing of $\hat{\tau}(\rho,\hat{X},\beta)$ on $V\backslash\{0\}$. On the other hand, the methods used
here heavily rely on the representation theory of semisimple Lie algebras, and this section is therefore in large parts logically independent of the rest of the paper.

\subsection{Formula for \texorpdfstring{$\beta(\mathbf{e})$}{beta(e)}}
We work in the same setup as in \cref{thm:restrictedTauHatDescription}. Additionally, we assume throughout this section
\begin{center}
    $G$ is semisimple.
\end{center}
This implies automatically that any Lie algebra homomorphism $\beta \colon \mathfrak g' \to \C$ satisfies $\restr{\beta}{\mathfrak g} = 0$, since $\mathfrak g = [\mathfrak g, \mathfrak g]$. In particular, the choice of $\beta$ is equivalent to the choice of a complex number $\beta(\mathbf{e})$. As we will see, the tautological system $\tau(\rho, \hat{X},\beta)$ will only be non-zero for particular values of $\beta(\mathbf e)$ that we can express in terms of the highest weight of the (necessarily irreducible by Borel--Weil, see e.g.~\cite{Ser54} or \cite[Cor.~to~Th.~V]{Bot57} or also \cref{th:borel-weil} below) $G$-representation $\rho$.

To state this formula, let $\mathfrak{t}$ be a Cartan subalgebra of $\mathfrak{g}$, let $\Phi^+\subseteq \mathfrak{t}^\vee$ be a choice of positive roots, and set
\[ \delta\coloneqq \frac{1}{2}\sum_{\lambda\in \Phi^+} \lambda.\]

\begin{thm}\label{th:reptheory-beta}

  Let $\mu$ be the highest weight of the irreducible $G$-representation $V\coloneqq H^0(X, \mathscr{L})^\vee$. If $\tau(\rho,\hat{X},\beta)$ is nonzero, then
  \[ \beta(\mathbf{e}) \in \Big\{0, \, \frac{2\langle \delta, \mu\rangle}{|\mu|^2}\Big\},\]
  where $\langle-,-\rangle$ is the inner product on $\mathfrak{t}^\vee$ dual to (the restriction to $\mathfrak{t}$ of) the Killing form, and $|\mu|=\sqrt{\langle\mu,\mu\rangle}$.
\end{thm}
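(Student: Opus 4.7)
The plan is to reduce the question via the Fourier--Laplace equivalence to constraints on $\beta$ coming from the cyclic generator of $\hat\tau:=\hat\tau(\rho,\hat X,\beta)$. Since $G$ is semisimple we have $\restr{\beta}{\mathfrak g}=0$, so the class of $1$ in $\hat\tau$ is $G$-invariant; moreover the relation for $\mathbf e$ makes it an eigenvector of the Euler vector field $E$ on $V$ with eigenvalue $\beta(\mathbf e)/w-\dim V$, where $w\in\{\pm 1\}$ is the weight of $\C^*$ on $V$. The support of $\hat\tau$ is a closed $G'$-invariant subvariety of $\hat X$, hence contained in $\{0\}$, equal to $L^*:=\hat X\setminus\{0\}$, or equal to $\hat X$; I will extract the two allowed values of $\beta(\mathbf e)$ from the two support regimes.

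If $\Supp(\hat\tau)\not\subseteq\{0\}$, then $\restr{\hat\tau}{V\setminus\{0\}}\neq 0$, and by \cref{cor:FLTautSysAsDirectImage} this restriction equals $i_+\mathscr N_{L^*}^\beta$, so $\mathscr N_{L^*}^\beta\neq 0$. Combining \cref{prop:interpretationOfNBeta} with \cref{prop:nonZeroIndependentOfPower} (using the transitivity of $G'$ on $L^*$) forces $(\omega_{L^*}\{\beta\})^{\otimes k}\cong \O_{L^*}$ as $\mathcal A_{L^*}$-modules for every $k\geq 1$. Using $\omega_{L^*}\cong \pi^*\omega_X$ (from \cref{lem:canonicalBundleOnL} combined with \cref{lem:invariantSection}) and an extension of \cref{cor:twoLineBdlsGiveOBetaOrZero} valid for rational Lie algebra characters $\beta$ (whose hypothesis holds for $X=G/P$ through the Bruhat decomposition, cf.\ \cref{prop:nonTorsionGivesZero}), this becomes an equivariant isomorphism $\omega_X^{\otimes(-k)}\cong \mathscr L^{\otimes\ell}$ of $G$-equivariant line bundles on $X$ with $\beta(\mathbf e)=\ell/k$.

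To evaluate $\ell/k$, I invoke Borel--Weil: since $X$ is projective $G$-homogeneous, $X\cong G/P$ for a parabolic $P$, $V\cong V_\mu$ is the irreducible $G$-module of highest weight $\mu$, and $\omega_X$ corresponds to the character $-2\delta_P$ of $P$, where $\delta_P$ is the half-sum of the positive roots outside the Levi $\mathfrak l$ of $P$. Turning $\omega_X^{\otimes(-k)}\cong \mathscr L^{\otimes\ell}$ into an equality of $P$-characters yields $2k\delta_P=\ell\mu$. Decomposing $\delta=\delta_L+\delta_P$ with $\delta_L$ the half-sum of the positive roots of the Levi, and using that $\mu$ is Killing-orthogonal to all roots of $\mathfrak l$ (because $\mu$ is trivial on the semisimple part of the Levi), gives $\langle\delta,\mu\rangle=\langle\delta_P,\mu\rangle=\frac{\ell}{2k}|\mu|^2$, whence $\beta(\mathbf e)=2\langle\delta,\mu\rangle/|\mu|^2$.

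If instead $\Supp(\hat\tau)\subseteq\{0\}$, then $\hat\tau$ is a $\mathscr D_V$-module supported at the origin, generated by a $G$-invariant element of $\bigoplus_{d\geq 0}(S^d V)\cdot\delta_0$ on which $E$ acts with eigenvalue $-\dim V-d$ in degree $d$. Since $\mathcal O(\hat X)=\bigoplus_{d\geq 0}V_{d\mu}^\vee$ by Borel--Weil, its $G$-invariants are concentrated in degree zero, and the relations $I(\hat X)\cdot 1=0$ force any $(S^d V)^G$-contribution for $d>0$ to vanish in $\hat\tau$. The generator is therefore proportional to $\delta_0$, forcing $\beta(\mathbf e)=0$. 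The hard part I anticipate is the reduction in the second paragraph, specifically the extension of \cref{cor:twoLineBdlsGiveOBetaOrZero} to non-integral Lie algebra characters, which requires producing a suitable $k$-th root of the relevant equivariant bundle and checking that the $\mathcal A_{L^*}$-module structure on the pullback is compatible with the Killing-form computation yielding $\ell/k$.
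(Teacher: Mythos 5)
Your route is genuinely different from the paper's. The paper proves \cref{th:reptheory-beta} purely algebraically and uniformly in one stroke: it shows that the Casimir element $C$ acts on each graded piece $S_d\cong V_{d\mu}^\vee$ of the coordinate ring of $\hat X$ by the scalar $d^2|\mu|^2-2d\langle\delta,\mu\rangle$ (\cref{irred-rep-cas}, \cref{C-vs-e-on-S}), deduces via \cref{milicic-ker} that $Z(C)-\bigl(Z(\mathbf e)^2|\mu|^2-2Z(\mathbf e)\langle\delta,\mu\rangle\bigr)\in I D_V$, transposes, and reads off the quadratic relation $\beta(\mathbf e)\bigl(\beta(\mathbf e)|\mu|^2-2\langle\delta,\mu\rangle\bigr)=0$ in $\hat\tau$. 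No support analysis is needed. You instead split by support and, for the full-support case, route through the geometric non-vanishing machinery (\cref{cor:FLTautSysAsDirectImage}, \cref{prop:interpretationOfNBeta}, \cref{prop:nonZeroIndependentOfPower}) plus the Borel--Weil computation; this is legitimate and non-circular, since \cref{prop:NBetaEqualsOBeta}, \cref{rem:nonRationalBeta} and \cref{cor:twoLineBdlsGiveOBetaOrZero} (which already cover rational $\ell/k$ via the characters $\rho_k,\rho_\ell$ -- the ``extension'' you worry about is essentially already in the paper) do not use \cref{th:reptheory-beta}. Your identification $\langle\delta,\mu\rangle=\langle\delta_P,\mu\rangle$ via orthogonality of $\mu$ to the Levi roots is a correct variant of the paper's lemma $\langle\delta,\delta_I\rangle=\langle\delta_I,\delta_I\rangle$ inside \cref{prop:repTheoreticDescription}. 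What the paper's approach buys is uniformity and independence from Sections 5--6 and 8; what yours buys is a geometric explanation of the two values.

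The genuine gap is in your origin-supported case. You assert that $\hat\tau$ is ``generated by a $G$-invariant element of $\bigoplus_{d\ge 0}(S^dV)\cdot\delta_0$,'' i.e.\ you implicitly assume the module embeds in a \emph{single} copy of $\C[\partial]\delta_0$. By Kashiwara's equivalence a cyclic $\mathscr D_V$-module supported at the origin is $H\otimes_\C\C[\partial]\delta_0$ for a finite-dimensional multiplicity space $H$ that need not be one-dimensional (e.g.\ $\mathscr D_\C/\mathscr D_\C x^2$), so ``$G$-invariance of the generator'' does not by itself confine it to $(S^dV)^G\delta_0$, and your appeal to the vanishing of invariants of $\mathcal O(\hat X)$ in positive degree does not apply as stated. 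The argument can be repaired: since every differential operator, in particular each vector field $Z(\xi)$, acts on $H\otimes\C[\partial]\delta_0$ through the second tensor factor only, writing the generator as $\sum_j h_j\otimes v_j$ with the $h_j$ linearly independent, the relations $Z(\xi)\cdot\overline 1=0$ force each $v_j\in(S^dV)^{\mathfrak g}$ (where $d=-\beta(\mathbf e)$ is pinned down by the Euler eigenvalue), and the relations $I_d\cdot\overline 1=0$ force $v_j\in I_d^{\perp}\cong V_{d\mu}$, whose $\mathfrak g$-invariants vanish for $d>0$ because $\mu\neq 0$. Hence $\overline 1=0$ unless $d=0$, which is what you want; but as written your proof skips exactly the step that makes this work.
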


\begin{cor}\label{cor:BetaNonNeg}
If $\tau(\rho, \hat X, \beta) \neq 0$, then $\beta(\mathbf e)$ is a non-negative rational number.
\end{cor}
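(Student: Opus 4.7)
The plan is to invoke Theorem~\ref{th:reptheory-beta}, which reduces the claim to verifying that $\frac{2\langle \delta, \mu\rangle}{|\mu|^2}$ is a non-negative rational number (the value $0$ already being such). We may assume $\mu \neq 0$, since otherwise $V$ would be the trivial one-dimensional representation and $X$ a point, a degenerate situation not of interest here.

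For the non-negativity, note that both $\mu$ and $\delta$ lie in the closed dominant Weyl chamber: $\mu$ as the highest weight of an irreducible representation, and $\delta$ since it equals the sum $\sum_i \omega_i$ of the fundamental weights. Writing $\mu = \sum_j m_j \omega_j$ with $m_j \in \Z_{\geq 0}$, one obtains $\langle \delta, \mu\rangle = \sum_{i,j} m_j \langle \omega_i, \omega_j\rangle$. The pairings $\langle \omega_i, \omega_j\rangle$ are all non-negative---a classical fact equivalent to the non-negativity of the entries of the inverse Cartan matrix (or, equivalently, to the fact that each fundamental weight is a non-negative rational combination of the simple roots). Hence $\langle \delta, \mu\rangle \geq 0$ and the quotient is non-negative.

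For the rationality, recall that the Killing form takes rational values on pairs of coroots, and consequently the dual form $\langle -, -\rangle$ on $\mathfrak t^\vee$ takes rational values on pairs of simple roots. Since the fundamental weights are rational linear combinations of the simple roots, both $\delta$ and $\mu$ lie in the rational span of the simple roots, so $\langle \delta, \mu\rangle$ and $|\mu|^2 = \langle \mu, \mu\rangle$ are rational. Their ratio is therefore rational as well. No substantive obstacle is anticipated; the whole argument is a direct consequence of standard facts from the structure theory of semisimple Lie algebras.
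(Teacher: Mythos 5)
Your proposal is correct and follows essentially the same route as the paper: both reduce the claim to \cref{th:reptheory-beta} and then verify that $\tfrac{2\langle\delta,\mu\rangle}{|\mu|^2}$ is a non-negative rational. The only (cosmetic) difference is in the non-negativity step, where the paper pairs $\mu$ directly with the positive roots (dominance gives $\langle\mu,\alpha\rangle\geq 0$ for all $\alpha\in\Phi^+$, and $2\delta=\sum_{\alpha\in\Phi^+}\alpha$), whereas you expand both vectors in fundamental weights and invoke the non-negativity of the inverse Cartan matrix --- a slightly heavier but equally valid standard fact.
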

\begin{proof}
  Since $\mu$ is the highest weight, the inner product of $\mu$ with all positive roots is a non-negative integer, hence $\langle \delta, \mu \rangle \geq 0$ and so $\frac{2\langle \delta, \mu\rangle}{|\mu|^2}\in\dQ_{\geq 0}$.
\end{proof}

In the remainder of this section, we will prove \cref{th:reptheory-beta} as well as a more geometric interpretation of it.

\begin{notation}
Throughout this section, we use the following notation.
\begin{itemize}
    \item $\mathfrak{g}'=\C\mathbf{e}\oplus \mathfrak{g}$ -- the Lie algebra of $G'$
    \item $T$ -- a maximal torus of $G$
    \item $B$ -- a Borel subgroup of $G$ containing $T$
    \item $\mathfrak{t}$ -- the Lie algebra of $T$

    \item $\Phi(M,T)$ -- the roots of an (affine) algebraic group $M$ relative to a subtorus $T$. This is the set of characters $\lambda\colon T\to \C^*$ of $T$ such that
    \[ \mathfrak{m}_\lambda \coloneqq \{\xi\in \mathfrak{m} \mid \operatorname{Ad}(t)\xi = \lambda(t)\xi\}\neq 0,\]
    where $\mathfrak{m}$ is the Lie algebra of $M$. (Cf.~\cite{Hum75})

    \item $\Phi\coloneqq \Phi(G,T)$ -- the root system of $G$ relative to $T$. As usual, we view this as a subset of $\mathfrak{t}^\vee$.

    \item $\Phi^+\coloneqq \Phi(B,T)$ -- the choice of positive roots corresponding to $B$

    \item $\Delta\subseteq \Phi^+$ -- the simple roots

    \item $\delta \coloneqq \displaystyle\frac{1}{2}\sum_{\alpha\in \Phi^+}\alpha$ -- the Weyl vector.

    \item $\mathsf{B}(-,-)$ -- the Killing form on $\mathfrak{g}$
    \item $\langle-,-\rangle$ -- the symmetric bilinear form on $\mathfrak{t}^\vee$ induced by the restriction to $\mathfrak{t}$ of the Killing form. Since $\mathfrak{g}$ is semisimple, $\langle-,-\rangle$ is nondegenerate.
    \qedhere
\end{itemize}
\end{notation}

Since $\mathfrak{g}$ is semisimple, there is a decomposition
\[ \mathfrak{g} = \left(\bigoplus_{\alpha\in \Phi^+} \mathfrak{g}_{-\alpha}\right)\oplus \mathfrak{t}\oplus \left(\bigoplus_{\alpha\in \Phi^+} \mathfrak{g}_\alpha\right).\]
Each $\mathfrak{g}_\alpha$ is one-dimensional. In fact, one can choose a generator $E_\alpha\in \mathfrak{g}_\alpha$ for each $\alpha\in \Phi$ such that $[E_\alpha,E_{-\alpha}]\eqqcolon H_\alpha\in \mathfrak{t}$, $\mathsf{B}(E_\alpha, E_{-\alpha})=1$, $[H_\alpha, E_\alpha]=2E_\alpha$, and $[H_\alpha, E_{-\alpha}]=-2E_{-\alpha}$. Note that the $H_\alpha$ might not form a basis for $\mathfrak{t}$, as there may be too many of them.

A straightforward argument shows that for each $\alpha\in \Phi^+$, $H_\alpha$ is the unique element of $\mathfrak{t}$ for which $\mathsf{B}(H, H_\alpha) = \alpha(H)$ for all $H\in \mathfrak{t}$. We can use this property to define $H_\lambda$ for all $\lambda\in \mathfrak{t}^\vee$---then the nondegenerate bilinear form $\langle-,-\rangle$ is given by
\[ \langle \lambda, \lambda'\rangle = \mathsf{B}(H_\lambda, H_{\lambda'}) = \lambda(H_{\lambda'}) = \lambda'(H_{\lambda}).\]

\begin{dfn}
  The (second order) \emph{Casimir element} is the element
  \[ C\coloneqq \sum_i A_iB_i \in \mathcal{U}(\mathfrak{g}),\]
  where $\{A_i\}$ is any basis for $\mathfrak{g}$, and $\{B_i\}$ is the dual basis under the Killing form. In particular, if $\{H_i\}$ is an orthonormal basis of $\mathfrak{t}$ with respect to the Killing form, then
  \[ C = \sum_i H_i^2 + \sum_{\alpha\in\Phi^+} E_\alpha E_{-\alpha} + \sum_{\alpha\in\Phi^+} E_{-\alpha}E_{\alpha}.\]
\end{dfn}
A straightforward exercise
shows that $C$ is in the center of $\mathcal{U}(\mathfrak{g})$.

\begin{lem}\label{irred-rep-cas}
    Let $U$ be an irreducible representation of $\mathfrak{g}$ with
        lowest
    weight $\lambda$ and
        lowest
    weight vector $v_\lambda$. Then
    \[ C\cdot v_\lambda = \left(|\lambda|^2 - 2\langle\delta,\lambda\rangle\right)v_\lambda.\]
\end{lem}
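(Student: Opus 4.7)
The plan is to compute $C \cdot v_\lambda$ directly using the explicit expression
\[ C = \sum_i H_i^2 + \sum_{\alpha\in\Phi^+} E_\alpha E_{-\alpha} + \sum_{\alpha\in\Phi^+} E_{-\alpha}E_{\alpha}\]
given in the definition, with $\{H_i\}$ an orthonormal basis of $\mathfrak{t}$ with respect to the Killing form. The starting observation is that, since $v_\lambda$ is a lowest weight vector for $B$, the negative root vectors kill it: $E_{-\alpha} \cdot v_\lambda = 0$ for every $\alpha \in \Phi^+$. This immediately makes the middle sum vanish.

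First I would handle the Cartan part. Writing $H_\lambda = \sum_i c_i H_i$ and using the defining property $\mathsf{B}(H_j, H_\lambda) = \lambda(H_j)$ gives $c_j = \lambda(H_j)$, hence
\[ |\lambda|^2 = \mathsf{B}(H_\lambda, H_\lambda) = \sum_i \lambda(H_i)^2.\]
Since $H_i v_\lambda = \lambda(H_i) v_\lambda$, this yields $\sum_i H_i^2 \cdot v_\lambda = |\lambda|^2 v_\lambda$.

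Next I would rewrite the third sum using the commutator relation. For each $\alpha \in \Phi^+$,
\[ E_{-\alpha} E_\alpha = [E_{-\alpha}, E_\alpha] + E_\alpha E_{-\alpha} = -H_\alpha + E_\alpha E_{-\alpha},\]
using $[E_\alpha, E_{-\alpha}] = H_\alpha$. Applied to $v_\lambda$, the second summand vanishes, and $H_\alpha v_\lambda = \lambda(H_\alpha) v_\lambda = \langle \alpha, \lambda\rangle v_\lambda$ by the identification of $\mathfrak{t}^\vee$ with $\mathfrak{t}$ via the Killing form. Summing over $\Phi^+$ and using $2\delta = \sum_{\alpha \in \Phi^+}\alpha$ gives
\[ \sum_{\alpha\in\Phi^+} E_{-\alpha} E_\alpha \cdot v_\lambda = -\Big\langle \sum_{\alpha\in\Phi^+}\alpha, \lambda \Big\rangle v_\lambda = -2\langle\delta,\lambda\rangle v_\lambda.\]

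Adding the three contributions produces $C \cdot v_\lambda = (|\lambda|^2 - 2\langle\delta,\lambda\rangle) v_\lambda$, as claimed. There is essentially no obstacle here; the only point requiring care is the sign arising from working with a lowest weight vector rather than the more familiar highest weight vector (for a highest weight vector $v_\mu$ the analogous computation would give $(|\mu|^2 + 2\langle\delta,\mu\rangle) v_\mu$, since the roles of $E_\alpha$ and $E_{-\alpha}$ in the annihilation are swapped, flipping the sign of the commutator term).
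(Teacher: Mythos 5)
Your proof is correct and follows essentially the same route as the paper's: expand $C$ in the given basis, use that the lowest weight vector is annihilated by the $E_{-\alpha}$, apply $[E_\alpha,E_{-\alpha}]=H_\alpha$ to produce the $-2H_\delta$ term, and evaluate the Cartan part via the Killing-form identities. Your closing remark about the sign flip in the highest-weight case is also accurate (and in fact clarifies a small wording slip in the paper's own justification, which says ``$E_\alpha$ kills $v_\lambda$'' where it is really $E_{-\alpha}$ that does).
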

\begin{proof}
    We have
    \begin{align*}
        C\cdot v_\lambda &= \left(\sum_i H_i^2 + \sum_{\alpha\in\Phi^+} E_\alpha E_{-\alpha} + \sum_{\alpha\in\Phi^+} E_{-\alpha}E_{\alpha}\right)\cdot v_\lambda\\
        &= \left(\sum_i H_i^2 - \sum_{\alpha\in\Phi^+} H_\alpha + 2\sum_{\alpha\in\Phi^+} E_{\alpha}E_{-\alpha}\right)\cdot v_\lambda\\
        &= \left(\sum_i H_i^2 -2H_\delta\right)\cdot v_\lambda,\\
    \end{align*}
    since $E_\alpha$ kills $v_\lambda$. Now use that $ \sum_i H_i^2\cdot v_\lambda = \sum_i \lambda(H_i)v_i = |\lambda|^2v_i$ and $H_\delta\cdot v_\lambda = \lambda(H_\delta)v_\lambda$.
\end{proof}

For the proof of \cref{th:reptheory-beta}, we need
a few facts about differential operators on affine varieties, which we introduce now. The full power of the theory of such operators is not needed here, so we only touch on a very small bit of it.

Set
\[R\coloneqq \Gamma(V, \mathcal{O}_V), \quad D_V \coloneqq \Gamma(V, \mathscr{D}_V),\quad\text{and}\quad S\coloneqq R/I,\]
where $I$ is the defining ideal of $\hat{X}$.

Define
\begin{equation}\label{eq:subAlg}
\begin{array}{rcl}
    A   &\coloneqq& \{P\in D_V \mid P(I) \subseteq I\},\\ \\
     J   &\coloneqq &\{P\in D_V \mid P(R) \subseteq I\} = I\cdot D_V=\sum_{\alpha\in \N^n} I\partial^\alpha,\\
\end{array}
\end{equation}
and
\begin{equation}\label{eq:MapPsi}
  \begin{array}{rcl}
         \psi:A&\longrightarrow&\End_\C(S)  \\
         P&\longmapsto&(\overline{f}\mapsto\overline{P\bullet f})     \end{array}
\end{equation}

We will need the following fact in the proof of \cref{th:reptheory-beta}.

\begin{lem}[{\cite[Proposition 1.6.]{SmithStafford}}]\label{milicic-ker}
We have $\ker\psi=J$ and $\im\psi=D_S$, where $D_S$ is the ring of Grothendieck differential operators of $S$ over $\C$.
\end{lem}

Under the assumption that $G$ is semisimple, the definition of $\hat\tau(\rho, \hat{X},\beta)$ simplifies to
\[
\hat\tau(\rho, \hat{X}, \beta) = \mathscr{D}_V/\bigl(\mathscr{D}_VI + \mathscr{D}_V(Z(\xi) \mid \xi \in \mathfrak{g}) + \mathscr{D}_V(Z(\mathbf{e})-\dim V + \beta(\mathbf{e}))\bigr).
\]
Here, we denote by $Z(\xi)$ the vector field $Z_V(\xi)$ defined in \cref{ssec:VFfromGroupActions} and we will also denote by $Z$ the map $\mathcal U(\mathfrak g') \to D_V$ extending it.

Because $X$ is $G$-invariant, the ideal $I$ is $\mathfrak{g}'$-stable, i.e.\ $Z(\xi)(I)\subseteq I$ for all $\xi\in \mathfrak{g}'$. Hence, the map $Z$ induces a $\mathfrak{g}'$-module structure on $S$ for which the elements of $\mathfrak{g}'$ act via derivations. If $S_d$ is the $d$th graded component of $S$, then
\[ \xi\cdot S_d \subseteq S_d \quad \text{for all }\xi \in \mathfrak{g},\]
and
\begin{equation}\label{eq:eul-scales}
    \mathbf{e}\cdot f = -df\quad\text{for all }f\in S_d.
\end{equation}
Denote the induced map $\mathcal{U}(\mathfrak{g}') \to \End_\C(S)$ by $Z_S$.

\begin{lem}\label{C-vs-e-on-S}
    $Z_S(C) = Z_S(\mathbf{e})^2|\mu|^2 - 2Z_S(\mathbf{e})\langle\delta,\mu\rangle$.
\end{lem}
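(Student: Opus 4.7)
Since both sides of the claimed identity lie in $\End_\C(S)$, and $S$ decomposes as a graded vector space $\bigoplus_{d \geq 0} S_d$, the plan is to check equality on each piece $S_d$. First I would note that both sides preserve this grading: by \eqref{eq:eul-scales}, $Z_S(\mathbf{e})$ acts on $S_d$ as multiplication by $-d$, and since the $G$-action on $\hat{X}$ commutes with the $\C^*$-scaling that defines the grading, each $Z_S(\xi)$ with $\xi \in \mathfrak{g}$ preserves $S_d$, hence so does $Z_S(C)$. Consequently, on $S_d$ the right-hand side acts as the scalar
\[(-d)^2|\mu|^2 - 2(-d)\langle\delta,\mu\rangle = d^2|\mu|^2 + 2d\langle\delta,\mu\rangle,\]
and the problem reduces to showing that $Z_S(C)$ acts on $S_d$ by this same scalar.

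Next I would identify $S_d$ representation-theoretically. Restricting degree-$d$ homogeneous polynomials from $V$ to $\hat{X}$ and using the identification $\mathscr{L}^{\otimes d} \cong \restrK{\mathcal{O}_{\P V}(d)}{X}$ produces a $G$-equivariant injection $S_d \hookrightarrow H^0(X, \mathscr{L}^{\otimes d})$. By the Borel--Weil theorem on the partial flag variety $X = G/P$ (see \cref{th:borel-weil}), the target is an irreducible $G$-module. Thus $S_d$ is either zero or equal to $H^0(X, \mathscr{L}^{\otimes d})$, and in either case the central element $C \in \mathcal{U}(\mathfrak{g})$ acts on $S_d$ by the same scalar by which it acts on the irreducible module $H^0(X, \mathscr{L}^{\otimes d})$.

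It then remains to compute this Casimir eigenvalue. Since by hypothesis $V$ is the irreducible representation with highest weight $\mu$, its dual $V^\vee = H^0(X, \mathscr{L}) = S_1$ has lowest weight $-\mu$. A further application of Borel--Weil to $\mathscr{L}^{\otimes d}$ identifies $H^0(X, \mathscr{L}^{\otimes d})$ as the irreducible $G$-module whose highest weight is $d$ times that of $H^0(X, \mathscr{L})$, and hence whose lowest weight is $-d\mu$. Applying \cref{irred-rep-cas} to this module with $\lambda = -d\mu$ and using $\langle-d\mu,-d\mu\rangle = d^2|\mu|^2$ and $\langle\delta,-d\mu\rangle = -d\langle\delta,\mu\rangle$ gives
\[C \cdot f = \bigl(|-d\mu|^2 - 2\langle\delta,-d\mu\rangle\bigr)\, f = \bigl(d^2|\mu|^2 + 2d\langle\delta,\mu\rangle\bigr)\, f\]
for every $f \in S_d$, which matches the scalar computed for the right-hand side. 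The delicate step is the invocation of Borel--Weil for $G/P$ in the form identifying $H^0(X, \mathscr{L}^{\otimes d})$ as an irreducible $G$-module with the predicted lowest weight; once that is in place, everything else is a straightforward bookkeeping of scalars on each graded piece.
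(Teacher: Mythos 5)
Your proof is correct and follows essentially the same route as the paper's: reduce to the graded pieces $S_d$, identify $S_d$ as (contained in) an irreducible $G$-module of lowest weight $-d\mu$, and combine the Casimir eigenvalue from \cref{irred-rep-cas} with the fact that $\mathbf{e}$ acts on $S_d$ by $-d$. The only cosmetic difference is that you justify the irreducibility and the lowest weight of $S_d$ by embedding it $G$-equivariantly into $H^0(X,\mathscr{L}^{\otimes d})$ and invoking Borel--Weil for each power of $\mathscr{L}$, whereas the paper exhibits $x^d$ directly as a lowest weight vector of weight $-d\mu$ and uses the centrality of $C$ on the irreducible module $S_d$.
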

\begin{proof}
    By definition, $R_1=V^\vee$. The construction of the embedding $X\hookrightarrow \P V$ implies that $R_1=S_1$ also. Hence, if $x\in S_1\cong V^\vee$ is a
        lowest weight vector, it has
        lowest weight $-\mu$, because $\mu$ is the \emph{highest} weight of $V$.
        A straightforward argument shows that for all $d\in \mathbb{N}$, the element $x^d$ is a
        lowest weight vector of $S_d$ with
        lowest weight $-d\mu$.
        Hence, by \cref{irred-rep-cas},
    \[ C\cdot x^d = (|-d\mu|^2 - 2\langle\delta, -d\mu\rangle)x^d.\]
        Since $C$ is in the center of the universal enveloping algebra, it acts on the irreducible $\fg$-representation $S_d$ as a scalar, which then must be the factor on the right hand side. Now use that $\mathbf{e}$ acts on $S_d$ as multiplication by $-d$ (eq.~\eqref{eq:eul-scales}).
\end{proof}

By definition, the operators $Z(C)$ and $Z(\mathbf{e})^2|\mu|^2 - 2Z(\mathbf{e})\langle\delta,\mu\rangle$ are contained in the subalgebra $A$ from \eqref{eq:subAlg}. By \Cref{C-vs-e-on-S}, their difference is in the kernel of the map $\psi$ from  \eqref{eq:MapPsi}. Hence, by \cref{milicic-ker}, we know that
\[ Z(C) - \bigl(Z(\mathbf{e})^2|\mu|^2 - 2Z(\mathbf{e})\langle\delta,\mu\rangle\bigr) \in I D_V.\]
Applying the standard $D$-module transpose $(-)^\top$ and identifying $Z(\mathbf{e})$ with minus the Euler differential operator\footnote{The minus sign comes from the fact that the action of $\mathfrak{g}$ on the coordinate ring of $V$ is the contragredient action.} $-E$ gives
\begin{equation}\label{eq:ZZtop}
    (Z(C))^\top - \bigl((E+\dim{V})^2|\mu|^2 - 2(E+\dim{V})\langle\delta,\mu\rangle\bigr)\in D_V I,
\end{equation}
since $(-E)^\top=E+\dim V$ and $I$ is homogeneous.

\begin{lem}\label{ZZtop}
    $(Z(C))^\top = Z(C)$.
\end{lem}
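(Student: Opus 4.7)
My plan is to exploit that the Casimir element $C$ can be written as $\sum_i X_i^2$ for a \emph{symmetric} choice of basis, and that the transpose of each vector field $Z(\xi)$ for $\xi \in \mathfrak{g}$ is simply $-Z(\xi)$ because the trace correction vanishes by semisimplicity. The whole argument then reduces to $(-Z(X_i))^2 = Z(X_i)^2$.

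First I would recall the coordinate expression $Z(\xi) = -\sum_{i,j} d\rho(\xi)_{ji}\, x_i \partial_{x_j}$ from \cref{lem:equivVFOnVectSp}. A direct computation (already carried out inside the proof of \cref{cor:FLTautSysAsDirectImage}) yields
\[
Z(\xi)^\top \;=\; \sum_{i,j} d\rho(\xi)_{ji}\, \partial_{x_j} x_i \;=\; -Z(\xi) + \trace(d\rho(\xi))
\]
for every $\xi \in \mathfrak{g}$. Since $\mathfrak{g}$ is semisimple we have $\mathfrak{g} = [\mathfrak{g},\mathfrak{g}]$, so each $\xi$ is a sum of brackets and $\trace(d\rho(\xi)) = 0$; consequently
\[
Z(\xi)^\top = -Z(\xi) \qquad \text{for all } \xi \in \mathfrak{g}.
\]

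Next I would use that the Casimir element is independent of the choice of dual bases. Because the Killing form $\mathsf{B}$ is a nondegenerate symmetric bilinear form on the complex vector space $\mathfrak{g}$, I can pick a basis $\{X_i\}$ of $\mathfrak{g}$ with $\mathsf{B}(X_i,X_j) = \delta_{ij}$; its dual basis (under $\mathsf{B}$) is then $\{X_i\}$ itself, so $C = \sum_i X_i^2 \in \mathcal{U}(\mathfrak{g})$. Applying $Z$ and then the transpose (which is an anti-automorphism of $D_V$, so $(PQ)^\top = Q^\top P^\top$) yields
\[
Z(C)^\top \;=\; \sum_i \bigl(Z(X_i)^2\bigr)^{\!\top} \;=\; \sum_i \bigl(Z(X_i)^\top\bigr)^2 \;=\; \sum_i (-Z(X_i))^2 \;=\; \sum_i Z(X_i)^2 \;=\; Z(C),
\]
which is the claimed identity.

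There is no serious obstacle here: everything is a one-line consequence of the two ingredients (vanishing of $\trace \circ d\rho$ on $\mathfrak{g}$ and the existence of a $\mathsf{B}$-orthonormal complex basis). The only thing worth double-checking is the sign in $Z(\xi)^\top = -Z(\xi) + \trace(d\rho(\xi))$ so that the trace term genuinely drops out, but this is already recorded in \cref{cor:FLTautSysAsDirectImage} and carries through verbatim.
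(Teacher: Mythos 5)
Your proof is correct and follows essentially the same route as the paper: both arguments rest on $Z(\xi)^\top=-Z(\xi)$ for $\xi\in\mathfrak g$ (the trace term vanishing by semisimplicity) together with the fact that $C$ is a symmetric quadratic expression in $\mathfrak g$, so the anti-automorphism $(-)^\top$ fixes $Z(C)$. Your choice of a $\mathsf B$-orthonormal basis writing $C=\sum_i X_i^2$ is a clean way to make the paper's phrase ``evaluation on elements of $\mathfrak g$ of a homogeneous quadric'' explicit, and sidesteps having to note that $\sum_i B_iA_i=\sum_i A_iB_i$ for dual bases.
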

\begin{proof}
    Because $\fg$ is semisimple, $[\fg,\fg]=\fg$. Hence, $\fg$ acts on $V$ via trace-free matrices. Let $\xi\in \fg$ act on $V$ via a square matrix $A=[a_{ij}]$. Then $Z(\xi)$ is the derivation $-\sum_{i,j}a_{ji}x_i\partial_{x_j}$. The transpose of this operator is then $\sum a_{ji}\partial_{x_j} x_i = \sum a_{ji} x_i \partial_{x_j} +\Tr(A) = -Z(\xi)$. As $C$ arises as evaluation on elements of $\fg$ of a homogeneous quadric, $(Z(C))^\top=Z(C)$.
\end{proof}

\begin{proof}[Proof of \cref{th:reptheory-beta}]
Combining \cref{ZZtop} and eq.~\eqref{eq:ZZtop} yields
\begin{equation*}
    Z(C) - \bigl((E+\dim{V})^2|\mu|^2 - 2(E+\dim{V})\langle\delta,\mu\rangle\bigr)\in D_V I.
\end{equation*}
Taking cosets in $\hat\tau(\rho, \hat{X},\beta)$, we find
\begin{align*}
    0 = \overline{Z(C)}
    &= \overline{(E+\dim{V})^2|\mu|^2 -  2(E+\dim{V})\langle\delta,\mu\rangle}\\
        &= \overline{(E+\dim{V})\bigl((E+\dim{V})|\mu|^2 -  2\langle\delta,\mu\rangle\bigr)}.
\end{align*}
On the other hand, the defining ideal of $\hat\tau(\rho, \hat{X},\beta)$ also contains
\[ Z(\mathbf{e}) -\dim{V} + \beta(\mathbf{e}) = -E - \dim{V} + \beta(\mathbf{e})\]
So, we deduce
\[ \beta(\mathbf{e}) \in \left\{0, \frac{2\langle\delta,\;\mu\rangle}{|\mu|^2}\right\}.\]

\end{proof}

\subsection{Geometric interpretation}

We now aim for the following geometric description of the quantity for $\beta(\mathbf e)$ from the previous section, showing the compatibility of the non-vanishing result of $\restr{\hat{\tau}(\rho,\hat X, \beta)}{V \setminus 0}$ in \cref{thm:restrictedTauHatDescription} with that of
$\hat{\tau}(\rho,\hat{X},\beta)$ in \cref{th:reptheory-beta}.

\begin{prop} \label{prop:repTheoreticDescription}
  If $\Ell^{\otimes \ell} \cong \omega_X^{\otimes (-k)}$ as $G$-equivariant line bundles for some integers $k, \ell \neq 0$, then
  \[\frac{2\langle \delta, \mu \rangle}{\langle \mu, \mu \rangle} = \frac{\ell}{k}.\]
\end{prop}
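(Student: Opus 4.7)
The plan is to use the parabolic Borel--Weil theorem together with the explicit description of the anticanonical bundle on a projective homogeneous space.

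First, since $G$ is semisimple (hence connected) and acts transitively on the projective variety $X$, the stabilizer of any point is a parabolic subgroup $P$, so $X \cong G/P$. Choose a Borel subgroup $B$ of $G$ with $T \subseteq B \subseteq P$, let $L$ denote the Levi factor of $P$, let $\Phi^+_L \subseteq \Phi^+$ be its positive roots, and set
\[ \delta_L \coloneqq \tfrac{1}{2}\sum_{\alpha \in \Phi^+_L} \alpha, \qquad \delta^P \coloneqq \tfrac{1}{2}\sum_{\alpha \in \Phi^+ \setminus \Phi^+_L} \alpha, \]
so that $\delta = \delta_L + \delta^P$. Equivariant line bundles on $G/P$ are in bijection with algebraic characters of $P$, and since $G$ is semisimple, every such character is the restriction of a character of $T$, i.e., a weight in $\mathfrak{t}^\vee$; moreover, the characters of $P$ are precisely the weights that vanish on the coroots of the simple roots of $L$, which in particular implies they are $\langle-,-\rangle$-orthogonal to every root of $L$.

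Next I would apply the parabolic Borel--Weil theorem: in a suitable convention, the $G$-equivariant line bundle $L_\lambda \to G/P$ attached to a (dominant) character $\lambda$ of $P$ satisfies $H^0(X, L_\lambda)^\vee \cong V_\lambda$, the irreducible $G$-module of highest weight $\lambda$ (see e.g.\ \cite{Ser54, Bot57}). Thus the character of $P$ corresponding to $\mathscr{L}$ is exactly the highest weight $\mu$ of $V$. For the anticanonical bundle, one computes the $P$-character on the fiber of $\omega_X^{-1}$ over the base point $eP$, which is $\det(\mathfrak{g}/\mathfrak{p})^\vee$; its weight is the sum of the roots of $\mathfrak{u}_P$, i.e., $2\delta^P$. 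Hence $\omega_X^{-1} \cong L_{2\delta^P}$ as $G$-equivariant line bundles.

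With these identifications, the hypothesis $\mathscr{L}^{\otimes \ell} \cong \omega_X^{\otimes(-k)}$ of $G$-equivariant line bundles translates into the equality of characters of $P$
\[ \ell \mu = 2k\, \delta^P \]
in $\mathfrak{t}^\vee$. Since $\mu$ is a character of $P$, it is $\langle-,-\rangle$-orthogonal to every root of $L$, hence to $\delta_L$, which gives $\langle \mu, \delta \rangle = \langle \mu, \delta^P \rangle$. Pairing the equation $\ell \mu = 2k \delta^P$ with $\mu$ then yields
\[ \ell \langle \mu, \mu \rangle \;=\; 2k \langle \delta^P, \mu \rangle \;=\; 2k \langle \delta, \mu \rangle, \]
and rearranging gives the claim (note that $\mu \neq 0$ because $\mathscr{L}$ is very ample, so $|\mu|^2 > 0$).

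The only real difficulty is bookkeeping with conventions: fixing compatible signs in parabolic Borel--Weil and in the identification $\omega_X^{-1} \cong L_{2\delta^P}$ (both depend on whether one uses $L_\chi = G \times^P \mathbb{C}_\chi$ or $G \times^P \mathbb{C}_{-\chi}$). Once these are pinned down, the argument reduces to the one-line computation above.
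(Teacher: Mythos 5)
Your proof is correct and follows the same overall route as the paper: identify $X\cong G/P$, compute $\omega_X$ as the equivariant bundle attached to $2\delta^P$ (the paper's $\delta_I$), invoke parabolic Borel--Weil to identify the character of $\mathscr{L}$ with $\pm\mu$, translate the hypothesis into $\ell\mu = 2k\,\delta^P$, and pair with $\mu$. The one place where you genuinely diverge is the key inner-product identity. The paper isolates this as a separate technical lemma, $\langle\delta,\delta_I\rangle=\langle\delta_I,\delta_I\rangle$, and proves it by viewing $\delta-\delta_I$ as the Weyl vector of the sub-root system $\Phi\cap\R I$ and using the standard fact that the Weyl vector pairs to $1$ with every simple coroot. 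You instead observe that $\mu$, being (up to sign) a character of $P$, must kill the coroots $H_\alpha$ for $\alpha\in I$ (since a character of $P$ is trivial on the derived group of the Levi), hence is $\langle-,-\rangle$-orthogonal to all roots of the Levi and in particular to $\delta_L=\delta-\delta^P$; this gives $\langle\delta,\mu\rangle=\langle\delta^P,\mu\rangle$ directly. The two facts are equivalent once one knows $\mu\propto\delta^P$, but your version is shorter and avoids both the auxiliary root-system lemma and the paper's division by $\ell$ (which the paper has to justify via torsion-freeness of $\Pic^G(X)$); you work with the un-divided relation $\ell\mu=2k\,\delta^P$ throughout, which only requires injectivity of the character-to-bundle map. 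Your closing caveat about sign conventions is the right thing to flag, and is harmless here since orthogonality to the Levi roots is insensitive to the sign of the character.
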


The proof of this proposition will be based on the correspondence between characters of $P$ and equivariant line bundles on $G/P$ (where $P$ is parabolic), and identifying the character corresponding to the canonical bundle $\omega_{G/P}$ (\cref{can-as-char} below). For this, we first recall some facts about parabolic subgroups.

We first recall some facts about parabolic subgroups.

\begin{lem}\label{para-facts}\hphantom{NEWLINE}
    \begin{enumerate}[label=\textnormal{(\alph*)}]
        \item\label{para-homog} If $X$ is a projective homogeneous $G$-space, then $X\cong G/P$ for some parabolic subgroup $P$ of $G$ containing $B$.

        \item\label{para-bij} There is an inclusion-preserving bijection between subsets $I\subseteq \Delta$ and parabolic subgroups $P_I$ containing $B$.\footnote{Although we won't need it, the actual definition of $P_I$ can be found directly above Th.~29.2 in \cite{Hum75}.}
        \item\label{para-roots} $\Phi(P_I, T) = \Phi^+\cup (\Phi^-\cap \Z I)$.
    \end{enumerate}
\end{lem}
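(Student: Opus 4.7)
My plan is to reduce all three claims to the standard structure theory of parabolic subgroups in reductive algebraic groups, as developed in \cite{Hum75}, so the proof will amount to assembling references rather than new arguments.

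For \cref{para-homog}, I would pick any base point $x_0 \in X$ and let $P_0 \subseteq G$ be its stabilizer; the orbit map then induces an isomorphism $G/P_0 \xrightarrow{\cong} X$, and $P_0$ is parabolic by definition since $X$ is projective. To arrange containment of the fixed Borel rather than just some Borel, I would invoke the fact that every parabolic contains a Borel subgroup \cite[Cor.~21.3.A]{Hum75} together with the transitivity of $G$-conjugation on Borel subgroups: conjugating the identification above by a suitable $g \in G$ replaces $P_0$ by $P := g^{-1} P_0 g \supseteq B$. For \cref{para-bij}, I would appeal directly to the classification of parabolic subgroups containing $B$ \cite[Thm.~29.3]{Hum75}: the subgroup $P_I$ generated by $B$ together with the negative root groups $U_{-\alpha}$, $\alpha \in I$, is parabolic, and $I \mapsto P_I$ is an inclusion-preserving bijection onto the parabolic subgroups containing $B$.

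For \cref{para-roots}, the plan is to argue at the Lie algebra level. Since $\Lie(B) = \mathfrak{t} \oplus \bigoplus_{\alpha \in \Phi^+} \mathfrak{g}_\alpha$, and $\Lie(P_I)$ contains in addition all $\mathfrak{g}_{-\alpha}$ with $\alpha \in I$, closure under the Lie bracket forces $\mathfrak{g}_{-\gamma} \subseteq \Lie(P_I)$ whenever $\gamma \in \Phi^+$ is a $\Z_{\geq 0}$-combination of elements of $I$, i.e.\ whenever $-\gamma \in \Phi^- \cap \Z I$; this gives one inclusion. For the reverse direction, I would verify that $\mathfrak{t} \oplus \bigoplus_{\alpha \in \Phi^+ \cup (\Phi^- \cap \Z I)} \mathfrak{g}_\alpha$ is already a Lie subalgebra of $\mathfrak{g}$, using only the elementary relation $[\mathfrak{g}_\alpha, \mathfrak{g}_\beta] \subseteq \mathfrak{g}_{\alpha+\beta}$ (with the convention $\mathfrak{g}_0 = \mathfrak{t}$ and $\mathfrak{g}_\gamma = 0$ for $\gamma \notin \Phi \cup \{0\}$). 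Since this Lie subalgebra contains $\Lie(B)$ and all generators $\mathfrak{g}_{-\alpha}$, $\alpha \in I$, of $\Lie(P_I)$, it must coincide with $\Lie(P_I)$, yielding the claimed equality $\Phi(P_I, T) = \Phi^+ \cup (\Phi^- \cap \Z I)$.

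No step presents a genuine obstacle, since each claim reduces to well-known material from \cite{Hum75}. The only point requiring any care is the closure-under-bracket argument for \cref{para-roots}, where one must observe that the set $\Phi^+ \cup (\Phi^- \cap \Z I)$ is closed under addition of roots whenever the sum is again a root; but this follows immediately from the fact that $\Phi^- \cap \Z I$ is the negative root system of the Levi subsystem generated by $I$.
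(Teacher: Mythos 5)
Your proposal is correct. Parts (a) and (b) coincide with the paper's proof: the paper likewise obtains $X \cong G/P_0$ from the stabilizer of a point, moves $P_0$ to a parabolic containing $B$ via the facts that every parabolic contains a Borel and that all Borels are conjugate \cite[Th.~21.3]{Hum75}, and disposes of (b) by citing \cite[Th.~29.3]{Hum75}. The only divergence is in part (c), where the paper simply cites \cite[Th.~30.1]{Hum75}, whereas you supply a direct Lie-algebra verification. Your argument there is sound: the one point needing care is exactly the one you flag, namely that $S := \Phi^+\cup(\Phi^-\cap\Z I)$ is closed under root addition. The mixed case works out because if $\alpha\in\Phi^+$, $\beta\in\Phi^-\cap\Z I$ and $\alpha+\beta$ is a negative root, then the coefficients of the simple roots outside $I$ in $\alpha+\beta$ are simultaneously $\geq 0$ (coming from $\alpha$) and $\leq 0$ (as $\alpha+\beta\in\Phi^-$), hence vanish, forcing $\alpha+\beta\in\Z I$. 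One should also note (as your argument implicitly uses, and which is harmless over $\C$) that $\Lie(P_I)$ is the Lie subalgebra generated by $\Lie(B)$ and the $\mathfrak g_{-\alpha}$, $\alpha\in I$, which holds since $P_I$ is generated as a connected group by $B$ and the corresponding root subgroups. Your version buys self-containedness at the cost of a few lines; the paper's buys brevity by outsourcing (c) entirely to \cite{Hum75}.
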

\begin{proof}
    \ref{para-homog} This is standard. It uses that every parabolic subgroup contains a Borel subgroup (\cite[Cor.~21.3.B.]{Hum75}), and that all Borel subgroups are conjugate (\cite[Th.~21.3]{Hum75}).

    \ref{para-bij} \cite[Th.~29.3]{Hum75}.

    \ref{para-roots} \cite[Th.~30.1]{Hum75}.
\end{proof}

Let $P$ be a parabolic subgroup of $G$ containing a maximal torus $T$. Recall that the characters $\lambda\colon T\to \C^*$ which are extendable to $P$ correspond one-to-one with $G$-equivariant line bundles $L_{\lambda,P}$ on $G/P$; see, e.g., \cite[\S9.11]{Hotta} (although the argument there is for $P=B$, the same argument works verbatim with $N^{-}$ replaced by the unipotent radical of the parabolic subgroup of $G$ opposite to $P$). Note that there are two common conventions for this correspondence---we choose the convention for which $P$ acts on the fiber of $L_{\lambda,P}$ at $P$ as $b\cdot v = \lambda(b)v$.\footnote{The other convention is $b\cdot v = \lambda(b)^{-1}v$.} In this case, the sheaf of sections $\mathscr{L}_{\lambda,P}$ of $L_{\lambda,P}$ is given by
\begin{equation}\label{eq:borel-L-lambda}
    \Gamma(U, \mathscr{L}_{\lambda,P}) = \{f\in \Gamma(q^{-1}(U), \mathscr{O}_G) \mid f(gb) = \lambda(b)^{-1}f(g)\text{ for all }g\in G, b\in P\},
\end{equation}
where $q\colon G\to G/P$ is the quotient map. Since $L_{\lambda,P}$ is $G$-equivariant, there is a $G$-equivariant structure on $\mathscr{L}_{\lambda,P}$. Although we won't need to know this structure explicitly, it may help the reader to note that the induced action of $G$ on $\Gamma(G/P, \mathscr{L}_{\lambda,P})$ is given by
\begin{equation*}
    (g\cdot f)(g') = f(g^{-1}g')\qquad (g,g'\in G,\; f\in \Gamma(G/P, \mathscr{L}_\lambda)).
\end{equation*}

\bigskip
There are many proofs of the following theorem throughout the literature. It is often stated and proved only for $P=B$. However, it was originally proven for all parabolic subgroups, e.g.~in \cite{Ser54} or \cite[Cor.~to Th.~V]{Bot57}.
\begin{thm}[Borel--Weil]\label{th:borel-weil}
        If $-\lambda$ is a dominant weight which is extendable to the parabolic subgroup $P$, then $\Gamma(G/P,\mathscr{L}_{\lambda, P})^\vee$ is the irreducible representation of $G$ with highest weight $-\lambda$.
\end{thm}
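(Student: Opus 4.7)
The plan is to construct an explicit $G$-equivariant injection $V^\vee\hookrightarrow\Gamma(G/P,\mathscr{L}_{\lambda,P})$, where $V$ denotes the irreducible $G$-representation of highest weight $-\lambda$ (which exists because $-\lambda$ is dominant), and then to obtain surjectivity by reducing to the already classical case $P=B$.

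For the injection, let $v_0\in V$ be a highest weight vector. Since $-\lambda$ is extendable to $P=P_I$, one has $\langle -\lambda,\alpha^\vee\rangle=0$ for every $\alpha\in I$, so the irreducible $L_I$-representation of highest weight (the restriction of) $-\lambda$ is just the one-dimensional character $-\lambda$ of the Levi $L_I$. Because $v_0$ is annihilated by the unipotent radical of $P$ (which is generated by positive-root subgroups), it follows that $P=L_IU_I$ acts on $\C v_0$ via the character $\lambda^{-1}$. For each $\varphi\in V^\vee$ the regular function
\[
f_\varphi\colon G\to\C,\qquad f_\varphi(g):=\varphi(g\cdot v_0),
\]
then satisfies $f_\varphi(gp)=\varphi(g\cdot(p\cdot v_0))=\lambda(p)^{-1}f_\varphi(g)$ for all $p\in P$, so it defines an element of $\Gamma(G/P,\mathscr{L}_{\lambda,P})$ by~\eqref{eq:borel-L-lambda}. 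The resulting map $\varphi\mapsto f_\varphi$ is linear and manifestly $G$-equivariant; it is injective, since the orbit $G\cdot v_0$ spans the irreducible representation $V$, so that $f_\varphi\equiv 0$ forces $\varphi=0$.

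To obtain surjectivity, I will reduce to the case $P=B$. Let $\pi\colon G/B\to G/P$ be the natural projection. A direct check from~\eqref{eq:borel-L-lambda} shows that $\pi^*\mathscr{L}_{\lambda,P}\cong\mathscr{L}_{\lambda,B}$, both line bundles corresponding to the restriction of $\lambda$ to $B$. The fibres of $\pi$ are the projective connected varieties $P/B$ (flag varieties of $L_I$), hence $\pi_*\O_{G/B}=\O_{G/P}$, and the projection formula yields a canonical $G$-equivariant isomorphism
\[
\Gamma(G/P,\mathscr{L}_{\lambda,P})\;\cong\;\Gamma(G/B,\pi^*\mathscr{L}_{\lambda,P})\;=\;\Gamma(G/B,\mathscr{L}_{\lambda,B}).
\]
Under this identification, the injection constructed above coincides with the analogous Borel-case injection $V^\vee\hookrightarrow\Gamma(G/B,\mathscr{L}_{\lambda,B})$. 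The classical Borel--Weil theorem for the Borel subgroup (see~\cite{Ser54} and \cite[Cor.\ to Th.~V]{Bot57}) asserts that this Borel-case injection is an isomorphism; consequently so is the parabolic one, and dualising yields the statement.

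The only genuine obstacle is the surjectivity in the case $P=B$, which is invoked as a black box. It can be proved either by cohomological means (Kodaira vanishing plus the Weyl dimension formula) or by analysing sections on the big Bruhat cell and using that any $B$-eigenvector in $\Gamma(G/B,\mathscr{L}_{\lambda,B})$ of weight different from $\lambda$ must vanish; but spelling this out goes well beyond the scope of a sketch.
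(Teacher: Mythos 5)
Your argument is essentially correct, but be aware that the paper does not prove this statement at all: it is quoted as a classical result, with the preceding remark pointing to \cite{Ser54} and \cite[Cor.~to Th.~V]{Bot57} for the parabolic case. What you supply is therefore genuinely additional content: an explicit $G$-equivariant evaluation map $\varphi\mapsto\bigl(g\mapsto\varphi(g\cdot v_0)\bigr)$ landing in the function-theoretic model \eqref{eq:borel-L-lambda}, together with a reduction of the parabolic case to the Borel case via $\pi\colon G/B\to G/P$, $\pi_*\O_{G/B}=\O_{G/P}$ and the projection formula. Both steps are sound: the line $\C v_0$ is $P_I$-stable with character $\lambda^{-1}$, since the unipotent radical of $P_I$ lies in that of $B$ and hence \emph{fixes} $v_0$ (not ``annihilates'' it --- and note it is generated by the root subgroups for $\Phi^+\setminus\Z I$, not by all positive-root subgroups), while the extendability hypothesis gives $\langle\lambda,\alpha\rangle=0$ for $\alpha\in I$, which by $\fsl(2)$-theory forces $E_{-\alpha}v_0=0$, so the Levi preserves the line; injectivity follows from irreducibility of $V$; and the identification $\Gamma(G/P,\mathscr{L}_{\lambda,P})\cong\Gamma(G/B,\mathscr{L}_{\lambda,B})$ is correct. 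Since you still invoke the $P=B$ case as a black box, your proof has the same logical status as the paper's citation --- it does not make the result self-contained --- but it makes the reduction transparent and exhibits the isomorphism concretely, which is what the paper's function-model \eqref{eq:borel-L-lambda} is set up for anyway.
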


\begin{lem}\label{can-as-char}
    Let $I\subseteq \Delta$ be a subset of the set of simple roots, and let $P_I$ be the corresponding parabolic subgroup. Define
    \begin{equation*}
        \delta_I \coloneqq \frac{1}{2}\sum_{\alpha\in \Phi^+\setminus \Z I} \alpha.
    \end{equation*}
    Then
    \begin{equation*}
        \omega_{G/P_I} \cong \mathscr{L}_{2\delta_I, P_I}.
    \end{equation*}
\end{lem}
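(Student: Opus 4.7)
The plan is to identify the character of $P_I$ to which the $G$-equivariant line bundle $\omega_{G/P_I}$ corresponds under the bijection recalled in \cref{sec:eqbuns}, by computing the action of $T\subseteq P_I$ on the fiber of $\omega_{G/P_I}$ at the base point $eP_I$.

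First, recall that $P_I$ acts on $G/P_I$ by left translation and fixes $eP_I$, so $P_I$ acts linearly on the fiber. The tangent space $T_{eP_I}(G/P_I)$ is canonically isomorphic, as a $P_I$-representation, to $\mathfrak g/\mathfrak p_I$ equipped with the adjoint action; this can be seen by differentiating $p \cdot \exp(t\xi)P_I = \exp(t\operatorname{Ad}(p)\xi) P_I$ at $t=0$. Consequently, the fiber of $\omega_{G/P_I}$ at $eP_I$ is the one-dimensional $P_I$-representation
\[
\textstyle \bigwedge^{\dim(G/P_I)}(\mathfrak g/\mathfrak p_I)^\vee.
\]

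Next, I decompose $\mathfrak g/\mathfrak p_I$ into $T$-weight spaces. By \cref{para-facts}\ref{para-roots}, the roots of $P_I$ relative to $T$ are $\Phi^+\cup(\Phi^-\cap \Z I)$, so $\mathfrak p_I = \mathfrak t \oplus \bigoplus_{\alpha\in \Phi^+\cup(\Phi^-\cap \Z I)} \mathfrak g_\alpha$ and therefore
\[
\mathfrak g/\mathfrak p_I \;\cong\; \bigoplus_{\alpha\in \Phi^-\setminus \Z I}\mathfrak g_\alpha \;=\; \bigoplus_{\alpha\in \Phi^+\setminus \Z I}\mathfrak g_{-\alpha}
\]
as $T$-representations. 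Hence $T$ acts on the one-dimensional determinant line $\bigwedge^{\dim(G/P_I)}(\mathfrak g/\mathfrak p_I)$ through the character $-\sum_{\alpha\in \Phi^+\setminus \Z I}\alpha = -2\delta_I$, and consequently on its dual through the character $2\delta_I$.

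Finally, since $\omega_{G/P_I}$ is $G$-equivariant, it arises from a character of $P_I$ via the correspondence recalled in \cref{sec:eqbuns}; by the preceding paragraph, the restriction of this character to $T$ agrees with $2\delta_I$, so the character must be (the extension to $P_I$ of) $2\delta_I$ itself. Under the convention stated in \cref{sec:eqbuns}, namely that $P_I$ acts on the fiber of $\mathscr L_{\lambda,P_I}$ at $eP_I$ by multiplication by $\lambda$, this gives $\omega_{G/P_I}\cong \mathscr L_{2\delta_I,P_I}$, as claimed. The only subtle point worth flagging is the sign convention in \cref{sec:eqbuns}: had the opposite convention been chosen, the weight would have come out to $-2\delta_I$; but with the convention as stated, the computation of the $T$-weight on the fiber directly yields the answer.
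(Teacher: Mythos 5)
Your proof is correct and follows essentially the same route as the paper: both identify the fiber of $\omega_{G/P_I}$ at the base point as the determinant of $(\mathfrak g/\mathfrak p_I)^\vee$, decompose $\mathfrak g/\mathfrak p_I$ into the root spaces $\mathfrak g_{-\alpha}$ for $\alpha\in\Phi^+\setminus\Z I$ using \cref{para-facts}, and read off the $T$-weight $2\delta_I$ (the paper phrases this via $T^*(G/P_I)$ and $\mathfrak p_I^\perp\cong(\mathfrak g/\mathfrak p_I)^\vee$, but this is only a cosmetic difference).
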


\begin{proof}
    The following argument is based on the argument given in the MathOverflow post \cite{403574}.

    According to \cite[Lem.~1.4.9]{CG10}, $T^*(G/P_I)$ is the (unique) $G$-equivariant vector bundle on $G/P_I$ whose fiber over $P_I$ is the $P_I$-module
    \[ \mathfrak{p}_I^\perp\coloneqq \{\xi \in \mathfrak{g} \mid \langle \xi, x\rangle = 0 \text{ for all }x\in \mathfrak{p}_I\},\]
    where $P_I$ acts via the coadjoint action.
    But, letting $T$ be a maximal torus of $G$ contained in $P_I$, we have a sequence of $P_I$-isomorphisms
    \begin{align*}
        \mathfrak{p}_I^\perp &\cong (\mathfrak{g}/\mathfrak{p}_I)^\vee\\
        &\cong \left(\bigoplus_{\substack{\alpha\in \Phi\text{ not a root}\\\text{of }P_I\text{ rel.~to }T}}\mathfrak{g}_\alpha\right)^\vee\\
        &\cong \left(\bigoplus_{\alpha\in -(\Phi^+\setminus \Z I)} \mathfrak{g}_\alpha\right)\\
        &\cong \bigoplus_{\alpha\in \Phi^+\setminus \Z I} \mathfrak{g}_\alpha.
    \end{align*}
    Taking the determinant gives the $P_I$-equivariant line bundle whose fiber at $P_I$ is the $P_I$-module
    \begin{equation*}
        \bigotimes_{\alpha\in \Phi^+\setminus \Z I} \mathfrak{g}_\alpha.
    \end{equation*}
    The action of $P_I$ on this module is determined by the action of $T$, and the action of $t\in T$ is just multiplication by
    \[ \sum_{\alpha \in \Phi^+\setminus \Z I} \alpha(t) = 2\delta_I(t).\]
    Thus, $\omega_{G/P_I} \cong \mathscr{L}_{2\delta_I, P_I}$.
\end{proof}

We need one more technical lemma before beginning the proof of \cref{prop:repTheoreticDescription}.

\begin{lem}
    Let $I\subseteq \Delta$ be a subset of the simple roots. Then $\langle\delta,\delta_I\rangle = \langle\delta_I, \delta_I\rangle$.
\end{lem}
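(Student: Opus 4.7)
The strategy is to decompose $\delta$ as $\delta = \delta_I + \delta_I'$, where
\[\delta_I' := \frac{1}{2}\sum_{\alpha \in \Phi^+ \cap \Z I}\alpha\]
is the half-sum of the positive roots of the Levi subsystem generated by $I$. With this decomposition, the claim $\langle \delta, \delta_I\rangle = \langle \delta_I, \delta_I\rangle$ is equivalent to $\langle \delta_I', \delta_I\rangle = 0$. The plan is to prove this orthogonality by showing that $\delta_I$ is orthogonal to every simple root in $I$, which forces it to be orthogonal to the whole subspace spanned by $I$, in which $\delta_I'$ lies.

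The key observation, which I would invoke in the main step, is the standard fact (see for instance \cite[Lemma~10.2.B]{Hum72}) that for a simple root $\alpha \in \Delta$, the reflection $s_\alpha$ permutes the set $\Phi^+ \setminus \{\alpha\}$. More precisely, if $\alpha \in I$, then $s_\alpha$ preserves the subset $\Phi^+ \setminus \Z I$ of $\Phi^+$: indeed, if $\beta \in \Phi^+ \setminus \Z I$, then $s_\alpha(\beta) = \beta - \langle\beta, \alpha^\vee\rangle\alpha$ still has at least one nonzero coefficient (on a simple root outside $I$) equal to the corresponding coefficient of $\beta$, hence $s_\alpha(\beta)$ remains positive and is also not in $\Z I$. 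Consequently $s_\alpha$ permutes the summands defining $\delta_I$, so $s_\alpha(\delta_I) = \delta_I$, which by the formula $s_\alpha(\delta_I) = \delta_I - 2\frac{\langle \delta_I,\alpha\rangle}{\langle \alpha,\alpha\rangle}\alpha$ forces $\langle \delta_I, \alpha\rangle = 0$.

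Since this holds for every $\alpha \in I$, the element $\delta_I$ is orthogonal to $\operatorname{span}_\C(I)$ with respect to $\langle -,-\rangle$. But $\delta_I' = \frac{1}{2}\sum_{\alpha \in \Phi^+\cap \Z I}\alpha$ is a $\dQ$-linear combination of elements of $I$ (since every root in $\Phi^+\cap \Z I$ is a nonnegative integer combination of simple roots in $I$), hence lies in $\operatorname{span}_\C(I)$. Therefore $\langle \delta_I, \delta_I'\rangle = 0$, and
\[\langle \delta, \delta_I\rangle = \langle \delta_I + \delta_I', \delta_I\rangle = \langle \delta_I, \delta_I\rangle,\]
as claimed.

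I do not anticipate any serious obstacle here; the only care needed is in verifying that $s_\alpha$ for $\alpha \in I$ really does stabilize $\Phi^+\setminus \Z I$, which rests on the elementary fact recalled above that $s_\alpha$ only moves $\alpha$ itself out of $\Phi^+$.
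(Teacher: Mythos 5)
Your proof is correct, and it takes a route for the core step that differs from the paper's, although both arguments share the same skeleton: decompose $\delta = \delta_I + \delta_I'$, reduce to $\langle \delta_I', \delta_I\rangle = 0$, and establish this by showing $\langle \delta_I, \alpha\rangle = 0$ for every $\alpha \in I$. You obtain that orthogonality directly: for $\alpha \in I$ the simple reflection $s_\alpha$ permutes $\Phi^+ \setminus \Z I$ (it alters only the coefficient of $\alpha$ in the simple-root expansion, hence preserves both positivity and the property of having a nonzero coefficient outside $I$), so $s_\alpha(\delta_I) = \delta_I$, which forces $\langle \delta_I, \alpha\rangle = 0$. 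The paper instead identifies $\delta_I'$ as the Weyl vector of the root subsystem $\Phi_I = \Phi \cap \R I$ with base $I$ and computes $\langle H_\alpha, \delta_I\rangle = \langle H_\alpha, \delta\rangle - \langle H_\alpha, \delta_I'\rangle = 1 - 1 = 0$ for $\alpha \in I$, using the standard pairing of a Weyl vector with simple coroots in both $\Phi$ and $\Phi_I$. Your version is somewhat more self-contained: it avoids checking that $\Phi_I$ is a root system with base $I$ and that $\delta_I'$ is its Weyl vector, at the cost of the (elementary, and correctly justified) verification that $s_\alpha$ stabilizes $\Phi^+\setminus\Z I$ for $\alpha\in I$ --- a verification of exactly the same flavour as the one the paper itself performs in the proof of \cref{lem:Fano} for $\alpha \in \Delta\setminus I$. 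Both proofs conclude identically, since $\delta_I'$ lies in the span of $I$.
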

\begin{proof}
    Set $\delta_I' \coloneqq \delta - \delta_I = \frac{1}{2}\sum_{\alpha\in \Phi^+\cap \Z I} \alpha$. We want to show that $\langle\delta'_I, \delta_I\rangle=0$. To begin with, let $\Phi_I$ be $\Phi\cap \R I$ viewed as a subset of the vector space $\R I$. It is immediately clear that $\Phi_I$ is a root system (in $\R I$), and that $I$ forms a base of $\Phi_I$. Hence, $\delta'_I$ is the Weyl vector $\delta_{\Phi_I}$ of $\Phi_I$ (with respect to this base). Moreover, the inner product of two elements of $\R I$ is the same as in the ambient vector space $\mathfrak{t}^\vee$ of the root system $\Phi$. So, the coroots of $\Phi_I$ are the coroots $H_\alpha\coloneqq 2\alpha/\langle\alpha,\alpha\rangle$ of $\Phi$ for $\alpha\in \Phi_I$. Therefore, by \cite[Prop.~8.38]{Hal15},
    \[ \langle\delta'_I, H_\alpha\rangle = \langle\delta_{\Phi_I}, H_\alpha\rangle = 1\]
    for all $\alpha\in I$. Hence, for all $\alpha\in I$, we have
    \begin{align*}
        \frac{2\langle \alpha, \delta_I\rangle}{\langle\alpha,\alpha\rangle}
        &= \langle H_\alpha, \delta_I\rangle\\
        &= \langle H_\alpha, \delta - \delta'_I\rangle\\
        &= \langle H_\alpha, \delta\rangle - \langle H_\alpha, \delta'_I\rangle\\
        &= 1-1 = 0,
    \end{align*}
    where the final equality again uses \cite[Prop.~8.38]{Hal15}. Therefore, $\langle\alpha, \delta_I\rangle=0$ for all $\alpha\in I$ and hence for all $\alpha\in \R I$. In particular, $\langle \delta'_I, \delta_I\rangle=0$.
\end{proof}

\begin{proof}[Proof of \Cref{prop:repTheoreticDescription}]
Since $X$ is a projective $G$-homogeneous space, it is isomorphic by \Cref{para-facts} to $G/P_I$ for some $I$. By assumption, $\mathscr{L}^{\otimes \ell}\cong \omega_X^{\otimes (-k)}$ as $G$-equivariant line bundles. Therefore, since $\Pic(X)$ and therefore $\Pic^G(X)$ is torsion-free, and applying \cref{can-as-char},
\begin{equation*}
    \mathscr{L} \cong \mathscr{L}_{-\frac{k}{\ell}2\delta_I, P}.
\end{equation*}
Therefore, by Borel--Weil (\cref{th:borel-weil}), the $G$-representation $V=\Gamma(X, \mathscr{L})^\vee$ has highest weight
\[\mu=-(-\frac{2k}{\ell}\delta_I)=\frac{2k}{\ell}\delta_I.\] Then
\begin{align*}
    \frac{2\langle\delta, \mu\rangle}{\langle\mu,\mu\rangle}
    &= \frac{\ell\langle \delta, \delta_I\rangle}{k\langle\delta_I, \delta_I\rangle}
    = \frac{\ell\langle \delta_I, \delta_I\rangle}{k\langle\delta_I, \delta_I\rangle}
    = \frac{\ell}{k}
    = \beta(\mathbf{e}). \qedhere
\end{align*}
\end{proof}

We finish this section with the following well-known fact concering the anticanonical bundle of $X=G/P$, the proof of which we include for the convenience of the reader.
\begin{lem}\label{lem:Fano}
Assume only that $G$ is reductive. Then $X=G/P$ is a Fano variety.
\end{lem}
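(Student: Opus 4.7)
The plan is to write $\omega_X^{\vee}$ as an explicit equivariant line bundle on a generalized flag variety and then check ampleness via a standard root-combinatorial dominance criterion.

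First I reduce to the case where $G$ is semisimple: the connected component of the center of the reductive group $G$ is a central torus, which by the Borel fixed point theorem has fixed points on the projective variety $X$; its fixed locus is $G$-stable and nonempty, hence equal to $X$ by homogeneity, so this central torus acts trivially. Thus $X$ is also homogeneous under the semisimple derived subgroup. By \cref{para-facts}, $X \cong G/P_I$ for some $I \subseteq \Delta$, and \cref{can-as-char} then gives $\omega_X \cong \mathscr{L}_{2\delta_I, P_I}$, so that $\omega_X^{\vee} \cong \mathscr{L}_{-2\delta_I, P_I}$.

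It remains to verify that this $G$-equivariant line bundle is ample. In the convention fixed in \cref{sec:eqbuns}, the standard ampleness criterion for line bundles on generalized flag varieties (provable by restriction to the Schubert lines $C_\alpha \cong \P^1$ for $\alpha \in \Delta\setminus I$, whose classes numerically span the cone of curves on $G/P_I$) states that $\mathscr{L}_{\lambda,P_I}$ is ample if and only if $\langle -\lambda, H_\alpha\rangle > 0$ for every $\alpha \in \Delta \setminus I$. So it suffices to show that $\langle 2\delta_I, H_\alpha\rangle > 0$ for all $\alpha \in \Delta \setminus I$.

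For this I use the decomposition $2\delta_I = 2\delta - 2\delta_I'$, where $\delta_I' = \tfrac{1}{2}\sum_{\beta \in \Phi^+\cap \Z I}\beta$ is the Weyl vector of the sub-root system $\Phi_I$ (as noted in the proof of \cref{prop:repTheoreticDescription}). The standard identity $\langle 2\delta, H_\alpha\rangle = 2$ handles the first term. For the second, each $\beta \in \Phi^+\cap \Z I$ is a nonnegative integral combination of simple roots in $I$, and for $\gamma \in I$ and $\alpha \in \Delta \setminus I$ the off-diagonal Cartan integer $\langle \gamma, H_\alpha\rangle$ is nonpositive; hence $\langle 2\delta_I', H_\alpha\rangle \leq 0$. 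Combining these gives $\langle 2\delta_I, H_\alpha\rangle \geq 2 > 0$, as required. No real obstacle arises; the only point demanding care is keeping sign conventions straight when identifying characters of $P_I$ with equivariant line bundles on $G/P_I$.
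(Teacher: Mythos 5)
Your proof is correct, and the key computation takes a genuinely different route from the paper's. Both proofs reduce to showing that $\omega_X^{\vee}\cong \mathscr{L}_{-2\delta_I,P_I}$ is ample via the criterion $\langle 2\delta_I,\alpha\rangle>0$ for all $\alpha\in\Delta\setminus I$ (the paper simply cites \cite[II.4.4]{Jantzen} for the criterion, where you sketch the Schubert-line argument). The paper then argues that $s_\alpha$ permutes $\Phi^+\setminus(\Z I\cup\{\alpha\})$, deduces $s_\alpha(\delta_I)=\delta_I-\alpha$, and concludes the exact value $2\langle\alpha,\delta_I\rangle/\langle\alpha,\alpha\rangle=1$. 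You instead write $2\delta_I=2\delta-2\delta_I'$ and combine the identity $\langle 2\delta,\alpha^\vee\rangle=2$ with the nonpositivity of the off-diagonal Cartan integers $\langle\gamma,\alpha^\vee\rangle$ for $\gamma\in I$, $\alpha\in\Delta\setminus I$, to get the lower bound $\langle 2\delta_I,\alpha^\vee\rangle\geq 2$. Your version is arguably the more robust one: the paper's permutation claim, and with it the exact value $1$, already fails for $A_2$ with $I=\{\alpha_1\}$ and $\alpha=\alpha_2$, where $s_{\alpha_2}$ sends $\alpha_1+\alpha_2$ to $\alpha_1\in\Z I$ and $\langle 2\delta_I,\alpha_2^\vee\rangle=\langle\alpha_1+2\alpha_2,\alpha_2^\vee\rangle=3$; only the positivity survives, which is all either argument needs. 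You also make explicit the reduction from reductive to semisimple $G$ (the connected centre acts trivially by the Borel fixed point theorem together with homogeneity), a step the paper leaves implicit even though the standing hypothesis of its surrounding section is semisimplicity. No gaps.
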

\begin{proof}
By \cite[II.4.4]{Jantzen}, a $G$-equivariant line bundle $\Ell_{-\lambda,P}$ on $G/P$ is ample if and only if
\[\langle \lambda, \alpha \rangle > 0 \qquad \text{for all $\alpha \in \Delta \setminus I$}.\]
By \cref{can-as-char}, the anticanonical bundle $\omega_X^{\vee}$ is $\Ell_{-2\delta_I,P}$, so we need to check that $\langle \delta_I, \alpha \rangle > 0$ holds for any $\alpha \in \Delta \setminus I$. The reflection $s_\alpha \colon \Phi \to \Phi$ given by
\[s_\alpha(\beta) = \beta-\frac{2\langle \alpha, \beta \rangle}{\langle \alpha, \alpha \rangle} \alpha\]
maps $\alpha$ to $-\alpha$ and permutes $\Phi^+ \setminus (\Z I \cup \{\alpha\})$ (this follows easily from the defining property of the set of simple roots $\Delta$). Hence, $s_\alpha(\delta_I) = \delta_I - \alpha$, which means that $\frac{2\langle \alpha, \delta_I\rangle}{\langle \alpha, \alpha \rangle} = 1 > 0$.
\end{proof}

\section{Tautological systems associated to homogeneous spaces} \label{sec:tautologicalSystems}

The purpose of this section is to gather all the previous results and to apply them to the special case where we are given a projective variety $X$ with a transitive group action together with a very ample equivariant line bundle.
We obtain a representation on the space of sections, and we can therefore consider the corresponding tautological system. The non-vanishing results from \cref{sec:nonVanishingCriteria} then apply. Moreover, we show in \cref{subsec:LocProp} a localization property of the corresponding Fourier transform $\hat{\tau}$, and in \cref{subsec:CoLocProp} a property that is in a certain sense dual to the first one, which is why we call it ``colocalization property''. These results, combined with the discussion in \cref{sec:FLOnVecBundles}, will finally give our main result
(\cref{theo:tau_is_MHM}) stating that the tautological system $\tau(\rho,\hat{X},\beta)$, if non-zero, underlies a complex pure resp.\ a mixed Hodge module, this is discussed in \cref{subsec:TautMHM}.

\subsection{Localization property of \texorpdfstring{$\widehat{\tau}$}{\textbackslash hat\textbackslash tau}}
\label{subsec:LocProp}

The purpose of this and of the following section is to prove a key property of the differential system $\hat{\tau}(\rho, \hat{X},\beta)$ concerning its relation to its restriction $i^+\hat{\tau}(\rho, \hat{X},\beta)$, where $i \colon \{0\}\hookrightarrow V$. In this section we only consider the case where $\beta(\mathbf{e})\in\dC\setminus \dZ$, whereas in the next section also the case where $\beta(\mathbf{e})\in \dZ$ is studied. For the moment, we are working in a slightly more general setup, therefore, we let temporarily  $V$ be any finite-dimensional vector space, and we consider the Euler operator $E$ on $V$ (i.e.\ the differential of the scaling action). For $\lambda\in \C$, define $\mathbf{Eig}(V,\lambda)$ to be the full subcategory of $\operatorname{Mod}(\Gamma(V, \mathscr{D}_V))$ consisting of modules $M$ satisfying
\begin{equation}
    M = \bigoplus_{\mu\in \lambda + \Z} M_\mu,
\end{equation}
where $M_\mu \coloneqq \ker(E-\mu)\subseteq M$.

\begin{prop}\label{prop:Eig-loc}
    Let $\mathscr{M}\in \operatorname{Mod}_{qc} (\mathscr{D}_V)$. If $\Gamma(V, \mathscr{M})\in \mathbf{Eig}(V,\lambda)$ for some $\lambda\in \C\setminus \Z$, then
    \[
    \mathscr{M}\cong j_+ j^+\mathscr{M},
    \]
    where $j$ denotes the open embedding of $V \setminus \{0\}$ into $V$.
\end{prop}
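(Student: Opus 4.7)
The plan is to reduce the asserted isomorphism to the vanishing of the local cohomology $R\Gamma_{\{0\}} \mathscr{M}$ at the origin, then to derive this vanishing from two incompatible constraints on the spectrum of $E$ on each $H^k_{\{0\}} \mathscr{M}$. With $i \colon \{0\} \hookrightarrow V$ the closed complement of $j$, I would invoke the standard distinguished triangle
\[
  i_+ i^! \mathscr{M} \lra \mathscr{M} \lra j_+ j^+ \mathscr{M} \overset{+1}{\lra}
\]
in $D^b_{qc}(\mathscr{D}_V)$. Since $i_+$ is fully faithful, it then suffices to show that $R\Gamma_{\{0\}} \mathscr{M} = i_+ i^! \mathscr{M}$ vanishes.

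First I would compute $R\Gamma_{\{0\}} \mathscr{M}$ on the affine variety $V$ by means of the Čech complex of $M := \Gamma(V, \mathscr{M})$ associated to the regular sequence $(x_1, \ldots, x_n)$, whose terms are the localizations $M[x_{i_1}^{-1} \cdots x_{i_k}^{-1}]$ and whose differentials are $D_V$-linear. The first key observation is that each such localization still lies in $\mathbf{Eig}(V, \lambda)$: the identity $[E, x_i^{-1}] = -x_i^{-1}$ shows that multiplication by $x_i^{-1}$ shifts $E$-eigenvalues by $-1 \in \Z$, and the $E$-equivariance of the Čech differentials then propagates this eigenspace decomposition, with eigenvalues still in $\lambda + \Z$, onto every cohomology module $H^k_{\{0\}} \mathscr{M}$.

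The second observation points the other way: any quasi-coherent $\mathscr{D}_V$-module set-theoretically supported at $\{0\}$ has all $E$-eigenvalues in $\Z$. By (the quasi-coherent extension of) Kashiwara's equivalence, such a module is a direct sum of copies of the delta module $\mathscr{B}_{\{0\}} := \mathscr{D}_V/\mathscr{D}_V(x_1, \ldots, x_n)$, and a short computation on the $\C$-basis $\{\partial^\alpha \overline{1}\}$, combining $E \cdot \overline{1} = -n \cdot \overline{1}$ (from $x_i \partial_i \overline{1} = [x_i,\partial_i] \overline{1} = -\overline{1}$) with $[E, \partial^\alpha] = -|\alpha| \partial^\alpha$, shows that $E$ acts with integer eigenvalues $-n - |\alpha|$.

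Combining the two, the $E$-eigenvalues on each $H^k_{\{0\}} \mathscr{M}$ belong simultaneously to $\lambda + \Z$ and to $\Z$; since $\lambda \notin \Z$ these sets are disjoint, forcing $H^k_{\{0\}} \mathscr{M} = 0$ for every $k$, and the triangle above yields the desired isomorphism. The step that I expect to require the most care is the quasi-coherent form of Kashiwara's equivalence used in the third paragraph, but this is a standard reduction to the coherent case by writing any quasi-coherent $\mathscr{D}_V$-module as the filtered colimit of its coherent submodules.
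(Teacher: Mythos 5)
Your proof is correct and follows essentially the same route as the paper's: reduce to the vanishing of $\mathrm{R}\Gamma_{\{0\}}\mathscr{M}$ via the adjunction triangle, observe via the \v{C}ech complex that each local cohomology module stays in $\mathbf{Eig}(V,\lambda)$, and derive a contradiction from the integrality of the $E$-eigenvalues on modules supported at the origin. The only (minor) divergence is in the last step, where the paper avoids Kashiwara's equivalence altogether by directly computing that a nonzero eigenvector killed by $\mathfrak{m}=(x_1,\dots,x_N)$ must have eigenvalue $-N\in\Z$, which is marginally more elementary than your reduction to direct sums of the delta module.
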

\begin{proof}
        Let $N := \dim V$ and choose coordinates $x_1,\dots,x_N$ on $V$. The distinguished triangle
    \[\mathrm{R}\Gamma_{\{0\}}(\mathscr{M}) \to \mathscr{M} \to j_+j^+\mathscr{M} \xrightarrow{+1}\]
    in $D_{qc}^b(\mathscr D_V)$ (see \cite[Prop.~1.7.1(i)]{Hotta}) shows that it suffices to prove $\mathrm{R}\Gamma_{\{0\}}(\mathscr{M}) = 0$ in order to conclude the claim. Since $V$ is affine and $\mathscr{M}$ is quasi-coherent, we actually just need to show
    \[ H^i_{\mathfrak{m}}(M)=0\quad\text{for all }i,\]
    where $M=\Gamma(V, \mathscr{M})$ and $\mathfrak{m}=(x_1,\ldots,x_N)$.

    Recall that $H^i_\mathfrak{m}(M)$ may be computed as the cohomology of the \v{C}ech complex
    \[ 0 \to M \to \bigoplus_{i} M_{x_i} \to \bigoplus_{i,j} M_{x_ix_j} \to \cdots \to M_{x_1\cdots x_N}\to 0.\]
    A straightforward application of the definition of eigenvector implies (a) that each term in this complex is also in $\mathbf{Eig}(V,\lambda)$, and (b) that $\mathbf{Eig}(V,\lambda)$ is closed under taking subquotients. Hence, each $H^i_\mathfrak{m}(M)$ is in $\mathbf{Eig}(V,\lambda)$.

    Since $H^i_\mathfrak{m}(M)$ is $\mathfrak{m}$-torsion, it remains to show that every $\mathfrak{m}$-torsion module in $\mathbf{Eig}(V,\lambda)$ is zero. Let $M'$ be one such module, and assume there is a nonzero $n\in M'$. Without loss of generality, we may assume that $n\in M'_\mu$ for some $\mu\in \lambda +\Z$ and that $\mathfrak{m}n=0$. Then
                            $$
    \mu n = E\cdot n
            = \sum_{i=0}^N x_i\partial_i \cdot n
            = \sum_{i=0}^N (\partial_i x_i -1) \cdot n = -N n.
    $$
    Thus, because $n\neq 0$, $\mu$ must be $-N$---in particular, $\mu$, and therefore also $\lambda$, must be an integer, which is false by assumption. Hence, $M'=0$.
\end{proof}
We draw a conclusion of the previous general result that concerns the Fourier transform of tautological systems as studied in \cref{cor:FLTautSysAsDirectImage}, where we only make the assumption that the boundary of the $G'$-orbit is reduced to the origin in the vector space $V$. This is of course satisfied in the case of interest like in the situation studied in \cref{thm:restrictedTauHatDescription}.
\begin{cor} \label{cor:localization}
  Let $\rho \colon G' \to \GL(V)$ be a finite-dimensional rational representation of an algebraic group of the form $G' = G \times \C^*$, where $\C^*$ acts by scaling elements of $V$. Let $Y \subseteq V$ be a $G'$-orbit and let $\overline{Y}$ be its closure. Assume that $\overline{Y}\setminus Y = \{0\}$.
  Let $\beta \colon \mathfrak g' \to \C$ be a Lie algebra homomorphism with $\beta(\mathbf e) \in \C \setminus \Z$. Then the $\mathscr{D}_V$-module $\hat\tau(\rho,\overline{Y},\beta)$ from \cref{def:tautSys} satisfies
  \[
  \hat\tau(\rho,\overline{Y},\beta) \cong j_+ j^+\hat \tau(\rho,\overline{Y},\beta),
  \] where $j$ denotes the open embedding of $V \setminus \{0\}$ into $V$.
\end{cor}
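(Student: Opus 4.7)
The plan is to apply \cref{prop:Eig-loc} to $M := \Gamma(V, \hat\tau(\rho,\overline{Y},\beta))$ by exhibiting an eigenvalue decomposition under the Euler operator with non-integral shift.

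First I would identify $Z_V(\mathbf{e})$ with $-E$. Since the $\C^*$-factor of $G' = G \times \C^*$ acts by scaling on $V$, the Lie algebra representation satisfies $\differential\rho(\mathbf{e}) = \id_V$. By the explicit formula in \cref{lem:equivVFOnVectSp}, this gives $Z_V(\mathbf{e}) = -\sum_i x_i \partial_{x_i} = -E$ in any choice of linear coordinates $(x_1,\dots,x_N)$ on $V$, where $N := \dim V$. Moreover, $\beta'(\mathbf{e}) = \trace(\differential\rho(\mathbf{e})) - \beta(\mathbf{e}) = N - \beta(\mathbf{e})$, so the defining relation from $\xi = \mathbf{e}$ in $\hat\tau(\rho,\overline{Y},\beta)$ takes the form $E \cdot \overline{1} = (\beta(\mathbf{e})-N) \cdot \overline{1}$, where $\overline{1}$ denotes the class of $1$ in $M$.

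Next I would leverage the natural grading on $D_V := \Gamma(V,\mathscr{D}_V)$ in which $x_i$ has degree $1$ and $\partial_{x_i}$ has degree $-1$, so that $[E,P] = d \cdot P$ for any homogeneous operator $P$ of degree $d$. Setting $\lambda := \beta(\mathbf{e}) - N$, a direct computation shows that for homogeneous $P$ of degree $d$,
\[
E \cdot (P \cdot \overline{1}) = P \cdot (E \cdot \overline{1}) + [E,P] \cdot \overline{1} = (\lambda + d)\, P \cdot \overline{1}.
\]
Thus every homogeneous element of $M$ is an eigenvector of $E$ with integer-shifted eigenvalue from $\lambda$, and since $M$ is generated over $D_V$ by the single class $\overline{1}$, we obtain the decomposition $M = \bigoplus_{d \in \Z} M_{\lambda+d}$ (directness follows since eigenspaces for distinct eigenvalues are linearly independent). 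Hence $M \in \mathbf{Eig}(V,\lambda)$.

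Finally, the hypothesis $\beta(\mathbf{e}) \notin \Z$ gives $\lambda = \beta(\mathbf{e}) - N \notin \Z$, so \cref{prop:Eig-loc} applies directly and yields $\hat\tau(\rho,\overline{Y},\beta) \cong j_+ j^+ \hat\tau(\rho,\overline{Y},\beta)$. There is no real obstacle here; the only point worth double-checking is the sign convention $Z_V(\mathbf{e}) = -E$ matching the formulation of $\hat\tau$ in \cref{def:tautSys}, but this is settled by \cref{lem:equivVFOnVectSp}. Note also that the hypothesis $\overline{Y}\setminus Y = \{0\}$ is not explicitly used in this argument; it is implicit in the statement only insofar as it ensures $\hat\tau$ has support contained in $\overline{Y}$ which meets $V \setminus \{0\}$ in $Y$, making the localization statement geometrically meaningful.
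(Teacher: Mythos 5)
Your proposal is correct and follows essentially the same route as the paper: identify the relation coming from $\xi=\mathbf{e}$ as an Euler-operator eigenvalue equation, use the commutator $[E,P]=d\,P$ for homogeneous $P$ to place $\Gamma(V,\hat\tau(\rho,\overline{Y},\beta))$ in $\mathbf{Eig}(V,\lambda)$ with $\lambda\equiv\beta(\mathbf{e})\pmod{\Z}$, and invoke \cref{prop:Eig-loc}. Your bookkeeping of the shift $\lambda=\beta(\mathbf{e})-\dim V$ (versus the paper's $\beta(\mathbf{e})$) is immaterial since $\mathbf{Eig}(V,\lambda)$ depends only on $\lambda$ modulo $\Z$, and your observation that the hypothesis $\overline{Y}\setminus Y=\{0\}$ is not actually used in the argument matches the paper's proof as well.
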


\begin{proof}
        By \cref{prop:Eig-loc}, it suffices to prove that $\Gamma(V, \hat\tau(\rho, \overline Y, \beta))\in \mathbf{Eig}(V,\beta(\mathbf{e}))$. To do this,     let
    \[ P= \sum_{\alpha\gamma} c_{\alpha\gamma} x^\alpha\partial^\gamma\]
    be a global section of $\mathscr{D}_V$. In $\Gamma(V, \hat\tau(\rho, \overline Y, \beta))$, we have
    \begin{align*}
        EP &= \sum_{\alpha\gamma} c_{\alpha\gamma} (|\alpha| - |\gamma|)x^\alpha\partial^\gamma + \sum_{\alpha\gamma} c_{\alpha\gamma} x^\alpha\partial^\gamma E\\
        &= \sum_{\alpha\gamma} c_{\alpha\gamma} (|\alpha| - |\gamma|)x^\alpha\partial^\gamma + \sum_{\alpha\gamma} \beta(\mathbf e)c_{\alpha\gamma} x^\alpha\partial^\gamma\\
        &= \sum_{\alpha\gamma} (\beta(\mathbf e) + |\alpha| - |\gamma|)c_{\alpha\gamma}x^\alpha\partial^\gamma\\
        &\in \bigoplus_{\mu \in \beta(\mathbf e) + \Z} \hat\tau_\mu.
    \end{align*}
    Thus, $\Gamma(V, \hat\tau(\rho, \overline Y, \beta))\in \mathbf{Eig}(V,\beta(\mathbf e))$.
\end{proof}

\subsection{Colocalization property of \texorpdfstring{$\widehat{\tau}$}{\textbackslash hat\textbackslash tau}}
\label{subsec:CoLocProp}

In this section, we consider a similar property as just studied, but which also includes the case where $\beta(\mathbf{e})\in \dZ$.
It turns out (see \cref{ex:Coloc} below) that in general the $\cD_V$-module $\hat{\tau}(\rho, \hat{X}, \beta)$ is not equal to the direct image of its restriction to $V\setminus \{0\}$, but to one cohomology group of the properly supported direct image. In the case where the value of $\beta$ on $\mathbf{e}$ is not an integer, this is consistent with the previous result since both direct images are equal then.

We work in the setup described before \cref{thm:restrictedTauHatDescription}, i.e.\ $X\subseteq \dP V$ is projective with affine cone $\hat{X}$ with vanishing ideal $\cI\subseteq \cO_V$. Consider the embeddings
$$
\begin{tikzcd}
\hat{X} \setminus \{0\} \ar[hook]{r} & V\setminus\{0\} \ar[hook]{r}{j} & V & \{0\} \ar[hook', swap]{l}{i_{\{0\}}}.
\end{tikzcd}
$$

Our main result in this section is the following.
\begin{thm}\label{theo:coloc}
If $\beta(\mathbf{e})\notin \dZ_{\leq 0}$, then
$\hat{\tau}(\rho, \hat{X}, \beta)$ is colocalized, in the sense that the canonical morphism
$$
H^0j_\dag j^+ \hat{\tau}(\rho, \hat{X}, \beta) \longrightarrow \hat{\tau}(\rho, \hat{X}, \beta)
$$
is an isomorphism in $\textup{Mod}_h(\cD_V)$.
\end{thm}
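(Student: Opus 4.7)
My approach is to rephrase the colocalization statement as a Hom-vanishing condition and then verify it by a short calculation with the Euler operator. Set $d := \dim V$.

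The natural map $\alpha \colon H^0 j_\dag j^+ \hat\tau \to \hat\tau$ is always injective: by general properties of the $\cD$-module direct image functors, $H^0 j_\dag j^+ \hat\tau$ is a holonomic $\cD_V$-module that agrees with $\hat\tau$ on $V \setminus \{0\}$ and admits no nonzero submodule supported at $\{0\}$. Its cokernel is a quotient of $\hat\tau$ that restricts to zero on $V \setminus \{0\}$, hence is set-theoretically supported at $\{0\}$, and in fact is the maximal such quotient. By Kashiwara's theorem every holonomic $\cD_V$-module supported at $\{0\}$ has the form $i_{\{0\},+} N$ for some $\C$-vector space $N$, so $\alpha$ is an isomorphism iff
\[
  \operatorname{Hom}_{\cD_V}\!\bigl(\hat\tau(\rho,\hat X,\beta),\; i_{\{0\},+} N\bigr) = 0 \qquad \text{for every $\C$-vector space } N.
\]

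To verify this, fix such an $N$ and a $\cD_V$-linear map $\phi \colon \hat\tau \to i_{\{0\},+} N$. Since $\hat\tau$ is cyclic on $\bar 1$, $\phi$ is determined by $v := \phi(\bar 1)$, which must satisfy the defining relations of $\hat\tau$. I would use only the single relation coming from $\xi = \mathbf e$. Identifying $i_{\{0\},+} N \cong \C[\partial_1, \dots, \partial_d] \otimes_\C N$, the Kashiwara-module action $x_i \cdot (\partial^\gamma \otimes n) = -\gamma_i (\partial^{\gamma - e_i} \otimes n)$ immediately yields $E \cdot (\partial^\gamma \otimes n) = -(|\gamma| + d)(\partial^\gamma \otimes n)$. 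Using $Z_V(\mathbf e) = -E$ and $\beta'(\mathbf e) = d - \beta(\mathbf e)$, the Euler relation $(Z_V(\mathbf e) - \beta'(\mathbf e))v = 0$ applied to $v = \sum_\gamma \partial^\gamma \otimes n_\gamma$ collapses to
\[
  (|\gamma| + \beta(\mathbf e))\, n_\gamma = 0 \qquad \text{for every } \gamma \in \N^d.
\]
Under the hypothesis $\beta(\mathbf e) \notin \Z_{\leq 0}$, the scalar $|\gamma| + \beta(\mathbf e)$ is nonzero for every $\gamma \in \N^d$, so each $n_\gamma$ vanishes, $v = 0$, and $\phi = 0$. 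This establishes the Hom-vanishing, and hence the theorem.

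The main work is the Hom-vanishing reformulation in the first paragraph; once that is in place, the rest is a direct calculation involving only the explicit action of the Kashiwara delta-module. Two features are worth highlighting. First, neither the defining ideal $\cI$ nor the Lie-algebra relations for $\xi \in \fg$ enter the computation — the Euler weight carried by $\bar 1$ alone forces $\phi = 0$. Second, this cleanly explains why the hypothesis takes the sharp integrality form $\beta(\mathbf e) \notin \Z_{\leq 0}$: the excluded case $\beta(\mathbf e) \in \Z_{\leq 0}$ is exactly the case in which $|\gamma| + \beta(\mathbf e) = 0$ admits a solution $\gamma \in \N^d$, and one can then construct genuine nonzero quotients of $\hat\tau$ supported at $\{0\}$.
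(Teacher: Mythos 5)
Your Euler-operator computation is correct, but it only proves that the canonical map $\alpha\colon H^0 j_\dag j^+\hat\tau\to\hat\tau$ is \emph{surjective}. Indeed, the cokernel of $\alpha$ is the maximal quotient of $\hat\tau$ supported at $\{0\}$ (this uses the correct general fact that $H^0 j_\dag\cN$ has no nonzero \emph{quotient} supported on the complement of $U$), and your Hom-vanishing kills exactly that quotient; this half is essentially the surjectivity of $\delta^{-1}$ in the paper's proof of \cref{prop:VanishCohomA}, and your packaging of it via Kashiwara's theorem is clean. The gap is injectivity. You derive it from the assertion that $H^0 j_\dag j^+\hat\tau$ ``admits no nonzero submodule supported at $\{0\}$'', but this is the wrong dualization: it is $H^0 j_+\cN$ that has no submodule supported on the complement, while $H^0 j_\dag\cN$ can perfectly well have such submodules. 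Already for $V=\dC$ one has $H^0 j_\dag\cO_{\dC^*}\cong\bD\,\dC[t,t^{-1}]$, which contains the $\delta$-module as a submodule, and $\ker\bigl(H^0 j_\dag j^+\cO_{\dC}\to\cO_{\dC}\bigr)\neq 0$. Worse for your argument, the paper's own \cref{ex:Coloc} ($X=\dP^1\times\dP^1$, $\beta(\mathbf{e})=6$) is a case where the theorem holds and $H^0 j_\dag j^+\hat\tau\cong\hat\tau$ \emph{does} contain a nonzero submodule $\cK$ supported at the origin; so injectivity there cannot be explained by the absence of such submodules.

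What is actually needed is the vanishing of $\ker\alpha\cong H^{-1}\bigl(i_{\{0\},+}i_{\{0\}}^{\dag}\hat\tau[\dim V]\bigr)$, and this is where the bulk of the paper's proof lives: after the reduction of \cref{lem:Spect-1}, one must show that $H^{-1}$, and not only $H^0$, of the complex $\cS^\bullet(\omega_V\otimes_{\cO_V}\cO_{\hat{X}})$ vanishes, and that step genuinely uses the differential $\delta^{-2}$ and the relations $[\mathbf{e},\xi]=0$ --- information that is not visible from testing a single element of a target module against the left ideal defining $\hat\tau$. Equivalently, injectivity of $\alpha$ is a statement about $\bD\hat\tau$ (roughly, the vanishing of a first local cohomology of $\bD\hat\tau$ at the origin), for which no comparably simple cyclic presentation is available, so your method does not extend to this half without substantial additional input.
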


Before we discuss the proof of this theorem we show by example that integral parameters may correspond to systems that are  colocalized but not localized.  From here on and until the end of this  paragraph, in order to keep the notation light, we write $\hat\tau$ for the  $\cD_V$-module
$\hat{\tau}(\rho, \hat{X}, \beta)$ that appears in the theorem above.
\begin{exa}\label{ex:Coloc}
    Let $X$ be $\dP^1\times \dP^1$, where the group $G:=\SL_2\times\SL_2$ acts transitively via the action on each factor. Choose the projective embedding induced by the line bundle $\cO(1,1)$. The target space is $\dP V=\dP^3$ and $X$ is cut out by $f=x_{1,1}x_{2,2}-x_{2,1}x_{1,2}$. We write
    $E=x_{1,1}\partial_{1,1}+x_{1,2}\partial_{1,2}+
    x_{2,1}\partial_{2,1}+x_{2,2}\partial_{2,2}$.
    The interesting $\beta(\mathbf{e})$ (for which, according to \cref{thm:restrictedTauHatDescription}, the restriction of $\hat\tau$ to $V\setminus\{0\}$ is non-zero and has full support) equals $2$
    (so that $\beta'(\mathbf{e})=\trace(E)-\beta(\mathbf{e})=2$), and then the defining ideal of $\hat \tau$ is generated by $f$, $-E-\beta'(\mathbf{e})=-(E+2)$, and the operators
    \begin{gather*}
x_{2,1}\partial_{1,1}+x_{2,2}\partial_{1,2},\quad x_{1,1}\partial_{2,1}+x_{1,2}\partial_{2,2},\quad x_{1,1}\partial_{1,2}+x_{2,1}\partial_{2,2},\quad x_{1,2}\partial_{1,1}+x_{2,2}\partial_{2,1},\\
\theta_{1,1}+\theta_{1,2}+1,\quad
\theta_{2,1}+\theta_{2,2}+1,\quad
\theta_{1,1}+\theta_{2,1}+1,\quad
\theta_{1,2}+\theta_{2,2}+1,
\end{gather*}
    where we write $\theta_{i,j}$ for $x_{i,j}\partial_{i,j}$. Notice that the sum of $\theta_{1,1}+\theta_{1,2}+1$ and $\theta_{2,1}+\theta_{2,2}+1$ (or equally the sum of $\theta_{1,1}+\theta_{2,1}+1$ and $\theta_{1,2}+\theta_{2,2}+1$) equals $E+2$, but we prefer to work with this slightly redundant set of generators.

    Let $P=\partial_{1,1}\partial_{2,2}-\partial_{2,1}\partial_{1,2}$. It is an easy calculation using the above generators to see that the class of $x_{i,j}P$ is zero in $\hat\tau$, for $i,j\in\{1,2\}$. A computer computation shows that $P$ is not zero in $\hat\tau$, and so $\hat\tau$ contains a submodule $\cK$ of holonomic length one that is supported at the origin. In particular, we certainly have $\hat{\tau}\neq j_+j^+\hat{\tau}$ in this case.

    Inspection shows that there is a natural $\cD_V$-module map from $\hat\tau$ to the local cohomology sheaf $\cH=H^1_{\hat X}(\cO_V)$ that sends the coset of $1$ to the coset of $1/f$. Notice that this map is not surjective, since $\cH \cong \cO_V(*\hat{X})/\cO_V$ is generated by $1/f^2$, due to the fact that the Bernstein-Sato polynomial of $f$ is $(s+1)(s+2)$.

    The image of
    $\hat{\tau}\rightarrow \cH$ is the Kashiwara--Brylinski module $\cB$ attached to
        $f$ (i.e. the module obtained as
        $\im(H^0\hat{\iota}_{\dag}\cO_{\hat{X}\setminus\{0\}}
        \rightarrow
        H^0\hat{\iota}_{+}\cO_{\hat{X}\setminus\{0\}})\in\textup{Mod}(\cD_V)$), recall that
        $\hat{\iota} \colon  L^{\vee,*}\cong\hat{X}\setminus\{0\}\hookrightarrow V$ is the composition of the closed embedding $i \colon \hat{X}\setminus\{0\}\hookrightarrow V\setminus\{0\}$ with the canonical open embedding $j \colon V\setminus\{0\}\hookrightarrow V$ from above), and so $\cB$ is in particular simple and self-dual. The cokernel $\cC=\cH/\cB$ is the $\cD_V$-module generated by $1/f^2$; it is supported at the origin and of holonomic length one.  The   kernel is the module $\cK$ above.
        We thus arrive at the following sequence of $\cD_V$-modules.
        \[
            0\longrightarrow \cK \longrightarrow \hat\tau \longrightarrow \cH \longrightarrow \cC  \longrightarrow 0.
        \]
        It is automatic that $\bD \cK\cong \cC$ since both are length one and supported at the origin,  but one can also verify  that $\bD \cH \cong \hat\tau$. Moreover, it follows from the fact that $\cO_V(*\hat{X})$ is localized along $\hat{X}$ that it is also localized at $\{0\}$, i.e. that we have $j_+j^+\cO_V(*\hat{X})
        =\cO_V(*\hat{X})$. Then since $j_+j^+\cO_V=\cO_V$ we get that $j_+j^+\cH\cong\cH$, and thus the module $\bD \hat\tau$ also satisfies
        $$
            j_+j^+ \bD\hat\tau \cong \bD \hat\tau.
        $$
\end{exa}

The proof of \cref{theo:coloc} will be given after several intermediate steps. First we recall that we have the algebra
$\cA_V$ (see \cref{def:A}), which is the universal enveloping algebra of the Lie algebroid $\cO_V\otimes_\dC \fg'$. It comes with a (in general non-surjective) algebra homomorphism $\widetilde{Z}_V\colon\cA_V\rightarrow \cD_V$ which extends the map $Z_V$ as defined
in \cref{lem:defEquivVF}. Then we consider the left $\cA_V$-module
\[
\hat{\tau}^\cA:=\cA_V/\cA_V \cI+\cA_V\left(\xi-\beta'(\xi)\,|\,\xi\in\fg'\right).
\]
From the right exactness of the tensor product we get
\[
\hat{\tau} = \cD_V\otimes_{\cA_V} \hat{\tau}^\cA =
H^0\left(\cD_V\otimes^\dL_{\cA_V} \hat{\tau}^\cA\right),
\]
using that $\widetilde{Z}_V$ makes $\cD_V$ into a right $\cA_V$-module.
We first have the following comparison result.
\begin{lem}\label{lem:Spect-1}
If $H^k(\omega_V\otimes^\dL_{\cA_V}\hat{\tau}^\cA)=0$ for $k=0,-1$, then also $H^k(\DR \hat\tau)=H^k(\omega_V\otimes^\dL_{\cD_V} \hat{\tau})=0$ for $k=0,-1.$
\end{lem}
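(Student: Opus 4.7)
The plan is to decompose $\omega_V \otimes^\dL_{\cA_V} \hat\tau^\cA$ through the intermediate algebra $\cD_V$ and to exploit the bounded cohomological amplitude of the functor $\omega_V \otimes^\dL_{\cD_V} -$. Set $B := \cD_V \otimes^\dL_{\cA_V} \hat\tau^\cA \in D^{\leq 0}(\cD_V)$, whose zeroth cohomology is $\hat\tau$. The first step is to establish the associativity isomorphism
$$
\omega_V \otimes^\dL_{\cA_V} \hat\tau^\cA \;\cong\; \omega_V \otimes^\dL_{\cD_V} B
$$
in the derived category of sheaves on $V$. This follows by choosing a resolution $P_\bullet \twoheadrightarrow \hat\tau^\cA$ by free $\cA_V$-modules: then $\cD_V \otimes_{\cA_V} P_\bullet$ is a complex of free (hence flat) $\cD_V$-modules computing $B$, and the underived associativity $\omega_V \otimes_{\cA_V} P_\bullet = \omega_V \otimes_{\cD_V}(\cD_V \otimes_{\cA_V} P_\bullet)$ lifts directly.

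Next, I would apply $\omega_V \otimes^\dL_{\cD_V} -$ to the canonical truncation triangle
$$
\tau^{\leq -1} B \lra B \lra \hat\tau \xrightarrow{\;+1\;}
$$
in $D^b(\cD_V)$. The resulting long exact cohomology sequence relates $H^\bullet(\omega_V \otimes^\dL_{\cD_V} \hat\tau)$ (our target), the groups $H^\bullet(\omega_V \otimes^\dL_{\cD_V} B) \cong H^\bullet(\omega_V \otimes^\dL_{\cA_V} \hat\tau^\cA)$ (which vanish in degrees $-1$ and $0$ by hypothesis), and $H^\bullet(\omega_V \otimes^\dL_{\cD_V} \tau^{\leq -1} B)$.

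The crucial finiteness input is that $\omega_V$ carries a Spencer resolution by induced right $\cD_V$-modules of length $n = \dim V$, so the functor $\omega_V \otimes^\dL_{\cD_V} -$ preserves the subcategories $D^{\leq m}(\cD_V)$ (as one sees immediately from the hypercohomology spectral sequence). Since $\tau^{\leq -1} B \in D^{\leq -1}(\cD_V)$, its image lies in $D^{\leq -1}$ and in particular has trivial cohomology in degrees $0$ and $1$. Combining these zeros with the hypothesis in the long exact sequence for $k = -1$ and $k = 0$ surrounds $H^k(\omega_V \otimes^\dL_{\cD_V} \hat\tau)$ by zero terms, forcing it to vanish, which is exactly the claim.

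The only genuine subtlety is the associativity step: one must ensure that the chosen $\cA_V$-free resolution of $\hat\tau^\cA$ stays $\cD_V$-flat after base change, which is automatic for free resolutions and therefore poses no real obstacle. Beyond this, the argument is a formal application of the triangulated formalism together with the finite homological dimension of $\omega_V$ over $\cD_V$.
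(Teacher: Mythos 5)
Your argument is correct and is essentially the paper's proof in different clothing: both decompose $\omega_V\otimes^\dL_{\cA_V}\hat\tau^\cA$ through $B=\cD_V\otimes^\dL_{\cA_V}\hat\tau^\cA$ (justified by a free $\cA_V$-resolution) and then exploit that $\omega_V\otimes^\dL_{\cD_V}(-)$ preserves $D^{\leq m}$, the paper phrasing this via the Grothendieck spectral sequence for the composed functors while you use the truncation triangle $\tau^{\leq -1}B\to B\to\hat\tau$. The two formulations are interchangeable here, and your long-exact-sequence bookkeeping in degrees $-1$ and $0$ is accurate.
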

\begin{proof}
Consider the Grothendieck spectral sequence for the composition of functors
$\omega_V \otimes_{\cD_V} - $ and $\cD_V\otimes_{\cA_V} -$, with $E_2$-term
\[
    E_2^{p,q} = H^p\left(\omega_V \otimes_{\cD_V}^\dL H^q \left(\cD_V\otimes^\dL_{\cA_V} \hat{\tau}^\cA\right)\right) \Longrightarrow
    H^{p+q}\left(\omega_V\otimes^\dL_{\cA_V} \hat{\tau}^\cA\right).
\]
We clearly have that $E_2^{0,0}=H^0(\omega_V\otimes^\dL_{\cA_V} \hat{\tau}^\cA)$ and moreover, because we are dealing with the second page of a third quadrant spectral sequence, $E_2^{-1,0}$ injects into
$H^{-1}(\omega_V\otimes^\dL_{\cA_V} \hat{\tau}^\cA)$. Hence, under the assumption of the lemma, we obtain
\[
E_2^{0,0} = H^0(\omega_V \otimes^\dL_{\cD_V} \hat{\tau}) =0
\quad\quad
\textup{and}
\quad\quad
E_2^{-1,0} = H^{-1}(\omega_V \otimes^\dL_{\cD_V} \hat{\tau}) =0.\qedhere
\]
\end{proof}

Next consider the following adjuction triangle
\begin{equation}\label{eq:AdjTriang}
j_\dag j^+ \hat\tau \longrightarrow \hat\tau \longrightarrow
(i_{\{0\},+} i_{\{0\}}^\dag)\hat\tau[\dim V] \stackrel{+1}{\longrightarrow}
\end{equation}
and the associated exact sequence
\[
0
\longrightarrow H^{-1}((i_{\{0\},+} i_{\{0\}}^\dag)\hat\tau[\dim V])
\longrightarrow H^0(j_\dag j^+ \hat\tau)
\longrightarrow \hat\tau
\longrightarrow H^0((i_{\{0\},+} i_{\{0\}}^\dag)\hat\tau[\dim V])
\]
We would like to show that the left- and the rightmost terms in this sequence vanish.
Since clearly $i_{\{0\},+}$ is an exact functor, it suffice to show that
$$
H^k(i_{\{0\}}^\dag\hat{\tau}[\dim V])=0
$$
for $k=0,-1$. To that end, we apply the functor $a_{V,+}$ (where
$a_V\colon V\rightarrow \{pt\}$ is the projection) to the triangle
\eqref{eq:AdjTriang}, this yields
\begin{equation}\label{eq:AdjTriang-2}
a_{V,+}j_\dag j^+ \hat\tau \longrightarrow a_{V,+}\hat\tau \longrightarrow
i_{\{0\}}^\dag \hat\tau[\dim V] \stackrel{+1}{\longrightarrow}
\end{equation}
since
\[
a_{V,+} i_{\{0\},+} i_{\{0\}}^\dag \hat\tau[\dim V] \cong
(a_V\circ i_{\{0\}})_+ i_{\{0\}}^\dag \hat\tau[\dim V] \cong
a_{\{0\},+} i_{\{0\}}^\dag \hat\tau[\dim V] \cong
i_{\{0\}}^\dag \hat\tau[\dim V]
\]
as elements in $D^b(\dC)$.

Now we have the following piece of the associated cohomology sequence of the triangle \eqref{eq:AdjTriang-2}
\begin{equation}\label{eq:CohomSequenceAv}
H^{-1} a_{V,+} \hat\tau
\longrightarrow H^{-1} i_{\{0\}}^\dag \hat\tau[\dim V]
\longrightarrow H^0 a_{V,+} j_\dag j^+ \hat\tau
\longrightarrow H^0 a_{V,+} \hat\tau
\longrightarrow H^0 i_{\{0\}}^\dag\hat\tau[\dim V]
\longrightarrow 0.
\end{equation}
Here zero on the rightmost term comes from the vanishing
\[
H^1 a_{V,+} j_\dag j^+ \hat\tau=0,
\]
which holds since both functors $a_{V,+}$ and $j_\dag $ are right exact.
We now claim
\begin{lem}
The map
\[
H^{-1} a_{V,+} \hat\tau
\longrightarrow H^{-1} i_{\{0\}}^\dag \hat\tau[\dim V]
\]
is an isomorphism.
\end{lem}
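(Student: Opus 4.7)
The strategy I would pursue is to prove the stronger vanishing $a_{V,+}j_\dag j^+\hat\tau = 0$ in $D^b(\dC)$, which by the long exact sequence \eqref{eq:CohomSequenceAv} yields the asserted isomorphism immediately.

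The first step is to reduce to the interesting geometric case. By \cref{thm:restrictedTauHatDescription}, either $j^+\hat\tau=0$ (in which case $j_\dag j^+\hat\tau=0$ and the lemma is trivial), or $j^+\hat\tau\cong i'_+\O_{L^*}^{\ell/k}$ with $\beta(\mathbf e)=\ell/k\in\dQ$ and $\Ell^{\otimes\ell}\cong\omega_X^{\otimes(-k)}$, where $i'\colon L^*\hookrightarrow V\setminus\{0\}$ is the closed embedding. Writing $\hat\iota := j\circ i'\colon L^*\hookrightarrow V$ for the locally closed embedding, one has $j_\dag j^+\hat\tau\cong\hat\iota_\dag\O_{L^*}^{\ell/k}$ in this case.

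The next step is to translate the vanishing into a topological assertion via Riemann--Hilbert. The de Rham complex of $\hat\iota_\dag\O_{L^*}^{\ell/k}$ is $\hat\iota_!\underline{\dC}_{L^*}^{\ell/k}[\dim L^*]$, where $\underline{\dC}_{L^*}^{\ell/k}$ is the rank-one local system associated with $\O_{L^*}^{\ell/k}$; the functor $a_{V,+}$ then corresponds (up to a shift) to $R\Gamma(V,-)$. Applying the open-closed distinguished triangle $j_!\to Rj_*\to i_{\{0\},*}i_{\{0\}}^*Rj_*\xrightarrow{+1}$ to the sheaf $i'_*\underline{\dC}_{L^*}^{\ell/k}$ on $V$ and then taking $R\Gamma(V,-)$, one identifies $R\Gamma(V,\hat\iota_!\underline{\dC}_{L^*}^{\ell/k})$ as the fibre of the natural restriction morphism
\[
R\Gamma\bigl(L^*,\underline{\dC}_{L^*}^{\ell/k}\bigr) \longrightarrow \operatorname{colim}_{\epsilon\to 0^+} R\Gamma\bigl(L^*\cap B_\epsilon(0),\underline{\dC}_{L^*}^{\ell/k}\bigr),
\]
the target being the stalk at $0\in V$ of $Rj_*i'_*\underline{\dC}_{L^*}^{\ell/k}$.

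The crux is that this restriction morphism is an isomorphism, so the fibre vanishes. Indeed, after choosing a Hermitian metric on $L\to X$, the rescaling $v\mapsto v/|v|$ is a deformation retraction of both $L^*$ and every $L^*\cap B_\epsilon(0)$ onto the common unit circle bundle $S\subset L$ over $X$, and these retractions are compatible with the inclusion $L^*\cap B_\epsilon(0)\hookrightarrow L^*$. They are moreover compatible with $\underline{\dC}_{L^*}^{\ell/k}$: the homotopy moves only along the positive-real radial direction of each $\C^*$-fibre, which is disjoint from the circular direction carrying the monodromy $e^{2\pi i\ell/k}$. Both cohomologies therefore compute $H^*(S,\restr{\underline{\dC}_{L^*}^{\ell/k}}{S})$ and the restriction is an iso. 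The main subtlety in writing the proof up will be to keep careful track of the shifts and of the translation between $\cD$-module and constructible-sheaf functors (for instance via \cref{cor:CBetaHodgeExt} and \cref{prop:directImageOfOBetaMHM}); the geometric retraction argument itself is routine.
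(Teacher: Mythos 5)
Your proof is correct, and it in fact establishes the same stronger statement as the paper's own proof does --- namely that $a_{V,+}\hat\tau\to i_{\{0\}}^\dag\hat\tau[\dim V]$ is an isomorphism in $D^b(\dC)$, equivalently (via the triangle \eqref{eq:AdjTriang-2}) that $a_{V,+}j_\dag j^+\hat\tau=0$ --- but by a genuinely different route. The paper obtains the isomorphism by citing \cite[Lemma 4.4]{AviDualProjRestrGKZ}, viewing $a_V$ as a bundle over a point and checking that $\hat\tau$ is twistedly $\dC^*$-quasi-equivariant; that condition depends only on $j^+\hat\tau$, which by \cref{thm:restrictedTauHatDescription} is either $0$ or $i'_+\cO_{L^*}^{\ell/k}$. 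You instead prove the vanishing directly on the topological side of Riemann--Hilbert, using the conic structure of $\hat X$: the restriction from the cohomology of $\hat X\setminus\{0\}$ with coefficients in the rank-one local system to its cohomology near the origin is an isomorphism because both compute the cohomology of the link (the unit circle bundle $S$ over $X$), so the extension by zero to $V$ has vanishing hypercohomology. This makes the geometric content (homogeneity of the support and of the module under $\dR_{>0}$-scaling) explicit where the paper's proof is a black-box citation; the price is the bookkeeping in the dictionary between $\cD$-module and constructible-sheaf functors, which you acknowledge. Two small repairs are needed: for $\epsilon<1$ the unit circle bundle $S$ is not contained in $L^*\cap B_\epsilon(0)$, so $v\mapsto v/|v|$ is a homotopy equivalence rather than a deformation retraction there --- cleanest is to use the homeomorphism $L^*\cong S\times\dR_{>0}$, under which every local system is pulled back from $S$ (the monodromy lives only in $\pi_1(S)$), so that the inclusions $S\times(0,\epsilon)\hookrightarrow S\times\dR_{>0}$ induce isomorphisms on cohomology with coefficients in $\underline{\dC}_{L^*}^{\ell/k}$; and note that the paper reserves the symbol $\hat\iota$ for the embedding of $L^{\vee,*}$ and writes $\iota$ for the composite $j\circ i'\circ\textup{inv}$, so your $\hat\iota:=j\circ i'$ clashes with the established notation.
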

\begin{proof}
It can be shown more generally that under the assumption made here, we have an isomorphism
\[
a_{V,+} \hat\tau
\longrightarrow i_{\{0\}}^\dag \hat\tau[\dim V]
\]
in $D^b(\dC)$. In order to see this, we apply \cite[Lemma 4.4]{AviDualProjRestrGKZ} (which is based on an earlier result in \cite[Lemma 3.3]{ReichWalth-Duco}),
when seeing $a_V \colon V\rightarrow\{pt\}$ as a bundle over the point $\{pt\}$. Then it is clear that this map is fibered in the sense of \cite[Definition 4.1]{AviDualProjRestrGKZ}. It therefore remains to check that the $\cD_V$-module $\hat\tau$ is twistedly $\dC^*$-quasi-equivariant (as defined in \cite[Definition 4.2]{AviDualProjRestrGKZ}). This is a condition that depends only on the restriction
$j^+\hat\tau$, and this restriction has support on $\hat{X}\setminus \{0\}$.
Recall that we have  the isomorphism $L^* \cong \hat{X}\setminus\{0\}$, obtained from composing the restriction to $L^{\vee,*}$ of the blow-up $L^\vee \rightarrow \hat{X}$ with the fiberwise isomorphism $\textup{inv} \colon L^*\stackrel{\cong}{\rightarrow} L^{\vee,*}$.
It is therefore sufficient
to show that $\iota^+\hat{\tau}$ is
twistedly $\dC^*$-quasi-equivariant with respect to the
$\dC^*$-action in the fibres of $L\rightarrow X$, where $\iota$ is the composition of $j \colon V\setminus \{0\}\hookrightarrow V$ with the closed embedding $\hat{X}\setminus\{0\}\hookrightarrow V\setminus\{0\}$ and with the above isomorphism $L^{*}\cong \hat{X}\setminus\{0\}$.

It follows from \cref{thm:restrictedTauHatDescription} that this restriction is either zero, in which case the equivariance property we are after is trivially satisfied, or else equals $\cO_{L^*}^{\ell/k}$. It is an easy exercise to check (e.g., locally over trivializing neighborhoods) that $\cO_{L^*}^{\ell/k}$ is
twistedly $\dC^*$-quasi-equivariant.
\end{proof}

By using the exact sequence \eqref{eq:CohomSequenceAv} as well as the previous lemma, \cref{theo:coloc} is proved once we have shown that $H^k(a_{V,+}\hat\tau)=0$ for $k=0,-1$. But clearly $a_{V,+}\hat\tau = a_{V,*}\DR(\hat\tau)$ since $a_V$ is an affine morphism. Therefore, by \cref{lem:Spect-1}, we are left to show the following.
\begin{prop}\label{prop:VanishCohomA}
Using the above notation, and always assuming that
$\beta(\mathbf{e})\notin \dZ_{\leq 0}$, we have
\[
H^k(\omega_V\otimes^\dL_{\cA_V}\hat{\tau}^\cA)=0
\]
for $k=0,-1$.
\end{prop}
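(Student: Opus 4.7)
My plan is to reduce, via a short exact sequence, to two Euler-operator computations. Introduce the left $\cA_V$-module $\cO_V^{\beta'} := \cA_V \otimes_{\cU(\fg')} \dC_{\beta'}$; a PBW calculation identifies its underlying $\cO_V$-module with $\cO_V$ itself, equipped with the twisted $\fg'$-action $\xi \cdot f = Z_V(\xi)(f) + \beta'(\xi) f$. The $\fg'$-stability of $I$, which follows from the $G'$-invariance of $\hat{X}$, makes $I \subseteq \cO_V^{\beta'}$ an $\cA_V$-submodule, and by construction
\[0 \to I\cO_V^{\beta'} \to \cO_V^{\beta'} \to \hat\tau^\cA \to 0\]
is an exact sequence of $\cA_V$-modules. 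Applying $\omega_V \otimes^\dL_{\cA_V}(-)$ and passing to the associated long exact sequence reduces the proposition to the two vanishings $\omega_V \otimes^\dL_{\cA_V} \cO_V^{\beta'} \simeq 0$ and $\omega_V \otimes_{\cA_V} I\cO_V^{\beta'} = 0$.

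For the first vanishing, I would use a Chevalley--Eilenberg-type free resolution of $\cO_V^{\beta'}$. Since $\cA_V = \cO_V \otimes_\dC \cU(\fg')$ is free, hence flat, as a right $\cU(\fg')$-module, tensoring the Chevalley--Eilenberg resolution of $\dC_{\beta'}$ with $\cA_V$ over $\cU(\fg')$ produces an $\cA_V$-free resolution $\cA_V \otimes_\dC \wedge^\bullet \fg' \to \cO_V^{\beta'}$. Tensoring with $\omega_V$ yields a complex $K^\bullet$ concentrated in cohomological degrees $[-n,0]$ with $K^{-p} = \omega_V \otimes_\dC \wedge^p \fg'$ and the standard Chevalley--Eilenberg differential, in which the degree-shifting maps send $\omega \otimes \xi$ to $\omega \cdot (\xi - \beta'(\xi))$ via the right $\cA_V$-action on $\omega_V$. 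Using the splitting $\fg' = \fg \oplus \dC\mathbf{e}$ and the centrality of $\mathbf{e}$, which kills every bracket cross-term, one endows $K^\bullet$ with the structure of a two-row bicomplex
\[K^{-p} = (\omega_V\otimes \wedge^p \fg) \oplus (\omega_V\otimes \mathbf{e}\wedge\wedge^{p-1}\fg),\]
whose horizontal differential is the Chevalley--Eilenberg differential for $\fg$ with coefficients in $\omega_V$ (twisted by $\beta'|_{\fg}$), while the vertical differential is right-multiplication on the $\omega_V$-factor by $(\mathbf{e} - \beta'(\mathbf{e}))$. A direct calculation in the trivialization $\omega_V = \cO_V \cdot dx_1 \wedge \cdots \wedge dx_n$, using $\beta'(\mathbf{e}) = \trace(d\rho(\mathbf{e})) - \beta(\mathbf{e}) = n - \beta(\mathbf{e})$, identifies this vertical operator with multiplication by the scalar $d + \beta(\mathbf{e})$ on the polynomial-degree-$d$ component $\omega_V(d)$. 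Under the hypothesis $\beta(\mathbf{e}) \notin \dZ_{\leq 0}$ this scalar is nonzero for every $d \geq 0$, so the vertical differential is an isomorphism on each row, and the spectral sequence of the bicomplex collapses at $E_1 = 0$, proving total acyclicity.

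The second vanishing follows from the same Euler argument. After trivializing $\omega_V$, the coinvariants $\omega_V \otimes_{\cA_V} I\cO_V^{\beta'}$ identify with the quotient of $I$ by the subspaces $(Z_V(\xi) - \beta(\xi))(I)$ for $\xi \in \fg'$; restricting to $\xi = \mathbf{e}$ gives the operator $-(E + \beta(\mathbf{e}))$, which is bijective on each homogeneous component of $I$ because $\hat{X}$ is a cone and $\beta(\mathbf{e}) \notin \dZ_{\leq 0}$. The hardest step will be the bicomplex analysis in the first vanishing: one has to track the signs in the Chevalley--Eilenberg differential under the splitting $\fg' = \fg \oplus \dC\mathbf{e}$ and verify that the right $\cA_V$-action on $\omega_V$ specialized to $(\mathbf{e} - \beta'(\mathbf{e}))$ really does produce the Euler-shifted scalar $d + \beta(\mathbf{e})$ on each graded piece. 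Once the two vanishings are established, the long exact sequence immediately yields $H^k(\omega_V\otimes^\dL_{\cA_V}\hat\tau^\cA) = 0$ for $k = 0, -1$.
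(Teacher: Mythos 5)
Your argument is correct, and it arrives at the vanishing by a genuinely different route from the one in the paper. The paper first constructs the Euler--Koszul--Chevalley--Eilenberg--Spencer complex $\cC^\bullet(\cO_{\hat{X}})$, proves by a PBW filtration argument that it resolves $\hat{\tau}^\cA$, replaces $\cO_{\hat{X}}$ by an $\cO_V$-free resolution of left $\cA_V$-modules, and runs the spectral sequence of the resulting double complex to identify $\omega_V\otimes^\dL_{\cA_V}\hat{\tau}^\cA$ with the explicit complex $\cS^\bullet(\omega_V\otimes_{\cO_V}\cO_{\hat{X}})$, whose cohomology in degrees $0$ and $-1$ is then killed by hand using the Euler operator. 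You instead split off the ideal at the outset through the short exact sequence $0\to\cI\cO_V^{\beta'}\to\cO_V^{\beta'}\to\hat{\tau}^\cA\to 0$ of left $\cA_V$-modules (legitimate because $\cI$ is $\fg'$-stable, so that the left ideal $\cA_V\cI$ maps onto the submodule $\cI\subseteq\cO_V^{\beta'}$), and reduce to two independent vanishings. For $\cO_V^{\beta'}$ you base-change the Chevalley--Eilenberg resolution of $\dC_{\beta'}$ along the right-flat map $\cU(\fg')\to\cA_V$ and use the centrality of $\mathbf{e}$ to present $\omega_V\otimes_{\cA_V}(\cA_V\otimes_\dC\bigwedge^\bullet\fg')$ as the cone of right multiplication by $\mathbf{e}-\beta'(\mathbf{e})$, which acts on the degree-$d$ graded piece of $\omega_V$ as the scalar $d+\beta(\mathbf{e})\neq 0$; for $\cI\cO_V^{\beta'}$ only the coinvariants are needed, and the same scalar gives surjectivity of $-(E+\beta(\mathbf{e}))$ on the homogeneous ideal $\cI$. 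The decisive computation is literally the same in both proofs (the paper's identity $\delta^{-1}((f\cdot\vol)\otimes\mathbf{e})=(E(f)+\beta(\mathbf{e})f)\cdot\vol$ is exactly your vertical differential), but your decomposition bypasses both the proof that $\cC^\bullet(\cO_{\hat{X}})$ is a resolution and the double-complex spectral sequence, at the modest cost of losing the single explicit complex $\cS^\bullet(\omega_V\otimes_{\cO_V}\cO_{\hat{X}})$ computing $\omega_V\otimes^\dL_{\cA_V}\hat{\tau}^\cA$ in all degrees: to push your method below degree $-1$ one would additionally need control of the higher $\cA_V$-Tor of $\cI\cO_V^{\beta'}$, whereas for the two degrees actually required your long exact sequence closes the argument immediately.
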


For this, we will need some further preparations. We start with an algebraic property of the left $\cA_V$-module
$\cA_V/\cA_V\cI$.
\begin{lem}
\label{lem:IleftAmod}
\begin{enumerate}
\item
$\cI\subseteq\cO_V$ has naturally the structure of a left $\cA_V$-module (and consequently, also $\cO_{\hat{X}}$ has).

\item
For any $\xi\in\fg'$, we have
\[
\cA_V\cdot \cI \cdot \xi \subseteq\cA_V\cdot \cI
\]
as subsets of $\cA_V$. Consequently, $\cA_V\cdot \cI$ is a two-sided ideal, and $\cA_V/\cA_V\cI$
is also a right $\cA$-module (i.e., it is a sheaf of  rings).
\end{enumerate}

\end{lem}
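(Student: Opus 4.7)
The plan is to handle both parts by a direct computation inside $\cA_V$, relying only on the multiplication rule from \cref{def:A} and the $G'$-invariance of $\hat{X}$. For part~(1), $G'$-invariance of $\hat{X}$ translates at the infinitesimal level into the inclusions $Z_V(\xi)(\cI)\subseteq \cI$ for every $\xi \in \fg'$. Since $\cO_V$ is canonically a left $\cA_V$-module on which $\cO_V \subseteq \cA_V$ acts by multiplication and $\xi \in \fg'$ acts via the derivation $Z_V(\xi)$, this invariance is exactly the statement that $\cI$ is an $\cA_V$-submodule of $\cO_V$; the quotient $\cO_{\hat{X}} = \cO_V/\cI$ then inherits a left $\cA_V$-module structure.

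For part~(2), the main ingredient is the commutation identity in $\cA_V$ extracted from the multiplication rule in \cref{def:A}: applied to $(1 \otimes \xi) \cdot (f \otimes 1)$ for $f \in \cO_V$ and $\xi \in \fg'$, it yields
\begin{equation*}
  \xi \cdot f \;=\; f \cdot \xi + Z_V(\xi)(f) \qquad \text{in } \cA_V,
\end{equation*}
so that $f \cdot \xi = \xi \cdot f - Z_V(\xi)(f)$. Whenever $f \in \cI$, part~(1) guarantees $Z_V(\xi)(f) \in \cI$ as well, hence for any $a \in \cA_V$ one has
\begin{equation*}
  a \cdot f \cdot \xi \;=\; (a \cdot \xi) \cdot f \;-\; a \cdot Z_V(\xi)(f) \;\in\; \cA_V \cdot \cI,
\end{equation*}
which is precisely the inclusion claimed in the lemma.

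To upgrade this to the two-sidedness of $\cA_V \cdot \cI$, I would then observe that right multiplication of $\cA_V \cdot \cI$ by $\cO_V$ is immediate because $\cI$ is already an ideal of $\cO_V$, while right multiplication by $\fg'$ is exactly the inclusion just proved. Since $\cA_V$ is generated as a $\C$-algebra by $\cO_V$ and $\fg'$, a routine induction on the PBW filtration of $\cU(\fg')$ extends closure to right multiplication by an arbitrary element of $\cA_V$, making $\cA_V \cdot \cI$ a two-sided ideal and endowing $\cA_V/\cA_V \cdot \cI$ with its natural sheaf-of-rings structure. I do not expect any serious obstacle here: the entire argument is the Lie-algebroid analogue of the familiar fact that a $\fg$-stable two-sided ideal in a $\fg$-algebra descends to the quotient, the only bookkeeping point being the sign in the commutation identity above, which flows directly out of \cref{def:A}.
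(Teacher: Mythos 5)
Your proof is correct and follows essentially the same route as the paper: infinitesimal $G'$-invariance of $\hat{X}$ gives $Z_V(\xi)(\cI)\subseteq\cI$ for part (1), and the commutation identity $f\cdot\xi=\xi\cdot f-Z_V(\xi)(f)$ in $\cA_V$ gives part (2). The only cosmetic difference is that the paper justifies $Z_V(\xi)(\cI)\subseteq\cI$ via the dual conormal sequence and extracts the commutator from the Lie-algebroid enveloping-algebra structure, whereas you read both directly off \cref{def:A}; the substance is identical.
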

\begin{proof}
\begin{enumerate}
\item
Clearly, $\cO_V$ is a left $\cA_V$-module
through $\tilde{Z}_V \colon \cA_V\rightarrow \cD_V$. We need to show that this left action leaves $\cI$ invariant. Let $\xi\in \fg'$ and let $g\in \cI$ be given.
Consider the following piece
of the dual to the conormal sequence of $\hat{X}\subseteq V$
\[
0\longrightarrow
{\cD\!}er_{\dC}(\cO_{\hat{X}})
\longrightarrow
{\cD\!er}_{\dC}(\cO_V)\otimes_{\cO_V}\cO_{\hat{X}}
\stackrel{\alpha}{\longrightarrow} {H\!}om_{\cO_V}(\cI,\cO_{\hat{X}}),
\]
Since $\hat{X}\subseteq V$ is a $G'$-variety, $Z_V(\xi)$ descends to a derivation of $\cO_{\hat{X}}$, i.e., it lies in the kernel of the map $\alpha$. Therefore $Z_V(\xi)(g)\in \cI$.
\item
Since $\cA_V$ is the universal enveloping algebra of the Lie algebroid $\cO_V\otimes_\dC \fg'$, for any $g\in \cO_V$, the commutator
\[
\xi\cdot g-g\cdot\xi
\]
must be equal to the result of applying the anchor map to $\xi$, and then applying the correspondig derivation to $g$. But the anchor map $\cO_V\otimes_\dC\fg'\rightarrow \Theta_V$ is nothing but the scalar extension of $Z_V$, so that $\xi\cdot g-g\cdot\xi=Z_V(\xi)(g)$, which lies in $\cI$ by point 1. Consequently
\[
g\cdot \xi=\xi\cdot g-Z_V(\xi)(g) \in \cA_V\cdot \cI
\]
for $g\in \mathcal{I}$, as required.
\end{enumerate}
\end{proof}

We next consider a homological construction that can be considered as a generalization of both the Spencer complex in $\cD$-module theory (see, e.g. \cite[Lemma 1.5.27.]{Hotta}) and of the Euler-Koszul complex as defined in the theory of hypergeometric differential systems (\cite[Section 4]{MMW}) and which is closely related to Lie algebra cohomology resp.\ homology (see, e.g., \cite[Section VII.4]{HiltonStammbach}). We therefore call it the Euler-Koszul-Chevalley-Eilenberg-Spencer complex. Notice that a similar construction is also considered in several works in the theory of free divisors, see, e.g. \cite[Section 1.1.2.]{CN08} (one could also cite the much more classical reference \cite[Section 4]{Rinehart}).

Let first $\cN$ be a right $\cA_V$-module. Define
\[
\D \cS^{-\ell}(\cN) :=
\D \cN \otimes_{\cO_V}
\bigwedge_{\cO_V}^\ell \left(\cO_V\otimes_\dC \fg'\right)
=
\cN \otimes_\dC \bigwedge^\ell \fg',
\]
where the differential is as follows
\begin{equation}\label{eq:DiffEKCESComplex}
\begin{array}{rcl}
\D \delta^{-\ell} \colon  \cS^{-\ell}(\cN) &\longrightarrow& \cS^{-\ell+1}(\cN) \\ \\
m \otimes (\xi_1 \wedge \ldots \wedge \xi_\ell) &\longmapsto & \D \sum_{i=1}^\ell (-1)^{i-1} m (\xi_i-\beta'(\xi_i)) \otimes (\xi_1 \wedge \ldots \wedge  \widehat{\xi}_i \wedge\ldots \wedge \xi_\ell)+\\
&& \D \sum\limits_{1 \leq i < j \leq \ell} (-1)^{i+j} m \otimes ([\xi_i,\xi_j] \wedge \xi_1 \wedge \ldots \wedge \widehat{\xi}_i \wedge \ldots \wedge \widehat{\xi}_j \wedge \ldots \wedge \xi_\ell).
\end{array}
\end{equation}
where the right $\cA_V$-module structure on $\cN$ is used in the first term of the differential when writing $m (\xi_i-\beta'(\xi_i))$.
In general, $\cS^\bullet(\cN)$ will be a complex of sheaves of $\dC$-vector spaces only.

We will apply this construction several times, but in particular in the following more special situation.
Let $\mathcal{M}$ be a left $\mathscr{A}_V$-module (e.g.\ $\mathcal{O}_{\hat{X}}$). Consider the sheaf
\[ \mathscr{A}_V\otimes_{\mathcal{O}_V}\mathcal{M}.\]
We view this sheaf as an $(\mathscr{A}_V,\mathscr{A}_V)$-bimodule as follows: The left $\mathscr{A}_V$-action is given by
\[ b(a\otimes m) = ba\otimes m.\]
The right action is induced by
\begin{align*}
    (a\otimes m)f &= af \otimes m \qquad (f\in \mathcal{O}_V)\\
    (a\otimes m)\xi &= a\xi \otimes m - a \otimes \xi m\qquad (\xi \in \mathfrak{g}').
\end{align*}
It is easy to check that this construction extends to a functor from left $\cA_V$-modules to $(\cA_V,\cA_V)$-bimodules.

Note that this uses the natural right action on the tensor product of a right $\cA_V$-module (here, $\cA_V$) and a left $\cA_V$-module (here, $\cM$) over $\O_V$ in the realm of Lie algebroids (see e.g.~\cite[Proposition 2.2.1]{Che99}) analogous to the case $\mathscr D_V$-modules. The left action however, is the trivial one only acting on the first factor. This ensures that both actions describe a bimodule structure. One could also consider on $\cA_V \otimes_{\cO_V} \cM$ the left $\cA_V$-module structure as in \cite[Section (2.1)]{Huebschmann} obtained from the left $\cA_V$-module structures on $\cA_V$ and $\cM$ respectively and complement it with the trivial right $\cA_V$-module structure (by right action on the first factor) to obtain a bimodule structure. In fact, these two $(\cA_V,\cA_V)$-bimodule structures are isomorphic, as can be seen by an argument similar to \cite[Corollary A.12]{Narvaez-Duality-BS}

To consider a specific example, we can take $\cM:=\cO_{\hat{X}}$, which is a left $\cA_V$-module by \cref{lem:IleftAmod} above.
Let $\psi\colon \mathscr{A}_V\otimes_{\mathcal{O}_V}\mathcal{O}_{\hat{X}}\to \mathscr{A}_V/\mathscr{A}_V\mathcal{I}$, $a\otimes \overline{g}\mapsto \overline{a\cdot g}$ be the canonical isomorphism of left $\mathscr{A}_V$-modules. Now  by the previous construction, the left hand side is also a right $\cA_V$-module, and by invoking \cref{lem:IleftAmod} again, so is the right hand side. Then the morphism is also an isomorphism of right $\mathscr{A}_V$-modules: Since $\mathfrak{g}'$ kills the element 1 of $\mathcal{O}_{\hat{X}}$, we have (for $a\in \mathscr{A}_V$, $\overline{g}\in\cO_{\hat{X}}$ and $\xi\in \mathfrak{g}'$)
    \begin{align*}
        \psi((a\otimes \overline{g})\xi) &= \psi((ag\otimes 1)\xi) \\
        &= \psi(ag\xi\otimes 1 - ag\otimes (\xi\cdot 1))\\
        &= \psi(ag\xi\otimes 1)\\
        &= \overline{ag\xi}\\
        &= \overline{ag}\cdot \xi\\
        &= \psi(a\otimes \overline{g})\xi,
    \end{align*}
    as claimed.

We now apply the construction of the complex $\cS^\bullet(-)$ (taking as input a right $\cA_V$-module $\cN$) to the particular case where $\cN:=\cA_V\otimes_{\cO_V}\cM$, i.e., we put for all $\ell\in\dZ$
\[
\cC^{-\ell}(\cM):=\cS^{-\ell}(\cA_V\otimes_{\cO_V}\cM),
\]
yielding a complex $(\cC^\bullet, \delta)$. It is readily checked that since $\cA_V\otimes_{\cO_V}\cM$ is also a left $\cA_V$-module, the differentials $\delta^{-\ell}$ are now left $\cA_V$-linear. Again, it is an easy exercise to see that this construction is functorial, so that $\cC^\bullet(-)$ yields an exact functor from the category of left $\cA_V$-modules to the category of complexes of left $\cA_V$-modules.

Pursuing the above example where $\cM=\cO_{\hat{X}}$, we see immediately that
\[
H^0(\cC^\bullet(\cO_{\hat{X}})) \cong \hat\tau^\cA.
\]

We also have the following important homological property of this complex.

\begin{lem}
For any left $\cA_V$-module $\cM$,
$\cC^\bullet(\cM)$ is a resolution of $H^0(\cC^\bullet(\cM))$ by left $\cA_V$-modules
(which in general are not $\cA_V$-free though).
In particular, for $\cM=\cO_{\hat{X}}$, we obtain
that $\cC^\bullet(\cO_{\hat{X}})$ is a resolution of $\hat{\tau}^\cA$ by left $\cA$-modules.
\end{lem}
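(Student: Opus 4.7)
The statement amounts to the vanishing $H^{-\ell}(\cC^\bullet(\cM)) = 0$ for every $\ell \geq 1$. My plan is to recognise $\cC^\bullet(\cM)$ as a Chevalley--Eilenberg--Rinehart-type complex attached to the Lie algebroid $\cO_V \otimes_\dC \fg'$ and to reduce its exactness to the standard Koszul exactness via a Poincar\'e--Birkhoff--Witt (PBW) filtration. Everything is local on $V$, and it suffices to work on a trivialising open on which a fixed $\dC$-basis $\xi_1, \dots, \xi_m$ of $\fg'$ provides PBW-monomial generators for $\cA_V$ over $\cO_V$.

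The first step will be to install a filtration. I will transport the standard PBW filtration $F_p\mathcal{U}(\fg')$ to $F_p \cA_V := \cO_V \otimes_\dC F_p \mathcal{U}(\fg')$, whose associated graded is $\gr^F \cA_V \cong \cO_V \otimes_\dC \Sym(\fg')$ by PBW. I will then equip the complex with the filtration $F_p \cC^{-\ell}(\cM) := (F_{p-\ell} \cA_V) \otimes_{\cO_V} \cM \otimes_\dC \bigwedge^\ell \fg'$. Writing an element of $\cC^{-\ell}(\cM)$ as $(a\otimes n) \otimes \xi_1 \wedge \dots \wedge \xi_\ell$ with $a\in\cA_V$ and $n\in\cM$, the differential $\delta^{-\ell}$ produces four types of summands: the ``top'' piece $a\xi_i \otimes n \otimes \xi_1\wedge\dots\widehat{\xi_i}\dots\wedge\xi_\ell$ raises the PBW degree of the $\cA_V$-factor by one while dropping a wedge factor, so it stays in filtration level $p$; by contrast the three corrections $-\beta'(\xi_i)\, a\otimes n$, $-a\otimes\xi_i\cdot n$ coming from the right action on $\cA_V\otimes_{\cO_V}\cM$, and the bracket contribution $a\otimes n\otimes [\xi_i,\xi_j]\wedge\cdots$ all leave $a$ in $F_{p-\ell}\cA_V$ while still dropping a wedge factor, and therefore land in $F_{p-1}\cC^{-\ell+1}(\cM)$.

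Passing to the associated graded, only the top symbol survives and the differential becomes
\[ a \otimes n \otimes \xi_1 \wedge \dots \wedge \xi_\ell \,\longmapsto\, \sum_{i=1}^{\ell} (-1)^{i-1} (a \cdot \bar\xi_i) \otimes n \otimes \xi_1 \wedge \dots \widehat{\xi_i} \dots \wedge \xi_\ell, \]
with multiplication performed in $\gr^F \cA_V \cong \cO_V \otimes_\dC \Sym(\fg')$. Using the natural identification $(\cO_V \otimes_\dC \Sym(\fg')) \otimes_{\cO_V} \cM \cong \cM \otimes_\dC \Sym(\fg')$, the graded complex is then $\cM \otimes_\dC K^\bullet$, where $K^\bullet = \Sym(\fg') \otimes_\dC \bigwedge^\bullet \fg'$ is the classical Koszul resolution of the augmentation module $\dC$ over $\Sym(\fg')$. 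Exactness of $K^\bullet$ in negative degrees combined with the exactness of $\otimes_\dC$ then yields $H^{-\ell}(\gr^F \cC^\bullet(\cM)) = 0$ for all $\ell \geq 1$.

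Finally, since the filtration $F_\bullet \cC^\bullet(\cM)$ is exhaustive and bounded below (we have $F_p\cC^{-\ell}(\cM)=0$ for $p<\ell$), the associated spectral sequence $E_1^{p,q} = H^{p+q}(\gr^F_p \cC^\bullet(\cM)) \Rightarrow H^{p+q}(\cC^\bullet(\cM))$ converges; the previous step forces $E_1$ to vanish outside the row $q = 0$, so the sequence degenerates at $E_1$ and delivers the desired vanishing. The most delicate point will be the bookkeeping that confirms $\delta^{-\ell}$ preserves the PBW filtration and that the three correction terms indeed drop one filtration level; the twist $\beta'$ enters only as a scalar correction and is harmless because scalars are of PBW degree zero.
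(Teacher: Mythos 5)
Your proof is correct and follows essentially the same route as the paper's: filter $\cC^\bullet(\cM)$ by the PBW degree of the $\cA_V$-factor, identify the associated graded with $\cM\otimes_\dC$ the Koszul complex of $\Sym(\fg')$ on a basis of $\fg'$, and conclude by exactness of the Koszul complex together with convergence of the spectral sequence of the (exhaustive, bounded-below) filtration. Your explicit bookkeeping that the three correction terms drop one filtration level is exactly the point the paper leaves implicit, and your index convention $F_{p-\ell}$ is the one that actually makes the differential filtration-preserving.
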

\begin{proof}
We follow a standard strategy by filtering
$\cC^\bullet(\cM)$ by degree using the natural
filtration on $\cA_V$. More precisely, using
$\cA_V=\cO_V\otimes_\dC\cU\fg'$, we set $F_k\cA_V:=\cO_V\otimes_\dC F_k(\cU\fg')$, where $F_\bullet(\cU\fg')$ is the standard filtration on the universal enveloping algebra. By the Poincar\'e-Birkhoff-Witt theorem, we have
\[
\gr^F_\bullet \cA_V \cong \cO_V\otimes_\dC \textup{Sym}^\bullet(\fg').
\]
We consider the induced filtration
$F_\bullet(\cA_V \otimes_{\cO_V} \cM) = F_\bullet(\cA_V) \otimes_{\cO_V} \cM$ on the left $\cA_V$-module
$\cA_V \otimes_{\cO_V} \cM$. Then we have the following isomorphism of $\cO_V\otimes_\dC \textup{Sym}^\bullet(\fg')$-modules
\[
\begin{array}{rcl}
\D \gr^F_\bullet(\cA_V \otimes_{\cO_V} \cM) & \cong &\D \gr^F_\bullet(\cA_V) \otimes_{\cO_V} \cM
\cong
\left(\cO_V\otimes_\dC \textup{Sym}^\bullet(\fg')\right) \otimes_{\cO_V} \cM
\\ \\
&\cong& \D
\textup{Sym}^\bullet(\fg') \otimes_{\dC} \cO_V \otimes_{\cO_V} \cM
\cong
\textup{Sym}^\bullet(\fg') \otimes_{\C} \cM.
\end{array}
\]
Notice that the action of elements from $\textup{Sym}^\bullet(\fg')$ on
$\textup{Sym}^\bullet(\fg') \otimes_{\C} \cM$ is only via the first factor.
Then we consider the filtration on $\cC^\bullet(\cM)$ defined as
\[
F_k \cC^{-\ell}(\cM) :=
F_{k-\ell}(\cA_V \otimes_{\cO_V} \cM) \otimes_\dC \bigwedge^\ell \fg'.
\]
This makes $F_\bullet\cC^\bullet(\cM)$ into a filtered complex (this is the reason one needs to shift the filtration on the various terms of the complex, since otherwise the first summand in the formula defining the differential of $\cC^\bullet(\cM)$, i.e. in formula \eqref{eq:DiffEKCESComplex} applied to the case $\cN=\cA_V\otimes \cM$, would not respect this filtration), and by the usual arguments one checks that
\[
\gr^F_\bullet \cC^\bullet(\cM)
\cong \textup{Kos}^\bullet(\cM \otimes_\dC \textup{Sym}^\bullet(\fg'), (\xi_1,\ldots,\xi_{\dim(\fg')})),
\]
for some basis $(\xi_1,\ldots,\xi_{\dim(\fg')})$ of the Lie algebra $\fg'$. Since clearly
$\xi_1,\ldots,\xi_{\dim(\fg')}$ is a regular sequence on $\cM\otimes_\dC \textup{Sym}^\bullet(\fg')$, we obtain $H^i(\gr_\bullet^F \cC^\bullet(\cM)) = 0$ for $i<0$. Then by a general argument (see, e.g. \cite[Theorem 4.3.5]{SST}) it follows that
\[
H^0(\gr^F_\bullet(\cC^\bullet(\cM)))=\gr^F_\bullet H^0(\cC^\bullet(\cM)).
\]
We have therefore shown that $\gr^F_\bullet \cC^\bullet(\cM)$ is a resolution of
$\gr^F_\bullet H^0(\cC^\bullet(\cM))$, but then
the original complex $\cC^\bullet(\cM)$ is a resolution of $H^0(\cC^\bullet(\cM))$, which is the first statement of the lemma.
Since, as remarked above, we have
$H^0\cC^\bullet(\cO_{\hat{X}}) \cong \hat\tau^\cA$,
$\cC^\bullet(\cO_{\hat{X}})$ is a resolution of
$\hat\tau^\cA$
by left $\cA_V$-modules.
\end{proof}
The terms of the complex $\cC^\bullet(\cM)$ are not $\cA_V$-free in general. This is cured by the following construction.

\begin{lem}
  There exists a finite resolution $(\cF^\bullet(\cM), d)\twoheadrightarrow \cM$ by left $\cA_V$-modules that are  free over $\cO_V$.
\end{lem}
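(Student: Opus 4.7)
The plan is to build the resolution iteratively. The basic building block rests on the observation that for any $\cO_V$-module $\cP$, the sheaf $\cA_V \otimes_{\cO_V} \cP$ carries a natural left $\cA_V$-module structure (via left multiplication on the first factor), and by the Poincar\'e--Birkhoff--Witt isomorphism $\cA_V \cong \cO_V \otimes_\dC \cU(\fg')$ it is free over $\cO_V$ (in fact $\cA_V$-free) whenever $\cP$ is. Moreover, any $\cO_V$-linear surjection $\cP \twoheadrightarrow \cM$ extends to an $\cA_V$-linear surjection $\cA_V \otimes_{\cO_V} \cP \twoheadrightarrow \cM$ via composition with the action map $\cA_V \otimes_{\cO_V} \cM \to \cM$, $a \otimes m \mapsto a \cdot m$.

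Applying this construction iteratively produces an \emph{a priori} infinite resolution $\cdots \to \cF^{-i} \to \cdots \to \cF^0 \twoheadrightarrow \cM$ by left $\cA_V$-modules $\cF^{-i} := \cA_V \otimes_{\cO_V} \cP^{-i}$, each $\cA_V$-free and hence $\cO_V$-free, where $\cP^{-i}$ is any chosen $\cO_V$-free module surjecting onto the preceding syzygy. To argue termination, I would invoke the fact that $\cA_V$, being the universal enveloping algebra of the locally free Lie algebroid $\cO_V \otimes_\dC \fg'$ of finite rank on a smooth variety, has finite global dimension bounded by $\dim V + \dim \fg'$ (Rinehart; cf.\ \cite{CN05}). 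Hence after $n := \dim V + \dim \fg'$ iterations the $n$-th syzygy $\cK^{-n} := \ker(\cF^{-n+1} \to \cF^{-n+2})$ is $\cA_V$-projective, and truncation by replacing the infinite tail with $\cK^{-n}$ yields a finite complex.

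The main obstacle is to upgrade the terminating syzygy $\cK^{-n}$ from being merely $\cA_V$-projective to being free over $\cO_V$. Since $\cK^{-n}$ is a direct summand of the $\cO_V$-free module $\cF^{-n+1}$, it is automatically $\cO_V$-projective; one then finishes by invoking that on the polynomial ring $\cO_V \cong \dC[x_1, \ldots, x_{\dim V}]$ every projective module is free (Quillen--Suslin for the finitely generated case, and Bass's theorem on big projectives for the non-finitely-generated case). This furnishes the desired finite resolution $\cF^\bullet(\cM) \twoheadrightarrow \cM$ of length at most $\dim V + \dim \fg'$ by left $\cA_V$-modules free over $\cO_V$.
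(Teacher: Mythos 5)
Your proof is correct, but it terminates the resolution by a genuinely different mechanism than the paper. The paper's key observation is that the syzygies of the infinite resolution are automatically left $\cA_V$-modules, so one only needs them to become \emph{$\cO_V$-free}, never $\cA_V$-projective: since $\cO_V$ has finite global dimension $n=\dim V$, the $n$-th syzygy is $\cO_V$-projective (hence $\cO_V$-free), and one truncates there. You instead invoke finiteness of the global dimension of $\cA_V$ itself (via Rinehart's theory for enveloping algebras of locally free Lie algebroids, i.e.\ the PBW filtration with regular associated graded $\Sym_{\cO_V}(\cO_V\otimes_\dC\fg')$), obtain an $\cA_V$-projective syzygy, and then descend to $\cO_V$-freeness. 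Both routes are valid; the paper's is cheaper (no input beyond the syzygy theorem for the polynomial ring) and gives the shorter bound $\dim V$ on the length, while yours costs the heavier Rinehart input and gives length $\dim V+\dim\fg'$, though it has the mild advantage that all terms except the last are $\cA_V$-free. The initial covers also differ: the paper induces from a $\fg'$-submodule $W^0$ of the generators, forming $\cO_V\otimes_\dC W^0$, while you form $\cA_V\otimes_{\cO_V}\cP$ from an $\cO_V$-free cover $\cP$ --- both are left $\cA_V$-modules free over $\cO_V$. One small slip: with your indexing $\cK^{-n}=\ker(\cF^{-n+1}\to\cF^{-n+2})$, projectivity of $\cK^{-n}$ splits the surjection $\cF^{-n}\twoheadrightarrow\cK^{-n}$, so $\cK^{-n}$ is a direct summand of $\cF^{-n}$, not of the module $\cF^{-n+1}$ it embeds into; the conclusion that it is $\cO_V$-projective (and then $\cO_V$-free by Quillen--Suslin, resp.\ Bass in the non-finitely-generated case) is unaffected.
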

\begin{proof}
    We first construct via induction an infinite resolution $\mathcal{G}^\bullet$ of $\mathcal{M}$ by left $\cA_V$-modules that are     free (but possibly of infinite rank) over $\mathcal{O}_V$.

    Let $W^0$ be the $\mathfrak{g}'$-submodule generated by a     global $\mathcal{O}_V$-generating set of $\mathcal{M}$. Then $\mathcal{G}^0:=\mathcal{O}_V\otimes_\C W^0$ is a left $\cA_V$-module via
    \begin{align*}
        f\cdot (g\otimes w) &= (fg)\otimes w \qquad (f\in \mathcal{O}_V),\\
        \xi\cdot (g\otimes w) &= (Z_V(\xi)(g))\otimes w + g \otimes (\xi\cdot w) \qquad (\xi\in \mathfrak{g}').
    \end{align*}
    The obvious map $\mathcal{G}^0\to \mathcal{M}$ is surjective and $\cA_V$-linear. Repeating this procedure with $\ker(\mathcal{G}^0\to \mathcal{M})$, and continuing in that way, we get an infinite resolution $\mathcal{G}^\bullet$ of $\mathcal{M}$ of the required type.

    We now construct $\mathcal{F}^\bullet(\mathcal{M})$: Since $\mathcal{O}_V$ has finite global dimension (say equal to $n$), $\im(\mathcal{G}^{-n}\to\mathcal{G}^{-n+1})$ is $\mathcal{O}_V$-projective (see, e.g., \cite[\href{https://stacks.math.columbia.edu/tag/00O5}{Lemma 00O5}]{stacks-project}) and therefore $\mathcal{O}_V$-free. Thus,
    \[
        \mathcal{F}^{-i}(\mathcal{M})
        := \begin{cases}
            \mathcal{G}^{-i}, &\text{if }i<n,\\
            \im(\mathcal{G}^{-n}\to\mathcal{G}^{-n+1}), &\text{if }i=n,\\
            0, &\text{if }i>n.
        \end{cases}
    \]
    with the differential induced from $\mathcal{G}^\bullet$ works.
\end{proof}
\begin{rmk}
For what follows, a resolution of modules that have possibly infinite rank over $\cO_V$ as just constructed is sufficient. However, it is actually possible to obtain a resolution by finite rank $\cO_V$-modules under the additional assumption that $\cM$ is graded,
and finitely generated over $\cO_V$ by homogeneous elements such that the grading is compatible with the left $\cA_V$-structure on $\cM$. This is in particular the case for $\cM=\cO_{\hat{X}}$, which is the only case that we will use below. Namely, under these assumptions, the $\fg'$-submodule $W^0$ constructed in each step is then necessarily contained in a finite number of homogeneous components of $\cM$, i.e. in a finite dimensional vector space. This suffices to obtain a free $\cO_V$-module of finite rank $\cG^0$ as above,
which is again graded in a compatible way with the left $\cA_V$-action, and then one argues again by induction.
\end{rmk}

In the sequel, we specialize to the case $\cM=\cO_{\hat{X}}$.
According to the previous lemma, by applying the functor
$\cC^\bullet(-)$ to the $\cO_V$-free resolution $\cF^\bullet(\cO_{\hat{X}}) \twoheadrightarrow \cO_{\hat{X}}$ by left $\cA_V$-modules,
we  obtain the double complex
\[
\cK^{\bullet,\bullet} :=
\cC^\bullet(\cF^\bullet(\cO_{\hat{X}}))
\]
and its associated total complex $\textup{Tot}^\bullet(\cK^{\bullet,\bullet})$. Then
$\textup{Tot}^\bullet(\cK^{\bullet,\bullet})$ provides a resolution of $\hat\tau^\cA$ by free left $\cA_V$-modules (of possibly infinite rank). Therefore, we have
\[
\omega_V\otimes^\dL_{\cA_V}\hat\tau^\cA\cong
\omega_\cA \otimes_{\cA_V} \textup{Tot}^\bullet(\cK^{\bullet,\bullet})
\cong
\textup{Tot}^\bullet(\omega_V \otimes_{\cA_V}\cK^{\bullet,\bullet})
\]

Consider the spectral sequence  associated to the double complex $\omega_V \otimes_{\cA_V}\cK^{\bullet,\bullet}$
with $E_1$-term given by first taking vertical cohomology, i.e.
\[
E_1^{p,q}:=H^q(\omega_V\otimes_{\cA_V}
\cC^p(\cF^\bullet(\cO_{\hat{X}}))) \Longrightarrow
H^{p+q}(\omega_V\otimes^\dL_{\cA_V}\hat\tau^\cA).
\]
Then we have the following
\begin{lem}
The above sequence collapses at the $E_1$-term, and we have
\[
\omega_V\otimes^\dL_{\cA_V}\hat\tau^\cA
\simeq  (\cS^\bullet(\omega_V\otimes_{\cO_V}\cO_{\hat{X}}), \delta),
\]
where we consider the right module structure on $\cN=\omega_V\otimes_{\cO_V}\cO_{\hat{X}}$ coming from the tensor product of the right $\cA_V$-module $\omega_V$ with the left $\cA_V$-module $\cO_{\hat{X}}$.
Explicitly, we have
\[
\cS^\ell(\omega_V\otimes_{\cO_V}\cO_{\hat{X}}):=
\omega_V/\cI\omega_V\otimes_\dC \bigwedge^{-\ell}\fg',
\]
and where the differentials are
\[
\begin{array}{rcl}
\delta^{-\ell} \colon
\D\frac{\omega_V}{\cI\omega_V}\otimes_\dC \bigwedge^{-\ell}\fg'
&\longrightarrow&
\D\frac{\omega_V}{\cI\omega_V}\otimes_\dC \bigwedge^{-\ell+1}\fg'
\\ \\
(f\cdot\vol) \otimes (\xi_1 \wedge \ldots \wedge \xi_\ell) &\longmapsto &
\SC \sum\limits_{i=1}^\ell (-1)^{i-1}
(-\Lie_{Z_V(\xi_i)}-\beta'(\xi_i))(f\cdot\vol) \otimes (\xi_1 \wedge \ldots \wedge  \widehat{\xi}_i \wedge\ldots \wedge \xi_\ell)+\\
&& \SC \sum\limits_{1 \leq i < j \leq \ell} (-1)^{i+j} (f\cdot\vol )\otimes ([\xi_i,\xi_j] \wedge \xi_1 \wedge \ldots \wedge \widehat{\xi}_i \wedge \ldots \wedge \widehat{\xi}_j \wedge \ldots \wedge \xi_\ell)
\end{array}
\]
\end{lem}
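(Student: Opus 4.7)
The plan is to analyze the $E_1$-page of the spectral sequence column by column, showing each column is concentrated in cohomological degree zero, so that the sequence degenerates and the $E_\infty = E_1$ row is precisely the asserted Chevalley--Eilenberg-type complex $\cS^\bullet(\omega_V\otimes_{\cO_V}\cO_{\hat{X}})$.

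First, I would fix $p = -\ell$ and analyze the column $\omega_V \otimes_{\cA_V} \cC^{-\ell}(\cF^\bullet(\cO_{\hat{X}}))$. By construction, $\cC^{-\ell}(\cN) = \cA_V \otimes_{\cO_V} \cN \otimes_\dC \bigwedge^\ell \fg'$ for any left $\cA_V$-module $\cN$, with $\cA_V$-module structure coming from the first factor. Since every $\cF^{-j}(\cO_{\hat{X}})$ is $\cO_V$-free by the previous lemma, and $\bigwedge^\ell \fg'$ is a finite-dimensional $\dC$-vector space, each term $\cC^{-\ell}(\cF^{-j}(\cO_{\hat{X}}))$ is a free left $\cA_V$-module. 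Tensoring with $\omega_V$ over $\cA_V$ therefore collapses the $\cA_V$-factor and yields the complex
\[
\omega_V\otimes_{\cO_V}\cF^\bullet(\cO_{\hat{X}})\otimes_\dC \bigwedge^\ell \fg'.
\]
Because $\omega_V$ is an invertible $\cO_V$-module, hence flat, this complex computes $\omega_V\otimes^\dL_{\cO_V}\cO_{\hat{X}}\otimes_\dC \bigwedge^\ell \fg' \cong (\omega_V/\cI\omega_V)\otimes_\dC \bigwedge^\ell \fg'$, concentrated in cohomological degree zero. Consequently $E_1^{p,q} = 0$ for $q \neq 0$ and $E_1^{-\ell,0} = (\omega_V/\cI\omega_V)\otimes_\dC \bigwedge^\ell \fg'$, so the spectral sequence degenerates at $E_1$ and
\[
H^\bullet\bigl(\omega_V\otimes^\dL_{\cA_V}\hat\tau^\cA\bigr) \cong H^\bullet(E_1^{\bullet,0}, d_1).
\]

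Next, I would identify the $d_1$-differential with $\delta$. The horizontal differential of $\cK^{\bullet,\bullet}$ is the Chevalley--Eilenberg-type differential $\delta$ built from the right $\cA_V$-action on $\cA_V\otimes_{\cO_V}\cF^{-j}(\cO_{\hat{X}})$. Under the column-wise quasi-isomorphism
\[
\omega_V\otimes_{\cA_V}\bigl(\cA_V\otimes_{\cO_V}\cF^\bullet(\cO_{\hat{X}})\bigr) \xrightarrow{\ \simeq\ } \omega_V/\cI\omega_V,
\]
this right $\cA_V$-action descends to the right $\cA_V$-module structure on $\omega_V\otimes_{\cO_V}\cO_{\hat{X}}$ arising from tensoring the right module $\omega_V$ with the left module $\cO_{\hat{X}}$, by functoriality of the construction $\cN \mapsto \cS^\bullet(\cN)$ in $\cN$. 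Thus $d_1 = \delta$ on $\cS^\bullet(\omega_V\otimes_{\cO_V}\cO_{\hat{X}})$.

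Finally, I would unwind the formula for the differential. On $\omega_V$ viewed as a right $\cD_V$-module, a vector field $\theta$ acts by $\omega\cdot\theta = -\Lie_\theta(\omega)$; composing with $\widetilde Z_V$ and accounting for the twist by $-\beta'$ built into $\hat\tau^\cA$, one computes
\[
(f\cdot\vol\otimes \bar 1)\cdot(\xi - \beta'(\xi)) = \bigl(-\Lie_{Z_V(\xi)} - \beta'(\xi)\bigr)(f\cdot\vol)\otimes \bar 1,
\]
using that $Z_V(\xi)$ annihilates the class $\bar 1\in \cO_{\hat X}$ since $\xi$ is a $\mathfrak g'$-invariant derivation fixing constants. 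Substituting this into the general formula for $\delta$ on $\cS^{-\ell}$ yields exactly the differential displayed in the statement. The main subtlety here is the bookkeeping of the right $\cA_V$-module structure on $\omega_V\otimes_{\cO_V}\cO_{\hat{X}}$; once that is in place, the two complexes agree term-by-term.
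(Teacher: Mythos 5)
Your proposal is correct and follows essentially the same route as the paper: cancel the $\cA_V$-factor in each column via $\omega_V\otimes_{\cA_V}(\cA_V\otimes_{\cO_V}\cF^q)\cong\omega_V\otimes_{\cO_V}\cF^q$, use flatness of the line bundle $\omega_V$ to see the vertical cohomology is $\omega_V\otimes_{\cO_V}\cO_{\hat X}\otimes_\dC\bigwedge^p\fg'$ concentrated in degree zero, conclude degeneration at $E_1$, and identify $d_1$ with $\delta$. Your explicit verification that $(f\cdot\vol\otimes\bar 1)\cdot(\xi-\beta'(\xi))=(-\Lie_{Z_V(\xi)}-\beta'(\xi))(f\cdot\vol)\otimes\bar 1$ is a welcome amount of detail the paper leaves implicit.
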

\begin{proof}
According to the above construction, we have
\begin{align*}
    &\D H^q(\omega_V\otimes_{\cA_V}
\cC^p(\cF^\bullet(\cO_{\hat{X}})))\\
    &\quad= \D\frac{\D \ker\left(\omega_V\otimes_{\cA_V}(\cA_V\otimes_{\cO_V}\cF^q(\cO_{\hat{X}}))\otimes_\dC\bigwedge^p\fg',\id\otimes\id\otimes d^q\otimes\id\right)}{\D \textup{im}\left(\omega_V\otimes_{\cA_V}(\cA_V\otimes_{\cO_V}\cF^{q-1}(\cO_{\hat{X}}))\otimes_\dC\bigwedge^p\fg',\id\otimes\id\otimes d^{q-1}\otimes\id\right)}\\
    &\quad= \D \omega_V\otimes_{\cO_V} H^q(\cF^\bullet(\cO_{\hat{X}}))\otimes_\dC\bigwedge^p \fg'\\
    &\quad=\begin{cases}
        0, &\text{if }q<0,\\
        \omega_V\otimes_{\cO_V}\cO_{\hat{X}}\otimes_\dC\bigwedge^p \fg', &\text{if }q=0.
    \end{cases}
\end{align*}
(recall that $d$ is the differential of the complex $\cF^\bullet(\cO_{\hat{X}})$)
from which it is obvious that the spectral sequence collapses, and that the induced differential $\delta^{-\ell} \colon \cS^\ell(\omega_V\otimes_{\cO_V}\cO_{\hat{X}})=E_1^{\ell,0}\rightarrow E_1^{\ell+1,0}=\cS^{\ell+1}(\omega_V\otimes_{\cO_V}\cO_{\hat{X}})$ is as indicated.
\end{proof}

Using all these preliminaries, we finally obtain the vanishing of the two de Rham cohomology groups we are interested in.
\begin{proof}[Proof of \cref{prop:VanishCohomA}]
It remains to show that, under the assumptions of the proposition (in particular, $\beta(\mathbf{e})\notin \dZ_{\leq 0}$), we have $H^k(\cS^\bullet(\omega_V\otimes_{\cO_V} \cO_{\hat{X}}))=0$ for $k=0,-1$. Let us first notice that the complex $\cS^\bullet(\omega_V\otimes_{\cO_V}\cO_{\hat{X}})$
is naturally graded by the grading of $\cO_{\hat{X}}$ and of $\cO_V$ (by putting $\deg(\vol):=\dim(V)$) and by setting $\deg(\fg'):=0$.
Then it is easily verified that the morphism $Z_V$ is homogeneous of degree $0$, and therefore also the differentials $\delta^{-\ell}$ are so. Consequently, it suffices to calculate the cohomology of the graded parts of $\cS^\bullet(\omega_V\otimes_{\cO_V}\cO_{\hat{X}})$.

The relevant maps in this complex are as follows:
\[
\begin{array}{rcl}
\D\delta^{-1} \colon \frac{\omega_V}{\cI\omega_V}\otimes_\dC \fg'
&\longrightarrow&
\D\frac{\omega_V}{\cI\omega_V}
\\ \\
(f\cdot \vol)\otimes \xi & \longmapsto & (-\Lie_{Z_V(\xi)}-\beta'(\xi))(f\cdot \vol)
\\ \\ \\ \\
\D\delta^{-2} \colon \frac{\omega_V}{\cI\omega_V}\otimes_\dC \bigwedge^2 \fg'
&\longrightarrow&
\D\frac{\omega_V}{\cI\omega_V}\otimes_\dC \fg'
\\ \\
(f\cdot \vol)\otimes\vartheta\wedge\eta &\longmapsto &
(-\Lie_{Z_V(\vartheta)}-\beta'(\vartheta))(f\cdot \vol)\otimes \eta
+
(\Lie_{Z_V(\eta)}+\beta'(\eta))(f\cdot \vol)\otimes \vartheta \\
&& -(f\cdot \vol)\otimes[\vartheta,\eta] \\ \\
&&\D=
\delta^{-1}((f\cdot\vol)\otimes\vartheta)\otimes\eta - \delta^{-1}((f\cdot\vol)\otimes\eta)\otimes\vartheta
 -(f\cdot \vol)\otimes[\vartheta,\eta].
\end{array}
\]
In order to describe these morphisms, first  notice that for any $\theta\in\fg'$, we have
\[
\begin{array}{rcl}
\D \Lie_{Z_V(\theta)}(\vol)& =&
\D \Lie_{-\sum_{i,j}d\rho(\theta)_{ji}x_i\partial_{x_j}}(\vol)=
-\sum_{i,j}d\rho(\theta)_{ji}\Lie_{x_i\partial_{x_j}}(\vol)\\ \\
&=&\D
-\sum_{i,j}d\rho(\theta)_{ji}\delta_{ij} \cdot \vol=
-\textup{trace}(d\rho(\theta)) \cdot \vol.
\end{array}
\]
We thus get
\begin{equation}\label{eq:CancelLieVolTrace}
\begin{array}{rcl}
\D \left(\Lie_{Z_V(\theta)}+\beta'(\theta)\right)(f\cdot \vol) &=&
\big(Z_V(\theta)(f)-f\cdot\textup{trace}(d\rho(\theta))+\underbrace{\beta'(\theta)}_{=\textup{trace}(d\rho(\theta))-\beta(\theta)}\cdot f\big)\cdot\vol\\ \\
&=& \D\left(Z_V(\theta)(f)-\beta(\theta)f\right)\cdot \vol
\end{array}
\end{equation}
After these preliminaries, let us first show that $H^0(\cS^\bullet(\omega_V\otimes_{\cO_V} \cO_{\hat{X}}))=0$, i.e., that the morphism $\delta^{-1}$ is surjective.
According to \cref{lem:equivVFOnVectSp}, for $\xi=\mathbf{e}\in\fg'$, we have
\[
Z_V(\mathbf{e})=-E:=-\sum_{i=1}^{\dim V} x_i\partial_{x_i},
\]
where $x_1,\ldots,x_{\dim(V)}$ are coordinates on $V$.
We thus have
\[
\begin{array}{rcl}
\D \delta^{-1}\left((f\cdot\vol)\otimes\mathbf{e}\right) &=&
\D (-\Lie_{Z_V(\mathbf{e})}-\beta'(\mathbf{e}))(f\cdot\vol)=
(E(f)+\beta(\mathbf{e})f)\cdot\vol
\end{array}
\]
Since $E(f) =d \cdot f$ for $f$ homogeneous of (non-negative) degree $d$, the fact that $\beta(\mathbf{e})\notin\dZ_{\leq 0}$  shows that $\delta^{-1}$ is surjective, hence $H^0(\mathcal S^\bullet(\omega_V\otimes_{\cO_V} \cO_{\hat{X}})) = 0$.

The vanishing of $H^{-1}(\cS^\bullet(\omega_V\otimes_{\cO_V} \cO_{\hat{X}}))$ will similarly be shown in each degree of the complex. Therefore, suppose that  we have homogeneous elements $f_i\in\cO_{\hat{X}}$ all of which have the same degree $d\in\dZ_{\geq 0}$ and $\xi_i\in \fg'$ for $i=1,\ldots,r$ such that
\[
\delta^{-1}\bigg(
\sum_{i=1}^r (f_i\cdot \vol)\otimes \xi_i\bigg)=0.
\]
By assumption, we have $d+\beta(\mathbf{e})\neq 0$. Then it follows (using $[\mathbf{e}, \xi_i]=0$) that
\[
\begin{array}{l}
\D \delta^{-2}\sum_{i=1}^r
(\frac{f_i}{d+\beta(\mathbf{e})}\cdot\vol)\otimes \mathbf{e}\wedge \xi_i=
 \sum_{i=1}^r
\delta^{-2}\bigg(
\frac{f_i}{d+\beta(\mathbf{e})}\cdot\vol\otimes \mathbf{e}\wedge \xi_i\bigg)\\ \\
= \D
\sum_{i=1}^r\bigg(\delta^{-1}\big((\frac{f_i}{d+\beta(\mathbf{e})}\cdot\vol)\otimes \mathbf{e}\big)\otimes \xi_i\bigg) -
\sum_{i=1}^r\delta^{-1}\bigg((\frac{f_i}{d+\beta(\mathbf{e})}\cdot\vol)\otimes \xi_i\bigg)\otimes \mathbf{e}
\\ \\
= \D
\sum_{i=1}^r\bigg(\delta^{-1}\big((\frac{f_i}{d+\beta(\mathbf{e})}\cdot\vol)\otimes \mathbf{e}\big)\otimes \xi_i\bigg)
-\frac{1}{d+\beta(\mathbf{e})}
\underbrace{\sum_{i=1}^r\delta^{-1}\bigg((f_i\cdot\vol)\otimes \xi_i\bigg)}_{=0}\otimes \mathbf{e}
\\ \\
=\D
\sum_{i=1}^r \bigg(\frac{E(f_i)+\beta(\mathbf{e})f_i}{d+\beta(\mathbf{e})}\cdot \vol\otimes \xi_i
\bigg),
\end{array}
\]
(recall that the elements $f_i\in \cO_{\hat{X}}$ were chosen to be homogeneous of degree $d$, so that $E(f_i)=d\cdot f_i$, and, consequently, $\frac{E(f_i)+\beta(\mathbf{e})f_i}{d+\beta(\mathbf{e})}=f_i$) so that $\sum_{i=1}^r (f_i\cdot \vol)\otimes \xi_i\in\textup{im}(\delta^{-2})$, thus showing $H^{-1}(\cS^\bullet(\omega_V\otimes_{\cO_V}\cO_{\hat{X}}))=0$.
\end{proof}

The next statement summarizes the results obtained so far in \cref{sec:tautologicalSystems}. We consider  the situation as described before \cref{thm:restrictedTauHatDescription} and we would like to describe how the $\cD_V$-module $\hat{\tau}(\rho, \hat{X},\beta)$ is related to its restriction to $V\setminus\{0\}$.
We only state the results under the simplifying assumption that $G$ is semisimple, since this is the main case of interest and since it allows us to use the results proved in \cref{sec:FormuleBeta}.
Recall that under the assumption that $G$ (and consequently its Lie algebra $\fg$) is semisimple, we necessarily have $\beta_{|\fg} = 0$.
\begin{cor}\label{cor:LocColocHatTau}
In the above situation, assume that $\tau(\rho,\hat{X},\beta)\neq 0$. Then we have:
\begin{enumerate}
    \item $\beta(\mathbf{e})\in\dQ_{\geq 0}$.
    \item
    If $\beta(\mathbf{e}) = 0$, then $\tau(\rho, \hat{X},\beta)$ is a free $\cO_W$-module of finite positive rank.

    \item
    If $\beta(\mathbf{e}) \in \dQ_{>0}$, then
    we have an isomorphism in $\textup{Mod}_h(\cD_V)$
    $$
        H^0j_\dag j^+ \hat{\tau}(\rho, \hat{X}, \beta) \stackrel{\cong}{\longrightarrow} \hat{\tau}(\rho, \hat{X}, \beta)
    $$

    \item
    If $\beta(\mathbf{e}) \in \dQ_{>0}\setminus \dZ_{>0}$, then
    we have isomorphisms in $\textup{Mod}_h(\cD_V)$
    $$
        \begin{array}{rcl}
            j_+ j^+ \hat{\tau}(\rho, \hat{X}, \beta) &\stackrel{\cong}{\longrightarrow} & \hat{\tau}(\rho, \hat{X}, \beta),
           \\ \\
                 & \textup{and} \\ \\
            j_+j^+\hat{\tau}(\rho, \hat{X}, \beta) & \cong & j_\dag j^+ \hat{\tau}(\rho, \hat{X}, \beta)
        \end{array}
    $$
    in particular, we have $H^i(j_\star j^+\hat{\tau}(\rho, \hat{X}, \beta))=0$ for $i\neq 0$ and for $\star\in\{+,\dag\}$ in this case. Furthermore, $\tau(\rho,\hat{X},\beta)$ is simple.
\end{enumerate}

\end{cor}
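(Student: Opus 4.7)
The plan is to address the four parts sequentially, relying on the structural results of \cref{sec:FormuleBeta,sec:tautologicalSystems,subsec:LocProp,subsec:CoLocProp}. Part 1 is immediate from \cref{cor:BetaNonNeg}, which was established in the representation-theoretic section under the standing semisimplicity hypothesis on $G$. For Part 2, observe that \cref{lem:Fano} implies that $X$ is Fano, so $\omega_X^{\otimes(-k)}$ is ample for every $k>0$ and therefore never isomorphic to the trivial bundle $\cO_X = \Ell_0^{\otimes 0}$. Consequently \cref{thm:restrictedTauHatDescription} forces $\hat\tau(\rho,\hat X,\beta)|_{V\setminus\{0\}} = 0$, i.e., $\hat\tau$ is supported at the origin. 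By Kashiwara's equivalence, $\hat\tau$ has the form $i_{\{0\},+}H$ for a finite-dimensional $\dC$-vector space $H$ (finite by holonomicity of $\hat\tau$), which is non-zero because $\tau\neq 0$. Since the Fourier--Laplace transform of a skyscraper at the origin is the structure sheaf on the dual space, $\tau=\FL^V(i_{\{0\},+}H)\cong H\otimes_\dC\cO_W$, a free $\cO_W$-module of positive rank $\dim H$. Part 3 is then immediate from \cref{theo:coloc}, since $\beta(\mathbf{e})\in\dQ_{>0}$ lies in $\dC\setminus\dZ_{\leq 0}$.

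For Part 4 the hypothesis $\beta(\mathbf{e})\in\dQ_{>0}\setminus\dZ_{>0}$ gives in particular $\beta(\mathbf{e})\notin\dZ$. The first isomorphism $j_+j^+\hat\tau\stackrel{\cong}{\longrightarrow}\hat\tau$ is a direct application of \cref{cor:localization} (applicable since the boundary of the $G'$-orbit $\hat X\setminus\{0\}$ in $\hat X$ is exactly $\{0\}$), and its proof via $R\Gamma_{\{0\}}\hat\tau=0$ yields the vanishing $H^i(j_+j^+\hat\tau)=0$ for $i\neq 0$. To upgrade the module-level equality $H^0j_\dag j^+\hat\tau\cong\hat\tau$ from Part 3 to a derived isomorphism $j_\dag j^+\hat\tau\cong\hat\tau$, my plan is to dualize: using $\bD j_+=j_\dag\bD$ and $\bD j^+=j^+\bD$ for the open embedding $j$, it suffices to establish $j_+j^+\bD\hat\tau\cong\bD\hat\tau$ in $D^b_h(\cD_V)$. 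Since $\hat\tau$ is holonomic, $\bD\hat\tau$ is a holonomic $\cD_V$-module concentrated in degree $0$, and the Euler eigenspace decomposition derived in the proof of \cref{cor:localization} for $\hat\tau$ (with eigenvalues in $\beta(\mathbf{e})+\dZ$) transfers under duality to $\bD\hat\tau$, whose global sections then admit an analogous decomposition with eigenvalues lying in a coset of $\dZ$ in $\dC$ avoiding $\dZ$. Applying \cref{prop:Eig-loc} to $\bD\hat\tau$ gives the desired localization, and dualizing back yields $\hat\tau\cong j_\dag j^+\hat\tau$ in the derived category, hence in particular the vanishing $H^i(j_\dag j^+\hat\tau)=0$ for $i\neq 0$.

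The simplicity of $\tau(\rho,\hat X,\beta)$ then reduces, via the $\FL^V$-equivalence on holonomic $\cD$-modules, to simplicity of $\hat\tau$. Its restriction $j^+\hat\tau\cong i'_+\cO_{L^*}^{\ell/k}$ is simple as a closed-embedding pushforward of the smooth rank-one $\cD_{L^*}$-module $\cO_{L^*}^{\ell/k}$, and \cref{cor:CBetaHodgeExt} shows that for $\ell/k\notin\dZ$ the $!$- and $*$-extensions of $\cO_{L^*}^{\ell/k}$ across the zero section of $L$ coincide with its minimal extension; combined with the two derived isomorphisms established above, this forces $\hat\tau$ to be the minimal extension of its restriction to $V\setminus\{0\}$, hence simple. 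The main expected obstacle is verifying cleanly that the Euler eigenspace condition lifts from $\hat\tau$ to $\bD\hat\tau$ in the form required by \cref{prop:Eig-loc}; if this becomes awkward, an alternative is to compute $j_\dag j^+\hat\tau$ directly from the description $j^+\hat\tau\cong i'_+\cO_{L^*}^{\ell/k}$ together with the behaviour of $\cO_{L^*}^{\ell/k}$ under $\bD$ given by \cref{lem:OLbetaDual}.
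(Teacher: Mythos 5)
Your proposal is correct and follows the paper's skeleton almost exactly: part 1 via \cref{cor:BetaNonNeg}, part 3 via \cref{theo:coloc}, and the first isomorphism of part 4 via \cref{cor:localization}, with the simplicity of $\tau$ deduced, as in the paper, from $\hat\tau$ being the intermediate extension of the simple module $i'_+\cO_{L^*}^{\ell/k}$ and $\FL^V$ being an equivalence. Two points diverge. In part 2 you obtain positivity of the rank directly from the standing hypothesis $\tau(\rho,\hat X,\beta)\neq 0$ (support at the origin via \cref{lem:Fano} and \cref{thm:restrictedTauHatDescription}, then Kashiwara plus the fact that $\FL^V$ of a skyscraper is $\cO_W$); the paper instead exhibits the constant function as a classical solution, which proves the stronger unconditional statement $\tau(\rho,\hat X,0)\neq 0$, but your version suffices for the corollary as stated.

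The one place where you take a genuinely different primary route is the second isomorphism of part 4. Your duality argument ($j_\dag j^+\hat\tau\cong\bD(j_+j^+\bD\hat\tau)$, then apply \cref{prop:Eig-loc} to $\bD\hat\tau$) is formally sound, but the key step --- that $\Gamma(V,\bD\hat\tau)$ again lies in $\mathbf{Eig}(V,\lambda)$ for some non-integral $\lambda$ --- is asserted rather than proved, and it is not immediate: one must produce an $\ad(E)$-graded free resolution of $\hat\tau$, track the induced action of left multiplication by $E$ on $\textup{Ext}^{\dim V}_{\cD_V}(\hat\tau,\cD_V)$, and account for the weight shift in the right-to-left conversion by $\omega_V^\vee$. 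As it stands this is a gap, which you correctly flag. Your stated fallback --- computing $j_\dag j^+\hat\tau$ from $j^+\hat\tau\cong i'_+\cO_{L^*}^{\ell/k}$ using \cref{lem:OLbetaDual} --- is essentially the paper's actual argument: since $i'$ is proper, the question reduces to $j_{L^\vee,+}\cO^{\beta}_{L^{\vee,*}}\cong j_{L^\vee,\dag}\cO^{\beta}_{L^{\vee,*}}$ for $\beta\notin\dZ$, i.e.\ to \cref{cor:CBetaHodgeExt} (packaged in the paper as points 1 and 3 of \cref{prop:directImageOfOBetaMHM}, via the factorization $\hat\iota=j\circ i'\circ\textup{inv}^{-1}$). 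If you route the argument through that fallback, the proof is complete; if you insist on the duality route, you owe the reader the graded-resolution computation.
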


\begin{proof}
\begin{enumerate}
\item
This is exactly the statement of \cref{cor:BetaNonNeg}.

\item
It has been shown in \cref{lem:Fano} that $X$ is Fano. This implies by \cref{thm:restrictedTauHatDescription} that if $\beta(\mathbf{e})=0$, then necessarily $\hat{\tau}(\rho,\hat{X},\beta)_{|V\backslash\{0\}}=0$. Therefore, $\hat{\tau}(\rho,\hat{X},0)$ has support in the origin in $V$ and consequently, $\tau(\rho,\hat{X},0)$ is a free $\cO_W$-module (of positive rank by the assumption $\tau(\rho,\hat{X},\beta)\neq 0$). Notice that the non-vanishing of $\tau(\rho,\hat{X},\beta)\neq 0$ is automatic if $\beta(\mathbf{e})=0$: in this case, any constant function on $W$ is a (classical) solution to $\tau(\rho,\hat{X},0)$, since it is annihilated by any operator in the denominator.
\item
This follows directly from \cref{theo:coloc}.
\item
The isomorphism
$j_+ j^+ \hat{\tau}(\rho, \hat{X}, \beta) \cong \hat{\tau}(\rho, \hat{X}, \beta)$ is exactly the content of \cref{cor:localization} (applying it for $Y=\hat{X}\setminus \{0\}$ and $\overline{Y}=\hat{X}$). Moreover, the second isomorphism is obviously true if $\hat{\tau}(\rho, \hat{X}, \beta)=0$. Otherwise, we must have by the first isomorphism that
$j^+\hat{\tau}(\rho, \hat{X}, \beta)\neq 0$, but then by
\cref{thm:restrictedTauHatDescription} we know that
$$
j^+\hat{\tau}(\rho, \hat{X}, \beta) \cong
i'_+ \O_{L^*}^{\ell/k}
$$
with $\beta(\mathbf{e})=\ell/k$. Since $i'$ is proper, we are therefore left to show that
\begin{equation}\label{eq:pluseqdag}
j_+ i'_+ \O_{L^*}^{\ell/k} \cong j_\dag i'_\dag \O_{L^*}^{\ell/k},
\end{equation}
but this follows from the proof of \cref{prop:directImageOfOBetaMHM}, points 1. and 3., by noticing that we have $\hat{\iota}=j\circ i' \circ \textup{inv}^{-1} \colon L^{\vee,*}\hookrightarrow V$.  It follows from Eq. \eqref{eq:pluseqdag} that $\hat{\tau}(\rho, \hat{X}, \beta)$ is an intermediate extension of $i'_+ \O_{L^*}^{\ell/k}$.  Since $ i'_+ \O_{L^*}^{\ell/k}$ is simple, we conclude that $\hat{\tau}(\rho, \hat{X}, \beta)$ is simple as well. Since $\FL^V$ is an equivalence of categories the claim follows. \end{enumerate}
\end{proof}

\begin{rmk}\label{rmk:RemarkRefColoc}
  Similar to \cref{rmk:RemarkToReferee}, it is possible to alternatively embrace equivariant viewpoints to approach the (co-)localization properties of $\hat\tau(\rho, \overline Y, \beta)$, as pointed out by a referee. One observation is that $\mathscr D_V$-modules supported on the origin can only be strongly $\beta'$-equivariant for $\beta(\mathbf e) \in \Z$ and only have $G'$-invariant sections for $\beta(\mathbf e) = 0$, which can be used to recover \cref{th:reptheory-beta}. A more detailed analysis also leads to \cref{theo:coloc}.
\end{rmk}

\subsection{Tautological systems as mixed Hodge modules}
\label{subsec:TautMHM}

The purpose of this section is to finally achieve the functorial construction of tautological systems
announced in the introduction (more specifically, in  \cref{theo:Main}), by combining the results in
 \cref{sec:TwistCOhomHypSec}, the description of $\hat\tau_{|V\setminus \{0\}}$ from \cref{thm:restrictedTauHatDescription} as well as the localization resp.\ colocalization properties of $\hat\tau$ summarized in \cref{cor:LocColocHatTau} above.

Let us recall once again the setup we are working with: We let $X$ be a projective variety and we consider a transitive action of a reductive connected algebraic group $G$ on $X$.
We let $L \rightarrow X$ be a very ample $G$-equivariant line bundle. We extend the group action on $X$ and $L$ to a an action of $G' := \C^* \times G$ by letting the $\C^*$-factor act trivially on $X$ and by inverse scaling in the fibers of $L$.
We consider the $G'$-representation $V:=H^0(X,\mathscr{L})^\vee$ and the equivariant closed embedding $X\hookrightarrow \P V$ defined by $|\mathscr{L}|$.
Let $\hat{X}\subseteq V$ be the affine cone of $X$ in $V$, and we have an isomorphism $\hat{X} \setminus \{0\} \cong L^{\vee,*}$ by identifying $L^{\vee}$ with the blow-up of $\hat X$ at the origin. We write $\hat\iota \colon L^{\vee,*} \cong \hat X \setminus \{0\} \to V$  for the locally closed embedding given as the composition of
the closed embedding $i \colon \hat{X}\setminus \{0\} \hookrightarrow V\setminus \{0\}$ with the canonical open embedding
$j \colon V\setminus \{0\} \hookrightarrow V$.
Together with the isomorphism $\textup{inv} \colon L^* \to L^{\vee,*}$ given by inverting fibers, we obtain a locally closed embedding $\iota \colon L^* \hookrightarrow V$ defined by $\iota:=\hat\iota\circ \textup{inv}$.

For the convenience of the reader, we summarize the various maps that occur by extending the diagram \eqref{diag:Maps} from \cref{ssec:canSheafOnLineBdl}.
\begin{equation}\label{diag:Maps2}
\begin{tikzcd}
L & L^*\ar[swap,hook']{l}{j_L} \ar[swap]{dd}{\textup{inv}}[swap]{\cong} \ar[hook]{rrd}{i'} \ar[bend left]{rrrd}{\iota} & & &  \\
&&\hat{X} \setminus \{0\} \ar[hook]{r}{i} & V \setminus \{0\} \ar[hook]{r}{j} & V
\\
\textup{Bl}_{\{0\}}(\hat{X})\cong L^\vee&  L^{\vee,*}\ar[hook']{l}{j_{L^\vee}} \ar{ur}{\cong}
\ar[bend right]{rrru}{\hat{\iota}}&&&
\end{tikzcd}
\end{equation}

We let, as before,
$\beta \colon \mathfrak g' \to \C$ be a Lie algebra homomorphism satisfying $\restr{\beta}{\mathfrak g} \equiv 0$. Denote $W := V^\vee$.

\begin{thm}\label{theo:tau_is_MHM}
Under the above hypotheses, the following statements hold.
\begin{enumerate}
    \item
                Assume that
        $\beta(\mathbf{e}) = \ell/k \in \Q \setminus \Z$. The tautological system $\tau(\rho,\hat{X},\beta)$ is non-zero if and only if $\Ell^{\otimes \ell} \cong \omega_X^{\otimes (-k)}$ as $G$-equivariant line bundles. In this case, we have
        isomorphisms
        \[
        \tau(\rho,\hat X, \beta) \cong \FL^V(\iota_+ \O_{L^*}^{\ell/k}) \cong \FL^V(\iota_\dag \O_{L^*}^{\ell/k})
        \]
        in $\textup{Mod}(\mathscr{D}_W)$, and the $\mathscr{D}_W$-module $\tau(\rho,\hat X, \beta) $ underlies a complex pure Hodge module on $W$ of weight $\dim(X)+\dim(W)$. Moreover, $\tau(\rho,\hat{X},\beta)$ is simple, and, consequently, the local system associated to
        $\tau(\rho,\hat{X},\beta)_{| W \backslash \textup{Sing}(\tau(\rho,\hat{X},\beta))}$ is irreducible.
   \item
        If $\beta(\mathbf{e})\in \dZ_{>0}$,
        then $\tau(\rho,\hat{X},\beta)$ is non-zero if and only if $\Ell^{\otimes \beta(\mathbf{e})} \cong \omega_X^\vee$ as $G$-equivariant line bundles, in which case we have  an isomorphism
        $$
            \tau(\rho, \hat{X},\beta) \cong \FL^V(H^0\iota_\dag \cO_{L^*})
        $$
        in $\textup{Mod}(\cD_W)$. Then the $\mathscr D_W$-module $\tau(\rho, \hat{X},\beta)$ underlies an element of $\MHM(W)$ with weights in $\{\dim(W)+\dim(X),\,\dim(W)+\dim(X)+1\}$.
\end{enumerate}
\end{thm}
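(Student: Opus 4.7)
The strategy is to combine three earlier results: the description of $j^+\hat{\tau}(\rho,\hat X,\beta)$ provided by \cref{thm:restrictedTauHatDescription}, the (co)localization properties of $\hat\tau$ collected in \cref{cor:LocColocHatTau}, and the mixed Hodge module structure on Fourier--Laplace transforms of the extensions of $\mathcal O_{L^{\vee,*}}^{\beta}$ established in \cref{prop:directImageOfOBetaMHM}. Recall the factorization $\iota = \hat\iota\circ\textup{inv}^{-1}$, where $\textup{inv}\colon L^* \xrightarrow{\cong} L^{\vee,*}$ is the fibrewise inversion. A local computation in a trivializing chart, in the spirit of \cref{lem:existenceGluing}, shows that $\textup{inv}^{-1}_+\mathcal O_{L^*}^{\ell/k}$ is isomorphic to $\mathcal O_{L^{\vee,*}}^{\gamma}$ for a suitable $\gamma$ in the class of $-\ell/k$ modulo $\mathbb Z$, so that the Hodge-theoretic conclusions of \cref{prop:directImageOfOBetaMHM}, formulated in terms of $\hat\iota$, transfer directly to the description via $\iota$.

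For part 1, assume $\beta(\mathbf e)=\ell/k\in\mathbb Q\setminus\mathbb Z$. By \cref{thm:restrictedTauHatDescription}, the restriction $j^+\hat{\tau}(\rho,\hat X,\beta)$ equals $i'_+\mathcal O_{L^*}^{\ell/k}$ precisely when $\mathscr L_0^{\otimes \ell}\cong\omega_X^{\otimes(-k)}$, and vanishes otherwise. By the localization property in \cref{cor:LocColocHatTau} part 4, we have $\hat{\tau}\cong j_+ j^+\hat{\tau}\cong j_\dag j^+\hat{\tau}$. Using $\iota = j\circ i'$ together with $i'_+=i'_\dag$ (as $i'$ is a closed embedding), this yields
\[
\hat{\tau}(\rho,\hat X,\beta)\cong \iota_+\mathcal O_{L^*}^{\ell/k}\cong \iota_\dag\mathcal O_{L^*}^{\ell/k}.
\]
Applying $\FL^V$ gives the claimed $\mathscr D_W$-module isomorphisms and the non-vanishing criterion. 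Transferring via $\textup{inv}$, \cref{prop:directImageOfOBetaMHM} part 3 (at $i=0$) equips $\tau(\rho,\hat X,\beta)$ with the structure of a complex pure Hodge module of weight $\dim(X)+\dim(W)$. Simplicity is already recorded in \cref{cor:LocColocHatTau} part 4, and since $\FL^V$ is an equivalence on holonomic modules, $\tau$ inherits simplicity; irreducibility of the associated monodromy representation then follows by the Riemann--Hilbert correspondence.

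For part 2, assume $\beta(\mathbf e)\in\mathbb Z_{>0}$. The non-vanishing criterion $\mathscr L_0^{\otimes\beta(\mathbf e)}\cong\omega_X^\vee$ follows from \cref{thm:restrictedTauHatDescription} by clearing denominators, using that $\Pic^{G'}(X)$ is torsion-free in our setting. By the colocalization isomorphism of \cref{cor:LocColocHatTau} part 3, combined with $i'_+=i'_\dag$, we obtain
\[
\hat{\tau}(\rho,\hat X,\beta)\cong H^0 j_\dag j^+\hat{\tau} \cong H^0\iota_\dag\mathcal O_{L^*}.
\]
Applying $\FL^V$ and transferring via $\textup{inv}$ (trivially in the integer case, up to a fixed integer shift of the parameter) identifies the result with $H^0({^{H,!}\!\!}\cM_L)$, which by \cref{prop:directImageOfOBetaMHM} parts 2 and 4 (at $k=0$) is an object of $\MHM(W)$ with weights in $\{\dim(W)+\dim(X),\,\dim(W)+\dim(X)+1\}$. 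The principal technical obstacle throughout is the careful tracking of the integer shift in $\beta$ introduced by $\textup{inv}$, which ultimately reduces to a local computation of the kind appearing in the proof of \cref{prop:FLofObeta}.
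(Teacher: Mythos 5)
Your proposal is correct and follows essentially the same route as the paper's own proof: it assembles \cref{thm:restrictedTauHatDescription}, the (co)localization statements of \cref{cor:LocColocHatTau}, and the Hodge-theoretic content of \cref{prop:directImageOfOBetaMHM}, with the same transfer through $\textup{inv}$ relating $\iota$ and $\hat\iota$ and the same bookkeeping of the parameter shift modulo $\Z$. The only differences are organizational (e.g.\ you read off $\iota_+\cong\iota_\dag$ directly from \cref{cor:LocColocHatTau} rather than re-invoking point 3 of \cref{prop:directImageOfOBetaMHM}), and these do not change the substance of the argument.
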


\begin{proof}
Under the assumptions of the theorem, we have in both cases that
\[
  j^+\hat\tau(\rho, \hat{X}, \beta) \cong
  \begin{cases}
    i'_+ \O_{L^*}^{\ell/k} &\text{if }     \Ell^{\otimes \ell} \cong \omega_X^{\otimes (-k)}, \\
        0 & \text{otherwise}
  \end{cases}
\]
by \cref{thm:restrictedTauHatDescription} (with $\ell = \beta(\mathbf e)$, $k = 1$ in case $2$), notice that we had implicitly identified $L^*$ with $\hat{X}\setminus \{0\}$ in \cref{thm:restrictedTauHatDescription}.

We now distinguish the two cases.
\begin{enumerate}
    \item
    Since
    $\ell/k\notin \dZ$, we know from
    \cref{cor:localization}
        that
    $$
    \hat{\tau}(\rho, \hat{X},\beta)\cong j_+j^+\hat{\tau}(\rho, \hat{X},\beta).
    $$
    Therefore, since $\iota=j\circ i'$, we conclude that
    $$
    \hat{\tau}(\rho, \hat{X},\beta)\cong \begin{cases} \iota_+ \cO_{L^*}^{\ell/k} & \text{if } \Ell^{\otimes \ell} \cong \omega_X^{\otimes (-k)}, \\
    0 &\text{otherwise}.
    \end{cases}
    $$
    As we have $\FL^V(\hat\tau(\rho,\hat X, \beta)) = \tau(\rho,\hat X, \beta)$ by \cref{def:tautSys}, we obtain
    $$
    \tau(\rho,\hat X, \beta) \cong \begin{cases}
    \FL^V(\iota_+ \O_{L^*}^{\ell/k})
    &\text{if } \Ell^{\otimes \ell} \cong \omega_X^{\otimes (-k)}, \\
    0 &\text{otherwise},
    \end{cases}
    $$
    as required. The fact that
    $\FL^V(\iota_+ \O_{L^*}^{\ell/k}) \cong \FL^V(\iota_\dag \O_{L^*}^{\ell/k})$ is simply the $\cD$-module version of \cref{prop:directImageOfOBetaMHM}, point 3. Then it follows as in \cref{cor:LocColocHatTau} that $\tau(\rho,\hat{X},\beta)$ is a simple $\cD_W$-module, and in particular that the local system (and the monodromy representation) of its restriction to its smooth part is irreducible.

    For the second statement, assume $\Ell^{\otimes \ell} \cong \omega_X^{\otimes (-k)}$ and recall from
    \cref{prop:directImageOfOBetaMHM} that
    $$
    \FL^V(\hat\iota_+ \O_{L^{\vee,*}}^{-\ell/k}) \cong a_{W,+} ev^\dag j_{L,\dag}\cO_{L^*}^{\ell/k}
    $$ as elements in $D^b_h(\mathscr{D}_W)$, using the notations from \cref{prop:directImageOfOBetaMHM}.     Since we have $\textup{inv}_+ \O_{L^{\vee,*}}^{-\ell/k} \cong \O_{L^*}^{\ell/k}$, we get
    $$
    \FL^V(\iota_+ \O_{L^*}^{\ell/k}) \cong a_{W,+} ev^\dag j_{L,\dag}\cO_{L^*}^{\ell/k}
    $$
    in $D^b_h(\mathscr D_W)$. However, as we have just proved, this is actually a single degree complex isomorphic to the tautological system $\tau(\rho,\hat X, \beta)$. Hence it follows from the second statement of \cref{prop:directImageOfOBetaMHM} that this $\mathscr{D}_W$-module underlies the pure complex Hodge module
    $$
     {^H\!\!}\cM_L^{\ell/k}=H^0( {^H\!\!}\cM_L^{\ell/k}) =a_{W,*} ev^* j_{L,!} {^H\!}\underline{\dC}_{L^*}^{\ell/k}[\dim W -1]
    $$
    which has weight $\dim(X)+\dim(W)$.

    \item Since $\beta(\mathbf e) \notin \Z_{\leq 0}$, we know from \cref{theo:coloc} that
    \[\hat\tau(\rho, \hat X, \beta) \cong \begin{cases} H^0 j_\dag j^+ \hat\tau(\rho, \hat X, \beta)  &\text{if } \Ell^{\otimes \beta(\mathbf e)} \cong \omega_X^\vee, \\
    0 & \text{otherwise.}
    \end{cases}\]
    Hence,
    \[\hat\tau(\rho, \hat X, \beta) \cong \begin{cases} H^0 j_\dag i'_+ \O_{L^*} &\text{if } \Ell^{\otimes \beta(\mathbf e)} \cong \omega_X^\vee, \\
    0 & \text{otherwise,}
    \end{cases}\]
    using $\O_{L^*}^{\beta(\mathbf{e})} \cong \O_{L^*}$ since $\beta(\mathbf e) \in \Z$. Since $i'$ is a closed embedding, we have $i'_+ \cong i'_\dag$, so, using $\iota = j \circ i'$, we conclude the first statement.

    The second statement then follows again from \cref{prop:directImageOfOBetaMHM}, points 1. and 2. More precisely, we had shown there that $\FL^V(\hat{\iota}_\dag \cO_{L^{\vee,*}})$ underlies ${^{H,!}\!\!}\cM_L\in\MHM(W)$, so that
    $$
        \tau(\rho,\hat{X},\beta)
        \cong
        \FL^V(H^0 \iota_\dag \cO_{L^*})
        \cong
        \FL^V(H^0 \hat{\iota}_\dag \cO_{L^{\vee,*}})
        \cong
        H^0\FL^V(\hat{\iota}_\dag \cO_{L^{\vee,*}})
    $$
    underlies $H^0({^{H,!}\!\!}\cM_L)\in\MHM(W)$. The weight estimate then follows directly from \cref{prop:directImageOfOBetaMHM}, point 4. for the case $k=0$.    \qedhere
\end{enumerate}
\end{proof}

As a corollary, we solve the holonomic rank problem from \cite[Conjecture 1.3.]{Bloch_HolRankProb} in general (i.e. for all homogeneous spaces and all possible equivariant line bundles that give rise to non-zero tautological systems). Recall from the discussion before \cref{prop:TautIsDirectImage} that $\cU:=(W\times X) \setminus ev^{-1}(0) \subseteq W\times X$ and that $a_\cU\colon \cU\rightarrow W$ denotes the restriction of the first projection $a_W \colon W\times X\rightarrow W$.
Moreover, for any $\lambda\in W$, we write $i_\lambda \colon \{\lambda\}\hookrightarrow W$ for the corresponding closed embedding, we let $U_\lambda\subset X$ be the complement of the zero locus of the section $\lambda \colon  X\rightarrow L$, and we denote by $\underline{\dC}^\beta_\lambda$ the complex local system on $U_\lambda$ that underlies the pure complex Hodge module $\lambda^*_{|U_\lambda}\, ^{H\!}\underline{\dC}^\beta_{L^*}[-1]$.
\begin{cor}\label{cor:holRankProb}
\begin{enumerate}
    \item
        Under the assumptions of \cref{theo:tau_is_MHM}, point 1., i.e., $\beta(\mathbf{e}) =\ell/k\in\dQ\setminus\dZ$ and $\Ell^{\otimes \ell} \cong \omega_X^{\otimes (-k)}$ as $G$-equivariant line bundles, we have isomorphisms in $\textup{Mod}_h(\mathscr{D}_W)$
        $$
            \tau(\rho,\hat X, \beta) \cong a_{\cU,\dag} {ev_{|\cU}^+}\cO_{L^*}^{-\ell/k} \cong a_{\cU,+} {ev_{|\cU}^+}\cO_{L^*}^{-\ell/k}.
        $$
        As a consequence, we have an isomorphism of vector spaces

        \begin{equation}\label{equ:solHolRank1a}
            H^m (i_\lambda^+ \tau(\rho, \hat X, \beta)) \cong H^{\dim(X) +m}(U_\lambda,\,\underline{\dC}^{-\ell/k}_\lambda)
        \end{equation}
        resp.
        \begin{equation}\label{equ:solHolRank1b}
            H^m (i_\lambda^\dag \tau(\rho, \hat X, \beta)) \cong H_c^{\dim(X) + m}(U_\lambda,\,\underline{\dC}^{-\ell/k}_\lambda)
        \end{equation}
        for all $m \in \Z$ and for all $\lambda\in W$.
                                            \item
        If we assume that the hypotheses of \cref{theo:tau_is_MHM}, point 2., hold true (i.e., $\beta(\mathbf{e})\in \dZ_{>0}$ and $\Ell^{\otimes \beta(\mathbf e)} \cong \omega_X^\vee$ as $G$-equivariant line bundles), then we have an isomorphism
        $$
        \tau(\rho,\hat X, \beta) \cong H^0 a_{\cU,+} {ev_{|\cU}^+}\cO_{L^*}.
        $$
        In particular, we obtain for all $\lambda\in W$ an isomorphism
        \begin{equation}\label{equ:solHolRank2}
            H^0 (i_\lambda^+ \tau(\rho, \hat X, \beta)) \cong H^{\dim(X)}(U_\lambda,\,\dC).
        \end{equation}

    \item The holonomic rank of $\tau(\rho,\hat X, \beta)$ is given in the two cases as
        $$
        \dim H_c^{\dim(X)}(U_\lambda, \underline{\dC}_\lambda^{-\ell/k})\simeq \dim H^{\dim(X)}(U_\lambda, \underline{\dC}_\lambda^{-\ell/k}) \quad\quad\quad\textup{ if } \beta(\mathbf{e})\in\dQ\setminus\dZ
        ,
        $$
        resp.\
        $$
        \dim H^{\dim(X)}(U_\lambda, \dC) \quad\quad\quad\textup{ if }\beta(\mathbf{e})\in\dZ_{>0},
        $$
        for any value $\lambda\in W$ that lies outside the singular locus of $\tau(\rho,\hat X, \beta)$. \end{enumerate}
\end{cor}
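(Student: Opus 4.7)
The whole corollary consists essentially in repackaging \cref{theo:tau_is_MHM} via the functorial descriptions obtained in \cref{prop:directImageOfOBetaMHM} and \cref{prop:TautIsDirectImage}. I would first rewrite $\iota=\hat\iota\circ\textup{inv}$, where $\textup{inv}\colon L^*\to L^{\vee,*}$ flips fibres. A direct local check (using \cref{lem:existenceGluing} for $\cO_{L^*}^\gamma$ and $\cO_{L^{\vee,*}}^{-\gamma}$, which are glued from the same Čech data on trivialising opens) shows that $\textup{inv}_+\cO_{L^*}^{\ell/k}\cong\cO_{L^{\vee,*}}^{-\ell/k}$ up to an integer shift that is absorbed by \cref{prop:OBetaIntegerShifts}, so that $\iota_\star\cO_{L^*}^{\ell/k}\cong\hat\iota_\star\cO_{L^{\vee,*}}^{-\ell/k}$ for $\star\in\{+,\dag\}$.

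In case $1$, I would feed this into \cref{theo:tau_is_MHM}.1 to obtain $\tau(\rho,\hat X,\beta)\cong\FL^V(\hat\iota_+\cO_{L^{\vee,*}}^{-\ell/k})\cong\FL^V(\hat\iota_\dag\cO_{L^{\vee,*}}^{-\ell/k})$; the chain of isomorphisms inside the proof of \cref{prop:directImageOfOBetaMHM} (Equation~\eqref{eq:ProofOf38} and its dual counterpart) then rewrites these as $a_{W,+}\,ev^\dag j_{L,\dag}\cO_{L^*}^{\ell/k}$ resp.\ $a_{W,+}\,ev^\dag j_{L,+}\cO_{L^*}^{\ell/k}$. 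Since $ev$ is smooth one has $ev^\dag\cong ev^+$ up to a shift that is compensated by the shift appearing in the definition of $^{H\!}\underline{\dC}^\beta_{\cU}$, while proper base change along the Cartesian square
\[
\begin{tikzcd}
\cU \ar[hook]{r}{j_\cU}\ar{d}{ev_{|\cU}} \ar[phantom]{dr}{\times} & W\times X\ar{d}{ev}\\
L^*\ar[hook]{r}{j_L} & L
\end{tikzcd}
\]
turns $ev^+j_{L,\dag}$ into $j_{\cU,\dag}ev_{|\cU}^+$, after which $a_{W,+}\circ j_{\cU,\dag}=a_{\cU,\dag}$ (since $a_W$ is proper) yields $\tau(\rho,\hat X,\beta)\cong a_{\cU,\dag}\,ev_{|\cU}^+\cO_{L^*}^{-\ell/k}$. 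The equivalent form with $a_{\cU,+}$ comes from the fact that $j_{L,\dag}\cO_{L^*}^{\ell/k}\cong j_{L,+}\cO_{L^*}^{\ell/k}$ by \cref{cor:CBetaHodgeExt} when $\ell/k\notin\dZ$. In case $2$, exactly the same argument, performed inside \cref{theo:tau_is_MHM}.2 and with $\beta=0$ in \cref{prop:directImageOfOBetaMHM}, gives $\tau(\rho,\hat X,\beta)\cong H^0 a_{\cU,+}\,ev_{|\cU}^+\cO_{L^*}$; note the $H^0$ appears because here $j_{L,\dag}\cO_{L^*}$ is no longer concentrated in one degree.

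For the stalk formulas \eqref{equ:solHolRank1a}--\eqref{equ:solHolRank2}, I would simply apply $i_\lambda^+$ (resp.\ $i_\lambda^\dag$) to the direct-image descriptions just obtained and repeat the base-change computation in the proof of \cref{prop:TautIsDirectImage}.2 along
\[
\begin{tikzcd}
U_\lambda\ar[hook]{r}\ar{d} \ar[phantom]{dr}{\times} & W\times X\ar{d}{a_W}\\
\{\lambda\}\ar[hook]{r}{i_\lambda} & W,
\end{tikzcd}
\]
identifying $\lambda_{|U_\lambda}^+\cO_{L^*}^{-\ell/k}$ with the $\cD$-module underlying the local system $\underline{\dC}_\lambda^{-\ell/k}$ and reading off ordinary cohomology from $a^{U_\lambda}_+$ and compactly supported cohomology from $a^{U_\lambda}_\dag$.

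Finally, for the holonomic rank assertion I would argue that at any $\lambda\in W\setminus\operatorname{Sing}(\tau(\rho,\hat X,\beta))$ the module $\tau(\rho,\hat X,\beta)$ is, locally around $\lambda$, a vector bundle with flat connection; consequently both $i_\lambda^+\tau(\rho,\hat X,\beta)$ and $i_\lambda^\dag\tau(\rho,\hat X,\beta)$ are concentrated in degree zero and their common dimension equals the holonomic rank. Evaluating the stalk isomorphisms just established at $m=0$ for such a generic $\lambda$ then yields the three announced rank formulas, and the coincidence of the two formulas in case $1$ amounts to Poincar\'e–Verdier duality applied to the local system $\underline{\dC}_\lambda^{-\ell/k}$ on the smooth affine variety $U_\lambda$. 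The main bookkeeping obstacle that I expect is the careful tracking of sign conventions (in particular the effect of the fibre-inversion $\textup{inv}$ on the twisting parameter $\beta$) and of the cohomological shifts introduced by $ev^+$ versus $ev^\dag$; all of these are however absorbed either by \cref{prop:OBetaIntegerShifts} or by the conventional shift $[\dim W-1]$ built into the definition of $^{H\!}\underline{\dC}^\beta_\cU$, so no new idea beyond what is already contained in \cref{prop:directImageOfOBetaMHM} and \cref{prop:TautIsDirectImage} should be required.
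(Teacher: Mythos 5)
Your plan follows the paper's own route: the corollary is obtained by combining \cref{theo:tau_is_MHM} with the functorial descriptions of \cref{prop:directImageOfOBetaMHM} and \cref{prop:TautIsDirectImage} (base change along the square relating $\cU\to L^*$ and $W\times X\to L$, the identity $a_{W,+}\circ j_{\cU,\dag}=a_{\cU,\dag}$, and the coincidence $j_{L,+}\cO_{L^*}^{\gamma}\cong j_{L,\dag}\cO_{L^*}^{\gamma}$ for $\gamma\notin\dZ$), and your treatment of point 3 via exactness of $i_\lambda^+=i_\lambda^\dag$ at non-singular points is exactly the paper's argument. The only step you gloss over is in case 2: since there $\tau(\rho,\hat X,\beta)$ is only identified with $H^0a_{\cU,+}ev_{|\cU}^+\cO_{L^*}$ (not with the full complex), obtaining \eqref{equ:solHolRank2} requires justifying $H^0 i_\lambda^+\bigl(H^0 a_{\cU,+}(-)\bigr)\cong H^0\bigl(i_\lambda^+ a_{\cU,+}(-)\bigr)$; the paper does this by observing that both $i_\lambda^+$ and $a_{\cU,+}$ are right exact, so the relevant term sits at the $(0,0)$-corner of a third-quadrant Grothendieck spectral sequence. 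You should add that one line. Finally, the sign bookkeeping you flag (the effect of $\textup{inv}$ on the twist, and which of $H$ versus $H_c$ pairs with $i_\lambda^+$ versus $i_\lambda^\dag$) is indeed the delicate point here: in your own write-up the exponent silently changes from $\ell/k$ to $-\ell/k$ between the display $a_{W,+}ev^\dag j_{L,\dag}\cO_{L^*}^{\ell/k}$ and the conclusion $a_{\cU,\dag}ev_{|\cU}^+\cO_{L^*}^{-\ell/k}$, so this transition needs to be made explicit rather than absorbed into "bookkeeping".
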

Notice that $H^0 (i_\lambda^+ \tau(\rho, \hat X, \beta))$ is the space dual to the space of (classical) solutions of $\tau(\rho, \hat{X},\beta)$ at the point $\lambda$, so that that \cref{equ:solHolRank1a} and \cref{equ:solHolRank2}  also comprise and generalize \cite[Corollary 2.3]{HuangLianZhu}.
\begin{proof}
\begin{enumerate}
    \item
        Using the previous \cref{theo:tau_is_MHM}, the first statement is exactly the $\mathscr{D}$-module version of  \cref{prop:TautIsDirectImage}, 1. Similarly,
        the second statement follows from \cref{prop:TautIsDirectImage}, 2.
    \item
        The first statement is obtained by combining \cref{theo:tau_is_MHM} with \cref{prop:TautIsDirectImage}, 1. In order to get the second one, we apply the functor $H^0i_\lambda^+$ to the isomorphism $\tau(\rho,\hat X, \beta) \cong H^0 a_{\cU,+} {ev_{|\cU}^+}\cO_{L^*}$. This shows that $H^0 i_\lambda^+ \tau(\rho,\hat X, \beta)$ sits at the origin of the $E_2$-term of the (third quadrant) Grothendieck spectral sequence for the composition of the functors $i_\lambda^+$ and $a_{\cU,+}$. Therefore, it is isomorphic to the $(0,0)$-spot of the abutment, which is $
            H^0 i_\lambda^+ a_{\cU,+} {ev_{|\cU}^+}\cO_{L^*}$.         Now the remainder of the argument is close to the proof of part 2. of \cref{prop:TautIsDirectImage}, namely,
        consider the diagram
        $$
         \begin{tikzcd}
            U_\lambda=(\{\lambda\}\times X)\cap\mathcal{U} \ar{rr}{a^{\cU_\lambda}} \ar[swap]{dd}{i_\lambda\times \textup{id}} \ar[bend right=60, swap]{dddd}{\lambda_{|U_\lambda}}&& \{\lambda\} \ar[swap]{dd}{i_\lambda} \\ \\
            \mathcal{U}  \ar[swap]{dd}{ev_{|\mathcal{U}}}
            \ar{rr}{a_{\mathcal{U}}}&& W \\ \\L^*
          \end{tikzcd}
          $$
        then base change shows that
        $$
        i_\lambda^+ a_{\cU,+} {ev_{|\cU}^+}\cO_{L^*}\cong a^{\cU_\lambda}_+(i_\lambda\times \id)^+
        {ev_{|\cU}^+}\cO_{L^*} \cong a^{\cU_\lambda}_+ \lambda_{U_\lambda}^+\cO_{L^*} \cong a^{\cU_\lambda}_+ \cO_{U_\lambda}.
        $$
        Taking zeroth cohomology, we finally obtain
        $$
            H^0 i_\lambda^+ \tau(\rho,\hat X, \beta) \cong H^0 i_\lambda^+ a_{\cU,+} {ev_{|\cU}^+}\cO_{L^*}
            \cong
            H^0 a^{\cU_\lambda}_+ \cO_{U_\lambda}
            = H^{\dim(X)}(U_\lambda,\dC).
        $$
    \item
       This follows from point 1. resp.\ 2. since the holonomic rank is the fibre dimension
       of $\tau(\rho,\hat{X},\beta)$ at any $\lambda\in W$ outside the singular locus. For such points $\lambda$ we also have $i_\lambda^+=i_\lambda^\dag$ and this is then an exact functor.
\end{enumerate}
\end{proof}

\bibliographystyle{amsalpha}
\newcommand{\etalchar}[1]{$^{#1}$}
\providecommand{\bysame}{\leavevmode\hbox to3em{\hrulefill}\thinspace}
\providecommand{\MR}{\relax\ifhmode\unskip\space\fi MR }
\providecommand{\MRhref}[2]{%
  \href{http://www.ams.org/mathscinet-getitem?mr=#1}{#2}
}
\providecommand{\href}[2]{#2}

\vspace*{1cm}

\nd
Paul G\"orlach\\
Otto-von-Guericke-Universität Magdeburg\\
Fakult\"at f\"ur Mathematik\\
Institut f\"ur Algebra und Geometrie\\
Universitätsplatz 2\\
39106 Magdeburg\\
Germany\\
paul.goerlach@ovgu.de\\

\nd
Thomas Reichelt\\
Lehrstuhl f\"ur Mathematik VI \\
Institut f\"ur Mathematik\\
Universit\"at Mannheim,
A 5, 6 \\
68131 Mannheim\\
Germany\\
thomas.reichelt@math.uni-mannheim.de\\

\nd
Christian Sevenheck\\
Fakult\"at f\"ur Mathematik\\
Technische Universit\"at Chemnitz\\
09107 Chemnitz\\
Germany\\
christian.sevenheck@mathematik.tu-chemnitz.de\\

\nd
Avi Steiner\\
Fakult\"at f\"ur Mathematik\\
Technische Universit\"at Chemnitz\\
09107 Chemnitz\\
Germany\\
avi.steiner@gmail.com\\

\nd
Uli Walther\\
Purdue University\\
Dept.\ of Mathematics\\
150 N.\ University St.\\
West Lafayette, IN 47907\\
USA\\
walther@math.purdue.edu\\

\end{document}

\end{document}